\def\A{\mathbb{A}}
\def\C{\mathbb{C}}
\def\L{\mathcal{L}}
\def\P{\mathbb{P}}
\def\R{\mathbb{R}}
\def\T{\mathbb{T}}
\def\a{\mathsf{a}}
\def\c{\mathfrak{c}}
\def\g{\mathfrak{g}}
\def\d{\mathfrak{d}}
\def\dr{{\mbox{d}}}
\def\h{\mathfrak{h}}
\def\k{\mathfrak{k}}
\def\l{\mathfrak{l}}
\def\p{\mathfrak{p}}
\def\q{\mathfrak{q}}
\def\r{\mathfrak{r}}
\def\s{\mathsf{s}}
\def\t{\mathsf{t}}
\def\u{\mathsf{u}}
\def\ph{p_{\h}}
\def\Ad{\mathrm{Ad}}
\def\ad{\mathrm{ad}}
\def\da{\dasharrow}
\def\act{\circlearrowright}
\def\tp{\tilde{\mathfrak{p}}}
\def\lt{\tilde{\ell}}
\def\tl{\tilde{\l}}
\def\Ad{\mathrm{Ad}}
\def\<{\langle}
\def\>{\rangle}
\def\rd{\mathrm{red}}
\def\rk{\mathrm{rk}}
\def\dim{\mathrm{dim}}
\def\gp{\rightrightarrows}
\def\md{\,\,\mathrm{mod}}
\def\ann{\mathrm{ann}}
\def\md{\,\,\mathrm{mod}\,}
\def\ph{\mathrm{pr}_{\h}}
\def\pg{\mathrm{pr}_{\g}}
\def\pr{\mathrm{pr}_{\r}}
\newcommand{\Cour}[1]      {[\![#1]\!]}
\theoremstyle{definition}
\newtheorem{thm}{Theorem}[section]
\newtheorem{lem}[thm]{Lemma}
\newtheorem{prop}[thm]{Proposition}
\newtheorem{cor}[thm]{Corollary}
\newtheorem{dfn}{Definition}[section]
\newtheorem*{thmn}{Theorem}
\theoremstyle{remark}
\newtheorem{example}{Example}[section]
\newtheorem{rmrk}{Remark}[section]
\tikzset{node distance=2cm, auto}
\author{Patrick J. Robinson}
\title{The Classification of Dirac Homogeneous Spaces}
\begin{document}


\maketitle


\begin{abstract}
A well known result of Drinfeld classifies Poisson Lie groups $(H,\Pi)$ in terms of Lie algebraic data in the form of Manin triples $(\d,\g,\h)$; he also classified compatible Poisson structures on $H$-homogeneous spaces
$H/K$ in terms of Lagrangian subalgebras $\l\subset\d$ with $\l\cap\h=\k=$Lie$(K)$.  Using the language of Courant algebroids and groupoids, Li-Bland and Meinrenken formalized the notion of \emph{Dirac Lie groups} and classified them in terms of so-called ``$H$-equivariant Dirac Manin triples" $(\d,\g,\h)_\beta$; this generalizes the first result of Drinfeld, as each Poisson Lie group gives a unique Dirac Lie group structure.\\
In this thesis, we consider a notion of homogeneous space for Dirac Lie groups, and classify them in terms of $K$-invariant coisotropic subalgebras $\c\subset\d$, with $\c\cap\h=\k$.  The relation between Poisson and Dirac morphisms makes Drinfeld's second result a special case of this classification.

\end{abstract}




\begin{acknowledgements}
I certainly would not have come this far without the love and support of my parents, who have always taught me to be inquisitive, to think for myself, and to love learning.  They have encouraged me in all that I do.\\
I am eternally grateful to my supervisor, Eckhard Meinrenken, for his patience, his willingness to spend hours discussing ideas (and answering my inane questions), his attempts to keep me on track by \emph{always} telling me I was behind, and for everything he has taught me.  I truly could not have asked for a better doctoral supervisor.\\
I would like to thank all of my colleagues with whom I have had interesting discussions, and have continually helped me through various mathematical issues:  Tyler Holden, Peter Crooks, David Li-Bland, Dan Rowe, Ivan Khatchatourian, Parker Glynn-Adey, and Madeleine Jotz.  I am indebted to all of the teachers and professors who have deeply inspired and encouraged my love of mathematics: Mr. Marzewski, Mrs. Kopach, Mr. Evst, Dr. Yeung, Kiumars Kaveh, Peter Rosenthal, Alfonso Gracia-Saz, and Joel Kamnitzer.  I'm also very thankful for my NSERC USRA supervisor, Dr. Neil Barnaby, who rescued me from the doldrums of undergrad and introduced me to what research was really like; I will always carry the tachyons with me.\\
I would certainly like to thank the staff of the Mathematics Department, without whom we would surely all perish.  I have received immense help from Ida Bulat, Jemima Merisca, Donna Birch, Pamela Brittain, Patrina Seepersaud, and Marie Bachtis.
Lastly, I would like to thank Stephanie Jones, for her support, love, and for having helped keep me sane during this mathematical adventure.
\end{acknowledgements}

\tableofcontents







\chapter{Introduction}
\section{Poisson Geometry}
Poisson developed the ``Poisson bracket" in 1809 \cite{poisson} as a means of solving problems in Lagrangian mechanics; the significance of the bracket was later recognized by Jacobi, who investigated its algebraic properties \cite{jacobi}. In his work on transformation groups, Lie was able to make a connection between the Poisson bracket and Lie theory when he discovered a canonical Poisson structure on the dual of any Lie algebra \cite{lie}. Poisson geometry experienced a resurgence beginning in the 1970s due to such mathematicians as Lichnerowicz \cite{Lich}, Weinstein \cite{wein1}\cite{wein2}, Koszul \cite{Koszul}, Kirillov \cite{kirillov}, and others as connections to varying areas of mathematics were discovered. Poisson geometry plays an important role in noncommutative geometry, representation theory, singularity theory, integrable systems, and cluster algebras to name a few (see \cite{cluster}, \cite{singularity}, \cite{noncom}, etc).\\
In modern mathematical language, a \emph{Poisson Manifold} is a manifold $M$, together with a Lie algebra bracket $\{,\}$ on the smooth functions of $M$ such that for any $f\in C^\infty(M)$, the operator
    \begin{equation}\label{hvector} X_f = \{f,\cdot\} \end{equation}
is a derivation.  This means $X_f$ is a vector field, called the \emph{Hamiltonian vector field} associated to $f$.



With Poisson manifolds comes the notion of a \emph{Poisson morphism}: a smooth map $\varphi:\,M_1\to M_2$ between Poisson manifolds whose pullback is compatible with the Poisson bracket, i.e.:
    \begin{equation}\{\varphi^*f,\varphi^*g\}_1 = \varphi^*\{f,g\}_2\qquad \forall\, f,g\in C^\infty(M_2).\end{equation}
\emph{Poisson Lie groups} were defined by Drinfeld in \cite{Drinfeldh}, with significant subsequent developments by Weinstein, Lu \cite{luth} \cite{luwein1}, Semenov-Tian-Shansky \cite{STS}, and others.  The original motivation came from the Drinfeld-Faddeev formulation of quantum groups \cite{Drinfeldq}, which arose as solutions to the Quantum Yang-Baxter equation, an important equation in quantum field theory; Drinfeld and Jimbo recognized that these quantum groups were simply Hopf algebras. Taking the ``classical limit" of a quantum group yields a Poisson Lie group (see \cite{Drinfeldq}, \cite{quantum}).  From a purely mathematical perspective, a Poisson Lie group $(H,\{,\})$ is a Lie group with Poisson structure such that the multiplication map $H\times H\to H$ is a Poisson morphism, with $H\times H$ given the product Poisson structure.  A \emph{Poisson action} is defined as an action of a Poisson Lie group $(H,\{,\}_H)$ on a Poisson manifold $(M,\{,\}_M)$ such that the action map
    \begin{equation} (H,\{,\}_H)\times(M,\{,\}_M)\longrightarrow (M,\{,\}_M) \end{equation}
is a Poisson morphism \cite{STS}. In general, the Poisson bracket of $M$ is not preserved (see Proposition 5 in \cite{STS}).
If a given Poisson action is transitive, then $M$ is referred to as a \emph{Poisson homogeneous space}; a choice of basepoint $m\in M$ identifies $M\cong H/K$, with $K$ being the stabiliser of $m$.
\section{Classifications of Drinfeld}
A \emph{Manin triple} is a triple of Lie algebras $(\d,\g,\h)$, where $\d$ is a quadratic Lie algebra, and $\g,\h$ are transverse Lagrangian subalgebras.  Drinfeld showed in \cite{Drinfeldh} that Poisson Lie groups $(H,\Pi_H)$ are in 1-1 correspondence with Manin triples $(\d,\g,\h)$, with $\h$ the Lie algebra of $H$, and an action of $H$ on $\d$ by Lie algebra automorphisms integrating the adjoint action of $\h$ on $\d$, and restricting to the adjoint action of $H$ on $\d$.  Since $\h$ and $\g$ are each Lagrangian, we have the association $\g\cong\h^*$ in the following manner:
    \begin{equation} \h^* \cong (\d/\g)^* \cong \g^\perp = \g. \end{equation}
The dual space $\h^*$ inherits a Lie bracket from the Poisson structure:  since $\{f,g\}(e)=0$ for any $f,g\in C^\infty(H)$, we can obtain a linear Poisson structure on $\h$, which is equivalent to a Lie algebra structure on $\h^*$ (\cite{luwein1}, \cite{Drinfeldh}; see Example \ref{PoisLiestru}). 
The metric on $\d$ is given by the natural pairing, and the bracket is the unique one  such that $\g$ and $\h$ are Lagrangian and the metric is
invariant.  Hence, given a Lie group $H$, the Lie algebra structure of $\h$ as well as the vector space structure of $\d=\g\oplus\h$ is automatically determined; it is the Poisson structure on $H$ which determines the Lie algebra structure of $\g$, and by extension, $\d$.\\
Drinfeld later considered Poisson homogeneous spaces in \cite{Drinfeldp}.  He concluded that given a Poisson Lie group $(H,\{,\})$ and an $H$-homogeneous space $H/K$, the Poisson structures on $H/K$ which make the action into a Poisson morphism are in 1-1 correspondence with Lagrangian, $K$-stable subalgebras $\l\subseteq\d$ such that $\l\cap\h=\k$ (where $\k=\mathrm{Lie}(K)$).\footnote{Drinfeld did not actually include a proof of this theorem in \cite{Drinfeldp}, a 1.5 page paper, but referred to the proof being ``more or less straightforward".  A non-Dirac geometric proof can be found in \cite{evenslu}}
\section{Dirac Geometry, Dirac Lie Groups}
An important property of Poisson manifolds is the ability to reformulate the structure of the Poisson bracket in terms of bivector fields:  a Poisson bracket $\{,\}_M$ on $M$ is equivalent to a bivector field $\Pi\in \wedge^2(\mathrm{T}M)$ such that the induced bracket on $C^\infty(M)$ given by
    \begin{equation} \{f,g\}\,:= \,\<\Pi, \mathrm{d}f\wedge\mathrm{d}g\> \end{equation}
satisfies the Jacobi identity.  This, in turn, is equivalent to the Poisson bivector $\Pi$ satisfying the equation $[\Pi,\Pi]=0$, for the Schouten bracket, the natural generalization of the Lie bracket of vector fields to higher rank tensors (\cite{Schouten},\cite{nij}).  This structure became integral to the study of Dirac geometry.\\
In the 1980s, Courant's work in \cite{courdir}, and (with Weinstein) in \cite{courwein}, put Dirac's theory of constrained Hamiltonian mechanics \cite{dirac} into a geometric perspective.  Dirac developed a generalization of the Poisson bracket, called the \emph{Dirac Bracket}, on a certain subclass of functions (usually referred to as \emph{admissible functions}) on the phase space of a mechanical system with constraints; his analysis allowed such systems to undergo canonical quantization \cite{dirac}.  Courant started by considering the \emph{generalized tangent bundle} $\T M$ of a manifold $M$ (also called the Pontryagin bundle), given by $\T M:=\mathrm{T}M\oplus\mathrm{T}^*M$, and carrying a non-degenerate symmetric bilinear form
    \begin{equation} \< (v_1,\alpha_1),(v_2,\alpha_2)\> = \<v_1,\alpha_2\> + \<v_2,\alpha_1\>. \end{equation}
He also described a bilinear bracket (called the Courant bracket, or sometimes the Dorfman bracket in this context) on the sections of $\T M$ given by
    \begin{equation}\Cour{(v_1,\alpha_1),(v_2,\alpha_2)} = ([v_1,v_2],\,\, \L_{v_1}\alpha_2 - \iota_{v_2}\dr\alpha_1) \end{equation}
where $[v_1,v_2]$ is the Lie bracket on vector fields.  The Courant bracket is not a Lie bracket since it is not anti-symmetric; it does, however, restrict to a Lie bracket on involutive, maximally isotropic (i.e.: Lagrangian) sub-bundles.  Courant referred to such sub-bundles as ``Dirac structures'', and they were at the core of his analysis, with the key examples being the graphs of Poisson bivectors on $M$:
    \begin{equation} \mathrm{Gr}(\Pi) = \{(\iota_\alpha\Pi,\alpha)\,|\,\alpha\in \mathrm{T}^*M\}\subseteq\T M \end{equation}
(in fact, as shown in \cite{courdir} for a given manifold $M$, the Dirac structures in $\T M$ transverse to T$M$ are precisely the graphs of Poisson bivectors on $M$).  One of the important aspects of Courant's work was in the ability to \emph{restrict} Dirac structures to sub-manifolds $S\subseteq M$, effectively imposing constraints.  Courant showed that if one took the restriction of the graph of a Poisson bivector on $M$, the inherited bracket on a certain important subclass of functions coincided with Dirac's bracket from \cite{dirac} (for more details, see \cite{david}, \cite{courwein}, \cite{courdir}).\\
Courant's work on $\T M$ together with its metric and bracket structure became the foundation for the mathematical object now known as a \emph{Courant algebroid}.  His ideas were generalized by Liu, Weinstein, and Xu in \cite{lwx}, defining abstract Courant algebroids to be vector bundles $\A\to M$ with a bundle metric $\<\,,\,\>$, a map $\a:\,\A\to \mathrm{T}M$ called the \emph{anchor}, and a bracket on the sections
    \begin{equation}\Cour{\,,\,}:\Gamma(\A)\times\Gamma(\A) \to \Gamma(\A) \end{equation}
which satisfies certain axioms.  Dirac structures are defined similar to Courant's case to be involutive, Lagrangian sub-bundles $E\subset\A$. Since the Courant bracket again restricts to a Lie bracket on $E$, these Dirac structures turn out to be \emph{Lie algebroids} (see Section \ref{plalg}).  Further work on Courant algebroids was done by Dorfman \cite{dorfman}, Roytenberg \cite{royt}, Severa \cite{severa} (letter no.7), Uchino \cite{uch}, and others.  With Courant algebroids come \emph{Courant morphisms}: given two Courant algebroids $\A,\A'$ over $M,M'$, and a smooth map $\Phi:\,M\to M'$, a Courant morphism $\A\da\A'$ is a Dirac structure in $\A'\times\bar{\A}$ restricted to the graph of $\Phi$ (\cite{anton}, \cite{severa2}).  This definition allows that if any two sets of sections are related by the morphism, then so are their brackets.  The dashed arrow is a reminder that Courant morphisms are relations, and not always actual maps.

To construct the notion of a Lie group in Dirac geometry, extending the definition of Poisson Lie groups, the Courant algebroids $\A$ must themselves carry a multiplicative structure. This is achieved by requiring that $\A$ has a $VB$-groupoid structure $\A\rightrightarrows\g$ over $H\rightrightarrows\mathrm{pt}$ (\cite{raj}, \cite{davidpavol}) such that the groupoid multiplication Mult: $\A\times\A\da\A$ is a Courant morphism along the group multiplication $H\times H \to H$; such Courant algebroids are referred to as $CA$-groupoids.  This gives rise to the notion of a \emph{multiplicative Dirac structure} $E\subset \A$, being a Dirac structure which is also a sub-groupoid. For the standard Courant algebroid $\T H$, the multiplicative Dirac structures were independently classified by Jotz \cite{JotzDir} and Ortiz \cite{ortizd}\footnote{While both refer to these as ``Dirac Lie groups", we reserve the term and only refer to them as multiplicative Dirac structures.}.  Given a Lie group $H$, a \emph{Dirac Lie group} structure on $H$ is a pair $(\A,E)\gp\g$ of a $CA$-groupoid and Dirac structure, with the caveat that any element of $E_{h_1h_2}$ is Mult-related to a unique element of $E_{h_1}\times E_{h_2}$; this type of morphism is called a \emph{Dirac morphism}.  This definition, and further analysis, is due to Li-Bland and Meinrenken in \cite{lmdir}; they showed that the Dirac Lie group structures on $H$ are in 1-1 correspondence with so-called \emph{H-equivariant Dirac Manin triples} $(\d,\g,\h)_\beta$, with the ad-invariant $\beta\in S^2\d$ determining a (possibly degenerate) bilinear form, and $H$ acting on $\d$ by Lie automorphisms, integrating the adjoint action of $\h\subset\d$ and restricting to the adjoint action of $H$ on $\h$.  Unlike the Poisson case, the transverse subalgebras $\g,\h$ are not necessarily Lagrangian; the only condition is that $\g$ is $\beta$-coisotropic, meaning $\beta$ restricted to $\ann(\g)$ is 0.\\
Every Poisson Lie group can be regarded as a Dirac Lie group:  $(H,\Pi)$ gives rise to $(\A,E) = (\T H,\mathrm{Gr}(\Pi))$, with the groupoid structure of $\T H$ being the direct sum of the tangent group $\mathrm{T}H\gp\mathrm{pt}$, and the standard symplectic groupoid $\mathrm{T}^*H\gp\h^*$.  In fact, all of the Dirac Lie group structures for $\T H$ arise from Poisson structures on $H$, though considering more general Courant algebroids $\A$ give many more interesting examples.  One of the convenient properties of Dirac Lie group structures is that they carry a natural trivialization $(\A,E)\cong(H\times\q,H\times\g)$, where $\q$ is constructed from the classifying triple $(\d,\g,\h)_\beta$; the trivialization arises from the groupoid multiplication of $E$, together with the fact that $E$ is \emph{vacant} in the sense that $E^{(0)} = E_e$ \cite{lmdir}.\\
A powerful aspect of Dirac geometry, and Dirac Lie groups in particular, is it allows for short and straightforward proofs of various theorems in Poisson geometry.  As an example in outline, we look at Drinfeld's classification of Poisson homogeneous spaces:  for a given Poisson Lie group $(H,\Pi)$, begin with a Poisson homogeneous space $(H/K,\Pi_{H/K})$.  The Poisson action $H\times H/K\to H/K$ lifts to a Dirac morphism
    \begin{equation} (\T H, \mathrm{Gr}(\Pi_H))\times (\T(H/K),\mathrm{Gr}(\Pi_{H/K}))\da (\T(H/K),\mathrm{Gr}(\Pi_{H/K})) \end{equation}
making $\T(H/K)$ and the Dirac structure into a groupoid space.  Taking the backwards image along the Courant morphism $R_\pi$ induced by the natural projection $\pi:\,H\to H/K$, we obtain the Dirac morphism
    \begin{equation} (H\times\d, H\times\g)\times(H\times\d, L) \da (H\times\d, L) \end{equation}
This implies that $L$ is an $E$-module, giving us a trivialization $L\cong H\times\l$.  Since the backwards image of a Dirac structure is again a Dirac structure \cite{capg}, we get that $\l$ must be a Lagrangian subalgebra of $\d$.  By the definition of the backwards image, we have
    \begin{equation} \l = \{ (v,\pi^*\alpha)\in\h\oplus\h^*\,|\, v\in \h,\,\alpha\in (\h/\k)^*,\,\mathrm{T}\pi (v) = \iota_\alpha\Pi_{H/K}(\bar{e}) \}\end{equation}
which, in particular, means $\l\cap\h = \ker\mathrm{d}\pi = \k$.  For the second half of the statement, taking some Lagrangian subalgebra $\l\subset\d$ with $\l\cap\h=\k$, first we form $L=H\times\l$ and take its forward image via $R_\pi$.  (As a note, forwards images do not always yield smooth sub-bundles; a ``niceness" condition must be met, that $\ker R_\pi\cap L=0$, which is true in this case.)  Denoting $L' = R_\pi\circ L$, one can show that $L'\cap \mathrm{T}(H/K) = 0$.  Since $L'$ is a Dirac structure, this means it must be the graph of some Poisson bivector on $H/K$.  Pushing the map $(H\times\d,H\times\g)\times(H\times\d,L)\da(H\times\d,L)$ forward under $R_\pi$, we obtain the Dirac morphism
    \begin{equation} (H\times\d,H\times\g)\times(\T(H/K),L')\da (\T(H/K),L') \end{equation}
which yields the Poisson action of $H$ on $H/K$.

\section{Dirac Homogeneous Spaces, Results}
It is the goal of this thesis to classify Dirac homogeneous spaces, with initial data similar to Drinfeld's theorem of Poisson homogeneous spaces.  One of the main initial challenges in doing so is deciding on the proper definition of homogeneous space for the Dirac case:  since $(\A,E)$ are $VB$-groupoids (over $H$), it is natural to choose a pair $(\P,L)$ of Courant algebroid and Dirac structure over a group homogeneous space $H/K$ which are $(A,E)$-modules. There is no reasonable notion of transitive groupoid action, however, so this definition falls short.  Looking to the Poisson case, we obtain the Dirac morphism
    \begin{equation} (\T H, \mathrm{Gr}(\Pi_H))\times (\T(H/K),\mathrm{Gr}(\Pi_{H/K}))\da (\T(H/K),\mathrm{Gr}(\Pi_{H/K})) \end{equation}
as in the previous section. We also note that if we take the quotient $\A/E$, we obtain the group T$H$, as well as an induced action on the quotient
\[\T(H/K)/\mathrm{Gr}(\Pi_{H/K}) \cong \mathrm{T}(H/K).\]
Indeed, this action is transitive since it is the tangent lift of the transitive action of $H$ on $H/K$.\\
Even in the more general Dirac Lie group setting, the quotient $\A/E$ always yields a group.  This allows us to define a Dirac homogeneous space as a $CA$-groupoid module and Dirac structure $(\P,L)$ over $H/K$, such that the induced group action $\A/E\act \P/L$ is transitive.  Just as the groupoid multiplication of $E$ trivialises $(\A,E)$, so too does the groupoid action of $E$ affect the structure of $(\P,L)$, giving the modules the structure of associated bundles.  This allows for significant analysis to be done at the level of the fibres, and is where we begin in chapter 3 with \emph{quadratic linear groupoids}.  In chapter 4, we look at the groupoid modules for \emph{vacant} vector bundle- and Lie algebroid-groupoids, as the Dirac structures in Dirac Lie groups are vacant.  The vacant condition means that for the $VB$-groupoid $E\to H$ over the group $H$,  $E^{(0)} = E_e$. We extend this analysis in chapter 5 to metrized $VB$-groupoids $(\A,E)$: we take a quadratic VB-groupoid $\A\gp\g$ over $H$, together with a vacant, Lagrangian subgroupoid $E$, to approximate the Dirac structure in this more general setting.  We once again get a trivialization $(\A,E)\cong (H\times\q,H\times\g)$.  Considering modules $(\P,L)$ over $H/K$, with a transitive group action $\A/E\act \P/L$, we are able to write $(\P,L)\cong(H\times_K\p,H\times_K\l)$.  It turns out the global action and structure are determined by that at the identity fibres, giving the classification of such homogeneous spaces in terms of $\lambda$-coisotropic subspaces $\l\subset\g$ which are invariant under an action of the subgroup $K$.\\
Lastly, in chapter 6, we combine all of the previous results to classify the full Dirac case.  While the metrized linear- and $VB$-groupoid cases give us the global bundle structure of $(\P,L)$, as well as the formulae for the groupoid actions, we need to use reduction to help recover the Courant structure:  in \cite{lmdir}, the CA-groupoid $\A$ is obtained as a reduction $C/C^\perp$, with $C$ being a particular coisotropic sub-groupoid of $\T H\times\bar{\q}\times\q$; since there is a natural Dirac action of $\T H\times\bar{\q}\times\q$ on $\T(H/K)\times\bar{\q}$, we take a certain coisotropic $D_1\subset\T(H/K)\times{\bar{\q}}$ on which $C$ acts, and obtain $\P$ as a reduction $D_1/{D_1}^\perp$.\\
Ultimately, the analysis yields the following classification, effectively extending Drinfeld's Poisson theorem:
\begin{thmn} (\textbf{Classification of Dirac Homogeneous Spaces}) Given a Dirac Lie group $(H,\A,E)$ classified by $(\d,\g,\h)_\beta$, the Dirac homogeneous spaces on $(H/K,\P,L)$ are classified by $K$-invariant $\beta$-coisotropic subalgebras $\c\subset\d$ such that $\c\cap\h=\k$.
\end{thmn}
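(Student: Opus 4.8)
The plan is to mirror the Dirac-geometric proof of Drinfeld's Poisson theorem recalled above, replacing the standard Courant algebroid $\T H$ by a general $CA$-groupoid $\A$, and to reduce the whole classification to its fibre over the base point $\bar e = eK\in H/K$. I will use throughout the trivialization $(\A,E)\cong(H\times\q,H\times\g)$ of \cite{lmdir}, where $\q$ is the quadratic space built from the classifying triple $(\d,\g,\h)_\beta$, together with the presentation of $\A$ as a reduction $\A=C/C^\perp$ of a coisotropic subgroupoid $C\subset\T H\times\bar{\q}\times\q$.

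\emph{From a homogeneous space to the algebraic data.} Given a Dirac homogeneous space $(\P,L)$ over $H/K$, the first step uses that $E$ is vacant, i.e. $E^{(0)}=E_e$: the groupoid action of $E$ on $(\P,L)$ trivializes the pair as an associated bundle $(\P,L)\cong(H\times_K\p,\,H\times_K\l)$ with $\p=\P_{\bar e}$ and $\l=L_{\bar e}$, by the module theory of vacant metrized $VB$-groupoids of Chapters 4 and 5. Under this and the trivialization of $(\A,E)$, transitivity of $\A/E\act\P/L$ becomes a statement at $\bar e$: $\q$ acts linearly on $\p$, the subspace $\g$ acts by $0$ on $\p/\l$, and the induced action makes $\p/\l$ a module identified with $\h/\k$ (in the Poisson case this is $\mathrm{T}H\act\mathrm{T}(H/K)$). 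By the metrized $VB$-groupoid classification of Chapter 5 this is equivalent to $\l$ being $\lambda$-coisotropic and $K$-invariant for the induced fibre form $\lambda$. The second step recovers the Courant structure: transporting $(\P,L)$ through the action of $\T H\times\bar{\q}\times\q$ on $\T(H/K)\times\bar{\q}$ yields a coisotropic $D_1\subset\T(H/K)\times\bar{\q}$ that is $C$-stable, with $\P=D_1/D_1^\perp$ and $L$ the reduction of the corresponding Dirac structure. Reading the Courant axioms for $L\subset\P$ off this double reduction at $\bar e$ produces a subspace $\c\subset\d$ with exactly the claimed properties: $\beta$-coisotropy ($\beta$ vanishing on $\ann(\c)$) from compatibility with the metric, the subalgebra property from involutivity of $L$ under the Courant bracket, and $\c\cap\h=\k$ as the Dirac analogue of $\l\cap\h=\ker\mathrm{d}\pi=\k$, read off the anchor of $\P$ into $\mathrm{T}(H/K)$; the $K$-invariance of $\c$ descends from that of $\l$.

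\emph{From the algebraic data to a homogeneous space.} Conversely, given a $K$-invariant $\beta$-coisotropic subalgebra $\c\subset\d$ with $\c\cap\h=\k$, I would run the construction backwards: use $\c$ to write down the $C$-stable coisotropic $D_1\subset\T(H/K)\times\bar{\q}$, verify the ``niceness'' of this forward-image-type construction --- that the relevant intersection vanishes, so that $D_1$ and its reduction are smooth sub-bundles, which is precisely where $\c\cap\h=\k$ enters, exactly as transversality did in the Poisson proof --- and set $\P=D_1/D_1^\perp$ and $L$ the reduction of the Dirac structure cut out by $\c$. Then one checks that $\P$ is a Courant algebroid and $L$ a Dirac structure (using that $\c$ is a $\beta$-coisotropic subalgebra), that the $C$-action descends to a genuine $CA$-groupoid module structure of $(\A,E)$ on $(\P,L)$, and that $\A/E\act\P/L$ is transitive --- which after trivialization amounts to $\h$ surjecting onto $\h/\k$. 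A final verification that the two assignments are mutually inverse completes the classification.

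\emph{The main obstacle.} I expect the crux to be controlling the Courant-algebroid structure through the \emph{double} reduction $\A=C/C^\perp$, $\P=D_1/D_1^\perp$: establishing $D_1^\perp\subset D_1$, that the quotient is a smooth metrized bundle carrying a well-defined Courant bracket, and that the bracket axioms for $L$ hold precisely when $\c$ is a subalgebra --- together with the smoothness (``niceness'') of the construction in the converse direction, the one place where the full force of $\c\cap\h=\k$ is needed and which admits no purely fibrewise shortcut.
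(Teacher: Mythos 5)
Your overall architecture is right, and your converse direction (build $D_1\subset\T(H/K)\times\bar{\q}$ from $\c$, check coisotropy, involutivity, the $C$-action, and transitivity) is essentially the paper's construction, except that the paper first builds $D\subset\T H\times\bar{\q}$ over $H$ and then obtains $D_1=D//K$ by reduction in stages, which makes the smoothness and rank counts cleaner than writing $D_1$ down directly.

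The forward direction, however, has a genuine gap. You propose to extract the subalgebra $\c$ by ``reading the Courant axioms off the double reduction at $\bar e$,'' i.e.\ from a presentation $\P=D_1/D_1^\perp$. But the existence of such a presentation for an arbitrary Dirac homogeneous space is precisely what has to be proved (it is the content of Proposition \ref{tcs} in the text, which is only available \emph{after} $\c$ has been constructed by other means), so as stated your argument is circular. More concretely, the fibre $\l=L_{eK}$ carries no Lie algebra structure when $K\neq\{e\}$: the $H$-invariant sections of $L=H\times_K\l$ correspond to $K$-fixed vectors only, so involutivity of $L$ does not induce a bracket on $\l$, and ``the subalgebra property from involutivity of $L$'' cannot be read off at $\bar e$. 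The missing device is the pullback $\pi^!$ along $\pi:H\to H/K$: one passes to $(\pi^!\P,\pi^!L)\cong(H\times\tilde{\p},H\times\tilde{\l})$ over $H$, where $\tilde{\l}\supset\k$ \emph{is} a Lie algebra (via the $H$-invariant-sections argument, Proposition \ref{trivLiealgstr}), and then $f_{\tilde{\p}}=\a_e\oplus\u_e:\tilde{\l}\to\d$ is an injective Lie algebra morphism with $f_{\tilde{\p}}\circ f_{\tilde{\p}}^*=\beta^\#$ (Propositions \ref{uplusa} and \ref{fpbeta}); its image is the desired $\c$, and $\c\cap\h=\k$ falls out because the anchor of $\pi^!L$ is projection to $\mathrm{T}H$ with $\ker(\mathrm{T}\pi)_e=\k$. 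Your fibrewise analysis over $H/K$ correctly produces the $\lambda$-coisotropic $\l\subset\g$ of the almost-Dirac classification, but that only recovers $\pg(\c)$, not $\c$ itself; without the pullback step you cannot see the $\h$-component of $\c$ or its Lie bracket.
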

We give a number of examples for various Dirac Lie groups, in addition to giving an overview of how to recover the Lagrangian $\l$ in the Poisson case using Dirac geometry. Lastly, we extend a previous result of Lu \cite{luh} for producing Poisson homogeneous spaces from certain Poisson Lie subgroups to the Dirac case, and discuss some ideas regarding general Dirac actions.

\chapter{Preliminaries}
\section{Poisson Geometry}
\subsection{Poisson Manifolds, Poisson Bivectors, Morphisms}
We recall the definition of the \emph{graph} of a smooth map $\Phi:\,M\to M'$:
    \begin{equation} \mathrm{Gr}(\Phi) = \{ (\Phi(m),m)\,|\,m\in M\} \subset M'\times M. \end{equation}

\begin{dfn} Given a smooth manifold $M$, a \emph{Poisson structure} on $M$ is a Lie bracket $\{\cdot,\cdot\}$ on $C^\infty(M)$ such that $\{f,\cdot\}$ is a derivation.  This bracket is referred to as a \emph{Poisson bracket}, and a manifold $M$ together with such a bracket is called a \emph{Poisson manifold}.
\end{dfn}
\begin{example} Every manifold $M$ carries the trivial Poisson structure, $\{f,g\}=0$ for all $f,g$.
\end{example}
A \emph{linear Poisson structure} on a finite dimensional $\R$-vector space $V$ is a Poisson structure such that the Poisson bracket of any two linear functions yields another linear function.  In this case, the map $(f,g)\mapsto\{f,g\}$, restricted to linear functions gives a Lie bracket structure on $V^*$.  Conversely, Lie algebras give rise to linear Poisson structures on their duals.
\begin{example}\label{PoisLiestru} Given a Lie algebra $\g$, a linear Poisson structure is obtained on $\g^*$ in the following way:  given two linear functions $f_1,f_2$, the Poisson bracket is defined to be
\begin{equation} \{f_1,f_2\}(\alpha) = \<\alpha,[\mathrm{d}f_1(\alpha),\mathrm{d}f_2(\alpha)]_\g\>\end{equation}
for each $\alpha\in\g^*$.  This is referred to as the \emph{Lie-Poisson structure}.
\end{example}
With Poisson manifolds come the notion of morphisms.
\begin{dfn} If $(M_1, \{,\}_1),\,(M_2,\{,\}_2)$ are two Poisson manifolds, then a smooth map $\varphi:\,M_1\to M_2$ is a \emph{Poisson morphism} if the pullback $\varphi^*:\,C^\infty(M_2)\to C^\infty(M_1)$ is a Lie algebra morphism for the Poisson structures.
\end{dfn}
\begin{example} Give a Lie algebra morphism $\varphi:\,\g\to\h$, the dual map $\varphi^*:\,\h^*\to\g^*$ is a Poisson morphism for the natural linear Poisson structures.
\end{example}
The derivation property of the bracket means $\{f,g\}$ depends only on the differential, and hence defines a bivector field $\Pi\in \Gamma(\wedge^2\mathrm{T}M)$ such that:
\begin{equation} \{f,g\} = \<\Pi,\dr f\wedge \dr g\>.\end{equation}
\begin{prop} A given bivector field $\Pi$ on $M$ defines a Poisson structure if and only if $[\Pi,\Pi]=0$, where $[\cdot,\cdot]$ is the \emph{Schouten bracket}, the natural generalisation of the Lie bracket of vector fields to higher rank tensors (\cite{Schouten},\cite{nij}).
\begin{proof}
The proof follows from writing formulae in local coordinates. See section 1.8.1 of \cite{dufng}.
\end{proof}
\end{prop}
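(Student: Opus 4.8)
The plan is to isolate the Jacobi identity as the only nontrivial condition, and then to match it term by term with the Schouten square $[\Pi,\Pi]\in\Gamma(\wedge^3\mathrm{T}M)$. First note that for \emph{any} bivector field $\Pi$ the bracket $\{f,g\}:=\langle\Pi,\dr f\wedge\dr g\rangle$ is automatically $\R$-bilinear and antisymmetric (inherited from $\Pi\in\Gamma(\wedge^2\mathrm{T}M)$), and it is a derivation in each slot: since $\dr(gh)=g\,\dr h+h\,\dr g$ and $\langle\Pi,\dr f\wedge\cdot\rangle$ is a pointwise-linear operation, one obtains $\{f,gh\}=g\{f,h\}+h\{f,g\}$ with no second-order corrections. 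Hence $\Pi$ defines a Poisson structure precisely when the Jacobiator
\[ \mathrm{Jac}(f,g,h):=\{f,\{g,h\}\}+\{g,\{h,f\}\}+\{h,\{f,g\}\} \]
vanishes identically on $C^\infty(M)$.

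The core step is the pointwise identity
\[ \langle [\Pi,\Pi],\, \dr f\wedge\dr g\wedge\dr h\rangle = 2\,\mathrm{Jac}(f,g,h), \qquad f,g,h\in C^\infty(M), \]
valid with the indicated constant for a suitable normalization of the Schouten bracket (the precise constant is convention-dependent). I would check this in a chart $(x^1,\dots,x^n)$: writing $\Pi=\tfrac12\sum_{i,j}\Pi^{ij}\,\partial_i\wedge\partial_j$ with $\Pi^{ij}=-\Pi^{ji}$, so that $\{f,g\}=\sum_{i,j}\Pi^{ij}\,(\partial_i f)(\partial_j g)$, the Schouten square expands (summed over all indices, up to a combinatorial constant) as $[\Pi,\Pi]=\sum \Pi^{ij}\,(\partial_i\Pi^{kl})\,\partial_j\wedge\partial_k\wedge\partial_l$, while the double bracket on coordinates gives $\mathrm{Jac}(x^a,x^b,x^c)=\sum_i\big(\Pi^{ia}\,\partial_i\Pi^{bc}+\Pi^{ib}\,\partial_i\Pi^{ca}+\Pi^{ic}\,\partial_i\Pi^{ab}\big)$; these agree after contracting $[\Pi,\Pi]$ against $\dr x^a\wedge\dr x^b\wedge\dr x^c$. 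This is the computation carried out in section 1.8.1 of \cite{dufng}, and I expect the only friction to be bookkeeping: keeping the antisymmetrizations and the factors of $2$ and $\tfrac12$ consistent so that one and the same constant appears on both sides.

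Given the identity, the proposition follows at once. Since $[\Pi,\Pi]$ is a skew $3$-vector field it vanishes on all of $M$ if and only if $\langle[\Pi,\Pi],\alpha_1\wedge\alpha_2\wedge\alpha_3\rangle\equiv 0$ for all $1$-forms $\alpha_i$, and because exact $1$-forms $\dr f$ span $\mathrm{T}^*_xM$ at every point $x$ it suffices to test on triples $(\dr f,\dr g,\dr h)$. By the identity this is equivalent to $\mathrm{Jac}\equiv 0$, i.e.\ to the Jacobi identity for $\{\cdot,\cdot\}$, i.e.\ to $\Pi$ being Poisson. (The same computation can be packaged invariantly via the expression of the Schouten bracket through Lie derivatives and interior products, but the coordinate version is the quickest route and is the one cited in the statement.) There is thus no conceptual obstacle here, only the combinatorial care needed to pin down the normalization constant.
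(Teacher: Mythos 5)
Your argument is correct and is precisely the local-coordinate computation that the paper delegates to Section 1.8.1 of Dufour--Zung: you reduce the Poisson condition to the Jacobiator and identify it with the contraction of $[\Pi,\Pi]$ against exact $1$-forms, which span each cotangent space. The only care needed is the normalization constant in $\langle[\Pi,\Pi],\dr f\wedge\dr g\wedge\dr h\rangle=\pm 2\,\mathrm{Jac}(f,g,h)$, which you correctly flag as convention-dependent and which does not affect the equivalence.
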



\subsection{Poisson Lie Groups, Homogeneous Spaces}
The notion of Poisson manifolds can be extended to Lie groups.
\begin{dfn} A Lie group $H$ with a Poisson structure $\Pi$ is called a \emph{Poisson Lie group} if the multiplication map
\begin{equation} (H,\Pi)\times(H,\Pi)\to (H,\Pi);\quad (h_1,h_2)\mapsto h_1h_2 \end{equation}
is a Poisson morphism.  Equivalently, $\Pi$ is a \emph{Poisson Lie structure} on $H$ if and only if it is a multiplicative Poisson tensor. i.e.: for all $h_1,h_2\in H$
\begin{equation} \Pi(h_1h_2) = (L_{h_1})_*\Pi(h_2) + (R_{h_2})_*\Pi(h_1). \end{equation}
\end{dfn}
\begin{dfn} (\cite{STS}) Let $(H,\Pi)$ be a Poisson Lie group, and $(M,\Pi_M)$ a Poisson manifold. A \emph{Poisson action} of $H$ on $M$ is a group action such that the action map
\begin{equation} (H,\Pi_H)\times(M,\Pi_M)\longrightarrow (M,\Pi_M) \end{equation}
is a Poisson morphism.  If the action is transitive, then $M$ is called a \emph{Poisson homogeneous space}.
\end{dfn}
\begin{example} The left and right multiplication of any Poisson Lie group are Poisson actions.
\end{example}
As a result of the definition, if $(H,\Pi)$ is a Poisson Lie group, then $\Pi(e) = 0$.  Hence, we obtain a linear Poisson structure on $\h$, and so a Lie bracket $[\cdot,\cdot]^*$ on $\h^*$.
\begin{thm} (\cite{Drinfeldq}) If $(H,\Pi)$ is a Poisson Lie group, then there is a natural Lie algebra structure on $\d=\h\times\h^*$ given by
\begin{equation} [(x,\alpha),(y,\beta)] = ([x,y]+\ad^*_\alpha y-\ad^*_\beta x, [\alpha,\beta]^* + \ad^*_x\beta - \ad^*_y\alpha). \end{equation}
This Lie algebra is often denoted $\d = \h\bowtie\h^*$, and has a non-degenerate, ad-invariant scalar product
\begin{equation} \<(x,\alpha),(y,\beta)\> = \<x,\beta\> + \<y,\alpha\> \end{equation}
for which both $\h$ and $\h^*$ are Lagrangian subalgebras of $\d$.
\end{thm}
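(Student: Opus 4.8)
The plan is to recognize this statement as the construction of the Drinfeld double of the Lie bialgebra attached to $(H,\Pi)$, and to verify directly the three ingredients: that the displayed formula is a Lie bracket on $\d=\h\oplus\h^*$, that the pairing is $\ad$-invariant, and that $\h$ and $\h^*$ are Lagrangian.

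First I would record what the Poisson Lie structure supplies infinitesimally. Since $\Pi(e)=0$, the linearization of $\Pi$ at $e$ is a well-defined map $\delta\colon\h\to\wedge^2\h$, dual to the bracket $[\cdot,\cdot]^*$ on $\h^*$. Two facts are needed: (i) multiplicativity of $\Pi$ forces $\delta$ to be a $1$-cocycle for the adjoint action of $\h$ on $\wedge^2\h$, i.e.\ $\delta([x,y]) = \ad_x\,\delta(y) - \ad_y\,\delta(x)$; and (ii) the Poisson condition $[\Pi,\Pi]=0$ forces $\delta$ to satisfy co-Jacobi, equivalently $[\cdot,\cdot]^*$ is a genuine Lie bracket on $\h^*$. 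Dualizing (i) yields the compatibility identity between $[\cdot,\cdot]$ and $[\cdot,\cdot]^*$ expressed via the coadjoint actions $\ad^*$; this is the only nontrivial input, and it is exactly the Lie bialgebra axiom.

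Granting antisymmetry of the displayed bracket (immediate from the formula together with antisymmetry of $[\cdot,\cdot]$ and $[\cdot,\cdot]^*$), I would verify the Jacobi identity by plugging in three elements $(x,\alpha),(y,\beta),(z,\gamma)$ and sorting the expansion by ``type'' according to how many $\h^*$-entries appear. The terms lying purely in $\h$ (resp.\ purely in $\h^*$) vanish by Jacobi in $\h$ (resp.\ in $\h^*$, using fact (ii)). The mixed terms split into two packets: those linear in the $\h^*$-slot collapse, after repeatedly using $\<\ad^*_x\beta,y\> = -\<\beta,[x,y]\>$ and the analogous identity for $\ad^*_\alpha$, to precisely the cocycle condition of fact (i); symmetrically for the terms linear in the $\h$-slot. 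I expect this bookkeeping — extracting the cocycle identity from the dozen-or-so mixed terms and checking the remaining cross terms cancel in pairs — to be the main obstacle; it is routine but unforgiving, and it is cleaner to organize it through the language of a matched pair of Lie algebras (the mutual coadjoint actions of $\h$ and $\h^*$), in which Jacobi for $\d$ is equivalent to the matched-pair compatibility, which is again fact (i). Only this one direction is needed here.

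Finally, for the metric: $\h$ and $\h^*$ are isotropic because $\<x,y\>=\<\alpha,\beta\>=0$ under the given pairing, which is plainly non-degenerate, so each is a maximal isotropic subspace of half the dimension of $\d$, hence Lagrangian. Ad-invariance $\<[u,v],w\> + \<v,[u,w]\> = 0$ for $u,v,w\in\d$ I would check by bilinearity on the cases sorted by the number of $\h^*$-entries: the all-$\h$ and all-$\h^*$ cases are vacuous since the pairing annihilates them, and in the mixed cases, expanding the bracket and using $\<\ad^*_x\beta,y\> = -\<\beta,[x,y]\>$ together with its dual, the contributions involving $[x,y]$ and $[\alpha,\beta]^*$ cancel. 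This establishes that $(\d,\h,\h^*)$ is a Manin triple, completing the proof.
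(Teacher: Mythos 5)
The paper states this result purely as background, citing Drinfeld, and supplies no proof of its own, so there is nothing internal to compare your argument against. Your outline is the standard and correct one: $\Pi(e)=0$ gives the linearization $\delta\colon\h\to\wedge^2\h$, multiplicativity gives the cocycle condition, $[\Pi,\Pi]=0$ gives co-Jacobi, and both the Jacobi identity for the double bracket and the $\ad$-invariance of the pairing reduce, case by case, to these facts together with the duality $\<\ad^*_x\beta,y\>=-\<\beta,[x,y]\>$. The one point worth making explicit in a full write-up is that the two mixed Jacobi cases (two entries from $\h$ and one from $\h^*$, and the reverse) reduce respectively to the cocycle condition and to its transpose, and these coincide under dualization, so — as you indicate — a single compatibility identity really does suffice.
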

\begin{dfn}\label{lietrip} Let $H$ be a Lie group, with Lie$(H)=\h$.  An \emph{H-equivariant Lie algebra triple} $(\d,\g,\h)$ is a Lie algebra $\d$, with two transverse subalgebras $\g,\h$, and an action of $H$ on $\d$ by Lie automorphisms, integrating the adjoint action of $\h\subseteq\d$, restricting to the adjoint action of $H$ on $\h$.  If $\d$ is a quadratic Lie algebra, such that $\g,\h$ are Lagrangian, then $(\d,\g,\h)$ with the $H$ action is called an \emph{H-equivariant Manin triple}.
\end{dfn}

Drinfeld classified Poisson Lie group structures on a given Lie group $H$ via this Lie algebraic data.
\begin{thm} (\cite{Drinfeldh}) If $H$ is a Lie group, then Poisson structures on $H$ making it a Poisson Lie Group are classified by $H$-equivariant Manin triples $(\d,\g,\h)$.
\end{thm}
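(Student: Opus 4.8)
\emph{Proof proposal.} The plan is to set up a bijection between Poisson Lie structures on $H$ and $H$-equivariant Manin triples $(\d,\g,\h)$ with $\h=\mathrm{Lie}(H)$, organized around a single auxiliary object: a group $1$-cocycle $\lambda\colon H\to\wedge^2\h$ whose derivative at $e$ is the cobracket of a Lie bialgebra structure on $\h$. Starting from a Poisson Lie group $(H,\Pi)$, multiplicativity forces $\Pi(e)=0$, so right translation produces a smooth map $\lambda(h):=(R_{h^{-1}})_*\Pi(h)\in\wedge^2\h$, and the multiplicativity identity $\Pi(h_1h_2)=(L_{h_1})_*\Pi(h_2)+(R_{h_2})_*\Pi(h_1)$ is precisely the cocycle relation $\lambda(h_1h_2)=\lambda(h_1)+\Ad_{h_1}\lambda(h_2)$. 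Differentiating at $e$ gives $\delta:=\mathrm{d}_e\lambda\colon\h\to\wedge^2\h$ with $\delta([X,Y])=\ad_X\delta(Y)-\ad_Y\delta(X)$, and the Jacobi identity $[\Pi,\Pi]=0$ is equivalent to the transpose $\delta^*\colon\wedge^2\h^*\to\h^*$ being a Lie bracket; together these say $(\h,\delta)$ is a Lie bialgebra. Forming the Drinfeld double $\d=\h\bowtie\h^*$ with the bracket and invariant pairing of the preceding theorem, $\h$ and $\g:=\h^*$ become transverse Lagrangian subalgebras, so $(\d,\g,\h)$ is a Manin triple.

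To upgrade this to an \emph{$H$-equivariant} Manin triple I would build the $H$-action on $\d$ out of $\lambda$: define $A\colon H\to\GL(\d)$ by $A_h|_\h=\Ad_h$ and, for $\xi\in\g\cong\h^*$,
\[
A_h(\xi)=\iota_{\Ad^*_h\xi}\lambda(h)+\Ad^*_h\xi\in\h\oplus\g .
\]
Then I would verify, in order: (i) the cocycle identity for $\lambda$ gives $A_{h_1h_2}=A_{h_1}A_{h_2}$; (ii) antisymmetry of $\lambda(h)$ together with $\h,\g$ being Lagrangian gives $A_h^*\langle\cdot,\cdot\rangle=\langle\cdot,\cdot\rangle$; (iii) $A_h$ is a Lie algebra automorphism of $\d$; (iv) $\mathrm{d}_eA=\ad\colon\h\to\mathrm{Der}(\d)$, using $\mathrm{d}_e\lambda=\delta$; (v) $A_h|_\h=\Ad_h$ by construction. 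This produces the $H$-equivariant Manin triple attached to $(H,\Pi)$.

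Conversely, from an $H$-equivariant Manin triple $(\d,\g,\h)$ with its action $A$: the pairing gives $\g\cong\h^*$, transporting the bracket of $\g$ and dualizing yields $\delta\colon\h\to\wedge^2\h$, and the Manin triple axioms make $(\h,\delta)$ a Lie bialgebra. To recover the \emph{global} cocycle I would use $A$: since $A_h$ preserves $\h$, the composite $B_h:=\pg\circ A_h|_\g\in\GL(\g)$ is invertible, and $\lambda(h):=\ph\circ A_h\circ B_h^{-1}|_\g$, viewed via $\g\cong\h^*$ as an element of $\h\otimes\h$, is in fact in $\wedge^2\h$ because $A_h$ preserves the pairing and $\h$ is isotropic. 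The homomorphism property of $A$ then translates into the cocycle identity for $\lambda$, and $\mathrm{d}_eA=\ad$ gives $\mathrm{d}_e\lambda=\delta$. Setting $\Pi(h):=(R_h)_*\lambda(h)$ yields a multiplicative bivector with $[\Pi,\Pi]=0$, the latter because $\d$ satisfies Jacobi (equivalently $(\h,\delta)$ is a Lie bialgebra). That the two assignments are mutually inverse is then formal, since each is determined by the pair $(\lambda,\delta)$ and the double $\d$.

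The hard part will be step (iii) — checking that the $A_h$ are Lie algebra automorphisms of $\d$ — and, in the converse direction, its counterpart: showing that an $H$-equivariant structure on $\d$ furnishes a globally defined cocycle $\lambda$ integrating the infinitesimal cobracket $\delta$. This integration statement is exactly what makes the theorem valid for arbitrary $H$, neither connected nor simply connected, where the bare Lie bialgebra $(\h,\delta)$ would not suffice; for simply connected $H$ it is automatic, and in general it is the role of the $H$-action to supply the missing integration.
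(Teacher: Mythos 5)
The paper does not prove this theorem --- it is quoted from Drinfeld with a citation, so there is no in-paper argument to measure you against. Your route is the classical one (multiplicative bivector $\to$ right-translated cocycle $\lambda:H\to\wedge^2\h$ $\to$ Lie bialgebra $(\h,\delta)$ $\to$ double $\d=\h\bowtie\h^*$, with the $H$-action on $\d$ assembled from $\lambda$), and the skeleton is sound: the cocycle identity computation is right, the invertibility of $B_h=\pg\circ A_h|_\g$ in the converse direction is indeed automatic because $A_h$ preserves $\h$ (so $\ker B_h=\g\cap A_h^{-1}(\h)=\g\cap\h=0$), and skewness of $\lambda(h)$ follows from $A_h(\g)$ being Lagrangian, exactly as you say. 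You also correctly identify that the $H$-action is the extra datum that replaces connectedness/simple connectedness.

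There is one step where the justification you give would fail in the stated generality. In the converse direction you claim $[\Pi,\Pi]=0$ ``because $\d$ satisfies Jacobi (equivalently $(\h,\delta)$ is a Lie bialgebra).'' For a multiplicative bivector $\Pi$, the Schouten bracket $[\Pi,\Pi]$ is again multiplicative, so $J(h):=(R_{h^{-1}})_*[\Pi,\Pi](h)$ is a cocycle $H\to\wedge^3\h$; the Lie bialgebra condition only gives $\mathrm{d}_eJ=0$, hence $J\equiv 0$ on the identity component. A $\wedge^3\h$-valued cocycle can vanish on $H_0$ and be nonzero on other components (already for $H=\mathbb{Z}$ with trivial action), so for disconnected $H$ --- precisely the case the theorem is meant to cover --- this inference is incomplete. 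The repair uses the full strength of the hypothesis that each $A_h$ is a Lie algebra automorphism of $\d$: the pointwise obstruction $J(h)$ is the failure of the graph of $\lambda(h)^\#$ inside $\d$ to be closed under the bracket, and that graph is $A_h(\g)$, which is a subalgebra because $\g$ is and $A_h\in\mathrm{Aut}(\d)$. (This is also the honest content of your deferred step (iii) in the forward direction: preservation of the $\d$-bracket on the $\g$-part by $A_h$ encodes the Jacobi identity of $\Pi$ at $h$, not just multiplicativity.) With that substitution, and the verifications (i)--(v) actually carried out, the argument goes through.
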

Since both $\h$ and $\g$ are Lagrangian, we can associate $\g\cong\h^*$ via
    \begin{equation} \h^* = (\d/\g)^* \cong \g^\perp = \g. \end{equation}
So for a given $\h$, both the vector space structure of $(\d,\g,\h)$ and the Lie algebra structure of $\h$ remain the same regardless of the Poisson structure $\Pi$; it is the Lie algebra structure of $\g$ which depends on $\Pi$.  Drinfeld classified Poisson homogeneous spaces for $(H,\Pi)$ using data related to the Manin triples.
\begin{thm} (\cite{Drinfeldp}) Let $(H, \Pi_H)$ be a Poisson Lie group with associated $H$-equivariant Manin triple $(\d,\g,\h)$, and let $K$ be a closed Lie subgroup.  There is a 1-1 correspondence between Poisson structures on $H/K$ which make the action map a Poisson morphism, and Lagrangian $K$-stable subalgebras $\l\subset\d$ with $\l\cap\h=\k:=\mathrm{Lie}(K)$.
\end{thm}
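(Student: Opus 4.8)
The plan is to run the argument sketched in Section~1.3 entirely within Dirac geometry, exploiting two dictionaries: (i) a Poisson structure $\Pi_M$ on $M$ is the same thing as a Dirac structure $\mathrm{Gr}(\Pi_M)\subset\T M$ transverse to $\mathrm{T}M$, and (ii) a smooth map $\varphi\colon M_1\to M_2$ is a Poisson morphism exactly when the relation it induces on generalized tangent bundles is a Dirac morphism $(\T M_1,\mathrm{Gr}(\Pi_{M_1}))\da(\T M_2,\mathrm{Gr}(\Pi_{M_2}))$. Thus it suffices to set up a bijection between Dirac structures $L'\subset\T(H/K)$ transverse to $\mathrm{T}(H/K)$ for which the tangent-lifted action is a Dirac morphism, and Lagrangian $K$-stable subalgebras $\l\subset\d$ with $\l\cap\h=\k$.

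For the forward direction, given such a $\Pi_{H/K}$ the Poisson action lifts to the Dirac morphism
\begin{equation}
 (\T H,\mathrm{Gr}(\Pi_H))\times(\T(H/K),\mathrm{Gr}(\Pi_{H/K}))\da(\T(H/K),\mathrm{Gr}(\Pi_{H/K})),
\end{equation}
exhibiting $(\T(H/K),\mathrm{Gr}(\Pi_{H/K}))$ as a module over the $CA$-groupoid $(\T H,\mathrm{Gr}(\Pi_H))$. Taking the backwards image along the Courant morphism $R_\pi\colon\T H\da\T(H/K)$ induced by $\pi\colon H\to H/K$, and using the canonical trivialization $\T H\cong H\times\d$ compatible with the Manin triple $(\d,\g,\h)$ and carrying $\mathrm{Gr}(\Pi_H)$ to $H\times\g$, one gets a Dirac morphism $(H\times\d,H\times\g)\times(H\times\d,L)\da(H\times\d,L)$. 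Hence $L$ is an $(H\times\g)$-module, so $L\cong H\times\l$ for a subspace $\l\subset\d$; since the backwards image of a Dirac structure is a Dirac structure, $\l$ is a Lagrangian subalgebra, and $K$-stability follows from $K$-equivariance of the construction. Unwinding the definition of the backwards image at the base coset gives
\begin{equation}
 \l=\{\,(v,\pi^*\alpha)\in\h\oplus\g\;|\;v\in\h,\ \alpha\in(\h/\k)^*,\ \mathrm{T}\pi(v)=\iota_\alpha\Pi_{H/K}(\bar{e})\,\},
\end{equation}
whence $\l\cap\h=\ker\mathrm{d}\pi=\k$.

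For the converse, start from a Lagrangian $K$-stable $\l\subset\d$ with $\l\cap\h=\k$, put $L=H\times\l\subset H\times\d\cong\T H$, and form the forward image $L'=R_\pi\circ L\subset\T(H/K)$. One checks the niceness condition $\ker R_\pi\cap L=0$ — this is where $\l\cap\h=\k$ enters — so that $L'$ is a genuine smooth Lagrangian subbundle, and then that $L'\cap\mathrm{T}(H/K)=0$; thus $L'=\mathrm{Gr}(\Pi_{H/K})$ for a unique bivector, which is Poisson because $L'$ is involutive. Pushing the module morphism $(H\times\d,H\times\g)\times(H\times\d,L)\da(H\times\d,L)$ forward along $R_\pi$ produces the Dirac morphism $(\T H,\mathrm{Gr}(\Pi_H))\times(\T(H/K),L')\da(\T(H/K),L')$, i.e. the $H$-action is Poisson. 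Verifying that the two assignments are mutually inverse then yields the claimed 1-1 correspondence.

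The step I expect to be the main obstacle is the smoothness/transversality bookkeeping in the converse: showing that $R_\pi\circ(H\times\l)$ is an honest smooth subbundle transverse to $\mathrm{T}(H/K)$ rather than a merely set-theoretic relation, and isolating exactly where $\l\cap\h=\k$ forces this. A secondary subtlety is that the trivialization $\T H\cong H\times\d$ must be the one intertwining the Courant bracket with the Manin-triple structure (so that $\mathrm{Gr}(\Pi_H)$ corresponds to $H\times\g$ as a Lie algebroid); it mixes left and right translations and is not the naive left trivialization. The remaining ingredients — functoriality of backwards and forwards images, preservation of the Lagrangian and involutivity conditions, and $K$-equivariance — are essentially formal.
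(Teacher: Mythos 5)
Your proposal is essentially the Dirac-geometric argument the paper itself sketches in Section~1.3 (backwards image along $R_\pi$ to extract $\l\cong L_e$ as a Lagrangian subalgebra with $\l\cap\h=\ker\mathrm{d}\pi=\k$, then the forward image of $H\times\l$ with the niceness condition $\ker R_\pi\cap L=0$ to reconstruct $\Pi_{H/K}$), and the paper otherwise treats this theorem as a cited result of Drinfeld, recovered later as the Lagrangian special case of the main classification. So the approach matches; the technical points you flag (smoothness of the forward image, the non-naive trivialization $\T H\cong H\times\d$) are exactly the ones the paper also defers to the references and to its general machinery.
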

\section{Groupoids and Groupoid Spaces}\label{gmodules}
\begin{dfn} A \emph{Lie groupoid} $H\gp H^{(0)}$ is a manifold $H$, and a closed submanifold $H^{(0)}\subset H$, together with:
  \begin{enumerate}
    \item Two submersions $\s,\t:\,H\to H^{(0)}$ called the \emph{source} and \emph{target} maps.
    \item A smooth product map $H^{(2)} \to H$, $(x,y)\mapsto x\circ y$ where
     \begin{equation} H^{(2)}=H {}_\s\times_\t H = \{ (x,y)\in H\times H\,|\, \s(x)=\t(y)\}. \end{equation}
     The product map satisfies $\s(x\circ y) = \s(y)$, $\t(x\circ y) = \t(y)$, and is associative for all compatible triples.
    \item An embedding $H^{(0)}\to H,\,\,c\mapsto 1_c$, with the image called the \emph{units} or \emph{identities}, which satisfy
     \begin{equation} 1_{\t(x)} \circ x = x, \quad x\circ 1_{\s(x)} = x. \end{equation}
     For brevity, we will usually identify $1_c$ with $c$.
    \item A smooth map, called the \emph{inversion map} $\mathsf{i}:\, H \to H$, such that for all $x\in H$, $\s(\mathsf{i}(x)) = \t(x)$, $\t(\mathsf{i}(x)) = \s(x)$, and
    \begin{equation} \mathsf{i}(x) \circ x = 1_{\s(x)},\quad x\circ \mathsf{i}(x) = 1_{\t(x)}. \end{equation}
     We will denote $\mathsf{i}(x)$ by $x^{-1}$.
    \end{enumerate}
\end{dfn}

\begin{example}
For any manifold $M$, we can form the \emph{pair groupoid} $M\times M\gp M$, with $\s(y,z) = z$, $\t(y,z) = y$, and the multiplication given by
\begin{equation} (y,z) \circ (z,x) = (y,x). \end{equation}
The units are embedded as $M_\triangle$, and the inversion map is given by $(y,z)^{-1} = (z,y)$.
\end{example}
\begin{example}
If $G$ is a Lie group, and $M$ is a manifold with a $G$-action, then we can define the \emph{action groupoid} $(G\times M) \gp M$ by the following:
\begin{equation} \s(g,m) = m, \quad \t(g,m) = g.m, \quad (g_1, (g_2).m) \circ (g_2, m) = (g_1g_2, m). \end{equation}
The identity at $m$ is $(e,m)$, and the inversion map is given by $(g,m)^{-1} = (g^{-1}, g.m)$.
\end{example}
\begin{example}
A group $G$ is a groupoid with a single object, $G\gp \mathrm{pt}$.
\end{example}
There is a notion of a groupoid action on a set which while analogous to a group action, requires slightly more information.

\begin{dfn}Let $H\gp H^{(0)}$ be a Lie groupoid. A \emph{groupoid space} for $H$ (also called an $H$-space, or an $H$-module) is a manifold $M$ together with a smooth structure map $\u:M\to H^{(0)}$, and a smooth \emph{action map}
\begin{equation}
H{}_\s\times_\u M\to M,\,\qquad (h,m)\mapsto h\circ m
\end{equation}
where $H{}_\s\times_\u M = \{(h,m)\in H\times X\,|\, \s(h)=\u(m)\}$, such that the following compatibility conditions are satisfied for all composable $h,k\in H$, $(h,m)\in H{}_\s\times_\u M$:
\begin{enumerate}
\item $\u(h\circ m) = \t(h),$
\item $h\circ (k\circ m) = (h\circ k)\circ m,$
\item $(\u(m))\circ m = m.$
\end{enumerate}
The structure map $\u$ is sometimes referred to as a ``moment map".
\end{dfn}
\begin{example} $H$ is trivially an $H$-space via the groupoid multiplication, with $\u=\t$.
\end{example}
\begin{example} Any Lie groupoid $H\gp H^{(0)}$ acts on its object space $H^{(0)}$ with $\u = \mathsf{i}$, the unit embedding.
\end{example}
\section{$VB$-Groupoids}
\subsection{$VB$-Groupoids, Dual $VB$-Groupoids}
\begin{dfn}\label{VBgpdf} Let $H\gp H^{(0)}$ be a Lie groupoid. A \emph{$VB$-groupoid} over $H$ is a vector bundle $V\to H$ with a Lie groupoid structure $V\gp V^{(0)}$ such that $\mathrm{Gr}(Mult_V)\subseteq V\times V\times V$ is a vector sub-bundle along the graph of the groupoid multiplication of $H$.  A $VB$-groupoid is often denoted by the diagram:
\begin{center}
\begin{tikzpicture}

  \node (G)   {$V$};
  \node (H) [right of=G, node distance=1.5cm] {$V^{(0)}$};
  \node (I) [below of=G, node distance=1.3cm] {$H$};
  \node (J) [right of=I, node distance=1.5cm] {$H^{(0)}$};

  \path[->]
([yshift= 2pt]G.east) edge node[above] {} ([yshift= 2pt]H.west)
([yshift= -2pt]G.east) edge node[below] {} ([yshift= -2pt]H.west);
\path[->]
([yshift= 2pt]I.east) edge node[above] {} ([yshift= 2pt]J.west)
([yshift= -2pt]I.east) edge node[below] {} ([yshift= -2pt]J.west);
  \draw[->] (G) to node {} (I);
  \draw[->] (H) to node [swap] {} (J);
  \end{tikzpicture}
\end{center}
\end{dfn}
\begin{example} Let $H\to H^{(0)}$ be any Lie groupoid.  Then

\begin{center}
\begin{tikzpicture}

  \node (G)   {T$H$};
  \node (H) [right of=G, node distance=1.5cm] {T$H^{(0)}$};
  \node (I) [below of=G, node distance=1.3cm] {$H$};
  \node (J) [right of=I, node distance=1.5cm] {$H^{(0)}$};

  \path[->]
([yshift= 2pt]G.east) edge node[above] {} ([yshift= 2pt]H.west)
([yshift= -2pt]G.east) edge node[below] {} ([yshift= -2pt]H.west);
\path[->]
([yshift= 2pt]I.east) edge node[above] {} ([yshift= 2pt]J.west)
([yshift= -2pt]I.east) edge node[below] {} ([yshift= -2pt]J.west);
  \draw[->] (G) to node {} (I);
  \draw[->] (H) to node [swap] {} (J);
  \end{tikzpicture}
\end{center}

with $\s_{\mathrm{T}H} = \mathrm{T}\s_H$, etc., is a $VB$-groupoid, sometimes known as the \emph{tangent prolongation groupoid}.
\end{example}
\begin{dfn}Let $V,V'$ be $VB$-groupoids over $H,H'$.  A \emph{$VB$-groupoid morphism} is a Lie groupoid morphism $\Phi:V\to V'$ such that the induced map $\mathrm{Gr}(\mathrm{Mult}_V)\to \mathrm{Gr}(\mathrm{Mult}_{V'})$ is a vector bundle morphism.
\end{dfn}
\begin{prop}\label{vbmor} If $V, V'$ are $VB$-groupoids, then a vector bundle map $\Phi: V\to V'$ is a $VB$-groupoid morphism if and only if the induced map
    \begin{equation} V\times V\times V \longrightarrow V'\times V'\times V' \end{equation}
takes $\mathrm{Gr}(\mathrm{Mult}_V)$ to $\mathrm{Gr}(\mathrm{Mult}_{V'})$
\begin{proof}
Suppose the induced map, which we will call $\tilde{\Phi}$, takes $\mathrm{Gr}(\mathrm{Mult}_V)$ to $\mathrm{Gr}(\mathrm{Mult}_{V'})$.  In other words,
    \begin{equation}\label{gpm} \tilde{\Phi}(v\circ w, v, w) = (\Phi(v\circ w),\Phi(v),\Phi(w)) = (\Phi(v)\circ\Phi(w),\Phi(v),\Phi(w))\end{equation}
with appropriate basepoints.  Taking $v=\s_V(w)$ shows $\Phi$ is compatible with the source maps; with $w=\t(v)$, we have the compatibility with the targets.  Equation \ref{gpm} shows $\Phi$ respects the groupoid multiplications, and so we can conclude that $\Phi$ is a Lie groupoid morphism.\\
The other direction follows from the definition.
\end{proof}
\end{prop}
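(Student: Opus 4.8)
The plan is to reduce both directions of the equivalence to the single assertion that $\Phi$ respects the groupoid products, and then, in the nontrivial direction, to recover the remaining Lie groupoid morphism axioms by specializing that multiplicativity to units. First I would record the set-level description $\mathrm{Gr}(\mathrm{Mult}_V) = \{(v_1\circ v_2,\,v_1,\,v_2)\mid (v_1,v_2)\in V^{(2)}\}$, and similarly for $V'$, and write $\tilde{\Phi} := \Phi\times\Phi\times\Phi$ for the induced map on the triple products. With this notation, the condition that $\tilde{\Phi}$ carries $\mathrm{Gr}(\mathrm{Mult}_V)$ into $\mathrm{Gr}(\mathrm{Mult}_{V'})$ unwinds to exactly the statement that for every composable pair $(v_1,v_2)$ in $V$, the pair $(\Phi(v_1),\Phi(v_2))$ is composable in $V'$ and $\Phi(v_1\circ v_2) = \Phi(v_1)\circ\Phi(v_2)$.

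The forward implication is then immediate: a $VB$-groupoid morphism is by definition a Lie groupoid morphism, hence respects the products, which is precisely the displayed condition on $\tilde{\Phi}$.

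For the reverse implication — the substantive one — I would begin from multiplicativity of the vector bundle map $\Phi$ and show $\Phi$ is a Lie groupoid morphism. Smoothness is part of the hypothesis. Compatibility with source and target, and with units, comes out of plugging units into the multiplicativity relation: with $v_2 = 1_{\s_V(v_1)}$ one gets $\Phi(v_1) = \Phi(v_1)\circ\Phi(1_{\s_V(v_1)})$, and composing on the left with $\Phi(v_1)^{-1}$ (available since $V'$ is a groupoid) together with the left-unit property forces $\Phi(1_{\s_V(v_1)}) = 1_{\s_{V'}(\Phi(v_1))}$; symmetrically $v_1 = 1_{\t_V(v_2)}$ yields $\Phi(1_{\t_V(v_2)}) = 1_{\t_{V'}(\Phi(v_2))}$. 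Since $\s_V$ and $\t_V$ are surjective, this shows $\Phi$ sends units to units and intertwines the source and target maps. Compatibility with inversion then follows by applying $\Phi$ to $v\circ v^{-1} = 1_{\t_V(v)}$ and $v^{-1}\circ v = 1_{\s_V(v)}$ and invoking uniqueness of inverses in $V'$, so $\Phi(v^{-1}) = \Phi(v)^{-1}$. Hence $\Phi$ is a Lie groupoid morphism, and the induced map $\mathrm{Gr}(\mathrm{Mult}_V)\to\mathrm{Gr}(\mathrm{Mult}_{V'})$ is merely the restriction of $\tilde{\Phi} = \Phi\times\Phi\times\Phi$, a product of vector bundle maps, to the vector sub-bundle $\mathrm{Gr}(\mathrm{Mult}_V)$ with image in the vector sub-bundle $\mathrm{Gr}(\mathrm{Mult}_{V'})$; such a restriction is again a vector bundle morphism, so $\Phi$ is a $VB$-groupoid morphism.

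I expect the only genuine obstacle to be the bookkeeping in the reverse direction: the hypothesis supplies information only along $\mathrm{Gr}(\mathrm{Mult}_V)$, so one must choose the unit substitutions carefully to extract the source, target, unit, and inversion axioms, while keeping the ``groupoid-direction'' units $V^{(0)}\hookrightarrow V$ distinct from the base manifold $H$ over which $V$ is a vector bundle. None of this is deep — it is simply the point at which the equivalence stops being a tautology.
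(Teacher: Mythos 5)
Your proposal is correct and follows essentially the same route as the paper: unwind the graph condition into multiplicativity of $\Phi$, then specialize to units to extract compatibility with sources and targets (and, in your version, inversion), the forward direction being immediate from the definition. You are somewhat more careful than the paper about distinguishing $1_c$ from $c$ and about noting that the induced map on the graphs is automatically a vector bundle morphism, but the underlying argument is identical.
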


\begin{dfn} Let $V\gp V^{(0)}$ be a $VB$-groupoid over $H\gp H^{(0)}$.  A $VB$-module for $V\gp V^{(0)}$ is a vector bundle $P\to M$ with a groupoid action of $V$ such that the graph of the groupoid action inside $P\times V\times P$ is a vector sub-bundle.
\end{dfn}

\begin{example} The \emph{standard symplectic groupoid} $\mathrm{T}^*H\gp \h^*$ is a $VB$-groupoid over the Lie group $H$.  Given an $H$-homogeneous space $H/K$, the cotangent bundle $\mathrm{T}^*(H/K)$ becomes a $VB$-module for $\mathrm{T}^*H$ in the following way:  in left-trivialisation, we have $\mathrm{T}^*H\cong H\times\h^*$, and $\mathrm{T}^*(H/K) \cong H\times_K (\h/\k)^*$.  Since $(\h/\k)^*\cong \ann(\k)\subset\h^*$, the moment map
    \[ \u:\, H\times_K\ann(\k)\to \h^*,\qquad [h,\alpha] \mapsto \Ad_h\alpha \]
makes $\mathrm{T}^*(H/K)$ into a $VB$-module for $\mathrm{T}^*H\gp \h^*$ with action defined by
    \[ (h_1, \Ad_{h_2}\alpha)\circ [h_2,\alpha] = [h_1h_2,\alpha].\]
\end{example}
\begin{dfn} Given a $VB$-groupoid $V\gp V^{(0)}$ over $H\gp H^{(0)}$, the vector bundle
    \begin{equation}\mathrm{core}(V) := V\vert_{H^{(0)}}/V^{(0)} \end{equation}
is called the \emph{core} of $V$; the core can be identified with both $\ker(\s_V)\vert_{H^{(0)}}$ and $\ker(\t_V)\vert_{H^{(0)}}$ as both are complements to $V^{(0)}$ inside $V\vert_{H^{(0)}}$.  These isomorphisms are given by
\begin{align*}
\text{core}(V) \to \ker(\s_V)|_{H^{(0)}},&\quad [v]\mapsto v-\s_V(v)\\
\text{core}(V) \to \ker(\t_V)|_{H^{(0)}},&\quad [v]\mapsto v-\t_V(v).
\end{align*}
\end{dfn}

\begin{dfn}\label{vacantdef} If $V\gp V^{(0)}$ is a $VB$-groupoid over $H\gp H^{(0)}$ such that $\mathrm{core}(V)=0$, then $V$ is called a \emph{vacant} $VB$-groupoid.  In other words, $V$ satisfies the property $V^{(0)}= V\vert_{H^{(0)}}$.
\end{dfn}
\begin{lem} There are canonical isomorphisms
    \begin{equation} \ker(\s_V)\cong \t_H^*\mathrm{core}(V),\quad \ker(\t_V) \cong \s_H^*\mathrm{core}(V). \end{equation}
\begin{proof}
View $H^{(0)}\subseteq H$.  The groupoid $H$ acts on $\ker(\s_V)$ by translation from the right:
    \[ v\mapsto v\circ a^{-1},\qquad v\in \ker(\s_V)_h,\, a\in H;\, \s_H(a)=\s_H(h).\]
In particular, for $a=h$, $h\circ a^{-1}=\t_H(h)$, and so the translation induces an isomorphism $\ker(\s_V)_h\cong \ker(\s_V)_{\t_H(h)}\cong\text{core}(V)_{\t_H(h)}$.\\
The second isomorphism follows similarly, using left translation.
\end{proof}
\end{lem}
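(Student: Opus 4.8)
The plan is to realise the first isomorphism fibrewise: for an arrow $h\in H$ I will \emph{right-translate} elements of $\ker(\s_V)_h$ by the zero element of $V$ over $h^{-1}$, land in the part of $V$ lying over the units, and there invoke the identification $\ker(\s_V)|_{H^{(0)}}\cong\mathrm{core}(V)$ already recorded in the definition of the core. The isomorphism $\ker(\t_V)\cong\s_H^*\mathrm{core}(V)$ then follows by the mirror-image argument with left translation.

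First I would pin down the formal inputs. Because $\mathrm{Gr}(\mathrm{Mult}_V)$ is a vector sub-bundle along $\mathrm{Gr}(\mathrm{Mult}_H)$, the maps $\s_V,\t_V\colon V\to V^{(0)}$ are fibrewise linear over $\s_H,\t_H$, so $\ker(\s_V)$ and $\ker(\t_V)$ are honest sub-bundles of $V\to H$ of rank $\mathrm{rk}(V)-\mathrm{rk}(V^{(0)})=\mathrm{rk}(\mathrm{core}(V))$; and since the zero section $0\colon H\to V$ is a groupoid morphism one has $0_a\circ 0_b=0_{a\circ b}$, $\s_V(0_a)=\t_V(0_a)=0$, and $0_c$ is the $V$-unit at $0_c\in V^{(0)}$ whenever $c\in H^{(0)}$.

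Next I would define the map. For $v\in\ker(\s_V)_h$ put $\Phi(v):=[\,v\circ 0_{h^{-1}}\,]\in\mathrm{core}(V)_{\t_H(h)}$: the product $v\circ 0_{h^{-1}}$ is defined since $\s_V(v)=0=\t_V(0_{h^{-1}})$ (using $\t_H(h^{-1})=\s_H(h)$), it lies over $h\circ h^{-1}=\t_H(h)\in H^{(0)}$, and it lies in $\ker(\s_V)$ because $\s_V(v\circ 0_{h^{-1}})=\s_V(0_{h^{-1}})=0$, so its class in the core quotient is well defined. This yields a bundle map $\Phi\colon\ker(\s_V)\to\t_H^*\mathrm{core}(V)$ over $H$. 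Its inverse sends $(h,\xi)$ to $w\circ 0_h$, where $w\in\ker(\s_V)_{\t_H(h)}$ is the unique representative of $\xi$; one checks the two maps are mutually inverse using associativity together with $0_h\circ 0_{h^{-1}}=0_{\t_H(h)}$ and $0_{h^{-1}}\circ 0_h=0_{\s_H(h)}$, both of which act as units of $V$. Linearity of $\Phi$ on fibres holds because $(v,w)\mapsto v\circ w$ is linear on the linear space of composable pairs and the core projection is linear; smoothness holds because inversion, the zero section and $\mathrm{Mult}_V$ are smooth. Hence $\Phi$ is a vector bundle isomorphism, and the same recipe with $v\circ 0_{h^{-1}}$ replaced by $0_{h^{-1}}\circ v$ produces $\ker(\t_V)\cong\s_H^*\mathrm{core}(V)$.

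The one thing requiring attention — more bookkeeping than genuine obstacle — is keeping the source/target matchings straight and using the groupoid-morphism properties of the zero section in exactly the right places (so that the products are defined, land over units, and the two translations cancel). Once those identities are laid out the argument is essentially formal, and I expect no genuinely hard step.
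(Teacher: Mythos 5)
Your proof is correct and follows essentially the same route as the paper's: right-translate $\ker(\s_V)_h$ to the unit fibre over $\t_H(h)$ (the paper writes this as $v\mapsto v\circ a^{-1}$ with $a=h$, which is exactly your $v\mapsto v\circ 0_{h^{-1}}$) and identify $\ker(\s_V)|_{H^{(0)}}$ with $\mathrm{core}(V)$, with the mirror-image left translation for $\ker(\t_V)$. You simply spell out the zero-section identities, the inverse map, and linearity/smoothness more explicitly than the paper does.
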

This gives us two short exact sequences:
 \begin{equation}\label{vbses1}
    \begin{CD}
    0 @>>> \t_H^*(\text{core}(V)) @>>> V @>{\s_{V}}>> \s_H^*(V^{(0)}) @>>> 0,\\
    0 @>>> \s_H^*(\text{core}(V)) @>>> V @>{\t_{V}}>> \t_H^*(V^{(0)}) @>>> 0.
    \end{CD}
    \end{equation}
Along $H^{(0)}$, these sequences have canonical splittings via the projection $V\vert_{H^{(0)}}\to V^{(0)}$.  The inclusions in the sequences \ref{vbses1} are given by:
\begin{align*}
\t_H^*(\text{core}(V))\to V,&\quad [v]\mapsto (v-\s_V(v))\circ h;\quad v\in V_{\t_H(h)},\\
\s_H^*(\text{core}(V))\to V,&\quad [v]\mapsto h\circ (w-\t_V(w));\quad w\in V_{\s_H(h)}.
\end{align*}
\begin{thm}\label{vbdualgrpd} (\cite{Pradinesv}) The dual vector bundle to $V\gp V^{(0)}$ becomes a $VB$-groupoid
\begin{equation} V^*\gp (V^*)^{(0)} \end{equation}
with $(V^*)^{(0)} = \text{core}(V)^*\cong \text{ann}_{V^*}(V^{(0)})$, and $\text{core}(V^*) = (V^{(0)})^*$.
\begin{proof}
(Outline) Dualising the short exact sequences \ref{vbses1}, we get:
    \begin{equation}\label{vbses2}
    \begin{CD}
    0 @>>> \s_H^*(V^{(0)})^* @>>> V^* @>{\t_{V^*}}>> \t_H^*(\ann(V^{(0)})) @>>> 0, \\
    0 @>>> \t_H^*(V^{(0)})^* @>>> V^* @>{\s_{V^*}}>> \s_H^*(\ann(V^{(0)})) @>>> 0.
    \end{CD}
    \end{equation}

The surjections in \ref{vbses2} define the target and source for $V^*$. Explicitly, for any $h\in H$, $\alpha\in {V^*}_h$, and $[w]\in \text{core}(V)$:
    \begin{align*}
    \< \t_{V^*}(\alpha), [w]\> &= \< \alpha, (w-\s_V(w))\circ h\>,\\
    \< \s_{V^*}(\alpha), [w]\> &= \< \alpha, h\circ(w-\t_V(w))\>.
    \end{align*}
For the multiplication, if $\alpha \in {V^*}_h$ and $\tau \in {V^*}_g$ are composable elements, then for any composable $\xi \in V_h$, and $\zeta \in V_g$
    \begin{equation} \< \alpha\circ\tau, \xi\circ\zeta\> = \<\alpha, \xi\> + \<\tau,\zeta\>. \end{equation}
For full details, see \cite{alfraj}, \cite{Pradinesv}.
\end{proof}
\end{thm}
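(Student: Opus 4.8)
The plan is to take the short exact sequences \ref{vbses1} for $V$ as the backbone and to obtain the groupoid structure on $V^*$ by dualizing them, with the multiplication characterized through the canonical pairing between $V$ and $V^*$.

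First I would fix the base. Set $(V^*)^{(0)} := \mathrm{ann}_{V^*|_{H^{(0)}}}(V^{(0)})$; since $V^{(0)}\subseteq V|_{H^{(0)}}$ is a subbundle with quotient $\mathrm{core}(V)$, its annihilator is canonically $\big(V|_{H^{(0)}}/V^{(0)}\big)^* = \mathrm{core}(V)^*$, which gives the first identification in the statement. Dualizing the sequences \ref{vbses1} fibrewise produces two short exact sequences over $H$ whose surjective terms are $\t_H^*(\mathrm{ann}(V^{(0)}))$ and $\s_H^*(\mathrm{ann}(V^{(0)}))$; the corresponding surjections, written out via the pairing as
\begin{align*}
\langle \t_{V^*}(\alpha),[w]\rangle &= \langle \alpha,\,(w-\s_V(w))\circ h\rangle,\\
\langle \s_{V^*}(\alpha),[w]\rangle &= \langle \alpha,\,h\circ(w-\t_V(w))\rangle,
\end{align*}
for $\alpha\in V^*_h$ and $[w]\in\mathrm{core}(V)$ (with $w$ in the appropriate fibre of $V$ over $H^{(0)}$), are the candidate target and source maps of $V^*$. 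I would check that they are well defined independently of the choice of representative $w$, that they are surjective bundle maps covering the submersions $\t_H,\s_H$ and hence submersions, and that the canonical splittings of the dualized sequences along $H^{(0)}$ furnish the unit embedding $(V^*)^{(0)}\hookrightarrow V^*$.

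The multiplication is the heart of the matter. For $\alpha\in V^*_h$ and $\tau\in V^*_g$ with $\s_{V^*}(\alpha)=\t_{V^*}(\tau)$, I would \emph{define} $\alpha\circ\tau\in V^*_{hg}$ by demanding
\begin{equation*}
\langle \alpha\circ\tau,\,\xi\circ\zeta\rangle = \langle\alpha,\xi\rangle+\langle\tau,\zeta\rangle
\end{equation*}
for every composable pair $\xi\in V_h$, $\zeta\in V_g$. Two facts make this legitimate: the fibrewise multiplication map $m\colon V_h\times_{V^{(0)}}V_g\to V_{hg}$ is surjective (solve $\xi\circ\zeta=x$ by $\xi=x\circ\zeta^{-1}$), and the linear functional $(\xi,\zeta)\mapsto\langle\alpha,\xi\rangle+\langle\tau,\zeta\rangle$ vanishes on $\ker m$ exactly when $\s_{V^*}(\alpha)=\t_{V^*}(\tau)$. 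This second fact is where I expect the genuine work to lie, and it is the main obstacle of the proof: applying $\s_V$ and $\t_V$ to the equation $\xi\circ\zeta=0_{hg}$ forces $\xi\in\ker(\t_V)_h$ and $\zeta\in\ker(\s_V)_g$, and identifies $\ker m$ with the graph $\{(\xi,\,\xi^{-1}\circ 0_{hg}):\xi\in\ker(\t_V)_h\}\cong\mathrm{core}(V)$ via the preceding Lemma on the kernels $\ker(\s_V)$ and $\ker(\t_V)$; substituting this into the functional and comparing with the explicit formulas for $\s_{V^*}$ and $\t_{V^*}$ above produces precisely the stated compatibility condition. Everything that follows is formal.

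It remains to verify the groupoid axioms for $V^*$, which I would do by transporting those of $V$ through the defining pairing: compatibility of $\circ$ with $\s_{V^*}$ and $\t_{V^*}$, associativity (reducing to associativity in $V$), the unit laws (because $(V^*)^{(0)}=\mathrm{ann}(V^{(0)})$ pairs trivially with the units $V^{(0)}$ of $V$), and the forced inversion $\langle\alpha^{-1},\xi^{-1}\rangle=-\langle\alpha,\xi\rangle$. Since the defining relation for $\mathrm{Mult}_{V^*}$ is linear in all three slots, $\mathrm{Gr}(\mathrm{Mult}_{V^*})\subseteq V^*\times V^*\times V^*$ is cut out along $\mathrm{Gr}(\mathrm{Mult}_H)$ by linear equations — indeed it is the image of $\mathrm{ann}(\mathrm{Gr}(\mathrm{Mult}_V))$ under the fibrewise isomorphism negating the last two factors — so it is a vector subbundle and $V^*\gp(V^*)^{(0)}$ is a $VB$-groupoid. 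Finally, $\mathrm{core}(V^*)=V^*|_{H^{(0)}}/(V^*)^{(0)}=V^*|_{H^{(0)}}/\mathrm{ann}(V^{(0)})\cong(V^{(0)})^*$, which also appears as $\ker(\s_{V^*})|_{H^{(0)}}$ in the dualized sequences, establishing the last claim.
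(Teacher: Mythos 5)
Your proposal is correct and follows essentially the same route as the paper's (outline) proof: dualize the two short exact sequences to get $\s_{V^*},\t_{V^*}$ via the stated pairing formulas, and characterize the multiplication by $\<\alpha\circ\tau,\xi\circ\zeta\>=\<\alpha,\xi\>+\<\tau,\zeta\>$. The only difference is that you supply the well-definedness check (surjectivity of the fibrewise multiplication and vanishing of the functional on its kernel precisely when $\s_{V^*}(\alpha)=\t_{V^*}(\tau)$) that the paper delegates to \cite{alfraj} and \cite{Pradinesv}, and your observation that $\mathrm{Gr}(\mathrm{Mult}_{V^*})$ is the annihilator of $\mathrm{Gr}(\mathrm{Mult}_V)$ is exactly Remark \ref{annr}.
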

\begin{rmrk} Note that even if $V$ is a group (i.e.: $V^{(0)} = 0$ over $H^{(0)} = $pt), the dual bundle is only a groupoid in general.  As an example, any tangent group $\mathrm{T}H\gp\text{pt}$ for $H$ a Lie group yields the standard symplectic groupoid $\mathrm{T}^*H\gp\h^*$ upon dualising.
\end{rmrk}

\begin{rmrk}\label{annr} From the definition, the graph of the groupoid multiplication in $V^*$ is the annihilator of the graph of the groupoid multiplication on $V$.
\end{rmrk}

\begin{prop} If $\Phi:\,V\to V'$ is a morphism of $VB$-groupoids over a common base $H$, then $\Phi^*:\,(V')^*\to V^*$ is a morphism of $VB$-groupoids.
\begin{proof}
We recall from Proposition \ref{vbmor} that if $V, V'$ are $VB$-groupoids, then a vector bundle map $V\to V'$ is a $VB$-groupoid morphism if and only if the induced map
    \begin{equation} V\times V\times V \longrightarrow V'\times V'\times V' \end{equation}
takes $\mathrm{Gr}(\mathrm{Mult}_V)$ to $\mathrm{Gr}(\mathrm{Mult}_{V'})$.  Dualising $\Phi$, we get a map from the annihilator of $\mathrm{Gr}(\mathrm{Mult}_{V'})$ to the annihilator of $\mathrm{Gr}(\mathrm{Mult}_{V})$. Via Remark \ref{annr}, this means $\Phi^*$ is a $VB$-groupoid morphism from $(V')^*\to V^*$.

\end{proof}
\end{prop}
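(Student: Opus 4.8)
The plan is to deduce the statement purely formally from two earlier facts: the characterisation of $VB$-groupoid morphisms in Proposition~\ref{vbmor} (a vector bundle map $\Psi\colon W\to W'$ between $VB$-groupoids is a $VB$-groupoid morphism iff $\Psi\times\Psi\times\Psi$ carries $\mathrm{Gr}(\mathrm{Mult}_W)$ into $\mathrm{Gr}(\mathrm{Mult}_{W'})$), and Remark~\ref{annr} (the graph of the dual multiplication on $V^{*}$ is the annihilator of $\mathrm{Gr}(\mathrm{Mult}_V)$ under the canonical, sign-twisted identification $(V\times V\times V)^{*}\cong V^{*}\times V^{*}\times V^{*}$).

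First I would observe that $\Phi^{*}\colon (V')^{*}\to V^{*}$, being the fibrewise transpose of a vector bundle map over $\mathrm{id}_H$, is again a vector bundle map over $H$, so Proposition~\ref{vbmor} is applicable to it. Next, since $\Phi$ is a $VB$-groupoid morphism, Proposition~\ref{vbmor} gives that $\widetilde\Phi:=\Phi\times\Phi\times\Phi$ sends $\mathrm{Gr}(\mathrm{Mult}_V)$ into $\mathrm{Gr}(\mathrm{Mult}_{V'})$ (over $\mathrm{id}_{H^{(2)}}$ on the graphs). I then apply, fibrewise over $H^{(2)}$, the elementary fact that if a linear map $\psi\colon W\to W'$ satisfies $\psi(C)\subseteq C'$ for subspaces $C\subseteq W$, $C'\subseteq W'$, then its transpose satisfies $\psi^{*}\big((C')^{\perp}\big)\subseteq C^{\perp}$, because $\langle\psi^{*}\xi,c\rangle=\langle\xi,\psi c\rangle=0$ for $\xi\in (C')^{\perp}$, $c\in C$. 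Taking $\psi=\widetilde\Phi$, $C=\mathrm{Gr}(\mathrm{Mult}_V)$, $C'=\mathrm{Gr}(\mathrm{Mult}_{V'})$, this shows that $\widetilde\Phi{}^{*}$ carries $\mathrm{Gr}(\mathrm{Mult}_{V'})^{\perp}$ into $\mathrm{Gr}(\mathrm{Mult}_V)^{\perp}$. Under the identification above, $\widetilde\Phi{}^{*}$ is $\Phi^{*}\times\Phi^{*}\times\Phi^{*}$ and, by Remark~\ref{annr}, the two annihilators are exactly $\mathrm{Gr}(\mathrm{Mult}_{(V')^{*}})$ and $\mathrm{Gr}(\mathrm{Mult}_{V^{*}})$; hence Proposition~\ref{vbmor} applied to $\Phi^{*}$ finishes the argument.

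The one point that needs genuine (if routine) care — and the only candidate for an obstacle — is matching conventions: one must check that the sign-twisted identification $(V\times V\times V)^{*}\cong V^{*}\times V^{*}\times V^{*}$ used in Remark~\ref{annr} to express $\mathrm{Gr}(\mathrm{Mult}_{V^{*}})$ as $\mathrm{Gr}(\mathrm{Mult}_V)^{\perp}$ is the \emph{same} one under which the transpose of $\Phi\times\Phi\times\Phi$ becomes $\Phi^{*}\times\Phi^{*}\times\Phi^{*}$, so that no permutation or sign mismatch of the three factors creeps in. Beyond this bookkeeping the proof is entirely formal: it needs no injectivity or surjectivity of $\Phi$, and works fibrewise over $H$ for the bundle statement and over $H^{(2)}$ for the graph condition.
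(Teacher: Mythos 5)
Your proposal is correct and follows essentially the same route as the paper's proof: apply Proposition \ref{vbmor} to $\Phi$, dualise to get that $\Phi^*\times\Phi^*\times\Phi^*$ carries the annihilator of $\mathrm{Gr}(\mathrm{Mult}_{V'})$ into that of $\mathrm{Gr}(\mathrm{Mult}_V)$, and invoke Remark \ref{annr} to identify these annihilators with the graphs of the dual multiplications. Your version merely spells out the fibrewise transpose argument and the convention-matching that the paper leaves implicit.
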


\subsection{Dual $VB$-Modules}
Let $V\gp V^{(0)}$ be a $VB$-groupoid over $H\gp H^{(0)}$, and let $P\to M$ be a $VB$-module for $V$, with moment maps $\u_P:\,P\to V^{(0)}$, $\u_M:\,M\to H^{(0)}$.  We will show that $P^*$ has the structure of a module for $V^*\gp(V^*)^{(0)}$.

\begin{lem}\label{dualmu} There is a natural bundle map $\u_{P^*}:\,P^*\longrightarrow (V^*)^{(0)}$.
\begin{proof}
The action of $V$ on $P$ defines linear maps
    \[ V_h\times_{V^{(0)}_{\s(h)}} P_m \,\longrightarrow P_{h\circ m} \]
for all $h\in H,\,m\in M$ with $\s_H(h)=\u_M(m)$.  Taking $h=\u_M(m)\in H^{(0)}$, and restricting to $0$ elements in $P_m$, this becomes a map whose domain is
\begin{align*}
    V_{\u(m)}\times_{V^{(0)}_{s(h)}}\{0\}_m &= \{(v,0)\,|\,s_V(v) = \u_P(0) = 0 \} \subseteq V_{\u(m)}\times\{0\}_m\\
                                            &= \ker(s_V)_{\u(m)}\\
                                            &\cong \text{core}(V)_{\u(m)}
\end{align*}
Thus, the map is
    \[ \text{core}(V)_{\u(m)} = V_{\u(m)}/V^{(0)}_{\u(m)} \longrightarrow P_m \]
with $(v,0_m) \mapsto (v\circ 0_m)\in P_m$.  Dualising gives us a map
    \[ \u_{P^*}:\,P^* \to (\text{core}(V))^* = (V^*)^{(0)}.\qedhere \]

\end{proof}
\end{lem}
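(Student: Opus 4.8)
The plan is to obtain $\u_{P^*}$ by dualizing a canonical vector bundle morphism $\u_M^*(\mathrm{core}(V))\to P$ built from the $V$-action on $P$, and then to read off the target via the identification $(V^*)^{(0)}\cong\mathrm{core}(V)^*$ supplied by Theorem \ref{vbdualgrpd}.

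First I would unwind the $VB$-module structure at the zero section of $P$. Because the graph of the $V$-action sits in $P\times V\times P$ as a vector sub-bundle, for each $h\in H$ and $m\in M$ with $\s_H(h)=\u_M(m)$ the action restricts to a \emph{linear} map from the fibred product $V_h\times_{V^{(0)}}P_m$ to $P_{h\circ m}$ (in particular $\u_P$ itself is then a vector bundle morphism over $\u_M$). Now fix $m\in M$ and take $h=\u_M(m)\in H^{(0)}$, a unit. Since $\u_P(0_m)=0$, the assignment $v\mapsto(v,0_m)$ identifies $\ker(\s_V)_{\u_M(m)}$ with a linear subspace of that fibred product; composing with the action and using $h\circ m=m$ for the unit $h$ yields a linear map $\ker(\s_V)_{\u_M(m)}\to P_m$, $v\mapsto v\circ 0_m$. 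Under the canonical identification $\ker(\s_V)|_{H^{(0)}}\cong\mathrm{core}(V)$ recorded with the definition of the core, and letting $m$ vary, these fit together into a vector bundle morphism $\u_M^*(\mathrm{core}(V))\to P$ over $\mathrm{id}_M$; smoothness and fibrewise linearity are inherited from the sub-bundle hypothesis on the action graph together with the canonical splittings of the sequences \ref{vbses1} along $H^{(0)}$.

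Second, I would simply dualize this morphism to get $P^*\to\u_M^*(\mathrm{core}(V)^*)=\u_M^*\big((V^*)^{(0)}\big)$, which is exactly the asserted natural bundle map $\u_{P^*}\colon P^*\to(V^*)^{(0)}$ covering $\u_M$; naturality is automatic, since restriction to the zero section, the core identification, and dualization are all canonical. I do not anticipate a genuine obstacle: the only point requiring real care is that $v\mapsto v\circ 0_m$ is linear and depends smoothly on $m$, and this is forced by the defining property that the action graph is a vector sub-bundle. Everything else is bookkeeping with base maps, units, and dualities --- and this lemma will then feed into equipping $P^*$ with the full $V^*$-module action dual to the action of $V$ on $P$.
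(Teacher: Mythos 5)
Your proposal is correct and follows essentially the same route as the paper's proof: take $h$ to be the unit $\u_M(m)$, restrict the action to pairs $(v,0_m)$ so that the domain becomes $\ker(\s_V)_{\u_M(m)}\cong\mathrm{core}(V)_{\u_M(m)}$, obtain the fibrewise linear map $v\mapsto v\circ 0_m$ into $P_m$, and dualize using $(V^*)^{(0)}\cong\mathrm{core}(V)^*$. Your extra attention to assembling these fibrewise maps into a smooth bundle morphism $\u_M^*(\mathrm{core}(V))\to P$ before dualizing is a welcome refinement but not a different argument.
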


This map is, in fact, a moment map for a groupoid action of $V^*$ on $P^*$.

\begin{prop}
If $P\to M$ is a $VB$-groupoid module for $V\gp V^{(0)}$ with moment maps $\u_P:\,P\to V^{(0)}$ and $\u_M:\,M\to H^{(0)}$, then $P^*\to M$ is a $VB$-groupoid module for $V^*\gp (V^*)^{(0)}$ with moment map $\u_{P^*}$.
\begin{proof}
Using $\u_{P^*}$ from Lemma \ref{dualmu}, let $\alpha\in {V^*}_h$, $\eta\in {P^*}_m$ with $s_{V^*}(\alpha)=\u_{P^*}(\eta)$ and $s_H(h)=\u_M(m)$.  Then $\alpha\circ \eta\in {P^*}_{h\circ m}$ is described as follows:  for any $x\in P_{h\circ m}$, there exists $v\in V_h$, $y\in P_m$ with $x=v\circ y$.  Then
    \begin{equation}\label{Vdmdeq} \<\alpha\circ\eta, v\circ y\>_P = \<\alpha,v\>_V + \<\eta,y\>_P. \end{equation}
This is independent of decomposition:  if $x = v'\circ y'$ is another decomposition, define $a:= v-v'$ and $b:=y-y'$.  It's clear that $a\circ b$ is defined, and
    \[ a\circ b = (v-v')\circ(y-y') = v\circ y - v'\circ y' = x-x = 0_{h\circ m} \]
It follows that $a\in \ker(t_V)$, and so there exists a $c\in V_{\s(h)}$ such that $a = 0_h\circ c$.  We get
    \begin{align*}
        \<\alpha,a\> + \<\eta,b\> &= \< \alpha, 0_h\circ c\> + \<\eta,b\> \\
                                  &= -\<\s_{V^*}(\alpha), c^{-1}\> + \<\eta,b\> \\
                                  &= -\<\u_{P^*}(\eta), c^{-1}\> + \<\eta,b\> \\
                                  &= \< \eta, b - (\u_{P^*})^*(c^{-1})\>.
    \end{align*}
Since $c^{-1}\in V_{\s(h)} = V_{\u_M(m)}$, then by definition, $(\u_{P^*})^*(c^{-1}) = (c^{-1})\circ 0_m$.  As $a=0_h\circ c$, and $a\circ b= 0_{h\circ m}$, this implies
    \[ 0_{h\circ m} = 0_h\circ c \circ b. \]
Cancelling and rearranging, this gives us that $b= c^{-1}\circ 0_m$ and hence $\<\eta, b-(\u_{P^*})^*(c^{-1})\>=0$.\\
Since $\eta$ was arbitrary, this means that
    \[ \<\alpha,v\> + \<\eta,y\> = \<\alpha,v'\> + \<\eta,y'\> \]
for any choice of decomposition.\\
Lastly, we show that $\u_{P^*}(\alpha\circ \eta) = t_{V^*}(\alpha)$.  Let $\eta\in \text{core}(V)_{t(h)}$.  Then
    \begin{align*}
        \<\t_{V^*}(\alpha) - \u_{P^*}(\alpha\circ\eta),\zeta\> =& \<\t_{V^*}(\alpha),\eta\> - \<\alpha\circ\eta,(\u_{P^*})^*\eta\>\\
                        &= \<\alpha, \zeta\circ 0_h\> - \<\alpha\circ \eta, \zeta\circ 0_{h\circ m} \>\\
                        &= \<\alpha, \zeta\circ 0_h\> - \<\alpha\circ \eta,(\zeta\circ 0_h)\circ 0_m\>\\
                        &= \<\alpha,\zeta\circ 0_h\> - \<\alpha,\zeta\circ 0_h\> - \<\eta, 0_m\>\\
                        &= 0.
    \end{align*}
As $\zeta$ was arbitrary, the result follows.
\end{proof}
\end{prop}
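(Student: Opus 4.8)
The plan is to produce the $V^*$-action on $P^*$ by dualising the $V$-action on $P$, the moment map $\u_{P^*}:P^*\to(V^*)^{(0)}$ being already in hand from Lemma \ref{dualmu}. For composable $\alpha\in V^*_h$ and $\eta\in P^*_m$ — meaning $\s_H(h)=\u_M(m)$ and $\s_{V^*}(\alpha)=\u_{P^*}(\eta)$ — I define $\alpha\circ\eta\in P^*_{h\circ m}$ by
\begin{equation}
\langle\alpha\circ\eta,\,v\circ y\rangle_P \;=\; \langle\alpha,v\rangle_V+\langle\eta,y\rangle_P,
\qquad v\in V_h,\ y\in P_m,\ \s_V(v)=\u_P(y).
\end{equation}
Two remarks frame this. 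First, $(v,y)\mapsto v\circ y$ maps onto $P_{h\circ m}$: given $x\in P_{h\circ m}$, pick $v\in V_h$ with $\t_V(v)=\u_P(x)$, which is possible because $\t_V$ is fibrewise surjective by the second exact sequence in \ref{vbses1}, and set $y:=v^{-1}\circ x\in P_m$; then $v\circ y=x$. Second, the $VB$-module condition says the graph $\Gamma=\{(v\circ y,v,y)\}\subset P\times V\times P$ of the $V$-action is a vector subbundle; once the displayed formula is well defined, the graph of the $V^*$-action is, up to the usual signs (cf. Remark \ref{annr}), the annihilator of $\Gamma$ inside $P^*\times V^*\times P^*$, hence automatically a vector subbundle. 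This disposes of the $VB$-condition for $P^*$ at the very end.

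The substantive point is that the right-hand side depends only on $v\circ y$, not on the decomposition. Suppose $v\circ y=v'\circ y'$ and put $a:=v-v'\in V_h$, $b:=y-y'\in P_m$. Bilinearity of the $V$-action (again because $\Gamma$ is a subbundle) gives $a\circ b=0_{h\circ m}$, and then the module identity $\u_P(a\circ b)=\t_V(a)$ forces $a\in\ker(\t_V)_h$. Using the inclusion $\s_H^*(\mathrm{core}(V))\hookrightarrow V$ from \ref{vbses1} we may write $a=0_h\circ c$ with $c\in\ker(\t_V)_{\s_H(h)}$ a core element; cancelling $0_h$ in $0_h\circ(c\circ b)=0_{h\circ m}=0_h\circ 0_m$, then $c$ in $c\circ b=0_m$, yields $b=c^{-1}\circ 0_m$. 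Hence
\begin{equation}
\langle\alpha,a\rangle+\langle\eta,b\rangle
\;=\;\langle\alpha,\,0_h\circ c\rangle+\langle\eta,\,c^{-1}\circ 0_m\rangle
\;=\;-\langle\s_{V^*}(\alpha),c^{-1}\rangle+\langle\u_{P^*}(\eta),c^{-1}\rangle
\;=\;0,
\end{equation}
where the middle equality combines the formula for $\s_{V^*}$ from Theorem \ref{vbdualgrpd} with the defining property of $\u_{P^*}$ from Lemma \ref{dualmu} — both evaluated against the core element $c^{-1}\in\ker(\s_V)$, using that inversion identifies $\ker(\s_V)$ and $\ker(\t_V)$ with $\mathrm{core}(V)$ up to sign — and the last equality is the composability hypothesis $\s_{V^*}(\alpha)=\u_{P^*}(\eta)$. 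Thus $\alpha\circ\eta$ is well defined, it sits over $h\circ m$, and the composability condition encoded in the annihilator of $\Gamma$ is exactly $\s_{V^*}(\alpha)=\u_{P^*}(\eta)$.

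What remains are three short pairing computations. Associativity $(\alpha\circ\tau)\circ\eta=\alpha\circ(\tau\circ\eta)$ follows by pairing both sides with $(v\circ w)\circ y=v\circ(w\circ y)$ and using associativity of the $V$-action on $P$; each side expands to $\langle\alpha,v\rangle+\langle\tau,w\rangle+\langle\eta,y\rangle$. For $\u_{P^*}(\alpha\circ\eta)=\t_{V^*}(\alpha)$ one pairs with a core element, rewrites both $(\u_{P^*})^*$ and the $\t_{V^*}$-formula of Theorem \ref{vbdualgrpd} through the $V$-action, and the terms cancel. For the unit axiom one identifies the units of $V^*$ with $\ann_{V^*}(V^{(0)})$ along $H^{(0)}$ (Theorem \ref{vbdualgrpd}) and pairs with $v\circ y$ taking $v$ a unit. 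Together with the subbundle remark above, this shows $P^*\to M$ is a $VB$-module for $V^*\gp(V^*)^{(0)}$ with moment map $\u_{P^*}$.

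The main obstacle, and essentially the only place where anything is genuinely proved, is the well-definedness step: one has to match the two identifications of $\mathrm{core}(V)$ — via $\ker(\s_V)$ as used in Lemma \ref{dualmu} and via $\ker(\t_V)$ as it arises here — correctly, including the sign, so that the $\s_{V^*}$-contribution cancels the $\u_{P^*}$-contribution under the composability hypothesis. Everything downstream (associativity, the moment-map and unit axioms, and the vector-bundle structure of the graph via the annihilator of $\Gamma$) is then formal.
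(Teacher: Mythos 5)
Your proposal is correct and follows essentially the same route as the paper's proof: the dual action is defined by the same pairing formula, and well-definedness is established by the identical argument (set $a=v-v'$, $b=y-y'$, observe $a\circ b=0_{h\circ m}$, write $a=0_h\circ c$ with $c$ a core element, deduce $b=c^{-1}\circ 0_m$, and cancel the $\s_{V^*}$- and $\u_{P^*}$-contributions using the composability hypothesis). The additional verifications you sketch (surjectivity of the decomposition, associativity, units, and the subbundle condition via the annihilator of the action graph) are formal points the paper leaves implicit, and your treatment of them is sound.
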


\begin{prop}If $\Phi: V\to V'$ is a morphism of $VB$-groupoids over a common base $H$, together with a corresponding morphism of groupoid modules $\psi:\,P\to P'$ over a common base $M$, then the dual map $\psi^*:\,(P')^*\to P^*$ is a module morphism for $\Phi^*:\,(V')^*\to V^*$.
\begin{proof}
First, let $\eta'\in {(P')^*}_m$.  Then for every $\zeta\in \text{core}(V)_{\u(m)}$ we have
    \begin{align*}  \<\u_{P^*}(\psi^*(\eta')),\zeta\> &= \<\psi^*(\eta'),\zeta\circ 0_m\>\\
                                &= \< \eta', \Phi(\zeta)\circ\psi(0_m)\>
    \end{align*}
as well as
    \begin{align*} \<\Phi^*(\u_{(P')^*}(\eta')),\zeta\> &= \< \u_{(P')^*}(\eta'), \Phi(\zeta)\>\\
                                &= \< \eta', \Phi(\zeta)\circ\psi(0_m)\>
    \end{align*}
since $\psi(0_m) = 0'_m$.  As $\zeta$ was arbitrary, we conclude
\[ \Phi^*\circ \u_{(P')^*} = \u_{P^*}\circ \psi^* \]
Secondly, with $\eta'$ as above, let $\alpha'\in {(V')^*}_h$ with $\s(h)=\u_{(P')^*}(m)$ and $\alpha'\circ \eta'$ defined.  Then for all $x \in P_{h\circ m}$, decomposed as $x = v\circ y$ for $v\in V_h$, $y\in P_y$, we have
    \begin{align*}
    \<\psi^*(\alpha'\circ\eta'),x\> = \<\alpha'\circ\eta', \psi(v\circ y)\> &= \<\alpha'\circ\eta', \Phi(v)\circ\psi(y)\>\\
                    &= \<\alpha',\Phi(v)\> + \<\eta',\psi(y)\>\\
                    &= \<\Phi^*(\alpha'),v\> + \<\psi^*(\eta'),y\>\\
                    &= \<\Phi^*(\alpha')\circ\psi^*(\eta'), v\circ y\>.
    \end{align*}
So, $\psi^*(\alpha'\circ\eta') = \Phi^*(\alpha')\circ\psi^*(\eta')$ for all composable $\alpha'\in (V')^*,\,\eta'\in (P')^*$.
\end{proof}
\end{prop}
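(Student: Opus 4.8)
The plan is to check directly that $\psi^*: (P')^* \to P^*$ satisfies the two defining conditions of a module morphism over the $VB$-groupoid morphism $\Phi^*: (V')^* \to V^*$: compatibility with the moment maps, $\u_{P^*} \circ \psi^* = \Phi^* \circ \u_{(P')^*}$, and compatibility with the groupoid actions, $\psi^*(\alpha' \circ \eta') = \Phi^*(\alpha') \circ \psi^*(\eta')$ for all composable $\alpha' \in (V')^*$, $\eta' \in (P')^*$. Throughout I would freely use the facts established above: that $\Phi^*$ is a $VB$-groupoid morphism, that $P^*$ and $(P')^*$ are modules for $V^*$ and $(V')^*$ with the dual moment maps of Lemma~\ref{dualmu}, and the bilinear characterization $\< \alpha \circ \eta, v \circ y \> = \< \alpha, v \> + \< \eta, y \>$ of the dual actions. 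The only properties of $\psi$ that enter are that it is a vector bundle map over $M$, so $\psi(0_m) = 0'_m$, and that it intertwines the two actions, $\psi(v \circ y) = \Phi(v) \circ \psi(y)$.

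For the moment maps I would fix $\eta' \in (P')^*_m$ and pair the two candidate elements of $(V^*)^{(0)}_{\u(m)} = \text{core}(V)^*_{\u(m)}$ against an arbitrary $\zeta \in \text{core}(V)_{\u(m)}$. Viewing $\zeta$ inside $\ker(\s_V)$, the defining formula for $\u_{P^*}$ gives $\< \u_{P^*}(\psi^*(\eta')), \zeta \> = \< \psi^*(\eta'), \zeta \circ 0_m \> = \< \eta', \psi(\zeta \circ 0_m) \>$, and $\psi(\zeta \circ 0_m) = \Phi(\zeta) \circ 0'_m$ with $\Phi(\zeta) \in \text{core}(V')_{\u(m)}$; by the corresponding formula for $\u_{(P')^*}$ and the definition of $\Phi^*$ on the unit subbundles, this last expression equals $\< \Phi^*(\u_{(P')^*}(\eta')), \zeta \>$. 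As $\zeta$ is arbitrary, the two elements coincide.

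For the actions I would take composable $\alpha' \in (V')^*_h$, $\eta' \in (P')^*_m$, noting first that $\Phi^*(\alpha')$ and $\psi^*(\eta')$ are then composable too, since $\s_{V^*}(\Phi^*(\alpha')) = \Phi^*(\s_{(V')^*}(\alpha')) = \Phi^*(\u_{(P')^*}(\eta')) = \u_{P^*}(\psi^*(\eta'))$ using that $\Phi^*$ respects source maps and the moment-map identity just proved. To identify $\psi^*(\alpha' \circ \eta')$ and $\Phi^*(\alpha') \circ \psi^*(\eta')$ in $P^*_{h \circ m}$, I would pair both against an arbitrary $x \in P_{h \circ m}$, written as $x = v \circ y$ with $v \in V_h$, $y \in P_m$ (possible since the module action map is surjective). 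The intertwining property and the bilinear formula for the dual action of $(V')^*$ on $(P')^*$ give $\< \psi^*(\alpha' \circ \eta'), x \> = \< \alpha' \circ \eta', \Phi(v) \circ \psi(y) \> = \< \alpha', \Phi(v) \> + \< \eta', \psi(y) \>$, while the bilinear formula for the dual action of $V^*$ on $P^*$ gives $\< \Phi^*(\alpha') \circ \psi^*(\eta'), v \circ y \> = \< \Phi^*(\alpha'), v \> + \< \psi^*(\eta'), y \>$; both reduce to $\< \alpha', \Phi(v) \> + \< \eta', \psi(y) \>$, and since $x$ was arbitrary the claim follows.

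I do not anticipate a genuine obstacle: with the dual moment map and dual action already available, the argument is essentially a chase through definitions, exactly parallel to the proof that $P^*$ is a $V^*$-module. The two points needing a little care are the verification that $\Phi^*(\alpha')$ and $\psi^*(\eta')$ are composable — which is precisely where the moment-map identity and the groupoid-morphism property of $\Phi^*$ get used — and the appeal to the well-definedness of the dual action proved above, so that evaluating against an arbitrary decomposed $x = v \circ y$ genuinely pins down the element $\psi^*(\alpha' \circ \eta') \in P^*_{h \circ m}$ and not merely one of its pairings.
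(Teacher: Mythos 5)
Your proposal is correct and follows essentially the same route as the paper's proof: pairing the two candidate moment-map images against an arbitrary core element, and pairing the two candidate action results against an arbitrary decomposed $x = v\circ y$, using the intertwining property $\psi(v\circ y)=\Phi(v)\circ\psi(y)$ and the bilinear characterization of the dual actions. Your explicit verification that $\Phi^*(\alpha')$ and $\psi^*(\eta')$ are composable is a small point the paper leaves implicit, but otherwise the arguments coincide.
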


\begin{lem}\label{VBquotactt} Let $V\gp V^{(0)}$ be a $VB$-groupoid over $H\gp H^{(0)}$, and let $W\subset V$ be a $VB$-subgroupoid.  The quotient of $V$ by $W$ is a groupoid $V/W\gp V^{(0)}/W^{(0)}$, with core
\[ \text{core}(V/W) = V\vert_{H^{(0)}}/(W|_{H^{(0)}} + V^{(0)}). \]
The quotient map is a groupoid morphism.  The annihilator, $\text{ann}(W)\subset V^*$ is a subgroupoid of $V^*$, with units
\[ (\text{ann}(W))^{(0)} = \text{ann}(W)\cap (V^*)^{(0)}.\]
Furthermore, if $P$ is a $V$-module, and $L\subset P$ is a submodule with respect to $W\subset V$, then $P/L$ is a module over $V/W$, and $\text{ann}_{P^*}(L)$ is a module over $\text{ann}_{V^*}(W)$.
\begin{proof}
The first part is a special case of Lemma 5.9 in \cite{moermc}.  The core is
    \begin{align*} \text{core}(V/W) &= (V/W)|_{H^{(0)}}/(V/W)^{(0)} \\
                                    &= (V|_{H^{(0)}}/W|_{H^{(0)}})\,/\,(V^{(0)}/W^{(0)})\\
                                    &= V\vert_{H^{(0)}}/(W|_{H^{(0)}} + V^{(0)})
    \end{align*}
since $W^{(0)} = V^{(0)}\cap W|_{H^{(0)}}$. If two elements in ann$(W)$ are composable, then their product must also be in $\ann(W)$, by the definition of the multiplication in $V^*$.\\
Since range$(\u_L)\subseteq W^{(0)}$, $\u_P$ descends to a map $P/L\longrightarrow V^{(0)}/W^{(0)}$.  If $\bar{v}\in V/W$ and $\bar{x}\in P/L$ with $\s(\bar{v})=\u(\bar{x})$, then the composition is given by
\[ \bar{v}\circ\bar{x} = \overline{v\circ x} \]
for any composable lifts $v,x$.  If $x'$ is another lift of $\bar{x}$, and $v'$ another lift of $v$ with $\s_V(v') = \u_P(x')$, then $(v-v')=w\in W$ and $(x-x')=y\in L$ and are composable, so we get
\[ v'\circ x' = (v-w)\circ(x-y) = v\circ x + w\circ y. \]
This means $\overline{v\circ x} = \overline{v'\circ x'}$ since $(w\circ y)\in L$.\\
Lastly, $\text{ann}_{P^*}(L)$ being a module over $\text{ann}_{V^*}(W)$ follows from the definition of the groupoid action, and that $L$ is a $W$-module.  Its set of units is the dual of the core of $V/W$, and hence $\text{ann}(W)\cap (V^*)^{(0)}$.
\end{proof}
\end{lem}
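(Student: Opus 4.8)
The plan is to treat the five assertions of the lemma in turn, in each case reducing to the linearity of the relevant structure maps together with the hypothesis that $W$ (resp.\ $L$) is a subgroupoid (resp.\ submodule); the only genuine content is checking that the ambient multiplications and actions descend to the quotients, and dually that they restrict to the annihilators.

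First I would set up the quotient groupoid. The subgroupoid condition forces $\s_V(W)\subseteq W^{(0)}$ and $\t_V(W)\subseteq W^{(0)}$, so $\s_V$ and $\t_V$ descend to maps $V/W\to V^{(0)}/W^{(0)}$, which are declared to be the source and target of $V/W$. For the product, given composable $\bar v,\bar w$ in $V/W$ I lift to composable $v,w$ in $V$ (possible after adjusting the lifts within $W$, since $\t_V$ and $\s_V$ map $W$ onto $W^{(0)}$) and set $\bar v\circ\bar w:=\overline{v\circ w}$. Independence of the lifts is the one place the vector-bundle structure is used: two composable lifts differ by composable elements of $W$, and since $\mathrm{Mult}_V$ is linear over $H^{(2)}$ the product then changes only by an element of $W$. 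Associativity, units and inverses descend formally, and the $VB$-structure is inherited, $\mathrm{Gr}(\mathrm{Mult}_{V/W})$ being the image of the sub-bundle $\mathrm{Gr}(\mathrm{Mult}_V)$; one may also simply cite Lemma 5.9 of \cite{moermc}, of which this is the $VB$-groupoid instance, so that only the $VB$ part needs checking. The projection $V\to V/W$ is a groupoid morphism by construction. For the core, by definition $\mathrm{core}(V/W)=(V/W)|_{H^{(0)}}/(V/W)^{(0)}$; since $(V/W)|_{H^{(0)}}=V|_{H^{(0)}}/W|_{H^{(0)}}$ and $(V/W)^{(0)}=V^{(0)}/W^{(0)}$ with $W^{(0)}=V^{(0)}\cap W|_{H^{(0)}}$, the third isomorphism theorem gives $\mathrm{core}(V/W)=V|_{H^{(0)}}/(W|_{H^{(0)}}+V^{(0)})$.

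The dual statements I would obtain by dualizing. By Remark \ref{annr}, $\mathrm{Gr}(\mathrm{Mult}_{V^*})=\ann(\mathrm{Gr}(\mathrm{Mult}_V))$, so the pairing formula $\langle\alpha\circ\tau,\xi\circ\zeta\rangle=\langle\alpha,\xi\rangle+\langle\tau,\zeta\rangle$ of Theorem \ref{vbdualgrpd} shows that when $\alpha,\tau\in\ann(W)$ are composable, $\alpha\circ\tau$ annihilates every composable $\xi\circ\zeta$ with $\xi,\zeta\in W$; hence $\ann(W)$ is closed under the multiplication of $V^*$, and together with the explicit source, target and inversion maps of Theorem \ref{vbdualgrpd} this makes $\ann(W)$ a $VB$-subgroupoid. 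Its units are then automatically $\ann(W)\cap(V^*)^{(0)}$, since the units of any subgroupoid are its intersection with the ambient units; consistently, $\ann(W)\cap(V^*)^{(0)}=\ann(W|_{H^{(0)}})\cap\ann_{V^*}(V^{(0)})=\ann_{V^*|_{H^{(0)}}}(W|_{H^{(0)}}+V^{(0)})$ is exactly the dual of the core computed above.

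Finally, the module statements run parallel to the groupoid ones. Since $L$ is a submodule relative to $W$, its moment map $\u_L$ has image in $W^{(0)}$, so $\u_P$ descends to $P/L\to V^{(0)}/W^{(0)}$; defining $\bar v\circ\bar x:=\overline{v\circ x}$ on composable lifts and invoking the linearity of the action map $V{}_\s\times_\u P\to P$ exactly as for $\mathrm{Mult}_V$ shows this is well defined, so $P/L$ is a $V/W$-module. Dually, the defining identity $\langle\alpha\circ\eta,v\circ y\rangle_P=\langle\alpha,v\rangle_V+\langle\eta,y\rangle_P$ for the $V^*$-action on $P^*$ shows $\ann_{P^*}(L)$ is preserved by the $\ann_{V^*}(W)$-action and has moment map the restriction of $\u_{P^*}$, using only that $L$ is a $W$-module. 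I expect the single real obstacle throughout to be the well-definedness of the descended multiplication and action — this is precisely where one needs the $VB$ (linearity) hypothesis rather than a bare set-theoretic quotient — and for the groupoid part this can be sidestepped by quoting \cite{moermc} directly.
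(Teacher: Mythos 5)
Your proposal is correct and follows essentially the same route as the paper: the third isomorphism theorem for the core, the multiplicativity of the dual pairing (Remark \ref{annr} and Theorem \ref{vbdualgrpd}) to see that $\ann(W)$ is closed under the product of $V^*$, and the lift-independence argument via linearity of the graph of the action for the quotient module, with the quotient-groupoid existence delegated to Lemma 5.9 of \cite{moermc}. The only difference is that you spell out the well-definedness of $\mathrm{Mult}_{V/W}$ explicitly where the paper relies on the citation, which is harmless.
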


\section{Lie Algebroids}\label{plalg}
\subsection{Definition and Examples}
The notion of a Lie algebroid was introduced by Pradines in \cite{Pradinesg} as the infinitesimal version of Lie groupoids.
\begin{dfn}
A \emph{Lie algebroid} $E\to M$ is a vector bundle over a manifold $M$, together with a Lie bracket $[\,\cdot\,,\,\cdot\,]$ on the sections $\Gamma(E)$, and a linear bundle map $\a:\,E\to \mathrm{T}M$ called the \emph{anchor} such that
\begin{equation}   [\sigma,f\tau] = f[\sigma,\tau] + (a(\sigma)f)\tau
\end{equation}
for all $\sigma,\tau\in \Gamma(E)$, and functions $f$ on $M$.
\end{dfn}
While this definition is sensible in many categories, we will mainly concern ourselves with smooth objects.
\begin{prop} Given a Lie algebroid $E\to M$, the anchor map $\a$ is an homomorphism of Lie algebras:
\begin{equation}  \a([\sigma,\tau]) = [\a(\sigma),\a(\tau)] \end{equation}
for all $\sigma,\tau \in \Gamma(E)$.
\begin{proof}
This follows from the Jacobi identity and the Leibniz rule.  See Lemma 8.1.4 in \cite{dufng}.
\end{proof}
\end{prop}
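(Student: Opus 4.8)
The plan is to show that the vector fields $\a([\sigma,\tau])$ and $[\a(\sigma),\a(\tau)]$ induce the same derivation of $C^\infty(M)$, which suffices since a vector field is determined by its action on functions. The difference of these two operators can be detected by feeding a third section into the brackets: I would fix arbitrary $\sigma,\tau,\rho\in\Gamma(E)$ and $f\in C^\infty(M)$ and compute the iterated bracket $[\sigma,[\tau,f\rho]]\in\Gamma(E)$ in two different ways, using the Leibniz rule and the Jacobi identity.

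First, expanding the inner bracket by the Leibniz rule, $[\tau,f\rho]=f[\tau,\rho]+(\a(\tau)f)\rho$, and then applying the Leibniz rule again to $[\sigma,f[\tau,\rho]]$ and to $[\sigma,(\a(\tau)f)\rho]$, one collects
\[ [\sigma,[\tau,f\rho]]=f[\sigma,[\tau,\rho]]+(\a(\sigma)f)[\tau,\rho]+(\a(\tau)f)[\sigma,\rho]+\big(\a(\sigma)(\a(\tau)f)\big)\rho. \]
Second, applying the Jacobi identity in the form $[\sigma,[\tau,f\rho]]=[[\sigma,\tau],f\rho]+[\tau,[\sigma,f\rho]]$, expanding each summand by the Leibniz rule (twice for the second one), and then invoking the Jacobi identity once more in $\Gamma(E)$ as $[[\sigma,\tau],\rho]+[\tau,[\sigma,\rho]]=[\sigma,[\tau,\rho]]$ to combine the $f$-terms, one obtains the identical expression except that the last term $\big(\a(\sigma)(\a(\tau)f)\big)\rho$ is replaced by $\big(\a([\sigma,\tau])f\big)\rho+\big(\a(\tau)(\a(\sigma)f)\big)\rho$.

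Equating the two results, every term cancels but the last, leaving $\big(\a(\sigma)(\a(\tau)f)-\a(\tau)(\a(\sigma)f)\big)\rho=\big(\a([\sigma,\tau])f\big)\rho$, that is, $\big([\a(\sigma),\a(\tau)]f-\a([\sigma,\tau])f\big)\rho=0$ for every $\rho\in\Gamma(E)$. Assuming $\rk(E)>0$ (the claim being trivial otherwise), at any $p\in M$ one may choose a local section $\rho$ with $\rho(p)\neq0$, which forces the scalar coefficient to vanish at $p$; since $p$ and $f$ were arbitrary, $\a([\sigma,\tau])$ and $[\a(\sigma),\a(\tau)]$ agree as derivations, hence as vector fields. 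The main obstacle is purely organizational — keeping track of the numerous Leibniz expansions so that the cancellation is transparent — together with the one genuinely non-formal step, namely passing from the section-valued identity to the vector-field identity, which relies on the existence of a section nonvanishing at a prescribed point.
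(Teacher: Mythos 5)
Your computation is correct and is exactly the argument the paper is invoking: the proposition's proof defers to the standard Jacobi-plus-Leibniz calculation (Lemma 8.1.4 of Dufour--Zung), which is precisely the double expansion of $[\sigma,[\tau,f\rho]]$ that you carry out, followed by cancellation and evaluation against a nonvanishing local section. No gaps; the final localization step (passing from $g\rho=0$ for all $\rho$ to $g=0$) is handled appropriately.
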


\begin{example} Let $M$ be a manifold. The tangent bundle T$M$ is a Lie algebroid over $M$ with the anchor map being the identity, and bracket being the normal Lie bracket of vector fields.
\end{example}
\begin{example} A Lie algebra is a Lie algebroid over a point.
\end{example}
\begin{example}\label{acLA} Let $M$ be a manifold, and $\g$ a Lie algebra together with an action $\psi: \,\g\to \mathfrak{X}(M)$.  We can associate to this data an \emph{action Lie algebroid} as follows:  the vector bundle $M\times\g \to M$, with anchor map defined as $\a(m,x) = \psi(x)\vert_m$, and a bracket on $\Gamma(M\times\g) = C^\infty(M,\g)$ given by
\begin{equation}
[\alpha,\beta]_m = [\alpha_m,\beta_m] + \psi(\alpha_m)_m\beta - \psi(\beta_m)_m\alpha.
\end{equation}
\end{example}

\begin{dfn} Let $E\to M$ be a Lie algebroid, and $S\subseteq M$ a submanifold.  A \emph{sub-Lie algebroid} of $E$ is a vector sub-bundle $F\to S$ such that $\a(F)\subseteq \mathrm{T}(S)$, and for all $\sigma_i\in \Gamma(E)$, if $\sigma_i|_S\in \Gamma(F)$, then $[\sigma_1,\sigma_2]|_S\in\Gamma(F)$.
\end{dfn}

\begin{dfn} Given Lie algebroids $E,E'$ over $M,M'$, a \emph{morphism of Lie algebroids} is a vector bundle morphism
\begin{center}
\begin{tikzpicture}
  \node (P) {$E$};
  \node (B) [right of=P] {$E'$};
  \node (A) [below of=P] {$M$};
  \node (C) [right of=A] {$M'$};

  \draw[->] (P) to node {} (B);
  \draw[->] (P) to node [swap] {} (A);
  \draw[->] (A) to node [swap] {$\Phi$} (C);
  \draw[->] (B) to node {} (C);
\end{tikzpicture}
\end{center}
whose graph in $E'\times E$ is a sub-Lie algebroid.  A \emph{comorphism of Lie algebroids}
\begin{center}
\begin{tikzpicture}
  \node (P) {$E$};
  \node (B) [right of=P] {$E'$};
  \node (A) [below of=P] {$M$};
  \node (C) [right of=A] {$M'$};

  \draw[dashed,->] (P) to node {} (B);
  \draw[->] (P) to node [swap] {} (A);
  \draw[->] (A) to node [swap] {$\Phi$} (C);
  \draw[->] (B) to node {} (C);
\end{tikzpicture}
\end{center}
is a vector bundle comorphism $\Phi^*E'\to E$ whose graph is a sub-Lie algebroid.
\end{dfn}

\subsection{Duals and Poisson Structures}
\begin{thm}\label{EdualPoisson} (See \cite{dufng}, Section 8.2) Let $E\to M$ be a Lie algebroid.  The dual bundle $E^*\to M$ is a Poisson manifold, with fibrewise linear bracket defined by
    \begin{align*}   \{f,g\}&=0 \\ \{\sigma,f\} &= \a(\sigma)f \\ \{\sigma,\tau\} &= [\sigma,\tau] \end{align*}
    with $f,g\in C^\infty(M)$, and $\sigma,\tau\in \Gamma(E)$ viewed as fibre-wise linear functions on $E^*$.  This bracket can be extended to other functions on $E^*$ via the Leibniz rule.
\end{thm}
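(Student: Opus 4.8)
The plan is to define the bracket on a generating set of functions on $E^*$, extend it by the Leibniz rule, and then verify the Poisson axioms on the generators. First I would note that, because $\pi\colon E^*\to M$ is a vector bundle, the algebra $C^\infty(E^*)$ is generated by the pullbacks $\pi^*f$, $f\in C^\infty(M)$, together with the fibrewise-linear functions $\ell_\sigma$ associated to sections $\sigma\in\Gamma(E)$, where $\ell_\sigma(\xi)=\langle\xi,\sigma_{\pi(\xi)}\rangle$: concretely, in a local frame $e_1,\dots,e_r$ of $E$ every smooth function on $E^*$ is a polynomial in the $\ell_{e_a}$ with coefficients in $\pi^*C^\infty(M)$. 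The three displayed formulas prescribe the bracket on these generators, and declaring $\{u,\cdot\}$ and $\{\cdot,v\}$ to be derivations extends it to all of $C^\infty(E^*)$.

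The first thing to check is that this Leibniz extension is consistent, i.e. compatible with the only relations among the generators, the $C^\infty(M)$-linearity relations $\ell_{f\sigma}=(\pi^*f)\,\ell_\sigma$. Using the Leibniz rule and $\{\pi^*f,\ell_\tau\}=-\{\ell_\tau,\pi^*f\}=-\pi^*(\a(\tau)f)$ one computes
\begin{equation*}
\{\ell_{f\sigma},\ell_\tau\}=(\pi^*f)\{\ell_\sigma,\ell_\tau\}+\ell_\sigma\,\{\pi^*f,\ell_\tau\}=(\pi^*f)\,\ell_{[\sigma,\tau]}-\pi^*(\a(\tau)f)\,\ell_\sigma,
\end{equation*}
which equals $\ell_{f[\sigma,\tau]-(\a(\tau)f)\sigma}=\ell_{[f\sigma,\tau]}$ exactly by the Leibniz identity in the definition of a Lie algebroid; the relations involving $\{\ell_{f\sigma},\pi^*g\}$ and $\{\pi^*f,\pi^*g\}$ are checked the same way. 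Thus the bracket is a well-defined biderivation on $C^\infty(E^*)$, and antisymmetry, holding on generators by inspection, propagates to all functions.

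It remains to prove the Jacobi identity. The point is that for any antisymmetric biderivation the Jacobiator $J(u,v,w)=\{u,\{v,w\}\}+\{v,\{w,u\}\}+\{w,\{u,v\}\}$ is again a derivation in each of its three slots, so it suffices to verify $J=0$ on generators, i.e. on the triples $(\pi^*f,\pi^*g,\pi^*h)$, $(\ell_\sigma,\pi^*f,\pi^*g)$, $(\ell_\sigma,\ell_\tau,\pi^*f)$ and $(\ell_\sigma,\ell_\tau,\ell_\chi)$. The first two vanish because every inner bracket that occurs lands in $\pi^*C^\infty(M)$, on which the bracket is zero. The third reduces to $\big([\a(\sigma),\a(\tau)]-\a([\sigma,\tau])\big)f=0$, which is precisely the proposition recalled above that the anchor is a homomorphism of Lie algebras. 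The fourth reduces, using linearity of $\sigma\mapsto\ell_\sigma$, to $\ell_{[\sigma,[\tau,\chi]]+[\tau,[\chi,\sigma]]+[\chi,[\sigma,\tau]]}=\ell_0=0$, i.e. to the Jacobi identity for the bracket on $\Gamma(E)$. Since $\{u,\cdot\}$ is a derivation by construction, $(E^*,\{\cdot,\cdot\})$ is a Poisson manifold, and the bracket is fibrewise linear because it sends a pair of linear generators to a linear function, and a linear-times-pullback to a pullback.

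The one genuinely delicate point is the bookkeeping in the well-definedness step, together with invoking (or proving in a line) that the Jacobiator of a biderivation is a triderivation; the checks on generators are then short and use only that $\a$ preserves brackets and that $[\cdot,\cdot]$ satisfies the Jacobi identity. Alternatively one can bypass the abstract extension argument: in bundle coordinates $(x^i,\xi_a)$ adapted to a local frame $e_a$ with $\a(e_a)=\rho^i_a\,\partial_{x^i}$ and $[e_a,e_b]=c^d_{ab}\,e_d$, set
\begin{equation*}
\Pi=\rho^i_a\,\partial_{\xi_a}\wedge\partial_{x^i}+\tfrac12\,c^d_{ab}\,\xi_d\,\partial_{\xi_a}\wedge\partial_{\xi_b};
\end{equation*}
one checks directly that $\{\cdot,\cdot\}=\langle\Pi,\dr(\cdot)\wedge\dr(\cdot)\rangle$ reproduces the three formulas, that $[\Pi,\Pi]=0$ is equivalent to the anchor and Jacobi identities for $E$, and that $\Pi$ does not depend on the chosen frame, hence glues to a global Poisson bivector on $E^*$.
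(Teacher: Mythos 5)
Your proof is correct; the paper itself gives no argument for this statement and simply cites Dufour--Zung, Section 8.2, where the standard proof is exactly the one you give (prescribe the bracket on pullbacks and fibrewise-linear functions, check consistency with $\ell_{f\sigma}=(\pi^*f)\ell_\sigma$ via the Leibniz axiom for the algebroid, extend as a biderivation, and verify Jacobi on generators using that the anchor is bracket-preserving --- equivalently, exhibit the local bivector $\Pi$ and check $[\Pi,\Pi]=0$). The only point worth writing out in full if this were included is the one you flag yourself, namely that the Jacobiator of an antisymmetric biderivation is a derivation in each slot, so that checking it on the generators $\pi^*f$ and $\ell_\sigma$ suffices.
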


\begin{thm}\label{Edualmor} There is a 1-1 correspondence between comorphisms of Lie algebroids $E\da E'$ and Poisson morphisms $E^*\to (E')^*$.
\end{thm}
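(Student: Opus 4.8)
The plan is to make the standard dictionary between comorphism data and fibrewise-linear maps of dual bundles explicit, and then show that the Lie-algebroid condition on one side translates, via the bracket formulas of Theorem~\ref{EdualPoisson}, into the Poisson condition on the other. Fix a smooth map $\Phi:\,M\to M'$. The first step is the purely linear-algebraic observation that a vector bundle comorphism $R:\,\Phi^*E'\to E$ (a bundle map over $\mathrm{id}_M$) is the same datum as a vector bundle morphism $\phi:\,E^*\to (E')^*$ over $\Phi$, obtained by fibrewise dualisation $\phi_m=(R_m)^*$; this gives a bijection at the level of maps. (Here, as is implicit in the statement, one reads ``Poisson morphism $E^*\to (E')^*$'' as meaning one that is also a vector bundle map over a base map, since arbitrary base-changing maps of the duals can be Poisson for trivial reasons, e.g.\ for abelian structures.) Given $R$, each $\sigma\in\Gamma(E')$ produces $R^!\sigma:=R\circ\Phi^*\sigma\in\Gamma(E)$, and writing $\ell_\sigma$ for the fibrewise-linear function on $(E')^*$ determined by $\sigma$, one checks straight from the definition of $\phi$ that $\phi^*\ell_\sigma=\ell_{R^!\sigma}$ and $\phi^*g=\Phi^*g$ for $g\in C^\infty(M')$ viewed on $(E')^*$. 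Since both sides of $\phi^*\{u,v\}=\{\phi^*u,\phi^*v\}$ are biderivations in $u,v$, it is enough to verify this on the generating functions $\ell_\sigma$ and $g$.

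The second step is that verification. On base functions $\{g,g'\}=0$ on both $E^*$ and $(E')^*$, so that case is automatic. For the mixed bracket, Theorem~\ref{EdualPoisson} gives $\phi^*\{\ell_\sigma,g\}=\Phi^*\big(\a_{E'}(\sigma)g\big)$ and $\{\phi^*\ell_\sigma,\phi^*g\}=\a_E(R^!\sigma)(\Phi^*g)$, so equality for all $\sigma,g$ holds exactly when $\a_E(R^!\sigma)$ is $\Phi$-related to $\a_{E'}(\sigma)$ for every $\sigma$. For the linear--linear bracket, $\phi^*\{\ell_\sigma,\ell_{\sigma'}\}=\ell_{R^![\sigma,\sigma']}$ and $\{\phi^*\ell_\sigma,\phi^*\ell_{\sigma'}\}=\ell_{[R^!\sigma,R^!\sigma']}$, so equality for all $\sigma,\sigma'$ holds exactly when $R^![\sigma,\sigma']=[R^!\sigma,R^!\sigma']$ in $\Gamma(E)$ (using that $\sigma\mapsto\ell_\sigma$ is injective). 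Hence $\phi=R^*$ is a Poisson morphism if and only if $R$ is $\Phi$-related on anchors and bracket-preserving on the sections of the form $R^!\sigma$.

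The third step identifies these two conditions with ``the graph of $R$ is a sub-Lie algebroid of $E'\times E$''. The graph $\mathrm{Gr}(R)=\{(e',R_m e'):m\in M,\ e'\in E'_{\Phi(m)}\}$ is automatically a vector sub-bundle over the submanifold $\mathrm{Gr}(\Phi)$, and $m\mapsto(\Phi(m),m)$ identifies $\Gamma(\mathrm{Gr}(R))$ with $\Gamma(\Phi^*E')$, which is spanned over $C^\infty(M)$ by the doubled sections $(\sigma,R^!\sigma)$, $\sigma\in\Gamma(E')$. The anchor requirement $\a_{E'\times E}(\mathrm{Gr}(R))\subseteq\mathrm{T}(\mathrm{Gr}(\Phi))$ is precisely $\Phi$-relatedness of the anchors; and the involutivity requirement for sections of $E'\times E$ restricting along $\mathrm{Gr}(\Phi)$ into $\mathrm{Gr}(R)$ reduces by the Leibniz rule to the generators $(\sigma,R^!\sigma)$, where $[(\sigma,R^!\sigma),(\sigma',R^!\sigma')]=([\sigma,\sigma'],[R^!\sigma,R^!\sigma'])$ lies in $\mathrm{Gr}(R)$ along $\mathrm{Gr}(\Phi)$ exactly when $[R^!\sigma,R^!\sigma']=R^![\sigma,\sigma']$. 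Thus ``graph of $R$ a sub-Lie algebroid'' is equivalent to ``$R^*$ a Poisson morphism'', and combined with the bijection of the first step this yields the claimed $1$--$1$ correspondence.

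The main obstacle is the bookkeeping of the third step: correctly presenting $\Gamma(\mathrm{Gr}(R))$ as a pullback module over $\mathrm{Gr}(\Phi)$ and justifying the Leibniz reduction so that the \emph{a priori} infinitely many involutivity conditions collapse to the single family $[R^!\sigma,R^!\sigma']=R^![\sigma,\sigma']$ (and analogously that testing the Poisson identity on the generators $\ell_\sigma$, $g$ suffices). These are the points to carry out with care; the remaining computations are the short formal manipulations indicated above.
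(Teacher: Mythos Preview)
Your proof is correct and follows the standard approach found in the references the paper cites. Note, however, that the paper itself does not give a proof of this theorem: it simply states the result and refers the reader to Higgins--Mackenzie and Chen--Liu, so there is no ``paper's own proof'' to compare against. What you have written is essentially a self-contained version of the argument in those sources: set up the bijection between bundle comorphisms and fibrewise-linear dual maps, test the Poisson identity on the generating functions $\ell_\sigma$ and basic functions using the bracket formulas of Theorem~\ref{EdualPoisson}, and match the resulting conditions against the anchor and bracket requirements for the graph of $R$ to be a sub-Lie algebroid.

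One small point worth tightening in your third step: sections of $E'\times E$ over $M'\times M$ that restrict to $\mathrm{Gr}(R)$ along $\mathrm{Gr}(\Phi)$ are more general than product sections $(\sigma,R^!\sigma)$, since $\Gamma(E'\times E)$ is a $C^\infty(M'\times M)$-module and not merely $C^\infty(M')\otimes C^\infty(M)$-generated by such products. You correctly flag this as ``the main obstacle,'' and the fix is exactly the Leibniz reduction you indicate: any ambient section restricting to $\mathrm{Gr}(R)$ can be written locally as $\sum_i f_i\,(\sigma_i,\tau_i)$ with $(\sigma_i,\tau_i)$ product sections, and the Leibniz rule together with the anchor-tangency condition $\a(\mathrm{Gr}(R))\subset\mathrm{T}(\mathrm{Gr}(\Phi))$ shows that involutivity propagates from the generators $(\sigma,R^!\sigma)$ to all such sections. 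With that made explicit, the argument is complete.
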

See \cite{higginsmack}, \cite{higmack2}, \cite{comor}.

\subsection{Reduction of Lie Algebroids Over Principal Bundles}\label{laredk}

Let $K$ be a Lie group with Lie algebra $\k$, and $M$ a manifold on which $K$ acts.  Let $E \to M$ be a Lie Algebroid, such that $K$ acts on $E$ by Lie algebroid automorphisms.  This action gives us a Lie algebra homomorphism $\k\to \Gamma(\mathfrak{aut}(E))$, $\xi\mapsto \xi_E$ defined by
\begin{equation}
\xi_E\sigma = \frac{\partial}{\partial t}\bigg|_{t=0} \exp(-t\xi)^*\sigma,\qquad\xi\in\k,\,\,\sigma\in\Gamma(E).
\end{equation}
This is an action of $\k$ on $E$ by Lie algebroid derivations.\\
The vertical bundle of $M$ is $VM\cong M\times\k$, which inherits a natural Lie algebroid structure:  the anchor is the inclusion map $\a:M\times\k\to \mathrm{T}M$, and the bracket is obtained by extending the Lie bracket on the constant sections of $M\times\k$.
\begin{dfn} A $K$-equivariant bundle map $\rho\in \mathrm{Hom}_K(M\times\k,\,E)$ is said to ``define generators for the $K$-action on E" if
\begin{equation}
\xi_E = [\rho(\xi),\cdot\,]
\end{equation}
as operators on $\Gamma(E)$, using $\rho$ to denote both $M\times\k\to E$, and $\k\to\Gamma(E)$.
\end{dfn}

If one differentiates the $K$-equivariance property $(k^{-1})^*\rho(\xi)=\rho(\Ad_k\xi)$, it follows that
\begin{equation}
[\rho(\xi_1),\rho(\xi_2)]_E = \rho([\xi_1,\xi_2]_\k).
\end{equation}
The image of the generators via the anchor are the generating vector fields on $M$: let $\xi\in \k$, $\sigma\in \Gamma(E)$, $f\in C^\infty(M)$, then we have
\begin{equation}
\xi_E(f\sigma) = [\rho(\xi),f\sigma] = f[\rho(\xi),\sigma] + a(\rho(\xi))f\cdot\sigma = f(\xi_E\sigma) + a(\rho(\xi))f\cdot\sigma.
\end{equation}
But $\xi_E(f\sigma) = f(\xi_E\sigma) + (\xi_Mf)\sigma$, so equating both sides, we get
\begin{equation}
(\xi_Mf)\sigma = (\a_E(\rho(\xi))f)\sigma
\end{equation}
for all $f\in C^\infty(M),\,\sigma\in\Gamma(E),\,\xi\in\k$.  Hence,
\begin{equation}\label{rhinj}
\a_E(\rho(\xi))=\xi_M.
\end{equation}
If we suppose that the $K$ action on $M$ is principal, equation \ref{rhinj} implies $\a_E\circ\rho$ is injective, and so $\rho$ must be injective.  Hence, we get the commuting diagram
\begin{center}
\begin{tikzpicture}
  \node (P) {$M\times\k$};
  \node (B) [right of=P] {$E$};
  \node (A) [below of=P] {$\mathrm{V}M$};
  \node (C) [right of=A] {$\mathrm{T}M$};

  \draw[->] (P) to node {$\rho$} (B);
  \draw[->] (P) to node [swap] {$\cong$} (A);
  \draw[->] (A) to node [swap] {} (C);
  \draw[->] (B) to node {$\a_E$} (C);
\end{tikzpicture}
\end{center}

\begin{lem}\label{Kideal} Identify $M\times\k$ as a Lie sub-algebroid of $E$ via the injection $\rho$.  The space of $K$-invariant sections of $M\times\k$ is a Lie ideal of the $K$-invariant sections of $E$.
\begin{proof}
Let $\sigma\in \Gamma(E)^K$, $\kappa\in \Gamma(M\times\k)^K$.  Since $[\sigma,\kappa]$ is $K$-invariant, we need only show that it lies in $\Gamma(M\times\k)$.  Let $\{e_i\}$ be a basis for $\k$.  Then $\kappa\in\Gamma(M\times\k)^K$ can be written as $\kappa=\sum_i f_ie_i$, and we have:
\begin{align*}
[\sigma,\kappa] &= [\sigma,\sum_if_ie_i]\\
                &=\sum_i f_i[\sigma,e_i] + \sum_i \big(\a_E(\sigma)f_i\big)e_i.
\end{align*}
The first term vanishes, owing to the $K$-invariance of $\sigma$.  The term $\sum_i(\a_E(\sigma)f_i)e_i$ lies in $\Gamma(M\times\k)$ as required. It is $K$-invariant since $\Gamma(E)^K$ is closed under the bracket.  Hence, for any $\sigma\in\Gamma(E)^K,\,\kappa\in\Gamma(M\times\k)^K$, we have $[\sigma,\kappa]\in\Gamma(M\times\k)^K$.
\end{proof}
\end{lem}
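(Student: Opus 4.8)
The plan is to reduce the statement to a pointwise identity obtained from the Leibniz rule of $E$, using a global frame of the trivial bundle $M\times\k$. First I would record the closure facts that make the assertion meaningful: since $K$ acts on $E$ (and on its subalgebroid $M\times\k$) by Lie algebroid automorphisms, the bracket of two $K$-invariant sections is again $K$-invariant, so $\Gamma(E)^K$ and $\Gamma(M\times\k)^K$ are Lie subalgebras of $\Gamma(E)$, and $\Gamma(M\times\k)^K\subseteq\Gamma(E)^K$. Consequently, for $\sigma\in\Gamma(E)^K$ and $\kappa\in\Gamma(M\times\k)^K$ the bracket $[\sigma,\kappa]$ is automatically $K$-invariant, and the only thing left to verify is that $[\sigma,\kappa]$ is a section of the sub-bundle $M\times\k\subset E$.

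To check this, I would fix a basis $\{e_i\}$ of $\k$, which gives a global frame of $M\times\k$; under the identification via $\rho$ each $e_i$ is the generator $\rho(e_i)\in\Gamma(E)$. Writing $\kappa=\sum_i f_i e_i$ with $f_i\in C^\infty(M)$ and applying the Leibniz rule of $E$ yields $[\sigma,\kappa]=\sum_i f_i[\sigma,e_i]+\sum_i(\a_E(\sigma)f_i)e_i$. The second sum is visibly a section of $M\times\k$. For the first sum, the key point is that $[\sigma,e_i]=[\sigma,\rho(e_i)]=-(e_i)_E\sigma$ by the defining property of the generators, and $(e_i)_E\sigma=\tfrac{\partial}{\partial t}\big|_{t=0}\exp(-te_i)^*\sigma$ vanishes because $\sigma$ is $K$-invariant. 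Hence $[\sigma,\kappa]=\sum_i(\a_E(\sigma)f_i)e_i\in\Gamma(M\times\k)$, and combined with the $K$-invariance noted above, $[\sigma,\kappa]\in\Gamma(M\times\k)^K$, which is the ideal property.

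I do not expect a serious obstacle here: the entire content is correctly invoking two identifications — that the constant $\k$-valued sections correspond to the generators $\rho(\xi)$, and that $K$-invariance of $\sigma$ forces $(\xi)_E\sigma=0$ for $\xi\in\k$ — after which the Leibniz rule does the rest. The one point requiring a little care is that $\{e_i\}$ be a genuine global frame so the coefficients $f_i$ are globally defined smooth functions, which is automatic since $M\times\k$ is trivial; note also that principality of the $K$-action is not used directly in this step, only earlier to guarantee that $\rho$ is the injection along which $M\times\k$ is identified as a subalgebroid.
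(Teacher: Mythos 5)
Your proposal is correct and follows essentially the same route as the paper: expand $\kappa=\sum_i f_ie_i$ in a global frame, apply the Leibniz rule, and kill the first sum using $[\sigma,\rho(e_i)]=-(e_i)_E\sigma=0$ for $K$-invariant $\sigma$. The only difference is that you spell out the generator identity behind the vanishing of $[\sigma,e_i]$, which the paper leaves implicit.
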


\begin{thm}\label{KactonEthm} Let $E\to M$ be a Lie algebroid with an action $K\act E$ by Lie algebroid automorphisms such that the $K$ action on $M$ is a principal action, and let $\rho\in \mathrm{Hom}_K(M\times\k,E)$ define generators for the $K$ action. Then the space
\begin{equation}
E//K := (E/\rho(M\times\k)) / K
\end{equation}
is a Lie algebroid over $M/K$ with the induced bracket and anchor map.
\begin{proof}
Let $\pi: M\to M/K$ be the natural projection.  The sections of $E//K$ are characterised as $\Gamma(E//K) = \Gamma(E)^K/\Gamma(M\times\k)^K$.  The kernel of the vector bundle map $\phi_K:\,E\to E//K$ is $\rho(M\times\k)\cong VM \cong \ker\mathrm{T}\pi$, with $\mathrm{T}\pi: \mathrm{T}M\to \mathrm{T}(M/K)$.  Hence, the anchor map $\a_E:\, E\to \mathrm{T}M$ descends to a map $\a_{E//K}:\,E//K\to \mathrm{T}(M/K)$ via
\begin{equation}
\a_{E//K}( x\md M\times\k) = \a_E(x)\md VM
\end{equation}
or more concisely, $\a_{E//K}(\overline{x}) = \overline{\a_E(x)}$, where the overline denotes the image in the respective quotients.  This gives us the following commuting diagram:
\begin{center}
\begin{tikzpicture}
  \node (P) {$E$};
  \node (B) [node distance=2.5cm,right of=P] {T$M$};
  \node (A) [below of=P] {$E//K$};
  \node (C) [node distance=2.5cm,right of=A] {$\mathrm{T}(M/K)$};

  \draw[->] (P) to node {$\a_E$} (B);
  \draw[->] (P) to node [swap] {$\phi_K$} (A);
  \draw[->] (A) to node [swap] {$\a_{E//K}$} (C);
  \draw[->] (B) to node {T$\pi$} (C);
\end{tikzpicture}
\end{center}

By the Lemma \ref{Kideal}, $\Gamma(M\times\k)^K$ is an ideal in $\Gamma(E)^K$, and hence we get a Lie bracket on $\Gamma(E//K)=\Gamma(E)^K/\Gamma(M\times\k)^K$.  Let $\overline{\sigma},\overline{\tau}\in \Gamma(E//K)$, and let $\sigma,\tau\in \Gamma(E)^K$ be any lifts of them.  Then we see, for any $f\in C^\infty(M/K)=C^\infty(M)^K$:
\begin{align*}
 [\overline{\sigma},f\overline{\tau}] &:= \overline{[\sigma,f\tau]}\\
              &= \overline{f[\sigma,\tau]} + \overline{a_E(\sigma)f\tau}\\
              &= f[\overline{\sigma},\overline{\tau}]+a_{E//K}(\overline{\sigma})f\overline{\tau}.
 \end{align*}
Hence, $\a_{E//K}$ is an anchor map for $E//K$ and its descending bracket, meaning $E//K\to M/K$ is a Lie algebroid.
\end{proof}
\end{thm}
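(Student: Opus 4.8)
The plan is to build the Lie algebroid structure on $E//K$ by descending the structure on $E$ in two stages, exactly mirroring the two-step definition $E//K=(E/\rho(M\times\k))/K$: first quotient the bracket and anchor by the sub-object $\rho(M\times\k)$ over $M$, then pass to $K$-orbits over $M/K$. First I would record the underlying bundle. Since $\rho$ is $K$-equivariant and injective (injectivity was already obtained from $\a_E\circ\rho=(\xi\mapsto\xi_M)$ together with freeness of the principal action, equation \ref{rhinj}), $\rho(M\times\k)\subset E$ is a $K$-invariant vector sub-bundle, so $E/\rho(M\times\k)$ is a $K$-equivariant vector bundle over $M$; because the $K$-action on $M$ is principal, hence free and proper, the quotient $E//K=(E/\rho(M\times\k))/K$ is a genuine vector bundle over $M/K$, whose sections are the $K$-invariant sections of $E/\rho(M\times\k)$, i.e. $\Gamma(E//K)\cong\Gamma(E)^K/\Gamma(M\times\k)^K$ (using surjectivity of $\Gamma(E)^K\to\Gamma(E/\rho(M\times\k))^K$). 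Write $\phi_K:E\to E//K$ for the induced bundle map over $\pi:M\to M/K$.

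Next I would descend the anchor. From equation \ref{rhinj}, $\a_E(\rho(\xi))=\xi_M$, so $\a_E$ maps $\rho(M\times\k)$ onto the vertical bundle $\mathrm{V}M=\ker(\mathrm{T}\pi)\cong M\times\k$; therefore $\a_E$ descends first to a bundle map $E/\rho(M\times\k)\to \mathrm{T}M/\mathrm{V}M\cong\pi^*\mathrm{T}(M/K)$ over $M$, and then, being $K$-equivariant, to a map $\a_{E//K}:E//K\to\mathrm{T}(M/K)$ characterised by $\a_{E//K}(\overline{x})=\overline{\a_E(x)}$. Well-definedness is immediate: replacing a lift $x$ by $x+\rho(v)$ alters $\a_E(x)$ by an element of $\mathrm{V}M$, which is killed in the quotient; and $K$-equivariance of $\a_E$ makes the descent to $M/K$ consistent.

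For the bracket I would invoke Lemma \ref{Kideal}: $\Gamma(M\times\k)^K$ is a Lie ideal of $(\Gamma(E)^K,[\cdot,\cdot])$, so the bracket passes to the quotient Lie algebra $\Gamma(E//K)=\Gamma(E)^K/\Gamma(M\times\k)^K$ by $[\overline{\sigma},\overline{\tau}]:=\overline{[\sigma,\tau]}$ for $K$-invariant lifts $\sigma,\tau$; well-definedness of this formula is exactly the ideal property, and the Jacobi identity is inherited from $\Gamma(E)$. It remains to verify the Leibniz rule against the new anchor: for $f\in C^\infty(M/K)=C^\infty(M)^K$, the section $f\tau$ is again a $K$-invariant lift of $f\overline{\tau}$, and
\[ [\,\overline{\sigma},\, f\overline{\tau}\,] \;=\; \overline{[\sigma,\,f\tau]} \;=\; \overline{f\,[\sigma,\tau]} + \overline{(\a_E(\sigma)f)\,\tau} \;=\; f\,[\,\overline{\sigma},\overline{\tau}\,] + \bigl(\a_{E//K}(\overline{\sigma})f\bigr)\,\overline{\tau}, \]
where I use that $\a_E(\sigma)$ is a $K$-invariant vector field, so $\a_E(\sigma)f\in C^\infty(M)^K$ descends to $\a_{E//K}(\overline{\sigma})f$ on $M/K$. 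This gives all the Lie algebroid axioms for $E//K\to M/K$.

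The genuinely non-formal ingredients are two. First, that the iterated quotient is actually a smooth vector bundle over $M/K$ with the stated sections: this is where principality of the $K$-action (freeness and properness) is indispensable, both to make $M/K$ a manifold and to identify $\Gamma(E//K)$ with $K$-invariant sections downstairs, which in turn needs surjectivity of $\Gamma(E)^K\to\Gamma(E/\rho(M\times\k))^K$. Second, the ideal property of Lemma \ref{Kideal}, without which no bracket descends at all; this is the real crux, and it is precisely the content already isolated in that lemma. Everything downstream — the Leibniz computation and the Jacobi identity — is a routine transport of identities on $\Gamma(E)$ through the quotient maps, so I expect no further obstacle there.
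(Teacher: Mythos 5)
Your proposal is correct and follows essentially the same route as the paper's own proof: identify $\Gamma(E//K)$ with $\Gamma(E)^K/\Gamma(M\times\k)^K$, descend the anchor using $\a_E(\rho(\xi))=\xi_M$, invoke Lemma \ref{Kideal} for the ideal property, and check the Leibniz rule by the same computation. Your additional remarks on smoothness of the quotient bundle and surjectivity of $\Gamma(E)^K\to\Gamma(E/\rho(M\times\k))^K$ only make explicit points the paper leaves implicit.
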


\subsection{Pullback Lie Algebroids and Their Relation to Reductions}
\begin{dfn}\label{pbLA} (\cite{higginsmack}) Let $L\to N$ be a Lie algebroid over a manifold $N$, and let $\Phi: M \to N$ be a smooth map such that $\mathrm{T}\Phi$ is transverse to the anchor of $L$.  The \emph{pullback Lie algebroid} of $L$ is defined as
\begin{equation}
\Phi^!L = \{(v,x)\,|\,\mathrm{T}\Phi(v) = a_{L}(x)\} \subset \mathrm{T}M\times L\vert_{\mathrm{Gr}(\Phi)},
\end{equation}
where we identify $\mathrm{Gr}(\Phi) = M$.  $\Phi^!L$ is a sub-Lie algebroid of T$M\times L$, and by the transversality condition, $\rk(\Phi^!L) = \rk(L) + \dim(M) - \dim(N)$.
\end{dfn}
\begin{example}
Let $\Phi:\,M\to S$ be a submersion.  Then the pullback $\Phi^!0$ is the vertical bundle $VM$, since
    \[ \Phi^!0 = \{ (v,0)\,|\,\mathrm{T}\Phi(v) = 0\} \subseteq \mathrm{T}M\times 0. \]
\end{example}
Given a Lie algebroid $E\to M$ over a principal $K$-bundle $M\xrightarrow{\,\,\pi\,\,} M/K$, and a Lie algebroid $E'$ over $M/K$, two natural questions to ask include  ``Is $\pi^!(E//K) = E$?", and ``Is $(\pi^!E')//K = E'$?".  For the latter, see Corollary \ref{otherLiered}.  For the former, we begin by noting that rk$\pi^!(E//K) = \mathrm{rk}(E//L) - \dim(M/K) + \dim(M)$, and by construction $\rk(E//K) = \rk(E) - \dim(\k)$.  Thus
\begin{equation}
\rk\pi^!(E//K) = \rk(E).
\end{equation}

\begin{prop}\label{LAlgpbk} Let $E$ be a Lie algebroid over $M$ carrying a $K$ action as in Theorem \ref{KactonEthm}, and let $\phi_K:\,E\to E//K$ be the Lie algebroid morphism taking $E$ to its reduction.  The natural map $f: E \to \mathrm{T}M\times (E//K)$ given by $f(x) = (\a_E(x),\phi_K(x))$ is an injective Lie algebroid automorphism.  Hence, the image of $E$ under $f$ is $\pi^!(E//K)$, and thus they are isomorphic as Lie algebroids.
\begin{proof}
Both the anchor map $\a_E$ and the reduction map $\phi_K$ are Lie algebroid morphisms, and hence $f$ must be.  If $x\in\ker f$ for $x\in E_m$, then $\phi_K(x) = 0$, which occurs if and only if $x\in M\times\k$.  Then $\a_E(x) = 0$ at $h$, but $\a_E(\rho(m,\xi)) = \xi_M\vert_m$, and since the $K$ action is principal, the generating vector fields cannot vanish.  Thus, $\ker f = \{0\}$.\\
Since $a_{E//K}(\phi_K(x)) = \mathrm{T}\pi(\a_E(x))$, this means $f(E)\subseteq \pi^!(E_{\rd})$.  Thus, since the map $f$ is an injective Lie algebroid morphism between two algebroids of the same rank, we conclude $E\cong \pi^!(E//K)$:
\begin{center}
\begin{tikzpicture}
  \node (P) {$\pi^!(E//K)$};
  \node (B) [node distance=2.5cm, right of=P] {$E//K$};
  \node (A) [node distance=2.5cm, below of=P] {$\mathrm{T}M$};
  \node (C) [node distance=2.5cm, below of=B] {$\mathrm{T}(M/K)$};
  \node (P1) [node distance=2cm, left of=P, above of=P] {$E$};
  \draw[->] (P) to node {} (B);
  \draw[->] (P) to node {$\a_{\pi!(E//K)}$} (A);
  \draw[->] (A) to node [swap] {$\mathrm{T}\pi$} (C);
  \draw[->] (B) to node {$\a_{E//K}$} (C);
  \draw[->, bend right] (P1) to node [swap] {$\a_E$} (A);
  \draw[->, bend left] (P1) to node {$\phi_K$} (B);
  \draw[->, dashed] (P1) to node {$f$} (P);
\end{tikzpicture}
\end{center}
\end{proof}
\end{prop}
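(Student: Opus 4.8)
The plan is to verify the three claims in turn --- that $f$ is a Lie algebroid morphism, that it is injective, and that its image equals $\pi^!(E//K)$ --- and then to upgrade this to an isomorphism by a rank count. First I would note that $f$ splits into two components: the anchor $\a_E\colon E\to\mathrm{T}M$, which is a Lie algebroid morphism over $\mathrm{id}_M$, and the reduction map $\phi_K\colon E\to E//K$, which by Theorem~\ref{KactonEthm} is a Lie algebroid morphism over $\pi\colon M\to M/K$. A map into a product of Lie algebroids is a morphism exactly when each of its components is, so $f=(\a_E,\phi_K)\colon E\to \mathrm{T}M\times(E//K)$ is a Lie algebroid morphism over $(\mathrm{id}_M,\pi)$. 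The commuting square established in the proof of Theorem~\ref{KactonEthm} gives $\a_{E//K}(\phi_K(x))=\mathrm{T}\pi(\a_E(x))$ for every $x$, which is precisely the relation cutting out $\pi^!(E//K)$ inside $\mathrm{T}M\times(E//K)$ in Definition~\ref{pbLA}; hence $f(E)\subseteq\pi^!(E//K)$, and since the latter is a sub-Lie algebroid, the corestriction $f\colon E\to\pi^!(E//K)$ is again a Lie algebroid morphism.

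Next I would check injectivity, which is where principality of the $K$-action enters. If $f(x)=0$ for $x\in E_m$, then $\phi_K(x)=0$, so $x\in\rho(M\times\k)=\ker\phi_K$; write $x=\rho(m,\xi)$, using that $\rho$ is injective (which, as in the discussion preceding Lemma~\ref{Kideal}, follows from \eqref{rhinj} because a principal, hence free, $K$-action has injective infinitesimal action). The remaining condition $\a_E(x)=0$ then reads $\xi_M\vert_m=0$ by \eqref{rhinj}, and freeness of the action forces $\xi=0$, so $x=0$. Thus $\ker f=0$.

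Finally I would close with a dimension count: by Definition~\ref{pbLA} together with the construction $E//K=(E/\rho(M\times\k))/K$, one has $\rk\pi^!(E//K)=\rk(E//K)+\dim M-\dim(M/K)=(\rk E-\dim\k)+\dim\k=\rk E$, so $f$ is an injective bundle map between vector bundles of equal rank over $M\cong\mathrm{Gr}(\pi)$, hence a vector bundle isomorphism; in particular $f(E)=\pi^!(E//K)$. An injective Lie algebroid morphism that is a vector bundle isomorphism is a Lie algebroid isomorphism, since the inverse bundle map has graph equal to the flip of the graph of $f$, which is again a sub-Lie algebroid. Therefore $E\cong\pi^!(E//K)$ as Lie algebroids. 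I expect the only genuinely delicate point to be the bookkeeping about Lie algebroid morphisms into products and pullbacks --- in particular the stability of the morphism property under corestriction to the sub-algebroid $\pi^!(E//K)$ --- but this is routine once phrased in terms of graphs, and the remaining ingredients (the rank count and the freeness of the $K$-action) are immediate.
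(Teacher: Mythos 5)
Your proposal is correct and follows essentially the same route as the paper: decompose $f$ into the anchor and the reduction map, use principality of the $K$-action to get injectivity, use the commuting square $\a_{E//K}\circ\phi_K=\mathrm{T}\pi\circ\a_E$ to land in $\pi^!(E//K)$, and finish with the rank count. The extra care you take with corestriction to the sub-algebroid and with the inverse being a morphism only makes explicit what the paper leaves implicit.
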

\section{Courant Algebroids}\label{prelimCAs}
\begin{dfn}\label{courdef} (\cite{lwx}) A \emph{Courant algebroid}  over a manifold $M$ is a vector bundle $\A\to M$, with a bundle map $\a:\,\A\to TM$ called the \emph{anchor}, a non-degenerate symmetric bilinear form (referred to as an \emph{inner product} or \emph{metric}) $\<\cdot,\cdot\>$ on the fibres of $\A$, and a bilinear \emph{Courant bracket} $\Cour{\cdot,\cdot}$ on the sections $\Gamma(\A)$, such that the following are satisfied for every $\sigma_i\in\Gamma(\A)$:
\begin{enumerate}
\item $\Cour{\sigma_1,\Cour{\sigma_2,\sigma_3}}=\Cour{\Cour{\sigma_1,\sigma_2},\sigma_3}
+\Cour{\sigma_2,\Cour{\sigma_1,\sigma_3}}$,
\item $\a(\sigma_1)\< \sigma_2,\sigma_3\> =\< \Cour{\sigma_1,\sigma_2},\,\sigma_3\>+\< \sigma_2,\,\Cour{\sigma_1,\sigma_3}\>$,
\item $\Cour{\sigma_1,\sigma_2}+\Cour{\sigma_2,\sigma_1}=\a^*(\dr \< \sigma_1,\sigma_2\>)$,
\end{enumerate}
where $\a^*:\,T^*M\to \A^*\cong\A$ is the map dual to the anchor.
\end{dfn}
From these, a number of other properties may be discerned, including:
\begin{enumerate}
\item[4.] $\Cour{\sigma_1,f\sigma_2}=f\Cour{\sigma_1,\sigma_2}+(\a(\sigma_1)f)\sigma_2$,
\item[5.] $\a(\Cour{\sigma_2,\sigma_2})=[\a(\sigma_1),\a(\sigma_2)]_{\mathfrak{X}(M)}$,
\item[6.] $\a\circ\a^* = 0$
\end{enumerate}
for $\sigma_i\in\Gamma(\A),f\in C^\infty(M)$. Given a Courant algebroid $\A\to M$, a Lagrangian sub-bundle $E\subset\A$ whose space of sections $\Gamma(E)$ is closed under the Courant bracket is called a \emph{Dirac structure}.  It follows that $E$, with the restrictions of the anchor map and the Courant bracket, is a Lie algebroid.

\begin{example} \cite{courdir}  For any manifold $M$, the bundle $\T M = \mathrm{T}M\oplus\mathrm{T}^*M$ is a Courant algebroid under the following metric and bracket:
\begin{align*}
\< (v_1,\alpha_1),(v_2,\alpha_2)\> &= \<v_1,\alpha_2\> + \<v_2,\alpha_1\>\\
\Cour{(v_1,\alpha_1),(v_2,\alpha_2)} &= ([v_1,v_2],\,\, \L_{v_1}\alpha_2 - \iota_{v_2}\dr\alpha_1)
\end{align*}
for all vector fields $v_i\in \mathfrak{X}(M)$, and for $\alpha_i\in \Gamma(\mathrm{T}^*M)$.  The anchor map $\a:\T M\to \mathrm{T}M$ is given by projection along T$^*M$.  This example is called the \emph{standard Courant algebroid} over $M$.  Both $\mathrm{T}M$ and $\mathrm{T}^*M$ are examples of Dirac structures in this case.
\end{example}
\begin{example} Given a quadratic Lie algebra $\g$ together with Lie algebra action $\rho:\,\g\to \frak{X}(M)$, the bundle $\A:=M\times\g$ becomes a Courant algebroid if and only if the stabilizers $\ker\rho_m$ are coisotropic in $\A$ \cite{capg}.  In this case, the anchor map is given by $\a(m,\xi) = \rho(\xi)_m$, and the bracket extends the Lie bracket of constant sections via
    \begin{equation} \Cour{\sigma_1,\sigma_2} = [\sigma_1,\sigma_2] + \L_{\rho(\sigma_1)}\sigma_2-\L_{\rho(\sigma_2)}\sigma_1 + \rho^*\<\mathrm{d}\sigma_1,\sigma_2\> \end{equation}
 with $[\cdot,\cdot]$ being the pointwise Lie bracket.  This is called the \emph{action Courant algebroid}.

\end{example}

\subsection{Courant Morphisms}
The notion of \emph{Courant morphism} is due to A.Alekseev and P.Xu \cite{anton} (see also \cite{courmor}).

\begin{dfn} Suppose $\A,\A'$ are Courant algebroids over $M,M'$.  A \emph{Courant morphism} $R_\Phi:\,\A\da\A'$ is a smooth map $\Phi:\,M\to M'$, together with a sub-bundle
    \begin{equation} R_\Phi \subset (\A'\times\bar{\A})\vert_{\mathrm{Gr}(\Phi)} \end{equation}
(where $\bar{\A}$ denotes the Courant algebroid $\A$ with the opposite metric) which satisfies the following properties:
    \begin{enumerate}
        \item $R_\Phi$ is Lagrangian,
        \item $(\a\times\a')(R_\Phi)$ is tangent to the graph of $\Phi$,
        \item If $\sigma_1,\sigma_2 \in \Gamma(\A'\times\bar{\A})$ restrict to sections of $R_\Phi$, then so does $\Cour{\sigma_1,\sigma_2}$.
    \end{enumerate}
\end{dfn}
For a given Courant morphism $R_\Phi:\,\A\da\A'$ with elements $x\in\A,\,x'\in\A'$, we denote $x\sim_{R_\Phi} x'$ if $(x',x)\in R_\Phi$.  For any sections $\sigma_i\in\Gamma(\A)$, $\sigma_i'\in\Gamma(\A')$, we have
    \begin{equation} \sigma_1 \sim_{R_\Phi} \sigma_1', \, \sigma_2 \sim_{R_\Phi} \sigma_2' \quad \Rightarrow\quad \Cour{\sigma_1,\sigma_2}\sim_{R_\Phi}\Cour{\sigma_1',\sigma_2'};\quad \<\sigma_1,\sigma_2\> = \Phi^*\<\sigma_1',\sigma_2'\>. \end{equation}
\begin{example}\label{standardmorphism}
Given any smooth map $\Phi:\,M\to M'$, there is the associated \emph{standard morphism} $R_\Phi:\,\T M\da \T M'$ where
    \begin{equation} (v,\alpha) \sim_{R_\Phi} (v',\alpha') \quad \Leftrightarrow\quad v' = \mathrm{T}\Phi(v),\,\,\alpha = \Phi^*(\alpha'). \end{equation}
\end{example}

\subsection{Coisotropic Reduction of Courant Algebroids, Pullbacks}
One of the best ways of producing Courant algebroids is by reducing existing ones via some compatible quotienting process.

\begin{prop}\label{CAcoisred} \textbf{(Coisotropic Reduction)}  Let $\A\to M$ be a Courant algebroid, $S\subset M$ be a submanifold, and let $C\subset \A|_S$ be subbundle such that
\begin{enumerate}
\item $C$ is coisotropic,
\item $\a(C)\subset \mathrm{T}S$, $\a(C^\perp)=0$,
\item if $\sigma,\,\tau\,\in\Gamma(\A)$ restrict to sections of $C$, then so does $\Cour{\sigma,\tau}$.
\end{enumerate}
Then the bracket, anchor map, and inner product on $C$ descend to $C/C^\perp$, making it a Courant algebroid over $S$.
\begin{proof}
See Proposition 2.1 in \cite{capg}.  Two important aspects of the proof are that $C^\perp$ is involutive, and that for any $\sigma\in \Gamma(C)$, and $\eta\in \Gamma(C^\perp)$, we have $\Cour{\sigma,\eta}\in \Gamma(C^\perp)$, both of which stem from the Courant axiom
    \[ \a(\sigma_1)\< \sigma_2,\sigma_3\> =\< \Cour{\sigma_1,\sigma_2},\,\sigma_3\>+\< \sigma_2,\,\Cour{\sigma_1,\sigma_3}\>.\qedhere \]
\end{proof}
\end{prop}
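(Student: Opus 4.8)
The plan is to push each of the three structures on $C$ down to the quotient $C/C^\perp$ and then to obtain the Courant algebroid axioms on $C/C^\perp$ by lifting sections, restricting to $S$, and projecting. The metric and the anchor are the easy part: coisotropy of $C$ (i.e. $C^\perp\subseteq C$) says exactly that $C^\perp$ is the radical of $\<\cdot,\cdot\>|_C$, so $\<\cdot,\cdot\>$ descends to a fibrewise non-degenerate pairing on $C/C^\perp$; and since $\a(C)\subseteq\mathrm{T}S$ while $\a(C^\perp)=0$, the anchor restricts to a bundle map $C\to\mathrm{T}S$ killing $C^\perp$, hence descends to $\bar\a\colon C/C^\perp\to\mathrm{T}S$.

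The real content is the bracket. I would represent a section of $C/C^\perp$ by a section $\sigma\in\Gamma(\A)$ restricting along $S$ to a section of $C$; by hypothesis, $\Cour{\sigma,\tau}$ again restricts to a section of $C$, so one sets $\overline{\Cour{\bar\sigma,\bar\tau}}:=\Cour{\sigma,\tau}|_S\md C^\perp$. Well-definedness comes down to two facts, both extracted from the second Courant axiom
\[\a(\sigma_1)\<\sigma_2,\sigma_3\>=\<\Cour{\sigma_1,\sigma_2},\,\sigma_3\>+\<\sigma_2,\,\Cour{\sigma_1,\sigma_3}\>\]
used together with the third Courant axiom and the anchor conditions $\a(C)\subseteq\mathrm{T}S$, $\a(C^\perp)=0$. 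First: if $\sigma$ restricts to a section of $C$ and $\eta$ to a section of $C^\perp$, then both $\Cour{\sigma,\eta}$ and $\Cour{\eta,\sigma}$ restrict to sections of $C^\perp$. Indeed, pairing $\Cour{\sigma,\eta}$ against any $\tau$ restricting to $C$, the term $\a(\sigma)\<\eta,\tau\>$ vanishes on $S$ since the function $\<\eta,\tau\>$ is identically zero there and $\a(\sigma)|_S\in\mathrm{T}S$, while $\<\eta,\Cour{\sigma,\tau}\>$ vanishes on $S$ because $\Cour{\sigma,\tau}$ restricts to $C$; then $\Cour{\eta,\sigma}$ is handled via the symmetry axiom together with $\a^*(\dr\<\sigma,\eta\>)|_S\in C^\perp$. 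In particular $C^\perp$ is Courant-involutive. Second: replacing $\sigma$ by any section of $\A$ vanishing on $S$ changes $\Cour{\sigma,\tau}|_S$ only by a section of $C^\perp$, which follows from the Leibniz rule (property (4)) and its first-slot analogue, plus the observation that $\a^*(\dr f)|_S\in C^\perp$ whenever $f$ vanishes on $S$; the same argument handles the second slot. Combining these, $\overline{\Cour{\cdot,\cdot}}$ is a well-defined bracket on $\Gamma(C/C^\perp)=\Gamma(C)/\Gamma(C^\perp)$.

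It remains to verify the Courant axioms and the derived properties (4)--(6) for $(C/C^\perp,\bar\a,\<\cdot,\cdot\>,\overline{\Cour{\cdot,\cdot}})$; each is obtained by lifting the sections involved to $\Gamma(\A)$ restricting to $C$, applying the corresponding identity in $\A$, and projecting modulo $C^\perp$. The one extra point is that $\a^*\colon\mathrm{T}^*M\to\A$ itself descends: its image along $S$ lands in $(C^\perp)^\perp=C$ since $\a(C^\perp)=0$, it is insensitive to the choice of cotangent extension modulo $C^\perp$ since $\a(C)\subseteq\mathrm{T}S$, and the resulting $\bar\a^*\colon\mathrm{T}^*S\to C/C^\perp$ is dual to $\bar\a$ for the quotient metric, so the third axiom and property (6) read off. \textbf{The main obstacle} is exactly the well-definedness of the descended bracket: that is where coisotropy of $C$, both anchor conditions, the bracket-closure hypothesis, and the second and third Courant axioms must all be brought to bear at once, whereas everything else is formal transport of structure through the quotient.
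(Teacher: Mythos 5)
Your proposal is correct and follows essentially the same route as the proof the paper cites (Proposition 2.1 of \cite{capg}): the two facts you isolate as the crux of well-definedness --- that $C^\perp$ is involutive and that $\Cour{\sigma,\eta}$ restricts to $\Gamma(C^\perp)$ for $\sigma$ restricting to $C$ and $\eta$ to $C^\perp$ --- are precisely the two points the paper singles out, derived from the same metric-compatibility axiom. Your additional checks (independence of the lift via the Leibniz rule and $\a^*(\dr f)|_S\in C^\perp$ for $f$ vanishing on $S$) correctly fill in the remaining details.
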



\begin{lem}\label{redinstage} \textbf{(Reduction in Stages)}  Let $V$ be a metrized vector space, $C\subseteq V$ a coisotropic subspace, and $\pi:\,C\to C/C^\perp$ the quotient map.  Given another coisotropic subspace $D\subseteq V$, we have
    \[ C\cap D\mathrm{\,coisotropic\,} \iff C^\perp\subseteq D \iff D^\perp \subseteq C. \]
Furthermore, in this case,
    \[ \pi(D)/\pi(D)^\perp = C\cap D/ (C\cap D)^\perp.\]
\begin{proof}
$C\cap D$ is coisotropic if and only if $C^\perp+D^\perp \subseteq C\cap D$, which is true if and only if $C^\perp\subset D$ and $D^\perp\subset C$.  The two conditions $C^\perp\subseteq D$ and $D^\perp\subseteq C$, however, are equivalent.  By definition of the induced metric on $C/C^\perp$, for all $v,v'\in C$ we have
    \[ \< \pi(v),\pi(v')\> = \< v,v'\>, \]
and hence
    \begin{align*} \pi(D)^\perp &= \{ \pi(v) \,|\, \<\pi(v),\pi(v')\>=0\,\,\forall\,\pi(v')\in \pi(D)\}\\
                                &= \{\pi(v)\,|\,\<v,v'\>=0 \,\,\forall\,v'\in C\cap D\}\\
                                &= \pi((C\cap D)^\perp)\\
                                &= \pi(C^\perp + D^\perp).
    \end{align*}
Since $\ker(\pi)=C^\perp$, this shows that
    \[ \pi(D)/\pi(D)^\perp = (C\cap D)/(C\cap D)^\perp.\qedhere\]
\end{proof}
\end{lem}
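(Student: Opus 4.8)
The plan is to reduce the whole statement to the standard orthogonality identities in a non-degenerate metrized vector space: $(A\cap B)^\perp=A^\perp+B^\perp$, $(A+B)^\perp=A^\perp\cap B^\perp$, and $(A^\perp)^\perp=A$, together with the characterization that a subspace $A$ is coisotropic iff $A^\perp\subseteq A$.

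First I would prove the chain of equivalences. By definition $C\cap D$ is coisotropic iff $(C\cap D)^\perp\subseteq C\cap D$, and since $(C\cap D)^\perp=C^\perp+D^\perp$, this says exactly that all four inclusions $C^\perp\subseteq C$, $C^\perp\subseteq D$, $D^\perp\subseteq C$, $D^\perp\subseteq D$ hold. Two of them, $C^\perp\subseteq C$ and $D^\perp\subseteq D$, are automatic because $C$ and $D$ are coisotropic, so the condition collapses to $C^\perp\subseteq D$ together with $D^\perp\subseteq C$. Applying $(\cdot)^\perp$ and using $(C^\perp)^\perp=C$ shows these two are in fact the same condition, which finishes the first displayed line.

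For the reduction identity I would work inside $C/C^\perp$ with its induced metric, which satisfies $\langle\pi(v),\pi(v')\rangle=\langle v,v'\rangle$ for $v,v'\in C$; here $\pi(D)$ is read as $\pi(C\cap D)$ since $\pi$ is only defined on $C$. Computing the orthogonal of $\pi(D)$ directly from the definition gives $\pi(D)^\perp=\pi\big((C\cap D)^\perp\cap C\big)$; but under the standing hypothesis $(C\cap D)^\perp=C^\perp+D^\perp$ already lies in $C$ (since $C^\perp\subseteq C$ always and now $D^\perp\subseteq C$), so $\pi(D)^\perp=\pi\big((C\cap D)^\perp\big)$. Finally I would invoke the third isomorphism theorem: because $\ker\pi=C^\perp$ is contained both in $C\cap D$ and in $(C\cap D)^\perp=C^\perp+D^\perp$, the quotient $\pi(C\cap D)/\pi\big((C\cap D)^\perp\big)$ is canonically $\big((C\cap D)/C^\perp\big)\big/\big((C\cap D)^\perp/C^\perp\big)\cong(C\cap D)/(C\cap D)^\perp$.

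None of the steps is a serious obstacle; the one place that requires care is the bookkeeping around $\pi(D)$, namely the convention $\pi(D):=\pi(C\cap D)$ and the verification that the orthogonal complement computed in the reduced space $C/C^\perp$ is genuinely $\pi\big((C\cap D)^\perp\big)$ and nothing larger. That verification is the only point where the hypothesis $D^\perp\subseteq C$ (equivalently $C^\perp\subseteq D$) is actually used, so it is what ties the ``furthermore'' part back to the equivalences proven first; everything else is routine manipulation of the perp operations.
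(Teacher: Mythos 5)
Your proof is correct and follows essentially the same route as the paper: establish the equivalences via $(C\cap D)^\perp=C^\perp+D^\perp$, then compute $\pi(D)^\perp$ using the induced metric and quotient by $\ker\pi=C^\perp$. If anything you are slightly more careful than the paper at the one delicate point, namely reading $\pi(D)$ as $\pi(C\cap D)$ and checking that $(C\cap D)^\perp$ actually lies in $C$ before pushing it through $\pi$.
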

\begin{rmrk} If $\Phi: D\to D/D^\perp$ is the quotient map, we have
    \[ \pi(D)/\pi(D)^\perp = C\cap D/(C\cap D)^\perp = \Phi(C)/\Phi(C)^\perp.\]
\end{rmrk}

\begin{dfn} \textbf{(Pullbacks)}  Given a Courant algebroid $\A$ over $N$, and a smooth map $\Phi:\,M\to N$ whose differential is transverse to $\a$, one can define the pullback of $\A$ as the quotient $\Phi^!\A = C/C^\perp$, with
    \begin{align*}
            C &= \{ (v,\alpha;x) \,|\,\a(x) = \mathrm{T}\Phi(v)\} \subseteq \T M\times\A\\
            C^\perp &= \{ (0,\nu;\a^*\lambda)\,|\,(\mathrm{T}\Phi)^*\lambda + \nu \in \ann((\mathrm{T}\Phi)^{-1}\mathrm{ran}(\a))\}.
    \end{align*}
The rank of the pullback is given by $\mathrm{rk}(\Phi^!\A) = \mathrm{rk}(\A) -2(\dim M-\dim S)$.  The shriek notation $\Phi^!$ is used to distinguish it from the pullback as a vector bundle, $\Phi^*\A$.
\end{dfn}

\begin{example} (Proposition 2.9 in \cite{capg}) For any smooth $\Phi:\,M\to N$, one has a canonical isomorphism
    \begin{equation} \Phi^!(\T N) = \T M. \end{equation}
\end{example}
\subsection{Reduction of Courant Algebroids with Isotropic Generators}
The second sense of Courant reduction comes from \cite{courred}, analogous to Section \ref{laredk} for Lie algebroids.

\begin{dfn} Let $\A\to M$ be a Courant algebroid, and let $K$ be a Lie group which acts on $\A$ by Courant automorphisms.  A $K$-equivariant bundle map $\rho\in\mathrm{Hom}_K(M\times\k,\A)$ is said to define \emph{generators for the K-action} if $\xi_\A=[[\rho(\xi),\cdot]]$ as operators on $\Gamma(\A)$.  These generators are \emph{isotropic} if $\<\rho(\xi),\rho(\xi)\>=0$ for all $\xi$.
\end{dfn}

\begin{thm} \textbf{($K$-Reduction with Isotropic Generators)}  Let $M$ be a manifold with a principal $K$ action, $\A\to M$ be a Courant algebroid such that $K$ acts on $\A$ by Courant automorphisms, and let $\rho\in \mathrm{Hom}_K(M\times\k,\A)$ define generators for the $K$ action which are isotropic.  Then $\Gamma(M\times\k)^K$ is an ideal in $\Gamma((H\times\k)^\perp)^K$, and
\begin{equation}
\A//K = (\rho(M\times\k)^\perp/\rho(M\times\k))/K
\end{equation}
with the induced bracket, anchor map, and inner product is a Courant algebroid over $M/K$.  If $E\subset \A$ is a $K$-invariant Dirac structure such that $E\cap\rho(M\times\k)$ has constant rank, then the reduced bundle
\begin{equation}
E//K= (E\cap \rho(M\times\k)^\perp)/(E\cap \rho(M\times\k))/K \subset \A_{\rd}
\end{equation}
is a Dirac structure of $\A//K$.
\begin{proof} See \cite{courred}.
\end{proof}
\end{thm}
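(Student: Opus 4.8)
The plan is to realize $\A//K$ as a two-stage reduction of $\A$: a fibrewise coisotropic reduction along $\rho(M\times\k)$, followed by the quotient by the principal $K$-action on the base, working throughout with $K$-invariant sections. First I would record the structural facts. Arguing as for equation \ref{rhinj}, the generator property gives $\a(\rho(\xi)) = \xi_M$ for $\xi \in \k$; since the $K$-action on $M$ is principal the generating vector fields never vanish, so $\rho$ is injective, $\rho(M\times\k) \subset \A$ is a subbundle of rank $\dim\k$, and $\a$ identifies it with the vertical bundle $VM = \ker\mathrm{T}\pi$ where $\pi: M \to M/K$. Isotropy of the generators gives $\rho(M\times\k) \subseteq \rho(M\times\k)^\perp =: C$, so $C$ is coisotropic with $C^\perp = \rho(M\times\k)$, and $C/C^\perp$ is a metrized vector bundle over $M$ of rank $\rk(\A) - 2\dim\k$. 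As $\rho$ is $K$-equivariant, $C$ and $C^\perp$ are $K$-invariant, so $\A//K := (C/C^\perp)/K$ is a metrized vector bundle over $M/K$ of the same rank, with non-degenerate metric since $C$ is coisotropic.

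Next I would prove the two bracket facts underlying the construction. Fact (i): $\Cour{\Gamma(C)^K, \Gamma(C)^K} \subseteq \Gamma(C)^K$. Fixing a basis $\{e_i\}$ of $\k$ and taking $\sigma, \tau \in \Gamma(C)^K$, Courant axiom 2 gives $\a(\sigma)\<\tau,\rho(e_i)\> = \<\Cour{\sigma,\tau},\rho(e_i)\> + \<\tau,\Cour{\sigma,\rho(e_i)}\>$; the left side vanishes since $\tau$ is a section of $C$, while $\Cour{\sigma,\rho(e_i)} = -\Cour{\rho(e_i),\sigma} + \a^*\dr\<\sigma,\rho(e_i)\> = -(e_i)_\A\sigma = 0$ using axiom 3, $\sigma \in \Gamma(C)$, and $K$-invariance of $\sigma$ (so $(e_i)_\A\sigma = 0$ as $\rho$ defines generators). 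Hence $\<\Cour{\sigma,\tau},\rho(e_i)\> = 0$, and $\Cour{\sigma,\tau}$ is $K$-invariant because $K$ acts by Courant automorphisms. Fact (ii): $\Gamma(\rho(M\times\k))^K$ is an ideal in $\Gamma(C)^K$. For $\kappa = \sum_i f_i \rho(e_i)$ with $f_i \in C^\infty(M)$, combining axiom 3 with the Leibniz rule gives $\Cour{f_i\rho(e_i),\sigma} = f_i\Cour{\rho(e_i),\sigma} - (\a(\sigma)f_i)\rho(e_i) + \<\rho(e_i),\sigma\>\a^*\dr f_i$; for $\sigma \in \Gamma(C)^K$ the first and third terms vanish (by $K$-invariance and by $\sigma \in \Gamma(C)$), so $\Cour{\kappa,\sigma} = -\sum_i(\a(\sigma)f_i)\rho(e_i)$ is a section of $\rho(M\times\k)$, and since $\<\sigma,\kappa\> \equiv 0$, axiom 3 gives $\Cour{\sigma,\kappa} = -\Cour{\kappa,\sigma}$; $K$-invariance of both is automatic when $\sigma,\kappa$ are.

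With these in hand the reduced structure assembles routinely. A principal connection on $M \to M/K$ identifies $\Gamma(\A//K) \cong \Gamma(C)^K / \Gamma(\rho(M\times\k))^K$. The metric descends fibrewise. The anchor descends to $\a_{\A//K}: \A//K \to \mathrm{T}(M/K)$ because $\a(C^\perp) = VM = \ker\mathrm{T}\pi$ and $\a$ is $K$-equivariant. The bracket is defined by $\Cour{\bar\sigma,\bar\tau}_{\A//K} := \overline{\Cour{\sigma,\tau}}$ on $K$-invariant lifts $\sigma, \tau \in \Gamma(C)^K$: it lands in $\Gamma(C)^K$ by Fact (i) and is independent of the lifts by Fact (ii). The three Courant axioms for $\A//K$ then follow by pushing the corresponding identities on $\A$ through the quotient map, with $\a^*_{\A//K}$ matching $\a^*$ composed with the projections. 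For the Dirac statement I would set $E//K := \big((E\cap C)/(E\cap C^\perp)\big)/K$. Since $E = E^\perp$ we have $E\cap C = (E + \rho(M\times\k))^\perp$, so the hypothesis that $E\cap\rho(M\times\k)$ has constant rank forces $E\cap C$ to have constant rank too; the inclusion $(E\cap C)/(E\cap C^\perp)\hookrightarrow C/C^\perp$ is injective (if $e\in E\cap C$ lies in $C^\perp$ then $e\in E\cap C^\perp$), exhibiting $E//K$ as a subbundle of $\A//K$. A dimension count using $E = E^\perp$ gives $\dim\big((E\cap C)/(E\cap C^\perp)\big) = \tfrac12\rk(\A) - \dim\k = \tfrac12\rk(\A//K)$, and isotropy of $E\cap C$ descends, so $E//K$ is Lagrangian; involutivity is immediate from Fact (i) together with $\Cour{\Gamma(E),\Gamma(E)}\subseteq\Gamma(E)$ (as $E$ is Dirac).

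The hard part will be Facts (i) and (ii) of the second paragraph: these are the single place where isotropy of the generators, $K$-invariance of sections (through $\xi_\A\sigma = 0$), principality of the action (through injectivity of $\rho$, so that $C^\perp = \rho(M\times\k)$ is a genuine subbundle on which the pairing with $\Gamma(C)^K$ vanishes), and the Courant axioms all have to be combined, and they are precisely what make the reduced bracket well defined. A secondary technical point is the analytic one that every section of the reduced bundle lifts to a $K$-invariant section of $C$, together with bookkeeping of the constant-rank hypotheses needed to keep all the quotients honest vector bundles.
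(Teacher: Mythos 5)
Your argument is correct. There is nothing internal to compare it against: the paper offers no proof of this theorem, deferring entirely to \cite{courred}, and your two-stage scheme --- fibrewise coisotropic reduction along $C=\rho(M\times\k)^\perp$ (with $C^\perp=\rho(M\times\k)$ by isotropy and non-degeneracy), followed by the quotient by the free $K$-action, working with $K$-invariant sections throughout --- is essentially the argument of that reference, and is the same pattern this thesis uses for the Lie-algebroid analogue in Theorem \ref{KactonEthm} and for coisotropic reduction in Proposition \ref{CAcoisred}. Your Facts (i) and (ii) are the real content and both check out: the identity $\Cour{\sigma,\rho(e_i)}=-\Cour{\rho(e_i),\sigma}+\a^*\dr\<\sigma,\rho(e_i)\>=-(e_i)_\A\sigma=0$ for $K$-invariant $\sigma\in\Gamma(C)$ is precisely where isotropy of the generators, $K$-invariance (killing $(e_i)_\A\sigma$), and Courant axioms 2--3 combine, and it yields simultaneously the closure of $\Gamma(C)^K$ under the bracket and the ideal property of $\Gamma(\rho(M\times\k))^K$ --- the latter being the first assertion of the theorem, where the statement's ``$H\times\k$'' is evidently a typo for $M\times\k$. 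The only points to spell out in a full write-up are the ones you already flag: surjectivity of $\Gamma(C)^K\to\Gamma(\A//K)$ is not formal and needs local triviality of the principal bundle (or a connection/averaging argument); and in the Dirac statement the constant-rank hypothesis on $E\cap\rho(M\times\k)$ is what makes $E\cap C=(E+\rho(M\times\k))^\perp$ a genuine subbundle, which you need before invoking involutivity of $E$ together with Fact (i) to get involutivity of the reduced Lagrangian subbundle.
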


Pullbacks and reductions interact well in the Courant case as well.

\begin{prop}\label{CourKred}  Let $\A$ be a Courant algebroid over $M/K$.  Then with $\pi:\,M\to M/K$, the pullback $\pi^!\A$ has a natural $K$ action with isotropic generators, and $(\pi^!\A)//K \cong \A$.
\begin{proof}
By definition, $\pi^!\A = C/C^\perp$, with $C\subseteq \T M\times\A$ being the fibre product along $\a$ and T$\pi$.  Let $K$ act on $\T M\times\A$ via the natural action on $\T M$ and the trivial action on $\A$.  This action preserves $C$ (and thus $C^\perp$), and has isotropic generators given by the inclusion
    \begin{equation} M\times\k \hookrightarrow \mathrm{T}M\subset \A\times\T M. \end{equation}
Since T$M\cap C^\perp = 0$, the generators descend to the quotient $C/C^\perp$, and hence
    \begin{equation} (\pi^!\A)//K = \big((C\cap(M\times\k)^\perp)/(C^\perp+(M\times\k))\big)/K. \end{equation}
$M\times\k$ injects into $C$ as elements $\{(\kappa,0;0)\,|\,\kappa\in\k\}$, and so $C+(M\times\k)=C$ giving
    \[C^\perp\cap (M\times\k)^\perp = C^\perp.\]
Let $C'\subset C$ be the subset of elements with $\alpha=0$.  Then
    \begin{align*}
        C\cap(M\times\k)^\perp &= \{ (v,\alpha;x) \,|\,\a(x) = \mathrm{T}\pi(v)\}\cap \{ (w,\delta;y)\,|\,\delta\in\ann(\k)\}\\
                                &= C'\oplus C^\perp
     \end{align*}
and hence we get
    \begin{equation} (C\cap(M\times\k)^\perp)/(C^\perp+(M\times\k))) = C'/(M\times\k) = \pi^*\A. \end{equation}
Quotienting by $K$, we obtain $(\pi^!\A)//K \cong \A$.
\end{proof}
\end{prop}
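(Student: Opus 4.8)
The plan is to exploit the coisotropic description of the pullback, $\pi^!\A = C/C^\perp$ with $C = \{(v,\alpha;x)\in\T M\times\A \mid \a(x)=\mathrm{T}\pi(v)\}$, together with the reduction theorem for Courant algebroids with isotropic generators. First I would put the $K$-action on $\T M\times\A$ given by the natural lifted action on $\T M$ and the \emph{trivial} action on $\A$ (which is legitimate, since $\A$ lives over $M/K$). This action visibly preserves the fibre-product condition $\a(x)=\mathrm{T}\pi(v)$ defining $C$, hence preserves $C^\perp$ as well, so it descends to a $K$-action on $\pi^!\A$.

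Next I would produce the generators. The infinitesimal $K$-action on $M$ furnishes the inclusion $M\times\k\hookrightarrow TM\subset\T M$, which I embed into $\T M\times\A$ as $\{(\kappa,0;0)\}$. Since $TM$ is isotropic in $\T M$ and we pair against $0$ in the $\A$-slot, these generators are isotropic. They lie in $C$ because $\mathrm{T}\pi(\kappa)=0=\a(0)$ for vertical $\kappa$, and since $TM\cap C^\perp=0$ they descend to honest generators on $C/C^\perp=\pi^!\A$; that they generate the $K$-action, $\xi_{\pi^!\A}=\Cour{\rho(\xi),\cdot}$, follows from the standard Courant bracket formula on $\T M$.

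Then I would apply the $K$-reduction theorem for isotropic generators, which gives
$(\pi^!\A)//K = \big((C\cap(M\times\k)^\perp)/(C^\perp+(M\times\k))\big)/K$, and simplify the two quotients. Since $M\times\k$ injects into $C$ as $\{(\kappa,0;0)\}$, we have $C+(M\times\k)=C$, hence $C^\perp\cap(M\times\k)^\perp=C^\perp$. Letting $C'\subset C$ denote the elements with $\alpha=0$, I would check $C\cap(M\times\k)^\perp=C'\oplus C^\perp$ (the $\ann(\k)$ condition on the cotangent component becomes automatic once $C^\perp$ is split off). Therefore the reduced space before dividing by $K$ is $C'/(M\times\k)\cong\pi^*\A$, and quotienting by $K$ identifies this with $\A$ over $M/K$.

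The main obstacle is the bookkeeping in the double quotient: verifying precisely that $C\cap(M\times\k)^\perp$ splits as $C'\oplus C^\perp$ and that $C'/(M\times\k)$ recovers $\pi^*\A$ on the nose rather than merely up to rank. This requires unwinding the explicit description of $C^\perp$ from the pullback definition and checking that the annihilator condition interacts correctly with the vertical subspace. Everything else — preservation of $C$, isotropy of the generators, and the bracket-generation identity — is routine.
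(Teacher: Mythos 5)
Your proposal follows essentially the same route as the paper's proof: the same $K$-action on $\T M\times\A$ (lifted on $\T M$, trivial on $\A$), the same isotropic generators $M\times\k\hookrightarrow TM$, the same application of the isotropic-generator reduction theorem, and the same decomposition $C\cap(M\times\k)^\perp=C'\oplus C^\perp$ leading to $C'/(M\times\k)\cong\pi^*\A$. The steps you flag as requiring care (the splitting and the identification with $\pi^*\A$) are exactly the ones the paper carries out, so the argument is sound as proposed.
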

\begin{cor}\label{otherLiered} If $E\subset \A$ is a Dirac structure, then $(\pi^!E)//K \cong E$.
\end{cor}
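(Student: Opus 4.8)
The plan is to repeat the argument of Proposition~\ref{CourKred}, now carrying the Dirac structure $E$ along. Recall the set-up there: $\pi^!\A=C/C^\perp$ with $C\subseteq\T M\times\A$ the fibre product of the anchor $\a$ with $\mathrm{T}\pi$; the group $K$ acts on $\T M\times\A$ by the natural action on the $\T M$ factor and the trivial action on $\A$, with isotropic generators $M\times\k\hookrightarrow\mathrm{T}M\subset\T M\times\A$; and the identification $(\pi^!\A)//K\cong\A$ factors through $C\cap(M\times\k)^\perp=C'\oplus C^\perp$, where $C'=\{(v,0;x)\in C\}$ consists of the elements whose $\mathrm{T}^*M$-component vanishes, the isomorphism being induced by $C'/(M\times\k)\xrightarrow{\ \cong\ }\pi^*\A$, $(v,0;x)\mapsto x$, followed by the $K$-quotient $\pi^*\A\to\A$.

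The first step is to identify $\pi^!E$ as a Dirac structure of $\pi^!\A$. Realising the pullback Lie algebroid of $E$ inside $\T M\times\A$, it is precisely $(\mathrm{T}M\times E)\cap C=\{(v,0;x)\mid x\in E,\ \a(x)=\mathrm{T}\pi(v)\}$, where $\mathrm{T}M\subset\T M$ denotes the tangent Dirac structure; since $\mathrm{T}M\times E$ is Lagrangian and involutive in $\T M\times\A$, its image $\pi^!E:=q\big((\mathrm{T}M\times E)\cap C\big)$ under $q\colon C\to C/C^\perp$ is a Dirac structure of $\pi^!\A$, and it is $K$-invariant since $\mathrm{T}M$, $E$, and $C$ all are. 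Next I would check the hypotheses of the $K$-reduction theorem for $\pi^!E$: the crucial observation is $\rho(M\times\k)\subseteq\pi^!E$, since $(\kappa,0;0)\in(\mathrm{T}M\times E)\cap C$ for every vertical $\kappa$. Hence $\pi^!E\cap\rho(M\times\k)=\rho(M\times\k)$ has constant rank $\dim\k$ (here $\rho$ is injective because $\a\circ\rho$ is the family of generating vector fields of the free $K$-action), and $\pi^!E\subseteq\rho(M\times\k)^\perp$ automatically as $\pi^!E$ is Lagrangian. Therefore
    \[ (\pi^!E)//K=\big(\pi^!E/\rho(M\times\k)\big)/K \]
is a Dirac structure of $(\pi^!\A)//K\cong\A$.

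It then remains to trace $\pi^!E$ through the isomorphism $(\pi^!\A)//K\cong\A$ recalled above. Each element of $(\mathrm{T}M\times E)\cap C$ already lies in $C'$ (its $\mathrm{T}^*M$-component vanishes), hence in $C\cap(M\times\k)^\perp$, and under $C'/(M\times\k)\cong\pi^*\A$ it is sent to its $\A$-component $x\in E$; so under the whole chain $(\pi^!E)//K$ is carried onto $(\pi^*E)/K=E$, giving $(\pi^!E)//K\cong E$. Since $E$ is a Lie algebroid and the Courant pullback and $K$-reduction restrict, on Dirac structures, to the Lie-algebroid pullback of Definition~\ref{pbLA} and the reduction of Theorem~\ref{KactonEthm}, this simultaneously answers the Lie-algebroid question $(\pi^!E')//K\cong E'$.

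The only genuine difficulty here is conceptual rather than computational: correctly locating $\pi^!E$ inside $C/C^\perp$ and recognising that it automatically contains the reduction generators $\rho(M\times\k)$; after that the argument is a term-by-term comparison with the computation already done for $\A$ in Proposition~\ref{CourKred}. A minor point to confirm along the way is that $C'\cap C^\perp=0$, so that projection of $C\cap(M\times\k)^\perp$ onto $C'$ along $C^\perp$ is well defined; this follows from $\a\circ\a^*=0$ together with the surjectivity of $\mathrm{T}\pi$.
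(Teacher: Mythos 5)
Your argument is correct and is precisely the elaboration the paper intends: Corollary \ref{otherLiered} is stated without proof as an immediate consequence of Proposition \ref{CourKred}, and you obtain it by carrying $E$ through that same reduction, realising $\pi^!E$ as the image of $(\mathrm{T}M\times E)\cap C$ in $C/C^\perp$, observing that it contains the generators $\rho(M\times\k)$, and tracing it through the identification $C\cap(M\times\k)^\perp=C'\oplus C^\perp$ onto $\pi^*E$ and hence onto $E$. No gaps; the constant-rank and $K$-invariance hypotheses of the reduction theorem are checked exactly where they are needed.
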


\section{Dirac Lie Groups}
\subsection{Manin Pairs}
A \emph{Manin pair} $(\A,E)$ is a Courant algebroid $\A\to M$ and a Dirac structure $E\subset\A$.  A \emph{morphism of Manin pairs} (also called a \emph{Dirac morphism})
    \begin{equation} R_\Phi:\, (\A,E) \da (\A',E') \end{equation}
is a Courant morphism with underlying map $\Phi:\,M\to M'$, and the additional property that for every $m\in M$, any element of $E'_{\Phi(m)}$ is $R_\Phi$-related to a unique element of $E_m$.
\begin{example} For any Manin pair $(\A,E)\to M$, there is a morphism of Manin pairs
    \begin{equation} R:\,(\T M,\mathrm{T}M) \da (\A,E) \end{equation}
where $(v,\alpha)\sim_R x\quad \Leftrightarrow \quad v=\a(x),$ and $x-\a^*(\alpha)\in E$.
\end{example}
\begin{example} Suppose $(M,\Pi),(M',\Pi')$ are Poisson manifolds, and $\Phi:\,M\to M'$ is a smooth map.  Then the standard morphism
    \begin{equation} R_\Phi:\,\T M\da \T M' \end{equation}
defines a morphism of Manin pairs $R_\Phi:\,(\T M,\mathrm{Gr}(\Pi))\da (\T M',\mathrm{Gr}(\Pi'))$ if and only if $\Phi$ is a Poisson morphism.
\end{example}

\subsection{$CA$-Groupoids, Multiplicative Manin Pairs}
\begin{dfn}\label{LAg} Let $H\gp H^{(0)}$ be a Lie groupoid.
\begin{enumerate}
  \item An \emph{$LA$-groupoid} over $H$ is a Lie algebroid $E\to H$, together with a groupoid structure such that Gr(Mult$_E)\subset E\times E \times E$ is a Lie sub-algebroid along Gr(Mult$_H$).
  \item  A \emph{$CA$-groupoid} over $H$ is a Courant algebroid $\A\to H$, together with a groupoid structure such that Gr(Mult$_\A)\subset \A\times\bar{\A}\times\bar{\A}$ is a Dirac structure along Gr(Mult$_H$).
 \end{enumerate}
As with the $VB$-groupoid case, it is common to indicate $LA$- and $CA$-groupoids via a diagram such as
\begin{center}
\begin{tikzpicture}

  \node (G)   {$\A$};
  \node (H) [right of=G, node distance=1.5cm] {$\A^{(0)}$};
  \node (I) [below of=G, node distance=1.3cm] {$H$};
  \node (J) [right of=I, node distance=1.5cm] {$H^{(0)}$};

  \path[->]
([yshift= 2pt]G.east) edge node[above] {} ([yshift= 2pt]H.west)
([yshift= -2pt]G.east) edge node[below] {} ([yshift= -2pt]H.west);
\path[->]
([yshift= 2pt]I.east) edge node[above] {} ([yshift= 2pt]J.west)
([yshift= -2pt]I.east) edge node[below] {} ([yshift= -2pt]J.west);
  \draw[->] (G) to node {} (I);
  \draw[->] (H) to node [swap] {} (J);
  \end{tikzpicture}
\end{center}

\end{dfn}
\begin{dfn}
Given two $CA$-groupoids $\A,\A'$ over $H,H'$, a \emph{morphism of CA-groupoids} $R_\Phi:\,\A\da\A'$ consists of a Lie groupoid morphism $\Phi:\,H\to H'$, and a Courant morphism $\R_\Phi\subset \A'\times\bar{\A}$ which is a Lie sub-groupoid along the graph of $\Phi$.
\end{dfn}
One can similarly define morphisms of $LA$-groupoids.
\begin{dfn} A \emph{multiplicative Manin pair} $(\A,E)$ is a Manin pair where $\A\gp\A^{(0)}$ is a $CA$-groupoid over $H\gp H^{(0)}$, and $E\gp E^{(0)}$ is an Dirac structure and subgroupoid of $\A$.  A \emph{morphism of multiplicative Manin pairs} $R_\Phi:\,(\A,E)\da (\A',E')$ is a morphism of Manin pairs which is also a morphism of $CA$-groupoids $\A\da\A'$.
\end{dfn}
When $H$ is a group (so $H^{(0)}=\mathrm{pt}$, the main case we are concerned with), the groupoid multiplication defines a Courant morphism $\text{Mult}_\A$ covering the group multiplication
\begin{center}
\begin{tikzpicture}
  \node (P) {$\A\times\A$};
  \node (B) [right of=P] {$\A$};
  \node (A) [below of=P, node distance=1.5cm] {$H\times H$};
  \node (C) [right of=A] {$H$};

  \draw[->, dashed] (P) to node {$\text{Mult}_\A$} (B);
  \draw[->] (P) to node [swap] {} (A);
  \draw[->] (A) to node [swap] {Mult$_H$} (C);
  \draw[->] (B) to node {} (C);
\end{tikzpicture}
\end{center}

\subsection{Groupoids, Modules, Reductions}
We will briefly link previous sections on $VB$-groupoids, modules, and reductions of Courant algebroids, to yield an important result about reductions of groupoid actions as restrictions of $CA$-groupoid actions.\\
Suppose that $\A\gp \A^{(0)}$ is a $VB$-groupoid over $H\gp H^{(0)}$ with a non-degenerate fibre metric which is multiplicative:
    \[ \<\xi_1\circ \eta_1,\xi_2\circ \eta_2\> = \<\xi_1,\xi_2\> + \<\eta_1,\eta_2\> \]
for appropriately composable elements.  The metric allows us to identify $\A\cong\A^*$, and since the metric is compatible with the multiplication, this is an isomorphism of groupoids.  If $C\subset \A$ is a sub-groupoid with $C^{(0)}=C\cap\A^{(0)}$, then $C^\perp$ is a sub-groupoid of $\A$ with $(C^\perp)^{(0)} = C^\perp\cap\A^{(0)}$.  Hence, if $C$ is coisotropic, then $C/C^\perp$ becomes a groupoid over
\[ (C/C^\perp)^{(0)} = (C\cap\A^{(0)})/(C^\perp\cap\A^{(0)}).\]

\begin{prop}
Suppose $\P$ is an $\A$-module, with a non-degenerate fibre metric which is compatible with the groupoid action.  Suppose $D\subset\P$ is a coisotropic submodule for $C$.  Then $D^\perp$ is a $C^\perp$ submodule, and $D/D^\perp$ is a module for $C/C^\perp$.
\begin{proof}
Let $x\in D_{h\cdot m}$ be arbitrary, then $x$ can be written as $x= \u(x')_h\circ x'$ for $x'\in D_m$. If $\zeta\in {C^\perp}_h$, and $y\in {D^\perp}_m$ with $\s(\zeta)=\u(y)$, then by the compatibility of the groupoid action with the metric:
    \[ \<x,\zeta\circ y\>_{D_{h\cdot m}} = \<\u(x'),\zeta\> + \<x',y\> = 0. \]
This means that $\zeta\circ y \in D^\perp$, which implies $C^\perp\act D^\perp$.  If $\nu$ is any element of $C$ with $s(\nu)=\u(y)$, then $\u(D^\perp)\subseteq (C^\perp)^{(0)}$, and so $D^\perp$ is a $C^\perp$-module.  Let $\bar{\xi}\in C/C^\perp$, $\bar{x}\in D/D^\perp$ be composable elements.  Choose composable lifts $\xi,x$; if $\xi',x'$ are another set of composable lifts, then $\xi-\xi'\in C^\perp$, $x-x'\in D^\perp$, and  $s(\xi-\xi')=\u(x-x')$.  Hence
    \[ \xi'\circ x' = \xi\circ x + (\xi-\xi')\circ(x-x') \]
meaning $\overline{\xi'\circ x'} = \overline{\xi\circ x}$.
\end{proof}
\end{prop}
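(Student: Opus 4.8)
The plan is to prove the two assertions separately, and in this order: first that $D^\perp$ is a module over $C^\perp$, and then, \emph{using} that fact together with the coisotropy of $C$ and of $D$, that $D/D^\perp$ is a module over $C/C^\perp$. Both assertions are metrized analogues of Lemma~\ref{VBquotactt}, and the quickest route is to invoke that lemma twice. The non-degenerate multiplicative metric on $\A$ identifies $\A\cong\A^*$ as $VB$-groupoids (Theorem~\ref{vbdualgrpd}), and the metric on $\P$, being compatible with the action, identifies $\P\cong\P^*$ as a module over this (compare the characterisation (\ref{Vdmdeq}) of the dual action); since these identifications are $x\mapsto\langle x,-\rangle$, under them $C^\perp$ corresponds to $\ann_{\A^*}(C)$ and $D^\perp$ to $\ann_{\P^*}(D)$. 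Applying Lemma~\ref{VBquotactt} with $(V,W,P,L)=(\A,C,\P,D)$ then gives that $\ann_{\P^*}(D)$ is a module over $\ann_{\A^*}(C)$, i.e.\ that $D^\perp$ is a $C^\perp$-module; applying it again with $(V,W,P,L)=(C,C^\perp,D,D^\perp)$, which is legitimate because $C^\perp\subseteq C$ and $D^\perp\subseteq D$ by coisotropy and because the submodule hypothesis required there is exactly the first conclusion, gives that $D/D^\perp$ is a module over $C/C^\perp$.

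For a self-contained argument, and to check that the two metric identifications are module-compatible, here is the first step directly. Let $\zeta\in C^\perp_h$ and $y\in D^\perp_m$ be composable, so $\s_\A(\zeta)=\u_\P(y)$; we must show $\zeta\circ y\in D^\perp_{h\circ m}$. Any $x\in D_{h\circ m}$ decomposes as $x=v\circ x'$ with $v\in C_h$, $x'\in D_m$ and $\s_\A(v)=\u_\P(x')$ (act on $x$ by a suitable $v^{-1}\in C$, using that $D$ is a $C$-module and hence $\u_\P(D)\subseteq C^{(0)}$). Compatibility of the metric with the action then yields
\[
\langle\zeta\circ y,\ v\circ x'\rangle_\P=\langle\zeta,v\rangle_\A+\langle y,x'\rangle_\P=0+0=0,
\]
the two pairings vanishing because $\zeta\in C^\perp$, $v\in C$ and $y\in D^\perp$, $x'\in D$. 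As $x$ was arbitrary, $\zeta\circ y\in D^\perp$; the moment-map condition $\u_\P(D^\perp)\subseteq(C^\perp)^{(0)}=C^\perp\cap\A^{(0)}$ comes out of the same computation taken with $v\in\A^{(0)}$, or by dualising $\u_\P(D)\subseteq C^{(0)}$. Thus $C^\perp$ acts on $D^\perp$.

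For the second step, $\u_\P$ descends to a map $D/D^\perp\to C^{(0)}/(C^\perp)^{(0)}=(C/C^\perp)^{(0)}$, and one sets $\bar\xi\circ\bar x:=\overline{\xi\circ x}$ on composable lifts; the module axioms for $D/D^\perp$ over $C/C^\perp$ will then be inherited from those for $\P$ over $\A$, so the one substantive point is well-definedness of this descended action. If $\xi+\zeta$ and $x+y$, with $\zeta\in C^\perp$ and $y\in D^\perp$, is another composable pair of lifts, then comparing the source and moment-map conditions forces $\s_\A(\zeta)=\u_\P(y)$, so $(\zeta,y)$ is itself composable and $(\xi+\zeta,x+y)=(\xi,x)+(\zeta,y)$ inside the fibre product $\A_h\times_{\A^{(0)}}\P_m$; since the $VB$-module action is \emph{linear} on that fibre product, $(\xi+\zeta)\circ(x+y)=\xi\circ x+\zeta\circ y$, and $\zeta\circ y\in D^\perp$ by the first step, whence $\overline{(\xi+\zeta)\circ(x+y)}=\overline{\xi\circ x}$. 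The only real point of care throughout is this composability bookkeeping: the naive expansion $(\xi+\zeta)\circ(x+y)=\xi\circ x+\xi\circ y+\zeta\circ x+\zeta\circ y$ is unavailable because the middle terms need not be composable, so one must use linearity on the fibre product over a point rather than bilinearity in the two arguments. Once that is organised both halves are routine, and the appeal to Lemma~\ref{VBquotactt} through the metric isomorphisms avoids even that.
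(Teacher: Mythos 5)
Your self-contained argument is correct and is essentially the paper's own proof: the same decomposition $x=v\circ x'$ of an arbitrary element of $D_{h\circ m}$, the same metric-compatibility computation showing $\zeta\circ y\perp D$, and the same use of linearity on the fibre product (rather than a fictitious bilinear expansion) to get well-definedness of the quotient action, where the paper writes $\xi'\circ x'=\xi\circ x\pm(\xi-\xi')\circ(x-x')$. Your additional observation --- that the whole statement can be deduced from Lemma \ref{VBquotactt} by using the multiplicative metric to identify $\A\cong\A^*$ and $\P\cong\P^*$ (the compatibility condition being exactly Equation \eqref{Vdmdeq} for the dual action), so that $C^\perp$, $D^\perp$ become $\ann(C)$, $\ann(D)$ --- is a legitimate alternative packaging that the paper does not use here; it buys economy by recycling the annihilator statement already proved, at the cost of having to verify (as you do) that the two identifications are module-compatible and that $C^\perp\subseteq C$, $D^\perp\subseteq D$ so the lemma can be applied a second time. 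Either route is complete.
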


\begin{example}\label{pullbmod} Let $\A$ be a $CA$-groupoid, with a $CA$-module $\P$.  The base groupoid of $\A$, $H$, acts on the base manifold $M$ of $\P$.  Suppose $H$ also acts on another manifold $N$, and let $\pi:\,N\to M$ be an equivariant submersion ($\u_N=\u_M\circ\pi$).  Then there is an action of $\T H\times\A$ on $\T N\times\P$.\\
Let $C\subset\T H\times \A$ be the pre-image of the diagonal embedding of T$H$ in $\mathrm{T}H\times\mathrm{T}H$ (via projection and anchor), and let $D\subset \T N\times\P$ be the pre-image of the diagonal embedding of T$N$ in $\mathrm{T}N\times\mathrm{T}M$ (via $\pi$ and anchor).  In other words,
\[ C = \{(x,\alpha,\xi) \in \T H\times\A\,|\, x = \a_{\A}(\xi) \}\]
and
\[ D = \{(y,\eta,\zeta)\in \T N\times\P\,|\, \mathrm{T}\pi(y) = \a_{\P}(\zeta)\}.\]

Both $C$ and $D$ are coisotropic (see \cite{capg}, Prop 2.7), and the module action restricts to an action of $C$ on $D$:
\begin{equation}\label{prlgCD} (x,\u_{\mathrm{T}^*M}(\eta),\xi)\circ(y,\eta,\zeta) = (x\cdot y, \eta, \xi\circ\zeta) \end{equation}
with $\s(\xi)=\u(\zeta)$.
Hence we get a $CA$-groupoid action of $C/C^\perp$ on $D/D^\perp$, but $C/C^\perp = {\mathrm{id}_H}^!\A=\A$, and $D/D^\perp = \pi^!\P$.  Thus, if $\P$ is an $\A$-module such that the groupoid action is a Courant morphism, then $\pi^!\P$ is a $CA$-module for $\A$ as well.
\end{example}

\subsection{Dirac Lie Group Structures}\label{prelimdirac}
\begin{dfn} (\cite{lmdir}) A \emph{Dirac Lie group structure} on a Lie group $H$ is a multiplicative Manin pair $(\A,E)$ over $H$ such that the multiplication morphism
    \begin{equation} \text{Mult}_\A:\,(\A,E)\times(\A,E)\da (\A,E) \end{equation}
is a morphism of Manin pairs.
\end{dfn}
This definition differs from Jotz \cite{JotzDir} and Ortiz \cite{ortizd}, as these sources take arbitrary multiplicative Manin pairs over $H$, and only look at the case $\A=\T H$.
\begin{prop}\label{mmpd} A multiplicative Manin pair $(\A,E)$ over $H$ defines a Dirac Lie group structure if and only if $E^{(0)} = \A^{(0)}$ (Proposition 2.7, \cite{lmdir}).
\end{prop}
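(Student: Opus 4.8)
The plan is to unwind the definition of a morphism of Manin pairs applied to the multiplication morphism $\mathrm{Mult}_\A$, and then reduce the statement to a condition on the core of $E$. Since $(\A,E)$ is a multiplicative Manin pair, $\mathrm{Mult}_\A$ is already a Courant morphism $(\A,E)\times(\A,E)\da(\A,E)$ covering $\mathrm{Mult}_H\colon H\times H\to H$, and $\mathrm{Gr}(\mathrm{Mult}_\A)=\{(\eta_1\circ\eta_2,\eta_1,\eta_2)\}$. Hence $(\A,E)$ is a Dirac Lie group structure precisely when, for every $(h_1,h_2)$ and every $\xi\in E_{h_1h_2}$, there is a \emph{unique} pair $(\eta_1,\eta_2)\in E_{h_1}\times E_{h_2}$ with $\xi=\eta_1\circ\eta_2$; so it suffices to characterize when this unique-factorization property holds.

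Next I would parametrize the factorizations of a fixed $\xi\in E_{h_1h_2}$. Because $E$ is a subgroupoid, a factorization $\xi=\eta_1\circ\eta_2$ with $\eta_i\in E_{h_i}$ forces $\eta_2=\eta_1^{-1}\circ\xi$ and $\t_E(\eta_1)=\t_E(\xi)$, and conversely any $\eta_1\in E_{h_1}$ with $\t_E(\eta_1)=\t_E(\xi)$ yields such a factorization with $\eta_2:=\eta_1^{-1}\circ\xi\in E_{h_2}$. Since $\t_E$ is fibrewise linear and, on each fibre, surjective onto $E^{(0)}$ (restrict to the base point $H^{(0)}=\mathrm{pt}$ the lower short exact sequence in \eqref{vbses1} applied to $E$), the admissible $\eta_1$ form a nonempty coset of $\ker(\t_E)\cap E_{h_1}$; and the canonical isomorphism $\ker(\t_E)\cong\s_H^*\,\mathrm{core}(E)$ restricts over $h_1$ to $\ker(\t_E)\cap E_{h_1}\cong\mathrm{core}(E)$, again because $H^{(0)}=\mathrm{pt}$. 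Thus the factorization is unique for all $\xi$ if and only if $\mathrm{core}(E)=0$, i.e.\ if and only if $E$ is vacant (if $\mathrm{core}(E)\neq0$, uniqueness already fails for $h_1=h_2=e$ and $\xi$ any unit).

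Finally I would check that, for a multiplicative Manin pair, $E$ is vacant if and only if $E^{(0)}=\A^{(0)}$. As $\mathrm{Gr}(\mathrm{Mult}_\A)$ is Lagrangian, hence isotropic, in $\A\times\bar\A\times\bar\A$, the fibre metric of $\A$ is multiplicative, so the metric identification $\A\cong\A^*$ is a $VB$-groupoid isomorphism; by Theorem \ref{vbdualgrpd} this gives $\mathrm{core}(\A)\cong\mathrm{core}(\A^*)=(\A^{(0)})^*$, so $\dim\mathrm{core}(\A)=\dim\A^{(0)}$. Combined with $E$ being Lagrangian, $\dim E_e=\rk(E)=\tfrac12\rk(\A)=\dim\A^{(0)}$. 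Since $\mathrm{core}(E)=E_e/E^{(0)}$ and $E^{(0)}=E_e\cap\A^{(0)}\subseteq\A^{(0)}$, this equality of dimensions forces $\mathrm{core}(E)=0\iff E_e=E^{(0)}\iff E^{(0)}=\A^{(0)}$. (Dimensions can be bypassed: as $E=E^\perp$, its image under $\A\cong\A^*$ is $\ann(E)$, which carries the $VB$-groupoid structure of $(\A/E)^*$, so Theorem \ref{vbdualgrpd} and Lemma \ref{VBquotactt} give $\mathrm{core}(E)\cong(\A^{(0)}/E^{(0)})^*$ directly.) Together with the previous paragraph this proves the proposition. The one genuinely delicate step is this last one, where both hypotheses --- $E$ Lagrangian and the $CA$-groupoid metric multiplicative --- really enter, and where the $VB$-groupoid duality of Theorem \ref{vbdualgrpd} must be applied with care; the reformulation and the parametrization of factorizations are routine bookkeeping with the groupoid axioms.
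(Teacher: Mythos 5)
Your argument is correct, and it fills a genuine gap: the thesis states this proposition only with a citation to Proposition 2.7 of \cite{lmdir} and gives no proof, so there is no in-paper argument to compare against. Your reduction is the natural one and each step checks out. The reformulation as a unique-factorization property is exactly what the definition of a morphism of Manin pairs demands of $\mathrm{Mult}_\A$; the parametrization of factorizations of a fixed $\xi\in E_{h_1h_2}$ by the affine space $\t_E^{-1}(\t_E(\xi))\cap E_{h_1}$, a torsor over $\ker(\t_E)_{h_1}\cong \mathrm{core}(E)$ via the sequences \eqref{vbses1} specialized to $H^{(0)}=\mathrm{pt}$, correctly identifies the Dirac-morphism condition with vacancy of $E$ (existence being automatic from fibrewise surjectivity of $\t_E$). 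The final equivalence is also right: multiplicativity of the metric makes $\A\to\A^*$ a $VB$-groupoid isomorphism by Proposition \ref{vbmor} and Remark \ref{annr}, whence $\mathrm{core}(\A)\cong(\A^{(0)})^*$ and $\rk(\A)=2\dim\A^{(0)}$, and the Lagrangian condition then forces $\dim E_e=\dim\A^{(0)}$, so that $E^{(0)}=E_e\cap\A^{(0)}$ equals $\A^{(0)}$ precisely when $E_e=E^{(0)}$. The only place I would ask for more care is the parenthetical shortcut identifying $\ann(E)$ with $(\A/E)^*$ \emph{as $VB$-groupoids} and hence $\mathrm{core}(E)\cong(\A^{(0)}/E^{(0)})^*$: this is true but not an immediate consequence of the statements of Theorem \ref{vbdualgrpd} and Lemma \ref{VBquotactt} as recorded here, so either justify that the groupoid structures match under the duality or simply drop the aside, since your dimension count already closes the argument.
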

\begin{example} If $M=\mathrm{pt}$, so $\A$ is a quadratic Lie algebra $\g$, the diagonal $\g_\vartriangle\subset\bar{\g}\times\g$ defines a Dirac Lie group structure on $H=\{e\}$.
\end{example}
\begin{example}
For any Lie group $H\gp \mathrm{pt}$, the standard Courant algebroid $\T H\gp \h^*$ is a $CA$-groupoid, given as the direct sum of the tangent group $\mathrm{T}H\gp \text{pt}$ and the standard symplectic groupoid $\mathrm{T}^*H\gp\h^*$.  The only Dirac Lie group structures for $\A=\T H$ are $E=\mathrm{Gr}(\Pi)$ for Poisson Lie group structures $\Pi$ on $H$ (\cite{JotzDir}, \cite{ortizd}).
\end{example}
\begin{thm} (\cite{lmdir}) Given a Lie group $H$, the Dirac Lie group structures on $H$ are classified by $H$-equivariant Dirac Manin triples $(\d,\g,\h)_\beta$.  Here $\d$ is a Lie algebra with $\g$ and $\h=\text{Lie}(H)$ transverse subalgebras, $\beta$ is a (possibly degenerate) invariant element of $S^2\d$, $\g$ is $\beta$-coisotropic in $\d$, and $H$ acts on $\d$ as by Lie automorphisms integrating the adjoint action of $\h\subset\d$ and restricting to the adjoint action of $H$ on $\h$ (denoted ``Ad" on all of $\d$).
\end{thm}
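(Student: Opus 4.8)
The plan is to produce the bijection explicitly: from a Dirac Lie group structure $(\A,E)$ I extract the Lie‑algebraic data $(\d,\g,\h)_\beta$ by restricting to the identity fibre, and conversely I reconstruct $(\A,E)$ from the data by a coisotropic reduction; the final step is to check the two procedures are mutually inverse, and that equivalences match isomorphisms of triples.

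\textbf{From a Dirac Lie group to a triple.} Let $(\A,E)$ be a Dirac Lie group structure on $H$. By Proposition~\ref{mmpd} one has $E^{(0)}=\A^{(0)}$, so $E$ is a vacant Dirac structure (Definition~\ref{vacantdef}). Set $\g:=E_e=E^{(0)}=\A^{(0)}$ and $\h:=\mathrm{Lie}(H)$. Since $E$ is a vacant $LA$-groupoid over the group $H$, the groupoid multiplication of $E$ trivialises the pair $(\A,E)$ as $(H\times\A_e,\,H\times\g)$, and the Lie‑algebroid bracket on constant sections of $E$ makes $\g$ a Lie algebra. I then build $\d$ as a vector space from the identity fibre, concretely $\d=\g\oplus\h$, and equip it with the Lie bracket assembled from the bracket on $\g$ just obtained, the bracket on $\h$, and the cross terms read off from the infinitesimal $H$-action on $\A$ together with the Courant bracket of $\A$ along $e$. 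The element $\beta\in S^2\d$ comes from the fibre metric of $\A$, which lives on $\A_e$ (a natural subquotient of $\d^*$) and hence pulls back to a possibly degenerate invariant element of $S^2\d$. The $H$-action on $\d$ integrating $\ad_\h$ and restricting to $\Ad$ on $\h$ is induced by conjugation with groupoid elements. It then remains to verify the axioms of an $H$-equivariant Dirac Manin triple: transversality $\d=\g\oplus\h$ is built in; $\g$ and $\h$ are subalgebras (integrability of the Lie algebroid $E$, resp.\ automatic); $\g$ is $\beta$-coisotropic --- this is the translation, through the identification of the metric with $\beta$, of the fact that $\g=E_e$ is Lagrangian in $\A_e$; and the equivariance conditions come from multiplicativity of $\A$ and of $E$.

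\textbf{From a triple to a Dirac Lie group.} Conversely, given $(\d,\g,\h)_\beta$ with the $H$-action, I follow the reduction strategy recalled in the introduction. First form the quadratic vector space $\q$ determined by $(\d,\g,\h)_\beta$ (so that a degenerate $\beta$ is properly accounted for) together with the ambient $CA$-groupoid $\T H\times\bar{\q}\times\q$ over $H$. The Lie bracket of $\d$, the $H$-action, and the $\beta$-coisotropy of $\g$ single out a coisotropic sub‑groupoid $C\subset\T H\times\bar{\q}\times\q$ satisfying the hypotheses of Proposition~\ref{CAcoisred}; set $\A:=C/C^\perp$. The image under the reduction of a suitable Lagrangian subgroupoid built from $\g\subset\d$ is a vacant Dirac structure $E\subset\A$ with $E^{(0)}=\A^{(0)}$, so $(\A,E)$ is a Dirac Lie group structure by Proposition~\ref{mmpd}. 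Here the $\beta$-coisotropy of $\g$ is precisely what makes the relevant subbundles coisotropic, in the same spirit as the coisotropy criterion for action Courant algebroids.

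\textbf{Inverse and main obstacle.} The two assignments are mutually inverse: by the vacant trivialisation both are determined by their restriction to the identity fibre, so the verification reduces to a lengthy but routine bookkeeping check at $e$. The genuinely hard part is the reconstruction, and within it two points require care. First, since $\d$ carries only the possibly degenerate $\beta$, it cannot serve directly as a Courant algebroid over a point; one must pass to the honestly quadratic $\q$ and check that the Lie bracket, the $H$-action, and the coisotropy all descend correctly through the reduction --- this is exactly where the hypothesis that $\g$ be merely $\beta$-coisotropic, rather than Lagrangian as in Drinfeld's theorem, does real work, and Lemma~\ref{redinstage} on reduction in stages is the key tool. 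Second, once $\A=C/C^\perp$ is in hand one must verify the full Courant algebroid axioms --- in particular the Leibniz--Jacobi identity --- and that $\mathrm{Gr}(\mathrm{Mult}_\A)$ is a Dirac structure, i.e.\ that $\A$ is genuinely a $CA$-groupoid and not merely a Courant algebroid with a compatible linear groupoid structure; the multiplicativity is the delicate point, just as it was already delicate in Drinfeld's classification.
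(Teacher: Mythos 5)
This theorem is imported from \cite{lmdir}: the thesis states it without proof and only recalls the relevant constructions in Section \ref{prelimdirac}, so there is no internal proof to measure you against. Your outline does track the strategy of \cite{lmdir} as it is recalled here --- extraction of the triple from the identity fibre using the vacant trivialisation, and reconstruction of $\A$ as the coisotropic reduction $C/C^\perp$ of $\T H\times\bar{\q}\times\q$ along the subgroupoid $C$ of Equation \eqref{defC}, with Proposition \ref{mmpd} supplying the Dirac Lie group condition $E^{(0)}=\A^{(0)}$. In that sense the approach is the right one.

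Two cautions. First, there is a substantive imprecision in how you handle $\beta$: you treat $\beta$ as literally the fibre metric of $\A$ at $e$ and claim that $\beta$-coisotropy of $\g$ is ``the translation'' of $\g=E_e$ being Lagrangian in $\A_e$. In fact $\A_e=\q$ carries the \emph{nondegenerate} form $\gamma$, in which $\g$ is honestly Lagrangian; $\beta$ lives on $\d$, not on $\A_e$, and is recovered only through the Lie algebra morphism $f:\q\to\d$ with $f(\gamma)=\beta$ (compare Lemma \ref{fstarann}). The $\beta$-coisotropy of $\g$ in $\d$ is precisely the condition making the reduction $\q=\s^{-1}(\g)/\s^{-1}(\g)^\perp$ of the groupoid $\d\times\d^*_\beta\gp\d$ well defined; it is not read off directly from the metric on $\A_e$. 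Relatedly, $\d$ itself is not ``the identity fibre'': recovering $\d=\g\oplus\h$ with its bracket requires combining the anchor with the source/target data, in the spirit of the map $\varphi=\u\oplus\a$ of Proposition \ref{uplusa}. Second, as written essentially every verification --- that the assembled bracket on $\d$ satisfies Jacobi, that $C$ meets the hypotheses of Proposition \ref{CAcoisred}, that the reduced object is a $CA$-groupoid and not merely a Courant algebroid with a compatible linear groupoid structure, and that the two assignments are mutually inverse --- is deferred to ``routine bookkeeping''. These are the actual content of the theorem in \cite{lmdir}; your text is a correct roadmap rather than a proof, which is acceptable here only because the thesis itself defers to the same reference.
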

If $\beta$ is degenerate, or $\g$ is not Lagrangian, we can use $(\d,\g,\h)_\beta$ to construct a new triple $(\q,\g,\r)_\gamma$ which does have a non-degenerate form, and for which $\g$ is Lagrangian.  The first step is constructing the groupoid $\d\times\d^*_\beta\gp\d$, where as a quadratic vector space $\d\times\d^*_\beta = \d\oplus\d^*$ with pairing
    \[ \<(x_1,\alpha_1),(x_2,\alpha_2)\> = \<x_1,\alpha_2\> + \<x_2,\alpha_1\> + \beta(\alpha_1,\alpha_2). \]
and with source, target, and multiplication given by
    \begin{align*}
        &\s(x,\alpha) = x;\quad \t(x,\alpha) = x+\beta^\#(\alpha);\\
        &(x_2+\beta^\#(\alpha_2),\alpha_1)\circ(x_2,\alpha_2) = (x_2,\alpha_1+\alpha_2)
    \end{align*}
(see Chapter 3 for details and analysis of groupoids of this type).  Next, we construct a new triple
    \begin{equation} (\q,\g,\r) = (\s^{-1}(\g)/s^{-1}(\g)^\perp,\g,f^{-1}(\h)) \end{equation}
where $\s$ is the source map $\s:\,\d\times\d_\beta^*\to\d$, and $f:\q\to\d$ is the descent of the target map to the quotient; $\gamma$ is obtained as the dual metric on $\q^*$, and is non-degenerate.  The subalgebra $\g$ is Lagrangian in $\q$. \\
Since $\s:\,E\to \g$ is a fibrewise isomorphism, it can be used to trivialise $E\cong H\times\g$.  The groupoid action of $E$ on $\A$ gives a trivialisation in the following manner:  if $j:\A\to E$ is the projection along $\ker(\t)$, then the map
    \begin{equation} \A\to\q,\,\,x\to j(x)^{-1}\circ x \end{equation}
defines a trivialisation $\A\cong H\times\q$ compatible with the metric.

\begin{prop} (5.1, \cite{lmdir}) Let $(\A,E) = (H\times\q,H\times\g)$ be the Dirac Lie group structure on $H$ associated to the $H$-equivariant Dirac Manin triple $(\d,\g,\h)_\beta$.   The Lie algebra action
    \begin{equation} \rho:\q\to\frak{X}(H), \quad \iota_{\rho(\zeta)}\theta^L_h = \Ad_{h^{-1}}\ph\Ad_hf(\zeta) \end{equation}
where $\theta^L$ is the left Maurer-Cartan form, defines the anchor map for $(\A,E)$ as action Courant/Lie algebroids.  The groupoid structure is given by
    \begin{equation} \s(h,\xi) = \pr^*(\xi);\quad \t(h,\xi) = h\bullet(1-\pr)(\xi) \end{equation}
where $h\bullet\zeta = (1-\ph)\Ad_h\zeta$ for $\zeta\in\g$, $\pr$ is the projection to $\r$ along $\g$, and $\pr^*$ is the projection to $\g$ along $\r^\perp$.  The map $f^*:\,\d^*\to\q$ isomorphically takes $\g^*\cong\ann(\h)$ to $\r^\perp$, and so allows us to define an action of $H$ on $\r^\perp$, also denoted $\bullet$ via:
    \[ h\bullet f^*(\mu) = f^*(\Ad_h\mu),\quad h\in H,\,\mu\in\ann(\h) \]
Given composable elements, the groupoid multiplication is given by
    \begin{equation}(h_1,\zeta_1)\circ(h_2,\zeta_2) = (h_1h_2,\,\zeta_2 + h_2^{-1}\bullet(1-\pr^*)\zeta_1). \end{equation}
\end{prop}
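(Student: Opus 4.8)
\emph{Approach.} I would obtain the anchor $\rho$, the source and target maps, the identity $f^*(\ann(\h))=\r^\perp$, and the multiplication formula all at once, by making the trivialisation $(\A,E)\cong(H\times\q,H\times\g)$ explicit, computing every structure map on the identity fibre $\A_e\cong\q$, and then propagating the formulae to an arbitrary point of $H$ by $H$-equivariance. The first task is to pin down the trivialisation. Since $E$ is vacant we have $E^{(0)}=E_e$, and since $(\A,E)$ is a Dirac Lie group structure, Proposition \ref{mmpd} gives $\A^{(0)}=E^{(0)}$; hence $\t_\A|_E=\t_E$ is a fibrewise isomorphism, $E\cap\ker(\t_\A)=0$, and comparing dimensions in the exact sequences \ref{vbses1} shows $\A=E\oplus\ker(\t_\A)$. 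Thus the projection $j\colon\A\to E$ along $\ker(\t_\A)$ makes sense, the product $j(x)^{-1}\circ x$ is composable and lands in $\A_e$ (its source and target both equal $\s_\A(x)\in\A^{(0)}$), and $x\mapsto j(x)^{-1}\circ x$ is the bundle isomorphism $\A\to H\times\A_e$ to be used, with $\A_e$ identified with $\q$ as in the construction of $(\q,\g,\r)_\gamma$.

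\emph{The identity fibre.} Because $\A_e$ is, by construction, the coisotropic reduction $\s^{-1}(\g)/\s^{-1}(\g)^\perp$ of the groupoid $\d\times\d^*_\beta\gp\d$ (with $\s$ its source map), I would read its source, target and multiplication off the explicit formulae for groupoids of type $\d\times\d^*_\beta$ established in Chapter 3. Using that $\g$ is Lagrangian and that $\g,\r$ are transverse, one has $\q=\g\oplus\r=\g\oplus\r^\perp$ (indeed $\g\cap\r^\perp=(\g+\r)^\perp=0$), so the descended source is the projection $\pr^*\colon\q\to\g$ along $\r^\perp$, the descended target is $1-\pr\colon\q\to\g$, and the descended multiplication is $(\xi_1,\xi_2)\mapsto\xi_2+(1-\pr^*)\xi_1$. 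For the anchor, the restriction of $\a_\A$ to $\A_e$ is a $VB$-groupoid morphism into $\mathrm{T}_eH=\h$, and tracing it through the same reduction identifies it with $\ph\circ f$, where $f\colon\q\to\d$ is the descent of the target map of $\d\times\d^*_\beta$. Dualising $\r=f^{-1}(\h)$ and comparing dimensions then gives $f^*(\ann(\h))=\r^\perp$, which transports the $\Ad$-action of $H$ on $\ann(\h)$ to a well-defined action $\bullet$ of $H$ on $\r^\perp$.

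\emph{Globalising.} It then remains to propagate these formulae off the identity fibre. The source map of $\A$ is $H$-equivariant, and in the trivialisation the $H$-action leaves the $\g$-valued source coordinate fixed, forcing $\s(h,\xi)=\pr^*(\xi)$ for all $h$; the target is $H$-equivariant for the action $\zeta\mapsto(1-\ph)\Ad_h\zeta$ of $H$ on $\g$, giving $\t(h,\xi)=h\bullet(1-\pr)(\xi)$; and equivariance of the identity-fibre multiplication under left translation on $H$ produces the correction term $h_2^{-1}\bullet(1-\pr^*)\zeta_1$ in $(h_1,\zeta_1)\circ(h_2,\zeta_2)$. For the anchor, $H$-equivariance of $\a_\A$ — the $H$-action integrating $\ad_\h$ and restricting to $\Ad$ — turns the value $\ph\circ f$ at $e$ into $\iota_{\rho(\zeta)}\theta^L_h=\Ad_{h^{-1}}\ph\,\Ad_h f(\zeta)$; one finally checks that this $\rho$ is an honest Lie algebra homomorphism $\q\to\mathfrak{X}(H)$ with coisotropic stabilisers, so that $\A=H\times\q$ is precisely the associated action Courant algebroid and $E=H\times\g$ the associated action Lie algebroid, with the metrics matching.

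\emph{Main obstacle.} The real work is the second step: making the identification $\A_e\cong\q$ and the equality $\a_\A|_{\A_e}=\ph\circ f$ precise requires carefully retracing the construction of $\q$ from $(\d,\g,\h)_\beta$ — the groupoid $\d\times\d^*_\beta$, its coisotropic reduction, and the way the Courant anchor of $\A$ descends through it — rather than any single slick argument. Once that is done, the remaining verifications (the splittings $\q=\g\oplus\r=\g\oplus\r^\perp$, the equivariant propagation, and the Jacobi and coisotropy checks for $\rho$) are essentially bookkeeping with the transverse decompositions and the $H$-action.
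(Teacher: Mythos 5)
The thesis itself offers no proof of this proposition: it is imported verbatim as Proposition 5.1 of \cite{lmdir}, so there is no internal argument to compare yours against. Taken on its own terms, your outline is sound, and it is in fact the same strategy the thesis later uses to re-derive these formulae in the almost-Dirac setting: trivialise $\A$ by the $E$-action via $x\mapsto j(x)^{-1}\circ x$, compute the structure maps of the metrized linear groupoid $\A_e=\q\gp\g$ (Theorem \ref{mlincl}, Proposition \ref{lgperp}), and propagate them by the conjugation action of Proposition \ref{conjact1}, which is exactly how the multiplication formula \eqref{metvbmult} is obtained. Your verification that $f^*(\ann(\h))=\r^\perp$ (isotropy against $\r=f^{-1}(\h)$, injectivity of $f^*$ on $\ann(\h)$ from $\mathrm{ran}(f)+\h=\d$, and a dimension count) is correct and complete, as is the splitting $\g\cap\r^\perp=\g^\perp\cap\r^\perp=(\g+\r)^\perp=0$.

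Two caveats. First, a minor slip: the source and target of $j(x)^{-1}\circ x$ are $\s(x)$ and $\s(j(x))$ respectively, and these need not coincide; what places the element in $\A_e$ is only that its base point is $h^{-1}h=e$. Second, the step you yourself flag as the main obstacle --- tracing the coisotropic reduction $\s^{-1}(\g)/\s^{-1}(\g)^\perp$ of $\d\times\d^*_\beta$ to identify the descended source, target and anchor with $\pr^*$, $1-\pr$ and $\ph\circ f$, and checking that the conjugation action on $\ker(\s)\cong\r^\perp$ really is $h\bullet f^*(\mu)=f^*(\Ad_h\mu)$ --- is essentially the entire content of the proposition; Chapter 3 only supplies the abstract normal form $\g\times\g^*_\lambda$, not these particular identifications, so this part is asserted rather than carried out. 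As written the proposal is a correct roadmap into the construction of \cite{lmdir} rather than a self-contained proof, but no step of it is wrong.
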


\chapter{Metrized Linear Groupoids, Homogeneous Spaces}
For any Dirac Lie group structure $(\A,E)\cong(H\times\q,H\times\g)$, the identity fibre $\q$ inherits a groupoid structure by restricting the groupoid multiplication to $h=e$. As we shall see in Chapter 5, the global structure of any Dirac homogeneous space being given $(\P,L)\cong (H\times_K\p, H\times_K\l)$  gives a ($K$-equivariant) module structure to the identity coset fibres $(\p,\l)$.  Hence, the most reasonable place to begin the analysis is with linear groupoids $\q\gp\g$ having metrics compatible with the groupoid structure, and their homogeneous spaces.
\section{Metrized Linear Groupoids}
\begin{dfn} A groupoid $\q\rightrightarrows\q^{(0)}$ is called a \emph{linear groupoid} if $\q$ and $\q^{(0)}$ are vector spaces, and all associated structure and multiplication maps are linear.
\end{dfn}
\begin{rmrk} In particular, linear groupoids are a special case of $VB$-groupoids, over $M=\mathrm{pt}$.  Additionally, linear groupoids are a special case of Baez-Crans \emph{2-vector spaces}, being a category inside the category of vector spaces; see \cite{baezcrans}, \cite{Pradinesv}, \cite{alfraj}.
\end{rmrk}

\begin{lem}\label{linmul} (See Lemma 6, \cite{baezcrans}) The groupoid multiplication of a linear groupoid $\q\gp\q^{(0)}$ is determined by the source and target maps, given by
    \begin{equation}\label{linmuleq} \xi\circ \eta = \eta + (1-\s)\xi \end{equation}
for $\xi,\eta\in\q$ with $\s(\xi)=\t(\eta)$.
\begin{proof}
Given such a $\q\rightrightarrows\q^{(0)}$, let $\xi,\eta\in\q$ such that $\s(\xi)=\t(\eta)$. The term $(1-\s)\xi\circ 0$ is defined since $(1-\s)\xi$ is in the kernel of the source map; it equals $(1-s)\xi$ since $0\in\q^{(0)}$ is a unit for the groupoid multiplication. Hence:
\begin{align*}
\xi\circ\eta &= (\s(\xi) + (1-\s)\xi))\circ(\eta + 0)\\
             &= \s(\xi)\circ\eta + (1-\s)\xi\circ 0 \\
             &= \eta + (1-\s)\xi.\qedhere
\end{align*}
\end{proof}
\end{lem}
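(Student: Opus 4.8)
The plan is to exploit the two structural features of a linear groupoid. First, the unit embedding $\q^{(0)}\to\q$ is linear and is a section of both $\s$ and $\t$; identifying $\q^{(0)}$ with its image, the source and target maps become linear idempotents $\q\to\q$ with image $\q^{(0)}$ (in particular $\s\circ\s=\s$), and the zero vector $0\in\q$ is simultaneously the zero of the vector space and the unit arrow $1_0$ at $0\in\q^{(0)}$. Second, the multiplication $\mathrm{Mult}\colon\q^{(2)}\to\q$ is linear on the subspace $\q^{(2)}=\{(\xi',\eta')\mid\s(\xi')=\t(\eta')\}\subset\q\times\q$. Given composable $\xi,\eta$ (so $\s(\xi)=\t(\eta)$), the strategy is to split $\xi=\s(\xi)+(1-\s)\xi$ into its unit part and its $\ker\s$ part and then compute $\xi\circ\eta$ by additivity of $\mathrm{Mult}$.

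First I would record the decomposition of the pair $(\xi,\eta)$ inside $\q^{(2)}$, namely
\[(\xi,\eta)=\big(\s(\xi),\,\eta\big)+\big((1-\s)\xi,\,0\big).\]
The first summand is a composable pair because the source of the unit $\s(\xi)$ is $\s(\xi)=\t(\eta)$; the second is composable because $\s\big((1-\s)\xi\big)=\s(\xi)-\s(\xi)=0=\t(0)$, using idempotency of $\s$. Since both summands lie in $\q^{(2)}$ and $\mathrm{Mult}$ is linear there, additivity gives
\[\xi\circ\eta=\s(\xi)\circ\eta+(1-\s)\xi\circ 0.\]

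To finish, I would simplify each term via the unit axioms: $\s(\xi)=\t(\eta)$ is the identity arrow at $\t(\eta)$, so $\s(\xi)\circ\eta=1_{\t(\eta)}\circ\eta=\eta$; and $(1-\s)\xi\circ 0=(1-\s)\xi\circ 1_{\s((1-\s)\xi)}=(1-\s)\xi$ by the other unit axiom. Adding these yields $\xi\circ\eta=\eta+(1-\s)\xi$, as claimed. The only step requiring genuine care, as opposed to being purely formal, is the middle one: one must verify that the two chosen summands are honestly composable pairs before applying linearity of $\mathrm{Mult}$ term by term, since the multiplication is defined only on $\q^{(2)}$ and not on all of $\q\times\q$. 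Everything else is a direct appeal to linearity of the structure maps together with the groupoid unit axioms.
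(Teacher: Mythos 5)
Your proposal is correct and follows essentially the same route as the paper: decompose $\xi = \s(\xi) + (1-\s)\xi$, pair the unit part with $\eta$ and the $\ker\s$ part with $0$, apply linearity of the multiplication on $\q^{(2)}$, and finish with the unit axioms. You are in fact slightly more careful than the paper's own proof in explicitly checking that both summands lie in $\q^{(2)}$ before invoking additivity, which is the one point where care is genuinely needed.
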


\begin{thm} (\cite{baezcrans}) Linear groupoids $\q\gp\g$ are classified by a vector space $\q$, a subspace $\g\subseteq\q$, and two projections  $\s,\t:\,\q\to\g$.
\begin{proof}
Given a linear groupoid, the data $(\q,\g,\s,\t)$ is recovered automatically.  Since the multiplication is determined by $\s,\t$, it follows that each linear groupoid produces a set of this data unique to it (i.e.: no two linear groupoids produce the same set of data).\\
Given any set of data $(\q,\g,\s,\t)$, we construct the groupoid $\q\gp\g$ with $\s,\t$ as the source and target maps, and multiplication defined by Equation \eqref{linmuleq}.  This is associative, as for all $(\xi_1,\xi_2,\xi_3)\in \q^{(3)}$,
    \begin{align*} \xi_1\circ(\xi_2\circ\xi_3) &= \xi_1\circ(\xi_3+(1-s)\xi_2) \\
                            &= \xi_3 + (1-s)\xi_2 + (1-s)\xi_1 \\
                            &= \xi_3 + (1-s)(\xi_2 + (1-s)\xi_1)\\
                            &= (\xi_1\circ\xi_2)\circ\xi_3.\qedhere
    \end{align*}
\end{proof}
\end{thm}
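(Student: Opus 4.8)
The statement asserts a bijection between linear groupoids $\q\gp\g$ and tuples $(\q,\g,\s,\t)$ with $\g\subseteq\q$ a subspace and $\s,\t$ projections onto $\g$, so I would prove it in two directions. \emph{From a groupoid to the data:} given $\q\gp\g$, put $\g=\q^{(0)}$ and let $\s,\t\colon\q\to\g$ be the (linear) source and target maps. Since the unit embedding is linear with $\s(1_c)=\t(1_c)=c$, identifying $1_c$ with $c$ gives $\s|_\g=\t|_\g=\mathrm{id}_\g$; together with $\mathrm{im}(\s)=\mathrm{im}(\t)=\q^{(0)}=\g$ this says exactly that $\s$ and $\t$ are projections onto $\g$. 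By Lemma~\ref{linmul} the multiplication of composable pairs is forced to equal $\xi\circ\eta=\eta+(1-\s)\xi$, and the unit map and inversion are then determined too, so the tuple recovers the whole groupoid and no two distinct linear groupoids yield the same tuple.

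\textbf{Reconstruction.} Conversely, given a tuple $(\q,\g,\s,\t)$ of this form, I would define $\q\gp\g$ with source $\s$, target $\t$, unit map the inclusion $\g\hookrightarrow\q$, inversion $\xi\mapsto\s\xi+\t\xi-\xi$, and product $\xi\circ\eta=\eta+(1-\s)\xi$ on $\q^{(2)}=\{(\xi,\eta):\s(\xi)=\t(\eta)\}$, then check the groupoid axioms. Each check reduces to the identities $\s^2=\s$, $\t^2=\t$, $\s\t=\t$, $\t\s=\s$ (all immediate from $\s,\t$ being projections onto $\g\subseteq\q$): the product has the right source and target, $\s(\xi\circ\eta)=\s(\eta)$ and $\t(\xi\circ\eta)=\t(\xi)$, because $\s(1-\s)=0$ and $\t\s\xi=\s\xi$; the unit laws $1_{\t(\xi)}\circ\xi=\xi$ and $\xi\circ 1_{\s(\xi)}=\xi$ hold since $\s,\t$ fix $\g$ pointwise; and the inverse identities $\xi^{-1}\circ\xi=1_{\s(\xi)}$, $\xi\circ\xi^{-1}=1_{\t(\xi)}$ fall out of the formula for $\xi^{-1}$. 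Everything in sight is linear, so smoothness of all structure maps is automatic.

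\textbf{The crux.} The only computation with any content is associativity: for a composable triple one gets $\xi_1\circ(\xi_2\circ\xi_3)=\xi_3+(1-\s)\xi_2+(1-\s)\xi_1$ and $(\xi_1\circ\xi_2)\circ\xi_3=\xi_3+(1-\s)\xi_2+(1-\s)^2\xi_1$, so associativity holds precisely when $1-\s$ is idempotent, i.e.\ precisely when $\s$ is a projection. Thus there is really no obstacle here: the structural input has already been isolated in Lemma~\ref{linmul}, and the theorem amounts to the observation that the recipe $\xi\circ\eta=\eta+(1-\s)\xi$ defines a genuine linear groupoid exactly under the stated hypothesis on $\s$ and $\t$.
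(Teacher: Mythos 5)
Your proposal is correct and follows essentially the same route as the paper: recover $(\q,\g,\s,\t)$ from the groupoid, note the multiplication is forced to be $\xi\circ\eta=\eta+(1-\s)\xi$ by Lemma~\ref{linmul}, and reconstruct the groupoid from the data with associativity as the only substantive check. You supply some routine verifications the paper omits (explicit unit and inversion formulas, source/target compatibility) and make explicit that the associativity computation uses $\s^2=\s$, but the argument is the same.
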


\begin{dfn}\label{multmet} A \emph{metrized linear groupoid} $(\q\gp\q^{(0)},\<\cdot,\cdot\>)$ is a linear groupoid $\q\gp\q^{(0)}$, with a metric $\<\cdot,\cdot\>$ on $\q$ compatible with the groupoid multiplication, in the sense that:
    \[ \< \xi_1\circ\eta_1,\xi_2\circ\eta_2\> = \<\xi_1,\xi_2\>+\<\eta_1,\eta_2\>\]
for all composable $\xi_i,\eta_i\in \q$. Such a metric is called a \emph{multiplicative metric}.
\end{dfn}
In keeping with the notation in \cite{lmdir} for the Dirac Manin triples, we will use $\g$ to denote $\q^{(0)}$.
\begin{prop}\label{lgperp} Let $(\q\gp\g, \<\cdot,\cdot\>)$ be a metrized linear groupoid. Then $\g$ is Lagrangian in $\q$, and $\ker(\s)=\ker(\t)^\perp$.
\begin{proof}
For any $\zeta\in\g$, we have the identity $\zeta=\zeta\circ\zeta$.  Hence,
    \[ \<\zeta,\zeta\> = \<\zeta\circ\zeta,\zeta\circ\zeta\> = \<\zeta,\zeta\>+\<\zeta,\zeta\> \]
meaning $\g$ is isotropic $(\g\subseteq\g^\perp)$.  \\
For all $\xi\in\q$, and $\eta\in\g^\perp$
    \begin{align*}
        \<\xi,\eta-\s(\eta)\> &= \<\xi,\eta\> - \<\xi,\s(\eta)\>\\
                              &=\<\t(\xi)\circ\xi,\eta\circ\s(\eta)\> - \<\xi,\s(\eta)\>\\
                              &= \<\t(\xi),\eta\> + \<\xi,\s(\eta)\> - \<\xi,\s(\eta)\>\\
                              &=0
    \end{align*}
and so $\eta = \s(\eta)\in\g$, meaning $\g^\perp\subseteq\g$.  Thus, we conclude $\g=\g^\perp$.\\
If $x\in\ker(\s),\,y\in\ker(\t)$, then $x=x\circ0$ and $y=0\circ y$.  Taking inner products,
    \[ \<x,y\> = \<x\circ 0,0\circ y \> = \<x,0\>+\<0,y\> = 0. \]
This shows $\ker(\s)\subseteq\ker(\t)^\perp$.  Equality follows since $\dim(\ker\s)=\dim(\ker\t)=\dim\g$.
\end{proof}
\end{prop}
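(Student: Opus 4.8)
The plan is to prove the two assertions in turn, using only the multiplicativity of the metric together with the elementary identities for the source, target, and unit maps of a linear groupoid (recall from the Baez--Crans classification that $\s,\t$ are projections $\q\to\g$, so $\dim\ker(\s)=\dim\ker(\t)=\dim\q-\dim\g$).

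First I would show $\g\subseteq\g^\perp$. Every $\zeta\in\g=\q^{(0)}$ is a unit, hence idempotent: $\zeta\circ\zeta=1_{\t(\zeta)}\circ\zeta=\zeta$ (using $\t(\zeta)=\zeta$), and this composite is defined since $\s(\zeta)=\t(\zeta)$. Applying the multiplicative property, $\langle\zeta,\zeta\rangle=\langle\zeta\circ\zeta,\zeta\circ\zeta\rangle=2\langle\zeta,\zeta\rangle$, so $\langle\zeta,\zeta\rangle=0$, and by polarization $\langle\g,\g\rangle=0$.

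Next comes the reverse inclusion $\g^\perp\subseteq\g$, which I expect to be the only real content. Given $\eta\in\g^\perp$, I would show $\eta=\s(\eta)$, which forces $\eta\in\g$. For arbitrary $\xi\in\q$, use the decompositions $\xi=\t(\xi)\circ\xi$ and $\eta=\eta\circ\s(\eta)$ (valid since $1_{\t(\xi)}\circ\xi=\xi$ and $\eta\circ 1_{\s(\eta)}=\eta$, with both products defined). Multiplicativity then gives $\langle\xi,\eta\rangle=\langle\t(\xi)\circ\xi,\eta\circ\s(\eta)\rangle=\langle\t(\xi),\eta\rangle+\langle\xi,\s(\eta)\rangle$; the first term vanishes because $\t(\xi)\in\g$ while $\eta\in\g^\perp$, so $\langle\xi,\eta-\s(\eta)\rangle=0$. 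Since $\xi$ is arbitrary and the metric is non-degenerate, $\eta=\s(\eta)\in\g$. Combined with the previous step this gives $\g=\g^\perp$, i.e.\ $\g$ is Lagrangian; in particular $\dim\g=\tfrac12\dim\q$.

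Finally, for $\ker(\s)=\ker(\t)^\perp$, I would first establish $\ker(\s)\subseteq\ker(\t)^\perp$: for $x\in\ker(\s)$ and $y\in\ker(\t)$ write $x=x\circ 0$ and $y=0\circ y$ (valid as $\s(x)=0$ and $\t(y)=0$), so $\langle x,y\rangle=\langle x\circ 0,\,0\circ y\rangle=\langle x,0\rangle+\langle 0,y\rangle=0$. Equality then follows from a dimension count: using $\dim\g=\tfrac12\dim\q$ from the Lagrangian step, $\dim\ker(\s)=\dim\q-\dim\g=\dim\g=\dim\q-\dim\ker(\t)=\dim\ker(\t)^\perp$. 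The argument is short throughout; the only point needing care is selecting, in the third paragraph, the decompositions of $\xi$ and $\eta$ that make the multiplicative identity produce exactly one term lying in $\g$, which the hypothesis $\eta\in\g^\perp$ then annihilates.
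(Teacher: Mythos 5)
Your proof is correct and follows essentially the same route as the paper's: the idempotence of units for isotropy of $\g$, the decomposition $\langle\xi,\eta\rangle=\langle\t(\xi)\circ\xi,\eta\circ\s(\eta)\rangle$ to show $\g^\perp\subseteq\g$, and the factorizations $x=x\circ 0$, $y=0\circ y$ plus a dimension count for $\ker(\s)=\ker(\t)^\perp$. No gaps.
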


\begin{thm}\label{mlincl} Metrized linear groupoids $(\q\rightrightarrows\g,\langle,\rangle)$ are classified by a choice of vector space $\g$ together with a element $\lambda\in S^2\g$.
\begin{proof}   Let $(\q\gp\g,\<,\>)$ be a metrized linear groupoid.  The metric on $\q\cong\q^*$ can be identified with an element of $S^2\q$.  Let $\lambda\in S^2\g$ be its image under the target map $\t$.  Hence for any $\alpha_i\in \g^*$, we have
    \[ \lambda(\alpha_1,\alpha_2) = \<\t^*(\alpha_1),\t^*(\alpha_2)\>, \]
from which we conclude $\t\circ\t^*|_{\g^*} = \lambda^\#:\,\g^*\to\g$. \\
As $\t$ is surjective, $\t^*:\,\g^*\to \q$ is injective, with image $\mathrm{ran}(\t^*) = \ker(\t)^\perp=\ker(\s)$ (Proposition \ref{lgperp}) and so we identify $\ker(\s)$ with $\g^*$. In the resulting decomposition $\q=\g\oplus\g^*$, the metric becomes
    \begin{equation}\label{linme} \<(\zeta_1,\alpha_2),(\zeta_2,\alpha_2)\> = \< \zeta_1,\alpha_2\>+\<\zeta_2,\alpha_1\> + \lambda(\alpha_1,\alpha_2), \end{equation}
and we will use the notation $\g\times\g_\lambda^*$ to denote $\q=\g\oplus\g^*$ with this metric.  Since $\ker(\t) = \ker(\s)^\perp \cong (\g^*)^\perp$, elements of $\ker(\t)$ are of the form $\alpha-\lambda^\#(\alpha)$ for $\alpha\in\g^*$.  The source and targets are uniquely determined by the fact that they are the identity on $\g$, and their kernels are $\g^*$ and $(\g^*)^\perp$ respectively.  From this, together with the result of Lemma \ref{linmul}, the groupoid structure is given by
    \begin{equation}\label{lingpstr}
    \begin{split}  \s(\zeta,\alpha) = \zeta,\quad \t(\zeta,\alpha) = \zeta + \lambda^\#(\alpha)\\
                      ( \zeta_1,\alpha_1)\circ (\zeta_2,\alpha_2) = (\zeta_2,\alpha_1+\alpha_2)
    \end{split}
    \end{equation}
for $\zeta_1 = \t(\zeta_2,\alpha_2)$.  From this we can conclude that each metrized linear groupoid $(\q\gp\g,\<\cdot,\cdot\>)$ determines a pair $(\g,\lambda)$ unique to it amongst metrized linear groupoids.\\
All that remains is to show that every $(\g,\lambda)$ arises this way.  Given any $\g$ and $\lambda\in S^2\g$, construct $\q:=\g\oplus\g^*$ with groupoid structure maps for $\q^{(0)}=\g$ given in Equation \eqref{lingpstr}, and metric given by Equation \eqref{linme}.  Thus, the only thing left to show is that this metric is multiplicative.  By direct computation, for any $\alpha_i\in\g^*,\zeta\in \g$:
    \begin{align*}
        &\< ( \zeta+\lambda^\#(\alpha_2),\alpha_1)\circ (\zeta,\alpha_2), ( \zeta+\lambda^\#(\alpha_2),\alpha_1)\circ (\zeta,\alpha_2)\>\\
         &= \< (\zeta,\alpha_1+\alpha_2),(\zeta,\alpha_1+\alpha_2)\> \\
         &= 2\<\zeta,\alpha_1 + \alpha_2\> + \lambda(\alpha_1+\alpha_2,\alpha_1+\alpha_2,)\\
         &= 2\<\zeta+\lambda^\#(\alpha_2),\alpha_1\> + 2\<\zeta,\alpha_2\> + \lambda(\alpha_1,\alpha_1)+ \lambda(\alpha_2,\alpha_2)\\
         &= \<\zeta+\lambda^\#(\alpha_2),\alpha_1),\zeta+\lambda^\#(\alpha_2),\alpha_1)\> + \< (\zeta,\alpha_2), (\zeta,\alpha_2) \>.\qedhere
    \end{align*}
\end{proof}
\end{thm}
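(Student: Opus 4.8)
The plan is to prove the two assertions implicit in the statement: (i) the isomorphism class of a metrized linear groupoid is determined by the pair $(\g,\lambda)$, and (ii) every such pair actually arises. The structural input I would lean on is the Baez--Crans description of linear groupoids as data $(\q,\g,\s,\t)$ with $\s,\t\colon\q\to\g$ projections, together with Lemma~\ref{linmul}, which forces the multiplication once $\s,\t$ are known, and Proposition~\ref{lgperp}, which says that for any multiplicative metric $\g$ is Lagrangian and $\ker(\s)=\ker(\t)^\perp$.

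For (i): given $(\q\gp\g,\langle\cdot,\cdot\rangle)$, since $\s$ restricts to the identity on $\g$ I obtain a splitting $\q=\g\oplus\ker(\s)$; because $\g$ is Lagrangian, $\dim\ker(\s)=\dim\g$ and the metric pairs $\g$ with $\ker(\s)$ perfectly, so $\ker(\s)\cong\g^*$ canonically --- and since $\ker(\s)=\ker(\t)^\perp=\mathrm{ran}(\t^*)$ and $\t|_\g=\mathrm{id}$, this identification is the one sending $\t^*\alpha$ to $\alpha$. Under $\q\cong\g\oplus\g^*$ the metric then takes the normal form $\langle(\zeta_1,\alpha_1),(\zeta_2,\alpha_2)\rangle=\langle\zeta_1,\alpha_2\rangle+\langle\zeta_2,\alpha_1\rangle+\lambda(\alpha_1,\alpha_2)$, where the $\g\times\g$ block vanishes by Lagrangianity, the mixed blocks are the canonical pairing, and $\lambda\in S^2\g$ is the restriction of the metric to $\g^*$, equivalently $\lambda^\#=\t\circ\t^*|_{\g^*}$. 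The kernels of $\s$ and $\t$ are now $\g^*$ and $(\g^*)^\perp$, so both projections --- hence, by Lemma~\ref{linmul}, the multiplication --- are determined by $\lambda$. Thus $(\g,\lambda)$ recovers the metrized linear groupoid up to isomorphism, so the assignment is well defined and injective.

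For (ii): given any $\g$ and $\lambda\in S^2\g$, I would build $\q=\g\oplus\g^*$, take $\s$ to be the projection onto $\g$ and $\t(\zeta,\alpha)=\zeta+\lambda^\#(\alpha)$, and put the form above on $\q$. Then $\s,\t$ are projections onto $\g$, so Baez--Crans produces a linear groupoid with the forced multiplication; the form is non-degenerate because the $\g\times\g^*$ pairing makes the Gram matrix invertible no matter what $\lambda$ is; and --- the only step needing an actual computation --- the form is multiplicative, which follows by substituting the explicit multiplication into Definition~\ref{multmet} and expanding, using bilinearity of $\lambda$ to see that both sides agree. Since this construction is inverse to the one in (i), the bijection follows.

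I do not anticipate a serious obstacle. The step that needs the most care is the canonical identification $\ker(\s)\cong\g^*$: one must check it simultaneously sends $\mathrm{ran}(\t^*)$ to $\g^*$ and puts $\langle\cdot,\cdot\rangle$ into the stated normal form, so that the whole groupoid-with-metric structure is genuinely reconstructed from $(\g,\lambda)$ alone. The multiplicativity check in part (ii) is the one place where a short direct calculation is unavoidable.
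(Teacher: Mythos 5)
Your proposal is correct and follows essentially the same route as the paper: identify $\ker(\s)$ with $\g^*$ (you do it via the metric pairing with $\g$, the paper via $\t^*$, and you correctly note these coincide since $\t|_\g=\mathrm{id}$), read off the normal form of the metric with $\lambda^\#=\t\circ\t^*|_{\g^*}$, invoke Lemma~\ref{linmul} to recover the multiplication, and finish with the direct multiplicativity check for the converse. No gaps.
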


\begin{prop}\label{qggroup} If $(\q\gp\g,\<\cdot,\cdot\>)$ is a metrized linear groupoid, then $\q/\g$ is a group, with group multiplication defined by
    \begin{equation} \overline{\xi_1}\cdot\overline{\xi_2} = \overline{\xi_1\circ\xi_2} \end{equation}
for lifts $\xi_1,\xi_2\in\q$ with $\s(\xi_1)=\t(\xi_2)$.
\begin{proof}
If $\xi_2\in\q$ is any lift of $\overline{\xi_2}$, then there is a unique lift $\xi_1$ of $\overline{\xi_1}$ with $\s(\xi_1)=\t(\xi_2)$.  If $\xi_2'$ is another lift, we can write $\xi_2' = \xi_2 + (\xi_2'-\xi_2)$.  The associated lift is then $\xi_1' = \xi_1 + \t(\xi_2'-\xi_2)$, and we compute
    \begin{align*} \xi_1'\circ \xi_2' - \xi_1\circ\xi_2 &= \xi_1\circ \xi_2 + \t(\xi_2'-\xi_2)\circ(\xi_2'-\xi_2) -\xi_1\circ\xi_2\\
                                                        &= \xi_2'-\xi_2.
    \end{align*}
Since $\xi_2'-\xi_2\in \g$, this means that $\overline{\xi_1\circ\xi_2} = \overline{\xi_1'\circ\xi_2'}$, and so the multiplication is well defined.  It is associative since the groupoid multiplication is, the identity is 0, and the inverses are given by $(\overline{\xi})^{-1} = -\overline{\xi}$.\\
Since $\q/\g\cong\g^*$, and considering Equation \eqref{lingpstr}, this group is just $\g^*$ viewed as a vector space.
\end{proof}
\end{prop}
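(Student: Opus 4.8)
The plan is to verify the group axioms directly, leaning on the normal form for a metrized linear groupoid from Theorem \ref{mlincl}: $\s$ and $\t$ are projections of $\q$ onto $\g=\q^{(0)}$, and by Lemma \ref{linmul} the multiplication is $\xi\circ\eta = \eta + (1-\s)\xi$ whenever $\s(\xi)=\t(\eta)$.

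First I would check that the operation is even defined. Given classes $\overline{\xi_1},\overline{\xi_2}\in\q/\g$ and any lift $\xi_2$ of $\overline{\xi_2}$, I can choose a lift $\xi_1$ of $\overline{\xi_1}$ with $\s(\xi_1)=\t(\xi_2)$: starting from an arbitrary lift, correct it by the element $\t(\xi_2)-\s(\xi_1)\in\g$, which does not change the class since $\s|_\g=\mathrm{id}_\g$. So $\xi_1\circ\xi_2$ exists and we may form $\overline{\xi_1\circ\xi_2}$.

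The one genuine computation is independence of the compatible lifts. If $(\xi_1,\xi_2)$ and $(\xi_1',\xi_2')$ are two pairs with $\s(\xi_1)=\t(\xi_2)$ and $\s(\xi_1')=\t(\xi_2')$, set $\delta_i=\xi_i'-\xi_i\in\g$; subtracting the two compatibility relations and using that $\s,\t$ fix $\g$ pointwise yields $\delta_1=\delta_2=:\delta$. Then, using Lemma \ref{linmul} together with $(1-\s)\delta=0$, I expect $\xi_1'\circ\xi_2' = \xi_1\circ\xi_2 + \delta$, so that $\overline{\xi_1'\circ\xi_2'}=\overline{\xi_1\circ\xi_2}$ and the multiplication on $\q/\g$ is well defined. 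This bookkeeping of the two independent choices of lift is the main, and only slightly delicate, point of the argument; everything else is the metric-compatible structure entering only through the normal form of $\s,\t$.

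The rest is routine: associativity of the quotient law descends from associativity of the groupoid product on compatibly chosen triples of lifts; $\overline{0}$ is a two-sided unit because $0\in\q^{(0)}$ is a groupoid identity; and the inverse of $\overline{\xi}$ is the class of the groupoid inverse $\mathsf{i}(\xi)$, since $\mathsf{i}(\xi)\circ\xi$ is a unit, which in the decomposition $\q=\g\oplus\g^*$ of Theorem \ref{mlincl} reads simply $-\overline{\xi}$. Finally, reading off \eqref{lingpstr} in that decomposition identifies the projection $\q\to\q/\g$ with $\g\oplus\g^*\to\g^*$ and the group law with addition, giving $\q/\g\cong(\g^*,+)$.
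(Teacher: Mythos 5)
Your proof is correct and follows essentially the same route as the paper's: both reduce well-definedness to the observation that changing the lifts by $\delta\in\g$ changes the product by $\delta$ (you via the formula $\xi\circ\eta=\eta+(1-\s)\xi$, the paper via bilinearity of $\circ$ and $\t(\delta)\circ\delta=\delta$), and both then read off associativity, unit, inverses, and the identification $\q/\g\cong(\g^*,+)$ from the normal form \eqref{lingpstr}.
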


\section{Linear Groupoid Modules, Homogeneous Spaces}
\begin{dfn} Given a linear groupoid $\q\gp\g$, a \emph{$\q$-module} is a groupoid module $\u:\,\p\to\g$ (Section \ref{gmodules}) such that $\p$ is a vector space, with $\u$ and the groupoid action being linear maps.
\end{dfn}

\begin{thm}\label{linmnm} The $\q$-modules for a given linear groupoid $\q\gp\g$ are classified by a vector space $\p$, and a pair of maps $\u:\,\p\to\g$, and $j:\,\ker(\s)\to\p$ such that the following diagram commutes
\begin{center}
\begin{tikzpicture}

  \node (A)   {$\ker(\s)$};
  \node (B) [right of=A, node distance=1.5cm] {$\p$};
  \node (C) [below of=A, node distance=1.3cm] {$\g$};

  \draw[->] (A) to node {$j$} (B);
  \draw[->] (A) to node [swap] {$\t$} (C);
  \draw[->] (B) to node {$\u$} (C);
  \end{tikzpicture}
\end{center}
For any $\xi\in\q,\,x\in\p$ with $\s(\xi)=\u(x)$, the action is given by
    \begin{equation}\label{linmodwom} \xi\circ x = x + j((1-s)\xi). \end{equation}
\begin{proof}
Let $\u:\,\p\to\g$ be a $\q$-module.  For any $\xi\in\q, x\in\p$ such that $\s(\xi)=\u(x)$, we have
    \[ \xi\circ x = (\s(\xi) + (1-\s)\xi)\circ (x + 0) = \s(\xi)\circ x + (1-\s)\xi\circ 0 = x + (1-\s)\xi\circ 0 \]
and hence the groupoid action is determined by the action of $\ker(\s)$ on $0\in \p$, i.e.: the induced linear map
    \[ j:\,\ker(\s)\times \{0\} \to \p. \]
Since $\u(\xi\circ x) = \t(\xi)$, the $j$ map must satisfy the composition relation $\u\circ j = \t$.  The groupoid action can then be expressed as
    \[ \xi\circ x = x + j((1-\s)\xi). \]
Since the ability to compose relies only on $\u$, and the action depends only on $j$, it is clear that distinct $\q$-modules give distinct data $(\p,\u,j)$.\\
Given any set of data $(\p,\u,j)$, we define a $\q$-module structure on $\p$ with moment map $\u$, and groupoid action defined in Equation \eqref{linmodwom}.  This is a groupoid action, as it satisfies
    \[ \u(\xi\circ x) = \u(x + j((1-\s)\xi)) = \u(x) + \t((1-\s)\xi) = \s(\xi) + \t(\xi) -\s(\xi) = \t(\xi), \]
and is associative
    \begin{align*} \xi_1\circ(\xi_2\circ x) &= \xi_1\circ (x+j((1-\s)\xi_2)) \\
                                            &= x + j((1-\s)\xi_2) + j((1-\s)\xi_1)\\
                                            &= x + j(\xi_1\circ\xi_2 -\s(\xi_2))\\
                                            &= x + j(1-\s(\xi_1\circ\xi_2))\\
                                            &= (\xi_1\circ\xi_2)\circ x
    \end{align*}
using Equation \eqref{linmuleq}, as $\s(\xi_1\circ\xi_2)=\s(\xi_2)$.
\end{proof}
\end{thm}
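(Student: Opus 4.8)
The plan is to follow the template of Lemma~\ref{linmul} and the classification of linear groupoids: first extract the triple $(\p,\u,j)$ from an abstract $\q$-module and show it determines the module, then verify that any admissible triple reassembles into a $\q$-module. Throughout I would use that $\s$ and $\t$ are linear projections onto $\g$ (so $\s^2=\s$, $\t^2=\t$, and each restricts to the identity on $\g$), whence $\s\circ(1-\s)=0$ on all of $\q$ and $\t(\s(\xi))=\s(\xi)$ for every $\xi$.

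For the forward direction, let $\u:\,\p\to\g$ be a $\q$-module. First note that for any $\xi\in\q$ the element $(1-\s)\xi$ lies in $\ker(\s)$ and, since $\u$ is linear, $\u(0)=0$; hence $(1-\s)\xi$ is composable with $0\in\p$. Decomposing $\xi=\s(\xi)+(1-\s)\xi$ and $x=x+0$ inside the fibre product $\q\,{}_\s\times_\u\p$ and using linearity of the action map together with the unit axiom $\s(\xi)\circ x=\u(x)\circ x=x$ (valid because $\s(\xi)=\u(x)\in\g=\q^{(0)}$ whenever $(\xi,x)$ is composable), I obtain
\[ \xi\circ x=\big(\s(\xi)+(1-\s)\xi\big)\circ(x+0)=\s(\xi)\circ x+(1-\s)\xi\circ 0=x+(1-\s)\xi\circ 0. \]
Thus the action is governed by the linear map $j:\,\ker(\s)\to\p$, $j(\zeta):=\zeta\circ 0$. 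Specializing the compatibility $\u(\xi\circ x)=\t(\xi)$ to $x=0$, $\xi=\zeta\in\ker(\s)$ gives $\u(j(\zeta))=\t(\zeta)$, i.e.\ the stated triangle commutes. Since composability of $(\xi,x)$ depends only on $\u$, and the action then depends only on $j$, distinct $\q$-modules produce distinct triples $(\p,\u,j)$.

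For the converse, given any $(\p,\u,j)$ with $\u\circ j=\t|_{\ker(\s)}$, I would define the action by Equation~\eqref{linmodwom} and check the three module axioms. Compatibility $\u(\xi\circ x)=\t(\xi)$ follows at once from $\u\circ j=\t$ on $\ker(\s)$, from $\u(x)=\s(\xi)$, and from $\t(\s(\xi))=\s(\xi)$; the unit axiom holds because $\u(x)\in\g$ forces $(1-\s)\u(x)=0$ and $j(0)=0$, so $\u(x)\circ x=x$. The one computation needing attention is associativity: one substitutes the linear-groupoid multiplication $\xi_1\circ\xi_2=\xi_2+(1-\s)\xi_1$ of Lemma~\ref{linmul}, uses $\s\circ(1-\s)=0$ to get $\s(\xi_1\circ\xi_2)=\s(\xi_2)$ and hence $(1-\s)(\xi_1\circ\xi_2)=(1-\s)\xi_1+(1-\s)\xi_2$, and then both $\xi_1\circ(\xi_2\circ x)$ and $(\xi_1\circ\xi_2)\circ x$ collapse to $x+j((1-\s)\xi_1)+j((1-\s)\xi_2)$.

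I do not anticipate a genuine obstacle: the argument is formally parallel to the groupoid case treated in Lemma~\ref{linmul}, with $0\in\p$ playing the role that the unit subspace $\g$ played there. The only place where care is needed is the bookkeeping with the idempotents $\s,\t$ — keeping straight that $\s\circ(1-\s)=0$ and that $\s,\t$ act as the identity on $\g$ — in the moment-map and associativity verifications.
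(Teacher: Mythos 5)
Your proposal is correct and follows essentially the same route as the paper's proof: the same decomposition $\xi=\s(\xi)+(1-\s)\xi$, $x=x+0$, the same definition of $j(\zeta)=\zeta\circ 0$, the same derivation of $\u\circ j=\t$ from the moment-map compatibility, and the same verification of associativity via Lemma \ref{linmul}. The only (harmless) differences are that you explicitly check composability of $(1-\s)\xi$ with $0$ and the unit axiom, which the paper leaves implicit.
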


\begin{dfn} Given a metrized linear groupoid $(\q\gp\g, \<\cdot,\cdot\>)$, a \emph{metrized $\q$-module} is a metrized vector space $(\p,\<\cdot,\cdot\>)$ together with a linear moment map $\u:\,\p\to\g$ which makes $p$ into a groupoid space for $\q$ .  The groupoid action is compatible with the metrics in the sense that for any composable $\xi_i\in\q,x_i\in \p$
    \[ \<\xi_1\circ x_1,\xi_2\circ x_2\> = \<\xi_1,\xi_2\> + \<x_1,x_2\>. \]
\end{dfn}
\begin{prop}\label{uustar}
Let $(\q\gp\g,\<\cdot,\cdot\>)$ be a metrized linear groupoid, and let $(\p,\<\cdot,\cdot\>,\u)$ be a metrized $\q$-module.  Then the groupoid action is given by
    \begin{equation}\label{uustarform} (\zeta,\alpha)\circ x = x+\u^*(\alpha), \end{equation}
for $\s(\zeta,\alpha)=\u(x)$.  The moment map satisfies $\u\circ\u^*=\lambda^\#$.
\begin{proof}
Elements $x\in \p, \,(\zeta,\alpha)\in \q$ are composable if and only if
    \[ \s(\zeta,\alpha) = \zeta = \u(x). \]
In this case, we can compute
\begin{align*}
(\u(x),\alpha)\circ x &= \big((\u(x),0)+(0,\alpha)\big)\circ (x+0)\\
                       &= (\u(x),0)\circ x + (0,\alpha)\circ 0 \\
                       &= x + (0,\alpha)\circ 0.
\end{align*}
Hence, the groupoid action is determined by how $\g^*$ acts on $0\in \p$.  If $(0,\alpha)\circ 0 = y$, then we must have that $\u(y) = \t(0,\alpha) = \lambda^\#(\alpha)$, and so
\begin{align*}
\< x, y \> &= \<(\u(x),0)\circ x, (0,\alpha)\circ 0\>\\
&= \<(\u(x),0),(0,\alpha)\> + \<x,0\>\\
&= \<\u(x),\alpha\>  \\
&= \<x, \u^*(\alpha)\>.
\end{align*}
Since this is true for every $x\in \p$, we conclude that $y = \u^*(\alpha)$, and hence
\begin{equation*}
(\u(x),\alpha)\circ x = x + \u^*(\alpha) \qquad\qquad \forall\,\,x\in \p, \alpha \in \g^*.
\end{equation*}
Lastly, as $\u(y) = \lambda^\#(\alpha)$, and $y = \u^*(\alpha)$, we get $\u\circ\u^*=\lambda^\#$.
\end{proof}
\end{prop}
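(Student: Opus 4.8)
The plan is to reduce everything to the classification results already established --- Theorem \ref{mlincl} for the metrized linear groupoid and Theorem \ref{linmnm} for its modules --- and then pin down the one remaining degree of freedom using the compatibility of the metric on $\p$ with the action. By Theorem \ref{mlincl} we may take $\q = \g\times\g^*_\lambda$ with the structure maps of \eqref{lingpstr}; in particular $\ker(\s) = \{(0,\alpha)\,|\,\alpha\in\g^*\}$, which we identify with $\g^*$. By Theorem \ref{linmnm} the module structure is determined by $\u$ together with a linear map $j\colon\ker(\s)\to\p$ satisfying $\u\circ j = \t|_{\ker(\s)}$, and the action is $\xi\circ x = x + j((1-\s)\xi)$. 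For $\xi = (\zeta,\alpha)$ one has $(1-\s)\xi = (0,\alpha)$, so the action reads $(\zeta,\alpha)\circ x = x + j(0,\alpha)$ whenever $\zeta = \u(x)$. Thus the entire content of the proposition is the identification $j(0,\alpha) = \u^*(\alpha)$, where $\u^*\colon\g^*\to\p$ is the adjoint of $\u$ with respect to the metric on $\p$ and the duality pairing on the $\g$ side, together with the consequence $\u\circ\u^* = \lambda^\#$.

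First I would set $y := (0,\alpha)\circ 0 = j(0,\alpha)$; since $\u(y) = \t(0,\alpha) = \lambda^\#(\alpha)$, the second assertion follows immediately once the first is proved. To prove the first assertion, observe that $(\u(x),0)$ is a unit of $\q$, so $(\u(x),0)\circ x = x$, and that the pairs $\big((\u(x),0),x\big)$ and $\big((0,\alpha),0\big)$ are each composable. Applying the compatibility of the metric with the action to these two pairs gives
    \[ \<x,y\> = \<(\u(x),0)\circ x,\ (0,\alpha)\circ 0\> = \<(\u(x),0),(0,\alpha)\> + \<x,0\> = \<\u(x),\alpha\>, \]
where the last equality uses the explicit metric \eqref{linme} on $\q = \g\times\g^*_\lambda$. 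By the definition of $\u^*$ the right-hand side equals $\<x,\u^*(\alpha)\>$, so $\<x,\, y - \u^*(\alpha)\> = 0$ for all $x\in\p$. Non-degeneracy of the metric on $\p$ then forces $y = \u^*(\alpha)$, i.e. $(\zeta,\alpha)\circ x = x + \u^*(\alpha)$, and substituting back into $\u(y) = \lambda^\#(\alpha)$ yields $\u\circ\u^* = \lambda^\#$.

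There is no serious obstacle here: the argument is a short diagram chase combined with a single application of metric compatibility. The only point that needs care is bookkeeping --- one must consistently use the identifications $\ker(\s)\cong\g^*$ (from the decomposition $\q = \g\oplus\g^*$) and $\p\cong\p^*$ (from its metric), so that $\u^*$ is unambiguously a map $\g^*\to\p$, and one must be sure to feed the metric-compatibility identity two genuinely composable pairs, which is exactly why pairing the unit $(\u(x),0)$ (with $x$) against $(0,\alpha)$ (with $0\in\p$) is the right move.
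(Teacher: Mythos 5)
Your proof is correct and follows essentially the same route as the paper: both reduce the action to the value of $(0,\alpha)\circ 0$ (you via the $j$-map of Theorem \ref{linmnm}, the paper via bilinearity of the action), then identify that value as $\u^*(\alpha)$ by pairing $(\u(x),0)\circ x$ against $(0,\alpha)\circ 0$ under metric compatibility and invoking non-degeneracy of the metric on $\p$. The derivation of $\u\circ\u^*=\lambda^\#$ from $\u(y)=\t(0,\alpha)=\lambda^\#(\alpha)$ is likewise identical.
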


\begin{thm}\label{linmods} For a given metrized linear groupoid $(\q\rightrightarrows\g,\<\cdot,\cdot\>)$, the metrized $\q$-modules are classified by a metrized vector space $(\p,\<\cdot,\cdot\>)$ together with a linear map $\u:\,\p\to\g$ such that $\u\circ\u^*=\lambda^\#$.
\begin{proof}
Given a metrized $\q$-module, we obtain the unique data $(\p,\u,j)$ by Theorem \ref{linmnm}; Proposition \ref{uustar} gives $j=\u^*$ which satisfies the requirement $\u\circ\u^* =  \t|_{\g^*} = \lambda^\#$, coming from the compatibility of the groupoid action with the metrics.\\
Given any such set of data $(\p,\u)$ with $\u\circ\u^*=\lambda^\#$, we construct the metrized $\q$-module $\u:\,\p\to\g$ with groupoid action defined by
    \[ (\zeta,\alpha)\circ x = x + \u^*(\alpha) \]
with $\s(\zeta,\alpha)=\u(x)$.  This is a groupoid action by Theorem \ref{linmnm}; all that remains is to show that this action is compatible with the metrics:
\begin{align*}
\<(\u(x),\alpha)\circ x,(\u(x),\alpha)\circ x\> &= \<x + \u^*(\alpha),x + \u^*(\alpha)\> \\
&= \<x,x\> + 2\<x,\u^*(\alpha)\> + \<\u^*(\alpha), \u^*(\alpha)\> \\
&= \<x,x\> + 2\<\u(x),\alpha)\> + \<\alpha, \lambda^\#(\alpha)\> \\
&= \<x,x\> + 2\<\u(x),\alpha)\> + \lambda(\alpha,\alpha) \\
&= \<x,x\> + \<(\u(x),\alpha),(\u(x),\alpha)\>.\qedhere
\end{align*}
\end{proof}
\end{thm}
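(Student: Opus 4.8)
The plan is to bootstrap off the two structural results already established: Theorem~\ref{linmnm}, which classifies arbitrary (non-metrized) $\q$-modules by data $(\p,\u,j)$ with $\u\circ j=\t$ on $\ker(\s)$ and action $\xi\circ x=x+j((1-\s)\xi)$, and Proposition~\ref{uustar}, which pins down $j$ once a compatible metric is present. Under this lens the theorem amounts to the assertion that, in the metrized setting, the only datum remaining beyond a metric on $\p$ and a linear moment map $\u\colon\p\to\g$ is the single algebraic identity $\u\circ\u^*=\lambda^\#$.

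For the forward direction I would begin with a metrized $\q$-module $(\p,\<\cdot,\cdot\>,\u)$. Its underlying groupoid module determines, by Theorem~\ref{linmnm}, unique data $(\p,\u,j)$ with $j\colon\ker(\s)\to\p$ and $\u\circ j=\t$. Using the identification $\ker(\s)\cong\g^*$ from Theorem~\ref{mlincl} (under which $\t|_{\ker(\s)}$ becomes $\lambda^\#$), Proposition~\ref{uustar} shows that compatibility of the action with the metrics forces $j=\u^*$, and hence $\u\circ\u^*=\lambda^\#$. Thus every metrized $\q$-module produces data of the claimed form, and the assignment is injective, since already the underlying module data $(\p,\u,\u^*)$ is recovered from $(\p,\<\cdot,\cdot\>,\u)$.

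For the converse, given $(\p,\<\cdot,\cdot\>)$ and $\u\colon\p\to\g$ with $\u\circ\u^*=\lambda^\#$, I would take $j:=\u^*$ in Theorem~\ref{linmnm}; the relation $\u\circ\u^*=\lambda^\#=\t|_{\g^*}$ is precisely the required $\u\circ j=\t$, so the formula $(\zeta,\alpha)\circ x=x+\u^*(\alpha)$ (for $\zeta=\u(x)$) defines a genuine groupoid action. What remains is metric compatibility, which is a short computation: expanding $\<(\u(x),\alpha)\circ x,(\u(x),\alpha)\circ x\>=\<x+\u^*(\alpha),x+\u^*(\alpha)\>$ and using $\<\u^*(\alpha),\u^*(\alpha)\>=\<\alpha,\u\u^*(\alpha)\>=\<\alpha,\lambda^\#(\alpha)\>=\lambda(\alpha,\alpha)$ together with formula~\eqref{linme} for the metric on $\q=\g\times\g^*_\lambda$ yields $\<x,x\>+\<(\u(x),\alpha),(\u(x),\alpha)\>$; polarizing extends this to all composable pairs.

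The two constructions are mutually inverse by Theorem~\ref{linmnm} and Proposition~\ref{uustar}, giving the bijection. I do not expect any substantive difficulty here: the only real content beyond invoking the earlier results is recognizing that metric-compatibility is equivalent, via Proposition~\ref{uustar}, to the lone identity $\u\circ\u^*=\lambda^\#$. The mild obstacle is bookkeeping — keeping the identifications $\ker(\s)\cong\g^*$ and $\q\cong\g\times\g^*_\lambda$ consistent so that ``$j=\u^*$'' and ``$\u\circ j=\t$'' match up — rather than anything genuinely hard.
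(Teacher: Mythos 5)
Your proposal is correct and follows essentially the same route as the paper: both directions reduce to Theorem \ref{linmnm} and Proposition \ref{uustar}, with the converse direction verified by the identical metric-compatibility computation (the paper likewise checks only the diagonal and relies on polarization). Your explicit observation that $\u\circ\u^*=\lambda^\#$ is exactly the condition $\u\circ j=\t$ needed to invoke Theorem \ref{linmnm} with $j=\u^*$ is a small but welcome clarification of a step the paper leaves implicit.
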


\begin{prop}\label{qgaction} If $(\q\gp\g,\<\cdot,\cdot\>)$  is a metrized linear groupoid, and $(\p,\l,\<\cdot,\cdot\>,\u)$ is a metrized $\q$-module together with a subspace $\l\subset \p$, then the group $\q/\g$ acts on $\p/\l$ via the formula
    \[ \overline{\xi}\cdot\overline{x} = \overline{\xi\circ x} \]
for lifts $\xi\in\q, x\in \p$ with $\s(\xi)=\u(x)$.
\begin{proof}
Let $\overline{\xi}\in\q/\g$ and $\overline{x}\in \p/\l$.  If $x\in \p$ is a lift of $\overline{x}$, then there is a unique lift $\xi\in\q$ of $\overline{\xi}$ with $\s(\xi)=\u(x)$.  Let $x'$ be another lift of $\overline{x}$.  As we can write $x'=x+(x'-x)$, the associated lift of $\overline{\xi}$ becomes $\xi'=\xi + \u(x'-x)$.  Thus, we see
    \begin{align*} \xi'\circ x' - \xi\circ x &= \xi\circ x + \u(x'-x)\circ (x'-x) -\xi\circ x\\
                                             &= x-x',
    \end{align*}
and since $x-x'\in \l$, this means $\overline{\xi\circ x}$ is well defined.  Given $\overline{\xi_1},\overline{\xi_2}\in\q/\g$, and $x\in\p/\l$, we have
    \begin{align*} \overline{\xi_1}\cdot\big(\overline{\xi_2}\cdot \overline{x}\big) &= \overline{\xi_1} \cdot \overline{\xi_2\circ x} \\
                            &= \overline{\xi_1\circ(\xi_2\circ x)}\\
                            &= \overline{(\xi_1\circ\xi_2)\circ x} \\
                            &= \overline{\xi_1\circ\xi_2}\cdot \overline{x}\\
                            &= \big(\overline{\xi_1}\cdot\overline{\xi_2}\big)\cdot \overline{x}
    \end{align*}
since $\u(\xi_2\circ x) = \t(\xi_2)$.  Since the identity element of $\q/\g$ is the coset $\overline{0}=\g$, it clearly acts trivially.
\end{proof}
\end{prop}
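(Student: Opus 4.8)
The plan is to verify, in order, that the rule $\overline{\xi}\cdot\overline{x}=\overline{\xi\circ x}$ (i)~is meaningful and independent of the chosen lifts, (ii)~is compatible with the group law on $\q/\g$ supplied by Proposition~\ref{qggroup}, and (iii)~makes the identity coset $\overline{0}=\g$ act trivially.

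First I would record the structural fact that makes the formula sensible: since $\s\colon\q\to\g$ is a projection whose kernel is a complement to $\g$, its restriction to each coset $\xi+\g$ is a bijection onto $\g$. Hence, given any lift $x\in\p$ of $\overline{x}$, there is a \emph{unique} lift $\xi\in\q$ of $\overline{\xi}$ with $\s(\xi)=\u(x)$, and $\xi\circ x$ is then defined. For independence of the choice of $x$, replace $x$ by $x'=x+\ell$ with $\ell\in\l$; the lift of $\overline{\xi}$ compatible with $x'$ is forced to be $\xi'=\xi+\u(\ell)$, which still represents $\overline{\xi}$ because $\u(\ell)\in\g$, and satisfies $\s(\xi')=\u(x')$ using $\s|_{\g}=\mathrm{id}$ and linearity of $\u$. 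Now both $(\xi,x)$ and $(\u(\ell),\ell)$ lie in the fibre product $\q{}_\s\times_\u\p$, so linearity of the action map there gives $\xi'\circ x'=\xi\circ x+\u(\ell)\circ\ell$; and $\u(\ell)\circ\ell=\ell$ because $\u(\ell)$ is the moment-map value of $\ell$ and units act trivially on the module. Thus $\xi'\circ x'-\xi\circ x=\ell\in\l$, so $\overline{\xi\circ x}$ depends only on $\overline{\xi}$ and $\overline{x}$.

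For (ii), given $\overline{\xi_1},\overline{\xi_2}$ and $\overline{x}$, I would pick a lift $x$, then the compatible lift $\xi_2$ (so $\s(\xi_2)=\u(x)$), and then the lift $\xi_1$ with $\s(\xi_1)=\t(\xi_2)$; this makes $\xi_1,\xi_2$ composable in $\q$. By Proposition~\ref{qggroup}, $\xi_1\circ\xi_2$ represents $\overline{\xi_1}\cdot\overline{\xi_2}$, while $\u(\xi_2\circ x)=\t(\xi_2)=\s(\xi_1)$ and $\s(\xi_1\circ\xi_2)=\s(\xi_2)=\u(x)$ ensure every composition below is legal. Associativity of the groupoid action then yields
\[ \overline{\xi_1}\cdot\bigl(\overline{\xi_2}\cdot\overline{x}\bigr)=\overline{\xi_1\circ(\xi_2\circ x)}=\overline{(\xi_1\circ\xi_2)\circ x}=\bigl(\overline{\xi_1}\cdot\overline{\xi_2}\bigr)\cdot\overline{x}. \]
For (iii), the lift of $\overline{0}=\g$ compatible with a chosen $x$ is $\u(x)\in\g$ itself, and $\u(x)\circ x=x$ since units act trivially, so $\overline{0}\cdot\overline{x}=\overline{x}$.

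I expect no genuine obstacle here: the only step needing care is (i), where one must vary the lift of $\overline{\xi}$ together with that of $\overline{x}$ and observe that the correction term $\u(\ell)\circ\ell$ lands precisely in $\l$ — this is the point where linearity of the module action on the fibre product and the unit axiom are both invoked. Everything else is a direct consequence of Proposition~\ref{qggroup}, the module axioms, and associativity of the groupoid action.
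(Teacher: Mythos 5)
Your proposal is correct and follows essentially the same route as the paper's proof: well-definedness by varying the lift of $\overline{x}$ and adjusting the lift of $\overline{\xi}$ to $\xi+\u(\ell)$, with the correction term $\u(\ell)\circ\ell=\ell\in\l$ coming from linearity of the action and the unit axiom; then associativity of the groupoid action for compatibility with the group law of Proposition~\ref{qggroup}; then triviality of the identity coset. The only difference is cosmetic — you spell out why the compatible lift of $\overline{\xi}$ is unique and why $\u(\ell)\circ\ell=\ell$, points the paper leaves implicit.
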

Since $\q/\g = \g^*$, then by Proposition \ref{uustar} we have
\begin{equation}\label{gstarac}
\alpha\cdot \overline{x} = \overline{ (\u(x),\alpha)\circ x} = \overline{ x + \u^*(\alpha)} = \overline{x} + \overline{\u^*(\alpha)}
\end{equation}
for any lift $x$ of $\overline{x}$.  \\
\begin{dfn} Let $\g$ be a vector space, and $\lambda\in S^2\g$. A subspace $\mathfrak{v}\subset\g$ is called $\lambda$-coisotropic if $\lambda$ restricted to $\ann(\mathfrak{v})\subset\g^*$ is identically $0$.  Equivalently, $\lambda^\#(\ann(\mathfrak{v}))\subset \mathfrak{v}$.
\end{dfn}




\begin{prop}\label{lininj} Suppose there is a metrized linear groupoid action $(\q\rightrightarrows\g,\<\cdot,\cdot\>)\act(\p,\l,\<\cdot,\cdot\>)$. Then the induced group action $(\q/\g)\act(\p/\l)$ is transitive if and only if the map $\u\vert_\l$ is an injection $\l\hookrightarrow\g$.  If the action is transitive, then $\u(\l)\subseteq\g$ is a $\lambda$-coisotropic subspace.
\begin{proof}
Let $\u_\l:\,\l\to\g$ be the restriction.  By Equation \eqref{gstarac}, the action of $\g^*=\q/\g$ on $\l^*\cong\p/\l$ is by translation via $\u_\l^*:\,\g^*\to\l^*$.  Hence, the group action is transitive if and only if $\u_\l^*$ is surjective, or equivalently, if and only if $\u_l$ is injective.\\
If $\u:\,\l\hookrightarrow\g$ is injective, we have that for all $x\in\l,\alpha\in\ann_{\g^*}(\u(\l))$:
\begin{equation*}
 0 = \<\alpha,\u(x)\> = \<\u^*(\alpha),x\>.
\end{equation*}
Since $\l\subset \p$ is Lagrangian, this means $\u^*(\alpha)\subset\l$ for all $\alpha\in\ann_{\g^*}(\u(\l))$.  Hence
    \[ \u\circ\u^*(\alpha) = \lambda^\#(\alpha)\subset \u(\l)\subset \g,\]
i.e.: $\lambda^\#(\ann_{\g^*}(\l))\subset\l$, meaning $\l$ is $\lambda$-coisotropic.

\end{proof}
\end{prop}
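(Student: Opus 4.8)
The plan is to reduce the whole statement to the explicit ``translation'' description of the induced action recorded in Equation~\eqref{gstarac}. First I would invoke Proposition~\ref{qggroup} to identify $\q/\g\cong\g^*$ and Proposition~\ref{qgaction} (together with Equation~\eqref{gstarac}) to see that, under this identification, $\alpha\in\g^*$ acts on $\overline{x}\in\p/\l$ by $\overline{x}\mapsto\overline{x}+\overline{\u^*(\alpha)}$. Since $\p/\l$ is a vector space and the action is by translations, transitivity is equivalent to surjectivity of the linear map $\g^*\to\p/\l$, $\alpha\mapsto\overline{\u^*(\alpha)}$ (the orbit of $\overline{0}$ is the image of this map, and all orbits are translates of it).

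The second step is to dualize this map. Using that $\l$ is Lagrangian in $\p$, the metric descends to a perfect pairing $\p/\l\times\l\to\R$ and hence yields an isomorphism $\p/\l\cong\l^*$. Under it, $\alpha\mapsto\overline{\u^*(\alpha)}$ becomes $\alpha\mapsto\bigl(x\mapsto\langle\u^*(\alpha),x\rangle\bigr)=\bigl(x\mapsto\langle\alpha,\u(x)\rangle\bigr)$, which is exactly the transpose $(\u|_\l)^*\colon\g^*\to\l^*$ of $\u|_\l\colon\l\to\g$. A transpose is onto precisely when the original map is injective, so the induced action $(\q/\g)\act(\p/\l)$ is transitive if and only if $\u|_\l$ is an injection $\l\hookrightarrow\g$.

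For the coisotropy claim, assume $\u|_\l$ is injective and pick $\alpha\in\ann_{\g^*}(\u(\l))$. Then $\langle\u^*(\alpha),x\rangle=\langle\alpha,\u(x)\rangle=0$ for every $x\in\l$, so $\u^*(\alpha)\in\l^\perp=\l$ by the Lagrangian hypothesis. Feeding this into the identity $\u\circ\u^*=\lambda^\#$ from Proposition~\ref{uustar} gives $\lambda^\#(\alpha)=\u(\u^*(\alpha))\in\u(\l)$; since $\alpha\in\ann_{\g^*}(\u(\l))$ was arbitrary, $\lambda^\#(\ann(\u(\l)))\subseteq\u(\l)$, i.e.\ $\u(\l)$ is $\lambda$-coisotropic.

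I expect no genuine obstacle: the argument is essentially formal once \eqref{gstarac} is in hand. The one point needing care is the identification $\p/\l\cong\l^*$, which is exactly where the Lagrangian property of $\l$ enters, and it is the same property that forces $\u^*(\alpha)$ back into $\l$ in the final computation; keeping straight which pairing is being dualized is the only place it would be easy to slip.
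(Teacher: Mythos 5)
Your proposal is correct and follows essentially the same route as the paper's proof: identify the $\g^*$-action on $\p/\l\cong\l^*$ as translation by $(\u|_\l)^*$ via Equation \eqref{gstarac}, deduce that transitivity is equivalent to injectivity of $\u|_\l$, and then use the Lagrangian property of $\l$ together with $\u\circ\u^*=\lambda^\#$ to get coisotropy of $\u(\l)$. Your version merely spells out the identification $\p/\l\cong\l^*$ more explicitly than the paper does.
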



\begin{dfn}
Given a metrized linear groupoid $(\q\rightrightarrows\g,\<\cdot,\cdot\>)$, an \emph{homogeneous space} for $(\q,\g)$ is defined to be a metrized $\q$-module $(\p,\l,\<\cdot,\cdot\>,\u)$ with $\l\subset \p$ Lagrangian, such that $\q/\g\act \p/\l$ transitively.
\end{dfn}

\begin{thm}\label{qlinhom}
For a metrized linear groupoid $(\q\rightrightarrows\g,\<\cdot,\cdot\>)$ with corresponding $\lambda\in S^2\g$, the homogeneous spaces for $(\q,\g)$ are classified by a choice of a $\lambda$-coisotropic subspace $\l\subset\g$.\\
The homogeneous spaces have the normal form
    \[ (\p,\l) = ( \s^{-1}(\l)/\s^{-1}(\l)^\perp,\l) \]
with $\u$ given by the descent of the target map on $\s^{-1}(\u(\l))$ to the quotient.
\begin{proof}
Given any $\lambda$-coisotropic $\l\subset\g$, we construct an homogeneous space in the following way:  define $C:=\s^{-1}(\l) = \l\times\g^*$, which gives
    \[ C^\perp = \mathrm{Gr}(-\lambda^\#)|_{\ann(\l)} \]
as $\l^\perp = \g\times\ann(\l)$, and $(\g^*)^\perp = \mathrm{Gr}(-\lambda^\#)$.  For all $\alpha\in \ann(\l)$, $\lambda^\#(\alpha)\in\l$ since $\l$ is $\lambda$-coisotropic, and thus $C^\perp\subset C$.  Taking the quotient, we have
\begin{align*}
\dim(C/C^\perp) &= \dim C - \dim C^\perp \\
                &= \dim\l + \dim\g - (\dim\g -\dim\l)\\
                &= 2\dim\l.
\end{align*}
and since $(\l\oplus 0)\cap C^\perp = 0$, we can conclude that $\l\subset C/C^\perp$ is Lagrangian.  Since $\ker(\s)\subset C$, this implies $C^\perp\subset\ker(\s)^\perp=\ker(\t)$, and so the target map $\t:C\to\g$ descends to a map $\u:(C/C^\perp)\to\g$.  As the target map is a moment map for the action of $\q$ on itself by left multiplication, we conclude from Proposition \ref{uustar} that $\u\circ\u^*=\lambda^\#$.  Since $\u$ restricted to $\l$ is the identity, we conclude from Theorem \ref{linmods} and Proposition \ref{lininj} that $(C/C^\perp,\l,\<\cdot,\cdot\>,\u)$ is an homogeneous space for $(\q,\g)$, and hence each $\lambda$-coisotropic $\l\subset\g$ determines an homogeneous space $(\p,\l)$.\\
Take any homogeneous space $(\p,\l)$ for $(\q,\g)$. From the groupoid action, we get the vector space $(\p,\l,\<\cdot,\cdot\>)$ and the map $\u:\,\p\to\g$ such that $\u\circ\u^*=\lambda^\#$ (Theorem \ref{linmods}).  From the transitive group action, we get -- via Proposition \ref{lininj} -- that $\u$ embeds $\l$ as a $\lambda$-coisotropic subspace of $\g$.  To show that this $\l\subset\g$ is unique to $(\p,\l)$, construct the homogeneous space $(\p',\l) = (C/C^\perp,\l)$ for $C=\s^{-1}(\u(\l))$.  Consider the map
    \begin{equation}\label{pprim} f:\,\s^{-1}(\u(\l)) \to \p,\quad (\u(x),\alpha)\mapsto x+\u^*(\alpha).\end{equation}
Since $\u:\,\l\hookrightarrow\g$, thus $\u^*:\,\g^*\twoheadrightarrow\l^*$, and since any complement to $\l$ in $\p$ is identified with $\l^\perp$, this means that the image of $f$ in Equation \eqref{pprim} is all of $\p$.  The map $f$ also preserves inner products as
\begin{align*}
\<x+\u^*(\alpha),x+\u^*(\alpha)\> &= 2\<x,\u^*(\alpha)\> + \<\u^*(\alpha),\u^*(\alpha)\>\\
                                    &= 2\<\u(x),\alpha\> + \lambda(\alpha,\alpha)\\
                                    &= \<(\u(x),\alpha),(\u(x),\alpha)\>
\end{align*}
and hence the kernel of $f$ must be $C^\perp$.  This gives us the short exact sequence:
\begin{center}
$\begin{CD}
0 @>>> C^\perp  @>>> \l\times\g^* @>>> \p @>>> 0.
\end{CD}$
\end{center}
Thus, $\p\cong C/C^\perp=\p'$, and by Theorem \ref{linmods}, the module actions coincide.
\end{proof}
\end{thm}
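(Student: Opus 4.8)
The plan is to set up a bijection between $\lambda$-coisotropic subspaces $\l\subseteq\g$ and isomorphism classes of homogeneous spaces for $(\q,\g)$, with the reduction $\s^{-1}(\l)/\s^{-1}(\l)^\perp$ serving as the explicit representative on the geometric side, which simultaneously yields the claimed normal form.

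For the construction direction, given a $\lambda$-coisotropic $\l\subseteq\g$, I would work in the normal form $\q=\g\times\g^*_\lambda$ of Theorem~\ref{mlincl} and set $C:=\s^{-1}(\l)=\l\times\g^*$. A direct computation of the orthogonal complement, using $\l^\perp=\g\times\ann(\l)$ and $(\g^*)^\perp=\mathrm{Gr}(-\lambda^\#)$, gives $C^\perp=\mathrm{Gr}(-\lambda^\#)\vert_{\ann(\l)}$, and $\lambda$-coisotropy is precisely the statement $\lambda^\#(\ann(\l))\subseteq\l$ that forces $C^\perp\subseteq C$, so $\p:=C/C^\perp$ is defined. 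A dimension count gives $\dim\p=2\dim\l$, and $\l\cap C^\perp=0$ makes $\l$ descend to a Lagrangian subspace of $\p$. Since $\ker(\s)\subseteq C$, Proposition~\ref{lgperp} yields $C^\perp\subseteq\ker(\s)^\perp=\ker(\t)$, so the target map descends to a moment map $\u:\p\to\g$ with $\u\vert_\l=\mathrm{id}$; because $\t$ is the moment map for left multiplication of $\q$ on itself, Proposition~\ref{uustar} gives $\u\circ\u^*=\lambda^\#$. Theorem~\ref{linmods} then certifies that $(\p,\l,\<\cdot,\cdot\>,\u)$ is a metrized $\q$-module, and since $\u\vert_\l$ is injective, Proposition~\ref{lininj} promotes it to a homogeneous space for $(\q,\g)$.

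For the uniqueness direction, starting from an arbitrary homogeneous space $(\p,\l)$ I would extract the data $(\p,\l,\<\cdot,\cdot\>,\u)$ with $\u\circ\u^*=\lambda^\#$ via Theorem~\ref{linmods}, and combine transitivity with Proposition~\ref{lininj} to see that $\u$ embeds $\l$ as a $\lambda$-coisotropic subspace of $\g$. It then remains to identify $(\p,\l)$ with the model built from $\u(\l)$. Writing $C=\s^{-1}(\u(\l))$ and defining $f:C\to\p$ by $(\u(x),\alpha)\mapsto x+\u^*(\alpha)$, I would check that $f$ is surjective (because $\u^*:\g^*\twoheadrightarrow\l^*$ is dual to the injection $\u\vert_\l$, and any complement of $\l$ in $\p$ is identified with $\l^\perp$), that $f$ preserves inner products using $\<\u^*(\alpha),\u^*(\alpha)\>=\lambda(\alpha,\alpha)$, and hence that $\ker f=C^\perp$, so $f$ descends to an isometry $C/C^\perp\xrightarrow{\ \sim\ }\p$ intertwining the moment maps. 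Since by Theorem~\ref{linmods} the module action is determined by $\u$, this is an isomorphism of homogeneous spaces, so the correspondence is a bijection and the normal form holds.

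The step I expect to be the main obstacle is the uniqueness direction: verifying that the natural map $f$ from the reduction model to an abstract homogeneous space is simultaneously onto, isometric, and has kernel exactly $C^\perp$, since this is where the moment-map relation $\u\circ\u^*=\lambda^\#$, the Lagrangian condition on $\l\subseteq\p$, and transitivity must all be used together. The construction direction, by contrast, is essentially bookkeeping once the normal form of Theorem~\ref{mlincl} is in hand.
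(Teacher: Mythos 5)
Your proposal is correct and follows essentially the same route as the paper's proof: the same reduction $C=\s^{-1}(\l)$ with $C^\perp=\mathrm{Gr}(-\lambda^\#)\vert_{\ann(\l)}$ for the construction direction, and the same comparison map $f:(\u(x),\alpha)\mapsto x+\u^*(\alpha)$ (surjective, isometric, kernel $C^\perp$) for uniqueness. No gaps to report.
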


\begin{rmrk}\label{qlinhomH} This theorem holds ``$K$-equivariantly'':  if a Lie group $K$ acts on $\q$ by groupoid automorphisms, and on $\p$ such that $\u$ is $K$-equivariant, and the action preserves the metrics of each, then the homogeneous spaces are classified by $\lambda$-coisotropic subspaces $\l\subseteq\p$ which are $K$-stable. For a given $(\q,\g)$-homogeneous space $(\p,\l)$ in this set-up, $\l\subset\p$ being Lagrangian means it must be $K$-stable, since the metric is $K$-invariant.  The map $f: \s^{-1}(\u(\l))\to\p$ defined by $(\u(x),\alpha)\mapsto x+\u^*(\alpha)$ is $K$-invariant, with kernel $\s^{-1}(\u(\l))^\perp$, and hence
    \[ \bar{f}:\,\s^{-1}(\u(\l))/\s^{-1}(\u(\l))^\perp \xrightarrow[]{\cong} \p, \qquad \overline{(\u(x),\alpha)}\mapsto x + \u^*(\alpha).\]
In other words, $\p$ is equivariantly isomorphic to $\s^{-1}(\u(\l))/\s^{-1}(\u(\l))^\perp$.
\end{rmrk}
\chapter{Vacant $VB$- and $LA$-Groupoid Modules}
Given any Dirac Lie group $(H,\A,E)$, the Dirac structure $E$ is a vacant $LA$-groupoid (Lemma 3.1, \cite{lmdir}).  In this chapter, we classify the vacant groupoids modules over $H$-homogeneous spaces $H/K$ as a preliminary step to classifying the Dirac homogeneous spaces.
\section{Vacant $VB$-groupoids}\label{subsec:vacantvb}

Let $H$ denote a Lie group.  As in Definition \ref{vacantdef}, a $VB$-groupoid over $H$
\begin{center}
\begin{tikzpicture}

  \node (G)   {$E$};
  \node (H) [right of=G, node distance=1.5cm] {$E^{(0)}$};
  \node (I) [below of=G, node distance=1.3cm] {$H$};
  \node (J) [right of=I, node distance=1.5cm] {pt};

  \path[->]
([yshift= 2pt]G.east) edge node[above] {} ([yshift= 2pt]H.west)
([yshift= -2pt]G.east) edge node[below] {} ([yshift= -2pt]H.west);
\path[->]
([yshift= 2pt]I.east) edge node[above] {} ([yshift= 2pt]J.west)
([yshift= -2pt]I.east) edge node[below] {} ([yshift= -2pt]J.west);
  \draw[->] (G) to node {} (I);
  \draw[->] (H) to node [swap] {} (J);
  \end{tikzpicture}
\end{center}
is called \emph{vacant} if it has the property that $E^{(0)}= E\vert_e$ (i.e.: $\mathrm{core}(E)=0$).  Following the Dirac convention in \cite{lmdir}, let $\g:=E^{(0)}$.
\begin{prop}\label{vtriv} The vacant $VB$-groupoid $E$ is the trivial vector bundle $H\times \g$, trivialised via the source map.
\begin{proof}
Consider the map $\psi: E\to H\times \g$, $v \mapsto (h,\s(v))$ for $v\in E_h$.  Since $\s$ is fibrewise surjective, this means for any $h\in H$, $s: E_h \to \g$ must be a bijection, as $E_e=\g$ by the vacant condition.  Hence, $\psi$ is a bundle isomorphism.
\end{proof}
\end{prop}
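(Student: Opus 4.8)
The plan is to write down the trivialization explicitly and to check it is an isomorphism fibrewise. Define $\psi\colon E\to H\times\g$ by $\psi(v)=(h,\s_E(v))$ for $v\in E_h$, where $\s_E\colon E\to E^{(0)}=\g$ is the source map of the $VB$-groupoid. As a structure map of a $VB$-groupoid, $\s_E$ is a smooth vector bundle morphism, so it restricts to a \emph{linear} map $E_h\to\g$ on each fibre; hence $\psi$ is a smooth vector bundle morphism covering $\mathrm{id}_H$. It therefore suffices to show that $\s_E|_{E_h}\colon E_h\to\g$ is a linear isomorphism for every $h\in H$.

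Injectivity is immediate from the structure theory of $VB$-groupoids: the lemma identifying $\ker(\s_E)\cong\t_H^*\bigl(\mathrm{core}(E)\bigr)$, together with the vacancy hypothesis $\mathrm{core}(E)=0$, forces $\ker(\s_E)=0$, so each $\s_E|_{E_h}$ is injective. For surjectivity I would run a dimension count: since $E\to H$ is a vector bundle, all of its fibres $E_h$ have the common rank $r=\rk(E)$, while the vacancy condition gives $\g=E^{(0)}=E|_{H^{(0)}}=E_e$ and so $\dim\g=r$ as well. An injective linear map between finite-dimensional vector spaces of equal dimension is bijective, so $\s_E|_{E_h}$ is an isomorphism for every $h$.

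Consequently $\psi$ is a fibrewise linear isomorphism covering $\mathrm{id}_H$, hence a vector bundle isomorphism $E\cong H\times\g$; its inverse sends $(h,\zeta)$ to the unique element of $E_h$ with source $\zeta$. Equivalently, and most compactly, one can simply observe that with $\mathrm{core}(E)=0$ the first short exact sequence of $(\ref{vbses1})$ collapses to the isomorphism $\s_E\colon E\xrightarrow{\cong}\s_H^*(E^{(0)})$, and, because $H$ is a group and therefore $H^{(0)}=\mathrm{pt}$, the pullback bundle $\s_H^*(\g)$ is literally $H\times\g$.

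There is no real obstacle here: the entire content is in translating the vacancy condition through the already-established fibrewise description of the source map. The one point to keep straight is that over a group the unit manifold is a single point, so that $E^{(0)}$ is genuinely the fibre $E_e$ and the symbol $\g$ denotes exactly this fibre.
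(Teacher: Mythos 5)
Your proof is correct and follows essentially the same route as the paper: the same map $\psi(v)=(h,\s(v))$, with the vacancy condition $E^{(0)}=E_e$ supplying the dimension count that upgrades one-sided behaviour of $\s|_{E_h}$ to a fibrewise bijection (you get injectivity from the core lemma where the paper invokes fibrewise surjectivity of $\s$, but this is a cosmetic difference). Your closing observation that the first sequence in \eqref{vbses1} collapses to $E\cong\s_H^*(\g)=H\times\g$ over a group is a tidy restatement of the same fact.
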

Using this trivialisation, we have $\s(h,\zeta)=\zeta$ for any $(h,\zeta)\in E_h$.  The groupoid multiplication, along the multiplication of $H$, is then given by
\begin{equation}\label{vacantgrm}
(h_1,\zeta_1)\circ(h_2,\zeta_2)=(h_1h_2,\zeta_2)
\end{equation}
for $\s(h_1,\zeta_1)=\t(h_2,\zeta_2)$.
\begin{prop}\label{vbullet} The target map is a group action of $H$ on $\g$:
    \[ H\times \g \to \g,\quad (h,\zeta) \mapsto \t(h,\zeta). \]
\end{prop}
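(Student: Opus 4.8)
The plan is to use the trivialization of Proposition \ref{vtriv} to reduce the statement to a short computation with the groupoid structure maps. After identifying $E\cong H\times\g$ via the source map, so that $\s(h,\zeta)=\zeta$ and the groupoid multiplication takes the form in \eqref{vacantgrm}, it suffices to verify the two axioms of a (left) $H$-action on $\g$ for the map $(h,\zeta)\mapsto\t(h,\zeta)$: namely that $\t(e,\zeta)=\zeta$ for all $\zeta\in\g$, and that $\t(h_1h_2,\zeta)=\t\big(h_1,\t(h_2,\zeta)\big)$ for all $h_1,h_2\in H$ and $\zeta\in\g$.

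For the first axiom I would invoke the vacancy hypothesis: since $\mathrm{core}(E)=0$, the unit space $E^{(0)}$ equals $E|_e=\{e\}\times\g$, so the unit at $\zeta\in\g$ is $1_\zeta=(e,\zeta)$; because any unit satisfies $\s(1_\zeta)=\t(1_\zeta)$, this forces $\t(e,\zeta)=\s(e,\zeta)=\zeta$.

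For the second axiom, the key step is to produce a composable pair whose product is computed directly by \eqref{vacantgrm}. Given $h_1,h_2\in H$ and $\zeta\in\g$, put $\zeta':=\t(h_2,\zeta)$; then $(h_1,\zeta')\in E_{h_1}$ has source $\s(h_1,\zeta')=\zeta'=\t(h_2,\zeta)$, so $(h_1,\zeta')\circ(h_2,\zeta)$ is defined and, by \eqref{vacantgrm}, equals $(h_1h_2,\zeta)$. Applying the target map and using that the target of a composite agrees with the target of its first factor, we get $\t(h_1h_2,\zeta)=\t(h_1,\zeta')=\t\big(h_1,\t(h_2,\zeta)\big)$, which is the required cocycle identity. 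Finally, the action is smooth because $\t$ is, and for each fixed $h$ the map $\zeta\mapsto\t(h,\zeta)$ is linear since $\t$ is a vector bundle morphism; hence $H$ in fact acts on $\g$ by linear automorphisms.

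I expect the only point needing care is the second axiom: one must choose the interpolating element $(h_1,\zeta')$ with precisely the $\g$-component $\t(h_2,\zeta)$ so that it is composable with $(h_2,\zeta)$, and then apply the correct groupoid compatibility relation for the target of a composite. Everything else is immediate from Proposition \ref{vtriv} and the definition of a vacant $VB$-groupoid.
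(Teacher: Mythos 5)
Your proof is correct and follows essentially the same route as the paper: it verifies $\t(e,\zeta)=\zeta$ from the unit property and derives the cocycle identity by applying $\t$ to the composable product $(h_1,\t(h_2,\zeta))\circ(h_2,\zeta)=(h_1h_2,\zeta)$. The extra observations on smoothness and fibrewise linearity are fine but not needed for the statement.
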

\begin{proof}
Consider the groupoid multiplication $(h_1,\t(h_2,\zeta))\circ(h_2,\zeta) = (h_1h_2,\zeta)$.  Comparing the targets of each side, we get
\begin{equation*}
\t(h_1,\t(h_2,\zeta)) = \t(h_1h_2,\zeta).
\end{equation*}
Since $(e,\zeta)$ can be viewed as an element of $\g$, we have $\t(e,\zeta)=\zeta$.  In other words, if we define $\t(h,\zeta):=h\bullet \zeta$, then
    \begin{equation}\label{vgpm} h_1\bullet(h_2\bullet \zeta) = (h_1h_2)\bullet \zeta;\quad e\bullet \zeta = \zeta. \end{equation}
Hence, the target map is a group action of $H$ on $\g$.
\end{proof}

From now on, we will use $\bullet$ to denote the action defined by the target.

\begin{thm}  Vacant $VB$-groupoids $E$ over a Lie group $H$ are classified by a choice of vector space $\g$, and an action of $H$ on $\g$.
\begin{proof}
This is just a special case of Mackenzie's classification of vacant $LA$-groupoids in \cite{mack1}, with $\a=0$, $[\cdot,\cdot]=0$.
\begin{center}
\begin{tikzpicture}

  \node (G)   {$H\times\g$};
  \node (H) [right of=G, node distance=1.5cm] {$\g$};
  \node (I) [below of=G, node distance=1.3cm] {$H$};
  \node (J) [right of=I, node distance=1.5cm] {pt};

  \path[->]
([yshift= 2pt]G.east) edge node[above] {} ([yshift= 2pt]H.west)
([yshift= -2pt]G.east) edge node[below] {} ([yshift= -2pt]H.west);
\path[->]
([yshift= 2pt]I.east) edge node[above] {} ([yshift= 2pt]J.west)
([yshift= -2pt]I.east) edge node[below] {} ([yshift= -2pt]J.west);
  \draw[->] (G) to node {} (I);
  \draw[->] (H) to node [swap] {} (J);
  \end{tikzpicture}
\end{center}
with source given by projection to $\g$, target given by the $H$ action, and composition
    \[(h_1,h_2\bullet\zeta)\circ(h_2,\zeta) = (h_1h_2,\zeta)\]
yielding the desired $(\g,\bullet)$.
\end{proof}
\end{thm}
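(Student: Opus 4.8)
The plan is to show that the assignment sending a vacant $VB$-groupoid $E\gp E^{(0)}$ over $H$ to the pair $(\g,\bullet)$, where $\g=E^{(0)}$ and $\bullet$ is the target action of Proposition~\ref{vbullet}, is a bijection onto the collection of pairs (vector space, smooth $H$-action). I would establish injectivity and surjectivity of this assignment separately, using the source-trivialisation of Proposition~\ref{vtriv} and the multiplication formula~\eqref{vacantgrm} throughout.

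For injectivity, start with a vacant $VB$-groupoid $E$. Proposition~\ref{vtriv} gives $E\cong H\times\g$ via the source, so $\s(h,\zeta)=\zeta$, and Proposition~\ref{vbullet} shows $\t(h,\zeta)=h\bullet\zeta$ for a genuine smooth $H$-action. The unit embedding is forced to be $\zeta\mapsto(e,\zeta)$, and the inversion is forced to be $(h,\zeta)^{-1}=(h^{-1},h\bullet\zeta)$: by~\eqref{vacantgrm} this is the unique element composable on the right with $(h,\zeta)$ whose product is a unit. The multiplication is itself determined by $\s$ and $\t$ via~\eqref{vacantgrm}. Hence every structure map of $E$ is recovered from $(\g,\bullet)$, so two vacant $VB$-groupoids producing the same data coincide under their source-trivialisations.

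For surjectivity, given a vector space $\g$ with a smooth $H$-action $\bullet$, set $E:=H\times\g\to H$ with $\s(h,\zeta)=\zeta$, $\t(h,\zeta)=h\bullet\zeta$, unit embedding $\zeta\mapsto(e,\zeta)$ identifying $E^{(0)}=\g=E|_e$, inversion $(h,\zeta)^{-1}=(h^{-1},h\bullet\zeta)$, and product $(h_1,h_2\bullet\zeta)\circ(h_2,\zeta)=(h_1h_2,\zeta)$, which is precisely~\eqref{vacantgrm} since composability forces $\zeta_1=h_2\bullet\zeta_2$. The groupoid axioms then reduce to bookkeeping: associativity follows from associativity in $H$ together with $h_1\bullet(h_2\bullet\zeta)=(h_1h_2)\bullet\zeta$; the unit identities use $e\bullet\zeta=\zeta$; and the inverse identities are immediate. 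Since the compatibility constraint and the output of the multiplication are linear in the fibre coordinates $\zeta_i$, the graph $\mathrm{Gr}(\mathrm{Mult}_E)\subset E\times E\times E$ is a vector subbundle along $\mathrm{Gr}(\mathrm{Mult}_H)$, so $E$ is a $VB$-groupoid; it is vacant because $E^{(0)}=\g=E|_e$ by construction, and its target action is the given $\bullet$.

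I do not anticipate a genuine obstacle: the content is the $\a=0$, $[\cdot,\cdot]=0$ specialisation of Mackenzie's classification of vacant $LA$-groupoids in~\cite{mack1}. The only points warranting a little care are verifying the $VB$-groupoid axiom (the subbundle condition on $\mathrm{Gr}(\mathrm{Mult}_E)$) rather than merely the abstract Lie groupoid axioms, and noting smoothness of the extracted action; both follow from the linearity of the formulas in the fibre directions together with the smoothness built into the $VB$-groupoid structure.
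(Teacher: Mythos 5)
Your proposal is correct and follows essentially the same route as the paper: source-trivialisation via Proposition \ref{vtriv}, identification of the target with an $H$-action via Proposition \ref{vbullet}, and the forced multiplication formula \eqref{vacantgrm}. The only difference is that you verify the groupoid and $VB$-subbundle axioms directly where the paper delegates them to Mackenzie's classification of vacant $LA$-groupoids; your checks (associativity from $h_1\bullet(h_2\bullet\zeta)=(h_1h_2)\bullet\zeta$, linearity of the graph of the multiplication in the fibre coordinates) are exactly the ones needed.
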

\section{Vacant $VB$-Groupoid Modules over $H/K$}
\begin{dfn} Let $E\gp\g$ be a vacant $VB$-groupoid, for $E$ over $H$.  An $E$-module is a vector bundle $\P\to M$, with an $H$-action on $M$, a bundle map $\u:\,\P\to \g$, and groupoid action
    \[ E _s\times_\u \P \to \P \]
such that the graph of the action is a vector sub-bundle of $\P\times E\times\P$.  This action is represented by the diagram:
\begin{center}
\begin{tikzpicture}
  \node (G)  {$E$};
  \node (H) [right of=G,node distance=1.5cm] {$\g$};
  \node (I) [below of=G,node distance=1.3cm] {$H$};
  \node (J) [right of=I,node distance=1.5cm] {pt};

  \node (ac) [node distance=0.55cm, right of=H, below of=H] {$\act$};

  \node (K) [right of=H, node distance=1.1cm] {$\P$};
  \node (L) [right of=K,node distance=1.5cm] {$\g$};
  \node (M) [below of=K,node distance=1.3cm] {$M$};
  \node (N) [right of=M,node distance=1.5cm] {pt};

  \draw[->] (G) to node {} (I);
  \draw[->] (H) to node [swap] {} (J);
  \path[->]
([yshift= 2pt]G.east) edge node[above] {} ([yshift= 2pt]H.west)
([yshift= -2pt]G.east) edge node[below] {} ([yshift= -2pt]H.west);
\path[->]
([yshift= 2pt]I.east) edge node[above] {} ([yshift= 2pt]J.west)
([yshift= -2pt]I.east) edge node[below] {} ([yshift= -2pt]J.west);
  \path[->]
([yshift= 2pt]G.east) edge node[above] {} ([yshift= 2pt]H.west)
([yshift= -2pt]G.east) edge node[below] {} ([yshift= -2pt]H.west);
\path[->]
([yshift= 2pt]I.east) edge node[above] {} ([yshift= 2pt]J.west)
([yshift= -2pt]I.east) edge node[below] {} ([yshift= -2pt]J.west);

  \draw[->] (K) to node {} (M);
  \draw[->] (L) to node {} (N);
  \draw[->] (K) to node {$\u$} (L);
  \draw[->] (M) to node {} (N);
\end{tikzpicture}
\end{center}

\end{dfn}
For this section, we will be concerned with the case that $M=H/K$, an $H$-homogeneous space.

\begin{prop}\label{EandP} Let $E$ be a vacant $VB$-groupoid over $H$, and let $\P$ be an $E$-module over $H/K$ for some closed Lie subgroup $K\leqslant H$. The groupoid action of $E$ on $\P$ induces a group action of $H$ on $\P$, via:
    \begin{equation}\label{HactonPL}
    H\times\P \to \P,\qquad    (h,y) \mapsto (h,\u(y))\circ y.
    \end{equation}
This restricts to a group action of $K$ on $\P_{eK}=:\p$, identifying $\P\cong H\times_K\p$.
\end{prop}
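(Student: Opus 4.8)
The plan is to establish the three assertions in turn — that \eqref{HactonPL} gives a well-defined group action, that this action is fibrewise linear, and that it exhibits $\P$ as the associated bundle $H\times_K\p$ — working throughout in the trivialisation $E\cong H\times\g$ of Proposition \ref{vtriv}, with the groupoid multiplication \eqref{vacantgrm} and the target action $\bullet$ of Proposition \ref{vbullet}.

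First I would check well-definedness and the group-action axioms. For $y\in\P$ lying over $m\in H/K$, the element $(h,\u(y))\in E_h$ has source $\u(y)$ in the trivialisation, so it is composable with $y$; hence $(h,\u(y))\circ y$ is defined, lies over $h\cdot m$ for the translation action of $H$ on $H/K$, and has moment map $\u\big((h,\u(y))\circ y\big)=\t(h,\u(y))=h\bullet\u(y)$ by the moment-map axiom for modules. For $h=e$ the element $(e,\u(y))$ is precisely the unit $1_{\u(y)}\in E^{(0)}=\g$, so the unit axiom for modules gives $(e,\u(y))\circ y=y$. For composability, set $z:=(h_2,\u(y))\circ y$, so $\u(z)=h_2\bullet\u(y)$, and then
\begin{align*}
(h_1,\u(z))\circ z &= (h_1,h_2\bullet\u(y))\circ\big((h_2,\u(y))\circ y\big) \\
&= \big((h_1,h_2\bullet\u(y))\circ(h_2,\u(y))\big)\circ y = (h_1h_2,\u(y))\circ y,
\end{align*}
using associativity of the module action and \eqref{vacantgrm}; the two elements $(h_1,h_2\bullet\u(y))$ and $(h_2,\u(y))$ are composable in $E$ since the source of the first and the target of the second both equal $h_2\bullet\u(y)$. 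This proves $H\act\P$ covering $H\act H/K$.

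Next I would verify fibrewise linearity. Since $\u\colon\P\to\g$ is a bundle map, the assignment $y\mapsto\big((h,\u(y)),y\big)$ is a linear map from the fibre $\P_m$ into the fibre product $E_h{}_\s\times_\u\P_m$; composing with the groupoid action — which is linear on the fibre product, by the defining property of a $VB$-module that the graph of the action is a vector sub-bundle — shows that $y\mapsto(h,\u(y))\circ y$ is linear on each fibre. Hence each $h\in H$ acts by a vector-bundle automorphism of $\P$ over the diffeomorphism $m\mapsto h\cdot m$ of $H/K$. This is the only step requiring care: because $\u(y)$ occurs inside the formula one cannot read off linearity directly, and must instead factor the map through the fibre product and invoke the $VB$-module axiom that the action is linear there.

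Finally, since $K$ stabilises the identity coset $eK$, the $H$-action restricts to a linear action of $K$ on $\p:=\P_{eK}$, and the standard associated-bundle map $H\times_K\p\to\P$, $[h,v]\mapsto h\cdot v$, is well defined (as $(hk)\cdot v=h\cdot(k\cdot v)$ for $k\in K$) and is a fibrewise-linear isomorphism, with inverse on $\P_{hK}$ given by $x\mapsto[h,h^{-1}\cdot x]$; smoothness is routine. This yields $\P\cong H\times_K\p$.
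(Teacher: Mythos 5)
Your proposal is correct and follows essentially the same route as the paper: well-definedness via the unique composable element $(h,\u(y))\in E_h$, the unit axiom for $h=e$, and associativity via the groupoid multiplication formula \eqref{vacantgrm}, concluding with the associated-bundle identification $y\mapsto[h,h^{-1}\cdot y]$. Your explicit verification of fibrewise linearity (factoring through the fibre product and invoking the $VB$-module axiom) is a detail the paper leaves implicit, but it is not a different approach.
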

\begin{proof} Given any $y\in \P$ and $h\in H$, there is a unique element of $E_h$ composable with $y$, namely $(h,\u(y))$, and so the proposed action is well defined.  For brevity, we will denote $(h,\u(y))\circ y$ by $h.y$ where appropriate.  First, we note that given any $y\in\P$, $e.y := (e,\u(y))\circ y = y$, since $(e,\u(y))$ is the unit at the point $\u(y)\in \g$. So to conclude it is an action, we will need to show that $(h_1h_2).y = h_1.(h_2.y)$.  Computing, we see
\begin{align*}
h_1.(h_2.y) &= \big((h_1,h_2\bullet\u(y))\circ(h_2,\u(y))\big)\circ y \\
              &= (h_1h_2, \u(y))\circ y \\
              &= (h_1h_2).y
\end{align*}
via the formula for the groupoid multiplication in $E$, Equation \eqref{vgpm}.\\
From the construction, if $y \in \P_{h_2K}$, then for any $h_1\in H$, we have $h_1.y = (h_1,\u(y))\circ y \, \, \in \P_{(h_1h_2)K}$, and hence $K\times \P_{eK} \to \P_{eK}$ under this action.  So with $\p:=\P_{eK}$, we have an action $K\circlearrowright \p$, giving us $\P\cong H\times_K\p$, $y \mapsto [h, h^{-1}.y]$ for $y\in \P_{hK}$.
\end{proof}
\begin{prop}\label{vacfor} Given $E$ and $\P$ as in Proposition \ref{EandP}, $h_i\in H$, $x\in \p$, the groupoid action is given by
    \begin{equation}\label{VBac} (h_1,\xi)\circ[h_2,x] = [h_1h_2,x] \end{equation}
for $\s(h_1,\xi)=\u([h_2,x])$, and the moment map given by $\u([h,x]) = h\bullet \u(x)$ for the $K$-equivariant induced linear map $\u:\,\p\to \g$.
\begin{proof}
For the groupoid action, we directly compute:
\begin{align*}
(h_1,\u([h_1,x]))\circ[h_2,x] &= h_1.[h_2,x] \\
                         &= h_1.(h_2.x) \\
                         &=(h_1h_2).x \\
                         &= [h_1h_2,x].
\end{align*}
Our map $\u:\, \P \to \g$ restricts to a linear map $\u:\, \p \to \g$ via $\u(x):= \u([e,x])$ for $x\in \p$; since $\u:\,\P\to\g$ is $H$-equivariant, $\u:\,\p\to\g$ is $K$-equivariant.  Using the formula that $(h,\u(x))\circ[e,x] = [h,x]$, we get
\begin{equation}\label{vacu}\u([h,x])=\t(h,\u(x)) = h\bullet\u(x).\end{equation}
\end{proof}
\end{prop}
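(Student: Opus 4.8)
The plan is to push everything through the two trivialisations already available: $E\cong H\times\g$ from Proposition \ref{vtriv} (so $\s(h,\zeta)=\zeta$, composition $(h_1,\zeta_1)\circ(h_2,\zeta_2)=(h_1h_2,\zeta_2)$, and $\t(h,\zeta)=h\bullet\zeta$), and $\P\cong H\times_K\p$ from Proposition \ref{EandP}. Under the latter, $[h_2,x]$ is by construction the image of $x\in\p\subseteq\P$ under the induced $H$-action, i.e.\ $[h_2,x]=h_2.x=(h_2,\u(x))\circ x$, lying in the fibre $\P_{h_2K}$. The whole statement is then a matter of unwinding these definitions.

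First I would treat the groupoid action. Because $E$ is vacant, for each $h_1\in H$ there is a \emph{unique} element of $E_{h_1}$ composable with a given $y\in\P$, namely $(h_1,\u(y))$; hence in $(h_1,\xi)\circ[h_2,x]$ the constraint $\s(h_1,\xi)=\xi=\u([h_2,x])$ forces $\xi=\u([h_2,x])$, and by the very definition of the induced action in Proposition \ref{EandP},
\[ (h_1,\xi)\circ[h_2,x] = (h_1,\u([h_2,x]))\circ[h_2,x] = h_1.[h_2,x]. \]
Invoking the associativity of the induced $H$-action proved in Proposition \ref{EandP}, $h_1.(h_2.x)=(h_1h_2).x$, this equals $[h_1h_2,x]$, which is exactly \eqref{VBac}.

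Next I would handle the moment map. Define $\u\colon\p\to\g$ as the restriction of the module moment map to $\p=\P_{eK}$, i.e.\ $\u(x):=\u([e,x])$. Applying the groupoid-module axiom $\u(v\circ m)=\t(v)$ to $v=(h,\u(y))\in E_h$ and $m=y$ gives $\u(h.y)=\u\big((h,\u(y))\circ y\big)=\t(h,\u(y))=h\bullet\u(y)$; that is, the module moment map intertwines the induced $H$-action on $\P$ with the target action $\bullet$ on $\g$. Restricting to $K\leqslant H$ shows $\u\colon\p\to\g$ is $K$-equivariant, and taking $y=x\in\p$ with arbitrary $h$ yields $\u([h,x])=\u(h.x)=h\bullet\u(x)$, as claimed.

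I do not anticipate a genuine obstacle: the content is pure bookkeeping between the trivialisations, the essential inputs being (a) vacancy of $E$, forcing uniqueness of composable lifts, and (b) Proposition \ref{EandP}, which supplies both the associativity of the induced action and the identification $\P\cong H\times_K\p$. The only point needing a little care is to confirm that linearity of $\u$ and its $K$-equivariance survive the identifications, but this is immediate since $\s$, $\t$, the groupoid action, and $\u$ are all linear and suitably equivariant by construction.
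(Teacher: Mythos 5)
Your proposal is correct and follows essentially the same route as the paper: both reduce the groupoid action to the induced $H$-action of Proposition \ref{EandP} (using vacancy to force $\xi=\u([h_2,x])$) and then invoke its associativity, and both obtain the moment map formula from the module axiom $\u(v\circ m)=\t(v)$ applied to $(h,\u(x))\circ[e,x]=[h,x]$. Your derivation of $K$-equivariance directly from $\u(h.y)=h\bullet\u(y)$ is if anything slightly more explicit than the paper's appeal to $H$-equivariance of $\u$, but it is the same argument.
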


\begin{thm}\label{vacantVBmd} Given a vacant $VB$-groupoid $E$ over a Lie group $H$, and an $H$-homogeneous space $H/K$, the $E$-modules $\P\to H/K$ are classified by a vector space $\p$, a $K$-action $K\act \p$, and a $K$-equivariant linear map $\u:\, \p\to \g$.
\begin{proof}
Beginning with such a $\P$, we obtain the data $\p$ and $\u$ with the $K$ action from the previous analysis in Propositions \ref{EandP} and \ref{vacfor}.  Now using this data, construct $\P':=H\times_K\p$, and extend $\u:\,\p\to \g$ to $\u:E\to \g$ by $\u([h,\xi])=h\bullet \u(\xi)$.  Since $\u$ is $K$-equivariant, this is well defined:  picking another representative $[hk^{-1},k.\xi]$, we have
\begin{align*}  \u([hk^{-1},k.\xi]) &= (hk^{-1})\bullet \u(k.\xi)\\
                                     &= (hk^{-1})\bullet k\bullet \u(\xi)\\
                                     &= h\bullet \u(\xi).
\end{align*}
Define the groupoid action to be $(h_1,\xi)\circ[h_2,x]=[h_1h_2,x]$ if $\s(h_1,\xi)=\u([h_2,x])$.  This is clearly the $\P$ with which we began, $\P'=\P$.\\
Conversely, starting with $K\act(\p,\u)$ and constructing $\P:=H\times_K\p$ with the extension of $\u$ in Equation \eqref{vacu} and action in Equation \eqref{VBac}, the recovery of the initial $(\p,\u)$ is automatic.
\end{proof}
\end{thm}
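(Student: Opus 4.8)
The plan is to exhibit a bijection between isomorphism classes of $E$-modules $\P\to H/K$ and triples $(\p, K\act\p, \u)$ consisting of a vector space $\p$, a linear $K$-action on it, and a $K$-equivariant linear map $\u:\p\to\g$. Since the groupoid multiplication of the vacant $VB$-groupoid $E$ has already been put into the normal form \eqref{vacantgrm}, with target given by a group action $\bullet$ of $H$ on $\g$ (Propositions \ref{vtriv} and \ref{vbullet}), both directions of the correspondence are essentially forced; the real content is checking that the two passages are mutually inverse.

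For the forward direction I would start from an arbitrary $E$-module $\P\to H/K$ and read off its data using results already in hand. Proposition \ref{EandP} produces the induced $H$-action on $\P$ by $h.y=(h,\u(y))\circ y$, exhibits $\P\cong H\times_K\p$ with $\p:=\P_{eK}$, and gives the restricted $K$-action on $\p$; Proposition \ref{vacfor} then identifies the moment map in this trivialisation as $\u([h,x])=h\bullet\u(x)$ for the $K$-equivariant linear map $\u:\p\to\g$ obtained by restriction, and the groupoid action as \eqref{VBac}. This assigns to each $\P$ a triple, and isomorphic modules evidently give isomorphic triples. For the backward direction, given a triple I would set $\P:=H\times_K\p$, extend $\u$ to $\P\to\g$ by $\u([h,x])=h\bullet\u(x)$ — well defined because $K$-equivariance of $\u$ together with the group-action property of $\bullet$ makes the formula independent of the representative — and define the candidate groupoid action by \eqref{VBac}, i.e. $(h_1,\xi)\circ[h_2,x]=[h_1h_2,x]$ whenever $\s(h_1,\xi)=\u([h_2,x])$. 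I would then verify this is an $E$-module: composability is governed purely by $\u$, associativity follows from \eqref{vgpm}, units act trivially, the compatibility $\u(\xi\circ y)=\t(\xi)$ holds by construction, and the graph of the action is a vector sub-bundle of $\P\times E\times\P$ since everything in sight is fibrewise linear.

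The main obstacle — really the only non-bookkeeping point — is showing that the two passages are mutually inverse. Going triple $\to$ module $\to$ triple is immediate: restricting the constructed $\P=H\times_K\p$ and its moment map and action to the fibre over $eK$ returns exactly $(\p,K\act\p,\u)$. Going module $\to$ triple $\to$ module is where care is needed: it amounts to recognising that an \emph{arbitrary} $E$-module over $H/K$ is already in the normal form $H\times_K\p$ with action \eqref{VBac} and moment map $h\bullet\u(x)$, which is precisely the combined content of Propositions \ref{EandP} and \ref{vacfor}. So the theorem reduces to assembling those statements, confirming that the reconstructed $\u$ and action literally coincide with the original ones rather than merely being isomorphic to them, and checking that the whole correspondence is natural with respect to module isomorphisms over $H/K$.
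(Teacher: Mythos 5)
Your proposal is correct and follows essentially the same route as the paper's proof: extract $(\p,K\act\p,\u)$ via Propositions \ref{EandP} and \ref{vacfor}, and reconstruct the module as $H\times_K\p$ with moment map $\u([h,x])=h\bullet\u(x)$ and action \eqref{VBac}. The extra care you take in verifying the module axioms and the mutual inverseness only makes explicit what the paper leaves implicit.
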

\section{Vacant LA-groupoids, LA-Modules}
Let $H$ be a Lie group, and $E\gp\g$ be an $LA$-groupoid (Definition \ref{LAg}) over $H$, which is vacant.
\begin{thm} (\cite{mack1}, \cite{mack2}) There is a 1-1 correspondence between $H$-equivariant Lie algebra triples $(\d,\g,\h)$ (Definition \ref{lietrip}), and vacant $LA$-groupoids over $H$.
\end{thm}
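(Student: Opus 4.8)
The plan is to prove both directions of the correspondence, using the classification of vacant $VB$-groupoids just established to pin down the underlying bundle data. Forgetting the Lie algebroid structure on a vacant $LA$-groupoid $E\gp\g$ over $H$, the preceding section already gives $E\cong H\times\g$, trivialised by the source map, with the target map defining an $H$-action $\bullet$ on $\g$ and groupoid multiplication $(h_1,\zeta_1)\circ(h_2,\zeta_2)=(h_1h_2,\zeta_2)$. So in each direction the real content is to match the Lie algebroid structure of $E\to H$ with a Lie algebra $\d$ carrying transverse subalgebras $\g,\h$ together with an $H$-action.

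\textbf{From a vacant $LA$-groupoid to a triple.} Given such an $E$, I would set $\d:=A(E\gp\g)$, the Lie algebroid of the Lie groupoid $E\gp\g$ over its base $\g$; a dimension count using vacancy ($E|_e=\g$) gives $\dim\d=\dim\g+\dim\h$. Since $E$ is simultaneously a Lie algebroid over $H$ and a Lie groupoid over $\g$, $\d$ inherits a $VB$-algebroid (double) structure over $\h=A(H\gp\mathrm{pt})$, and Mackenzie's analysis of vacant $LA$-groupoids identifies this with a matched pair of Lie algebras $(\h,\g)$, equivalently with a single Lie algebra structure on $\d$ in which $\h$ (coming from $A(H)$) and $\g$ (the unit manifold of $E\gp\g$, included into $\d$) are transverse Lie subalgebras. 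The $\h$-action on $\g$ in the matched pair is read off from the source and target maps of $E$, and the $\g$-action on $\h$ from the bracket of $E\to H$ along the identity section. The $H$-action on $\d$ comes from the action of $H$ on the groupoid $E\gp\g$ by $(h,\zeta)\mapsto(h'hh'^{-1},h'\bullet\zeta)$ covering $\zeta\mapsto h'\bullet\zeta$ on the base: this is by groupoid automorphisms, hence induces an action on $\d=A(E\gp\g)$ by Lie algebra automorphisms, restricting to $\Ad_H$ on $\h$ and differentiating to $\ad_\h$.

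\textbf{From a triple to a vacant $LA$-groupoid.} Conversely, given $(\d,\g,\h)$ with its $H$-action, put $E:=H\times\g$ and make it a vacant $VB$-groupoid via $h\bullet\zeta:=\pg(\Ad_h\zeta)$, the $\g$-component of $\Ad_h\zeta$ along $\h$; this is a genuine $H$-action precisely because $\Ad_h(\h)\subseteq\h$, so that $\pg\Ad_{h_1}(\ph\Ad_{h_2}\zeta)=0$. I would then equip $E\to H$ with the anchor $\a_E(h,\zeta)=\Ad_{h^{-1}}\ph\Ad_h\zeta$ in left trivialisation and with the bracket on $\Gamma(H\times\g)$ dictated by the matched-pair data, and verify the Lie algebroid axioms together with the requirement that $\mathrm{Gr}(\mathrm{Mult}_E)\subseteq E\times E\times E$ be a sub-Lie algebroid along $\mathrm{Gr}(\mathrm{Mult}_H)$; this last point is what upgrades the vacant $VB$-groupoid to an $LA$-groupoid. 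Finally one checks the two constructions are mutually inverse.

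\textbf{Main obstacle.} The heart of the argument is the equivalence between the multiplicativity of the anchor and bracket of the $LA$-groupoid $E$ and the compatibility conditions of a matched pair of Lie algebras $(\h,\g)$; this is Mackenzie's theorem, and an honest verification means unwinding the bracket of sections of $E\to H$ against the groupoid product, which I expect to be the main source of friction. The genuinely new ingredient beyond the cited references is the $H$-equivariance: one must confirm that the induced $H$-action on $\d$ is by Lie automorphisms with the stated restriction and differentiation properties, and, going backwards, that this equivariance datum is exactly what makes $H\times\g$ with the prescribed bracket an honest $LA$-groupoid and not merely a $VB$-groupoid. By comparison the dimension counts, the reconstruction of $\bullet$, and the inverse-construction checks are formal.
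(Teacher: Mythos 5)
The paper does not actually prove this theorem; it is quoted directly from Mackenzie (\cite{mack1}, \cite{mack2}), and your sketch follows the same matched-pair route those references take, deferring the central verification (multiplicativity of the anchor and bracket of $E\to H$ $\Leftrightarrow$ the matched-pair identities for $(\h,\g)$) to Mackenzie exactly as the paper does. One point to tighten: since $E\gp\g$ is precisely the action groupoid $H\ltimes\g$ for $\bullet$, the object $A(E\gp\g)$ you propose to call $\d$ is the action Lie algebroid $\g\times\h\to\g$ --- a Lie algebroid over the manifold $\g$, not yet a Lie algebra --- so the Lie algebra structure on $\d\cong\g\oplus\h$ only emerges after this is combined with the Lie algebroid structure of $E\to H$ through the matched-pair compatibility, which is the very step you delegate to the references. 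With that understood, your two directions, the formula $h\bullet\zeta=\pg(\Ad_h\zeta)$ with its verification via $\Ad_h(\h)\subseteq\h$, and the equivariance bookkeeping all agree with what the cited sources (and Proposition B.3 of \cite{lmdir}, which the paper invokes right after the theorem) actually establish.
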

Similar to the $VB$-case Proposition \ref{vtriv}, we have $E\cong H\times\g$, with formulae
    \[s(h,\zeta)=\zeta, \quad t(h,\zeta)=h\bullet\zeta,\]
where the latter is a group action of $H$ on $\g$.  Indeed, this group action is given by
    \begin{equation} h\bullet \zeta = \pg(\Ad_h\zeta) \end{equation}
(Proposition B.3, \cite{lmdir}).  The groupoid multiplication is given by Equation \eqref{vacantgrm}, and the anchor (for left-trivialisation of the tangent bundle) is given by
\begin{equation}\label{anchorvac}
\a(h,\zeta) = (h, \Ad_{h^{-1}}\ph \Ad_h \zeta).
\end{equation}
\begin{dfn}\label{LAmdl} Let $E\gp \g$ be a vacant $LA$-groupoid over a Lie group $H$. An $E$-module is a Lie algebroid $L$ over an $H$-manifold $M$, with a groupoid action of $E$ on $L$ (with moment map $\u$) along the group action $H\act M$ such that the graph of the groupoid action is a Lie subalgebroid of $L\times E\times L$.  This is represented by the diagram
\begin{center}
\begin{tikzpicture}
  \node (G)  {$E$};
  \node (H) [right of=G,node distance=1.5cm] {$\g$};
  \node (I) [below of=G,node distance=1.3cm] {$H$};
  \node (J) [right of=I,node distance=1.5cm] {pt};

  \node (ac) [node distance=0.55cm, right of=H, below of=H] {$\act$};

  \node (K) [right of=H, node distance=1.1cm] {$L$};
  \node (L) [right of=K,node distance=1.5cm] {$\g$};
  \node (M) [below of=K,node distance=1.3cm] {$M$};
  \node (N) [right of=M,node distance=1.5cm] {pt};

  \draw[->] (G) to node {} (I);
  \draw[->] (H) to node [swap] {} (J);
  \path[->]
([yshift= 2pt]G.east) edge node[above] {} ([yshift= 2pt]H.west)
([yshift= -2pt]G.east) edge node[below] {} ([yshift= -2pt]H.west);
\path[->]
([yshift= 2pt]I.east) edge node[above] {} ([yshift= 2pt]J.west)
([yshift= -2pt]I.east) edge node[below] {} ([yshift= -2pt]J.west);
  \path[->]
([yshift= 2pt]G.east) edge node[above] {} ([yshift= 2pt]H.west)
([yshift= -2pt]G.east) edge node[below] {} ([yshift= -2pt]H.west);
\path[->]
([yshift= 2pt]I.east) edge node[above] {} ([yshift= 2pt]J.west)
([yshift= -2pt]I.east) edge node[below] {} ([yshift= -2pt]J.west);

  \draw[->] (K) to node {} (M);
  \draw[->] (L) to node {} (N);
  \draw[->] (K) to node {$\u$} (L);
  \draw[->] (M) to node {} (N);
\end{tikzpicture}
\end{center}
\end{dfn}
As in the previous section, we will concern ourselves with the case where $M=H/K$, an homogeneous space.  This gives us $L\cong H\times_K\l$, for $\l:=L_{eK}$ via Proposition \ref{EandP}.
\begin{prop}  Let $E$,$L$ be as in Definition \ref{LAmdl}.  The moment map $\u:\,L\to \g$ is a Lie algebroid morphism.
\begin{proof}
The inclusion $\{e\}\hookrightarrow H$ is covered by an $LA$-morphism $\g\to E$; the diagonal inclusion of $H/K$ into $H/K\times H/K$ is covered by an $LA$ morphism $L\to L\times L$.  Taken together, we have a $LA$-morphism $\g\times L\to L\times E\times L$.  The graph of the groupoid action $E\times L \da L$ is a Lie subalgebroid of $L\times E\times L$.  Over elements of the form $(hK,e,hK)$, it restricts to a section of $\g\times L$, which is precisely the set of elements $\{(\u(x),x)\,|\,x\in L\}$, which is thus a Lie subalgebroid of $\g\times L$.
\end{proof}
\end{prop}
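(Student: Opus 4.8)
The plan is to obtain the claim formally, realising $\mathrm{Gr}(\u)\subset\g\times L$ as a preimage of the graph of the groupoid action along a Lie algebroid morphism assembled from the available structure maps. First I would build that morphism. The unit embedding $\g=E^{(0)}\hookrightarrow E$ is a structure map of the $LA$-groupoid $E$, hence in particular a Lie algebroid morphism covering $\mathrm{pt}\hookrightarrow H$, $\ast\mapsto e$; and the diagonal $L\to L\times L$, $x\mapsto(x,x)$, is a Lie algebroid morphism covering the diagonal embedding of $H/K$. Taking the product of these two morphisms and permuting factors gives a Lie algebroid morphism
\[ \iota\colon \g\times L\longrightarrow L\times E\times L,\qquad (\zeta,x)\longmapsto(x,1_\zeta,x), \]
covering the embedding $H/K\hookrightarrow H/K\times H\times H/K$, $hK\mapsto(hK,e,hK)$.

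Next I would invoke the defining property of an $E$-module from Definition \ref{LAmdl}: the graph $\mathrm{Gr}(\mathrm{Act})=\{(\xi\circ x,\xi,x)\}$ of the groupoid action is a Lie subalgebroid of $L\times E\times L$. The key observation is that pulling it back along $\iota$ recovers exactly $\mathrm{Gr}(\u)$: a point $(x,1_\zeta,x)$ lies on $\mathrm{Gr}(\mathrm{Act})$ iff the unit $1_\zeta$ is composable with $x$, i.e.\ $\zeta=\s(1_\zeta)=\u(x)$, in which case $1_\zeta\circ x=x$ by the unit axiom for the action. Hence $\iota^{-1}(\mathrm{Gr}(\mathrm{Act}))=\{(\u(x),x)\mid x\in L\}=\mathrm{Gr}(\u)$, which is a smooth subbundle of $\g\times L$ of the expected rank (the graph of the bundle map $\u$). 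Since the preimage of a Lie subalgebroid under a Lie algebroid morphism is again a Lie subalgebroid whenever it is a genuine subbundle, $\mathrm{Gr}(\u)$ is a Lie subalgebroid of $\g\times L$, which is precisely the assertion that $\u\colon L\to\g$ is a morphism of Lie algebroids (with $\g$ regarded as a Lie algebra, i.e.\ a Lie algebroid over a point).

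The argument is soft, so there is no single hard computation; the only point requiring care is the functoriality bookkeeping — verifying that the assembled map $\iota$ is genuinely a morphism of Lie algebroids rather than merely of vector bundles, and that the preimage $\iota^{-1}(\mathrm{Gr}(\mathrm{Act}))$ has constant rank, so that the principle ``preimage of a Lie subalgebroid is a Lie subalgebroid'' applies. A more computational alternative would bypass this by using the trivialisations $E\cong H\times\g$ and $L\cong H\times_K\l$ together with the explicit anchor and bracket formulas to check directly that $\u$ intertwines anchors and brackets, but the functorial route above is shorter and is the one I would present.
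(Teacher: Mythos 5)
Your proposal is correct and follows essentially the same route as the paper: both build the Lie algebroid morphism $\g\times L\to L\times E\times L$, $(\zeta,x)\mapsto(x,1_\zeta,x)$ from the unit embedding and the diagonal, and identify the preimage of $\mathrm{Gr}(\mathrm{Act})$ with $\{(\u(x),x)\mid x\in L\}=\mathrm{Gr}(\u)$ via the unit axiom. Your version merely makes explicit the constant-rank hypothesis needed to conclude that this preimage is a Lie subalgebroid, which the paper leaves implicit.
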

\begin{prop}\label{anchorid}
The anchor map of $L$ is entirely determined by its value at the identity coset.
\begin{proof}
The anchor of the graph of the groupoid action is tangent to the graph of the group action on $H/K$ (Definition \ref{LAmdl}). Using the groupoid action formula from Equation \eqref{VBac} when $h_2=e$:
    \[ (h,\u(x))\circ [e,x] = [h,x] \]
for $x\in \l$, and applying the anchor maps (in left-trivialisation), we get
    \begin{align*}
        \a[h,x] &= \a(h,\u(x))\cdot \a[e,x] \\
                  &= (h, \Ad_{h^{-1}}\ph \Ad_h \u(x))\cdot [e,\a(x)]\\
                  &= [h, \overline{\Ad_{h^{-1}}\ph \Ad_h (\u(x))} + \a(x)]
\end{align*}
for the group action of $\mathrm{T}H$ on $\mathrm{T}(H/K)$ (here the overline denotes the image in $\h/\k$).
\end{proof}
\end{prop}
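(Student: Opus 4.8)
The plan is to use the defining feature of an $E$-module from Definition \ref{LAmdl}: the graph of the groupoid action inside $L\times E\times L$ is a Lie subalgebroid, hence its anchor is tangent to the graph of the underlying group action $H\times H/K\to H/K$. Unwound, this says that for every composable pair $\xi\in E$, $x\in L$ one has $\a_L(\xi\circ x)=\a_E(\xi)\cdot\a_L(x)$, where the dot denotes the tangent lift to $\mathrm{T}(H/K)$ of the $H$-action on $H/K$, i.e.\ the action of the tangent group $\mathrm{T}H\gp\mathrm{pt}$ on $\mathrm{T}(H/K)$. Combining the trivialisation $L\cong H\times_K\l$ of Proposition \ref{EandP} with the groupoid action formula \eqref{VBac}, every element of $L$ is of the form $[h,x]$ with $x\in\l=L_{eK}$, and in particular $(h,\u(x))\circ[e,x]=[h,x]$.

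Applying $\a_L$ to the last identity and substituting the known anchor of the vacant $LA$-groupoid $E$, namely $\a(h,\zeta)=(h,\Ad_{h^{-1}}\ph\Ad_h\zeta)$ from \eqref{anchorvac}, together with the formula for the tangent-lifted action in left trivialisation — a vector $(h,v)\in\mathrm{T}H\cong H\times\h$ carries $[e,w]\in\{e\}\times(\h/\k)$ to $[h,\,w+\bar v]$, with $\bar v$ the image of $v$ in $\h/\k$ — gives
\[
\a_L([h,x]) \;=\; \bigl[\, h,\ \overline{\Ad_{h^{-1}}\ph\Ad_h\,\u(x)}\ +\ \a_L(x)\,\bigr].
\]
Since $h$, the $\mathrm{Ad}$-action, the moment map $\u$, and the value $\a_L(x)=\a_L|_\l(x)$ are all data already available from the module structure, this exhibits $\a_L$ as completely determined by its restriction $\a_L|_\l$ to the identity coset, which is the assertion. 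Well-definedness of the right-hand side under $[h,x]\mapsto[hk^{-1},k.x]$ follows from the $K$-equivariance of $\u$ and of the anchor, exactly as in the well-definedness check of Proposition \ref{vacfor}.

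I expect the only real work to be bookkeeping rather than anything conceptual: one must (i) verify that the Lie-subalgebroid condition on the graph of the groupoid action does translate precisely into the pointwise relation $\a_L(\xi\circ x)=\a_E(\xi)\cdot\a_L(x)$ for the tangent-lifted action, and (ii) compute the tangent lift of the transitive $H$-action on $H/K$ in the left trivialisation $\mathrm{T}(H/K)\cong H\times_K(\h/\k)$ with enough care that the ``$\bullet$''-action on $\g$ coming from the target of $E$ is not conflated with the tangent-lifted action on $\mathrm{T}(H/K)$. Once those two facts are in place the displayed computation finishes the proof immediately.
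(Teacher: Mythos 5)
Your argument is correct and is essentially the paper's own proof: both use the Lie-subalgebroid condition on the graph of the groupoid action to get $\a_L(\xi\circ x)=\a_E(\xi)\cdot\a_L(x)$, decompose $[h,x]=(h,\u(x))\circ[e,x]$, and substitute the anchor formula \eqref{anchorvac} for $E$ together with the tangent-lifted $H$-action on $\mathrm{T}(H/K)$ to arrive at the same displayed expression. The additional remarks on well-definedness and on not conflating the $\bullet$-action with the tangent lift are sensible but not needed beyond what the paper records.
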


\subsection{The Trivial Case $K=\left\{e\right\}$}
When the homogeneous space $H/K$ is just $H$, we get the trivialisation $L=H\times \l$.
\begin{prop}\label{trivLiealgstr} For the $E$-module $L=H\times\l$ over $H$, $L_e=:\l$ has a Lie algebra structure.
\begin{proof}
(See \cite{lmdir}, Proposition 3.9 for a similar argument.) As a note, the $H$-action given by $\bullet$ is not by $LA$-automorphisms.  We identify $\l$ with the space of $H$-invariant sections $\Gamma(L)^H$, i.e.: the constant sections relative to the trivialisation of $L$ by $E$.  Let $\mathrm{pr}_2: E\times L \da L$ be $LA$-morphism defined by
    \[ (x,y)\sim y' \quad \leftrightarrow \quad y = y',\,x\in E \]
and $\mathrm{Act}:\,E\times L\da L$ be the groupoid action.  By definition, a section $\sigma\in \Gamma(L)$ is $H$-invariant if and only if there exists $\tilde{\sigma}\in \Gamma(E\times L)$ with
    \[ \tilde{\sigma} \sim_{\mathrm{Act}} \sigma,\quad \tilde{\sigma} \sim_{\mathrm{pr}_2} \sigma. \]
If $\sigma_1, \sigma_2\in \Gamma(L)^H$, with associated sections $\tilde{\sigma_1},\tilde{\sigma_2}$, then
    \[ [ \tilde{\sigma_1},\tilde{\sigma_2}] \sim_{\mathrm{Act}} [\sigma_1,\sigma_2],\quad [ \tilde{\sigma_1},\tilde{\sigma_2}] \sim_{\mathrm{pr}_2} [\sigma_1,\sigma_2] \]
since both are Lie algebroid morphisms.  Thus, $\Gamma(L)^H\cong\l$ is a Lie subalgebra.
\end{proof}
\end{prop}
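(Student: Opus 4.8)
The plan is to realise $\l = L_e$ as the space $\Gamma(L)^H$ of sections of $L$ that are constant with respect to the trivialisation $L\cong H\times\l$ coming from the groupoid action of $E$ (Proposition \ref{EandP} applied with $K=\{e\}$ and $M=H$), and then to show that this space is closed under the Lie bracket of $\Gamma(L)$. The subtlety, and the reason one cannot simply invoke ``fixed points of an automorphism action form a subalgebra'', is that the $\bullet$-action of $H$ on $L$ induced by the target/moment map is \emph{not} by Lie algebroid automorphisms. So I would instead package the invariance condition in relational terms, exploiting that both the groupoid action and the trivial projection out of $E\times L$ are Lie algebroid morphisms.

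Concretely, first I would set up two comorphisms from $E\times L$: the action morphism $\mathrm{Act}:E\times L\da L$, whose graph inside $L\times E\times L$ is a Lie subalgebroid by the very definition of an $E$-module (Definition \ref{LAmdl}), and the second-projection morphism $\mathrm{pr}_2:E\times L\da L$ sending $(x,y)\sim y$. Then I would observe that a section $\sigma\in\Gamma(L)$ is $H$-invariant precisely when there is a section $\tilde\sigma$ of $E\times L$, built from the unit section of $E$ together with $\sigma$, that is simultaneously $\mathrm{Act}$-related and $\mathrm{pr}_2$-related to $\sigma$; this is just the infinitesimal form of the invariance condition $h\cdot\sigma(m)=\sigma(h\cdot m)$, and it is what identifies $\Gamma(L)^H$ with $\l$ via evaluation at $e$.

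The last step is the formal one: given $\sigma_1,\sigma_2\in\Gamma(L)^H$ with witnesses $\tilde\sigma_1,\tilde\sigma_2$, the bracket $[\tilde\sigma_1,\tilde\sigma_2]$ is again a section of $E\times L$ of the required shape, and since Lie algebroid morphisms relate brackets of related sections, $[\tilde\sigma_1,\tilde\sigma_2]\sim_{\mathrm{Act}}[\sigma_1,\sigma_2]$ and $[\tilde\sigma_1,\tilde\sigma_2]\sim_{\mathrm{pr}_2}[\sigma_1,\sigma_2]$. Hence $[\sigma_1,\sigma_2]\in\Gamma(L)^H$, so $\Gamma(L)^H\cong\l$ is a Lie subalgebra of $\Gamma(L)$, and transporting the bracket back along the trivialisation endows $\l$ with the asserted Lie algebra structure. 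I expect the delicate point to be verifying that invariance of a section is genuinely equivalent to this double-relatedness, i.e.\ pinning down the witness section $\tilde\sigma$ and checking $[\tilde\sigma_1,\tilde\sigma_2]$ stays of that form, rather than the bracket-compatibility itself, which is immediate from the morphism property; the parallel argument in Proposition~3.9 of \cite{lmdir} is a useful model here.
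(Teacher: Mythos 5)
Your proposal is correct and follows essentially the same route as the paper's proof: identifying $\l$ with $\Gamma(L)^H$, characterising invariance via simultaneous relatedness under the action morphism and the second projection, and using that both are Lie algebroid morphisms to close the bracket. The point you flag as delicate (pinning down the witness $\tilde\sigma$) is exactly the step the paper also treats as the definitional content of invariance, so there is no substantive difference.
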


$L$ comes with the linear maps $\u:\,\l\to \g$, and $\a:\, \l\to \h$, both determined by the values of the bundle maps at the identity (Propositions \ref{vacfor}, \ref{anchorid}).

\begin{prop}\label{uplusa}
The map $\varphi:\,\l\to\d,\,\,x\mapsto (\u(x),\a(x))$ is a Lie algebra morphism.
\begin{proof}
We will check $\varphi([x,y]) = [\varphi(x),\varphi(y)]$ for all $x,y\in \l$.
\begin{enumerate}
\item For the right hand side:
\begin{align*}
[\varphi(x),\varphi(y)] &= [(\u(x),\a(x)),(\u(y),\a(y))]\\
       &= \bigg([\u(x),\u(y)] + \L_{\a(x)}\u(y)-\L_{\a(y)}\u(x), [\a(x),\a(y)]+\L_{\u(x)}\a(y)-\L_{\u(y)}\a(x)\bigg).
\end{align*}
\item For the left hand side, consider $\tilde{x},\tilde{y}$, constant sections of $H\times\l$ corresponding to $x,y$.  Using Equation \eqref{anchorvac} for the anchor, and viewing the tangent bundle $\mathrm{T}H$ as an action Lie algebroid (Example \ref{acLA}) for $\h\to \mathfrak{X},\,\xi\mapsto \xi^L$, we compute
\begin{align*}
\a([x,y]) &= \a([\tilde{x},\tilde{y}])\vert_e \\
             &= [\a(\tilde{x}),\a(\tilde{y})]|_e\\
             &= [\a(x),\a(y)] + \a(x)^L(\a(y)+\Ad_{h^{-1}}\ph\Ad_h(\u(y)))\vert_e - \a(y)^L(\a(x)+\Ad_{h^{-1}}\ph\Ad_h(\u(x)))\vert_e\\
             &=[\a(x),\a(y)] + \ph[\a(x),\u(y)] - \ph[\a(y),\u(x)] + [\a(x),\ph(\u(y))] -[\a(y),\ph(\u(x))] \\
             &=[\a(x),\a(y)] -\L_{\u(y)}\a(x) +\L_{\u(x)}\a(y).
\end{align*}
We therefore get
\begin{align*}
\varphi([x,y]) &= \bigg(\u([x,y]),\a([x,y]\bigg) \\
                      &= \bigg([\u(x),\u(y)] + \L_{\a(x)}\u(y)-\L_{\a(y)}\u(x), [\a(x),\a(y)] +\L_{\u(x)}\a(y)-\L_{\u(y)}\a(x)\bigg).
\end{align*}
\end{enumerate}
Hence we conclude $\varphi([x,y]) = [\varphi(x),\varphi(y)]$.\\
\end{proof}
\end{prop}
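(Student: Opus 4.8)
The plan is to verify $\varphi([x,y]_\l) = [\varphi(x),\varphi(y)]_\d$ directly, splitting into components along the decomposition $\d = \g\oplus\h$ coming from the $H$-equivariant Lie algebra triple classifying $E$. Write $\zeta_i = \u(x_i)\in\g$ and $\xi_i = \a(x_i)\in\h$, and introduce the mixed actions $\L_\xi\zeta := \pg[\xi,\zeta]_\d$ and $\L_\zeta\xi := \ph[\zeta,\xi]_\d$ of the two transverse subalgebras on each other. Expanding $[(\zeta_1,\xi_1),(\zeta_2,\xi_2)]_\d$ by bilinearity and using that $\g,\h$ are subalgebras gives
\[ [\varphi(x),\varphi(y)]_\d = \big(\,[\zeta_1,\zeta_2]_\g + \L_{\xi_1}\zeta_2 - \L_{\xi_2}\zeta_1,\ \ [\xi_1,\xi_2]_\h + \L_{\zeta_1}\xi_2 - \L_{\zeta_2}\xi_1\,\big). \]
It therefore suffices to show that $\u([x,y]_\l)$ equals the first entry and $\a([x,y]_\l)$ the second.

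For the $\h$-entry I would use that the anchor $\a:\,L\to\mathrm{T}H$ is a Lie algebroid morphism, so on sections $\a([\tilde x,\tilde y]) = [\a(\tilde x),\a(\tilde y)]$, where $\tilde x,\tilde y$ are the constant sections of $L = H\times\l$ corresponding to $x,y$ under $\l\cong\Gamma(L)^H$ (Proposition \ref{trivLiealgstr}). Viewing $\mathrm{T}H$ as an action Lie algebroid (Example \ref{acLA}, $\xi\mapsto\xi^L$) and inserting the explicit formula of Proposition \ref{anchorid}, namely $\a[h,x] = [h,\ \Ad_{h^{-1}}\ph\Ad_h\u(x)+\a(x)]$ (the overline there being trivial since $\k = 0$), the bracket at $e$ reduces to differentiating $h\mapsto \Ad_{h^{-1}}\ph\Ad_h\u(y)$ along $\a(x)$ and the symmetric term; since $\ph\u(y)=0$ each such derivative collapses to $\ph[\a(x),\u(y)]_\d$, and collecting everything yields $\a([x,y]_\l) = [\a(x),\a(y)]_\h + \L_{\u(x)}\a(y) - \L_{\u(y)}\a(x)$, exactly the $\h$-entry above.

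For the $\g$-entry I would argue symmetrically using that the moment map $\u:\,L\to\g$ is a Lie algebroid morphism, i.e.\ that the graph $\{(\u(v),v)\}$ is a Lie subalgebroid of $\g\times L$. Taking the bracket in $\g\times L$ of the sections $(\u\circ\tilde x,\tilde x)$ and $(\u\circ\tilde y,\tilde y)$ and isolating the $\g$-component gives $\u([\sigma_1,\sigma_2]_L) = [\u(\sigma_1),\u(\sigma_2)]_\g + \a(\sigma_1)(\u\circ\sigma_2) - \a(\sigma_2)(\u\circ\sigma_1)$, pointwise as $\g$-valued functions on $H$ (the derivative terms appear because the anchor of the $\g$-factor is zero). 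Evaluating at $e$ with $\sigma_i = \tilde x_i$, using $\u[h,x] = \pg\Ad_h\u(x)$ and $\a(\tilde x)|_e = \a(x)$, the two derivative terms become $\pg[\a(x),\u(y)]_\d = \L_{\a(x)}\u(y)$ and its partner, so $\u([x,y]_\l) = [\u(x),\u(y)]_\g + \L_{\a(x)}\u(y) - \L_{\a(y)}\u(x)$, the $\g$-entry above. Matching the two entries with the displayed expansion of $[\varphi(x),\varphi(y)]_\d$ then completes the proof.

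The main obstacle will be bookkeeping rather than any conceptual difficulty: one must track signs consistently in the four mixed-action terms, pass correctly between the action-algebroid Lie derivatives ($\xi^L$ acting on $\g$- and $\h$-valued functions) and $\ad$ in $\d$, and in particular justify carefully that the left-trivialized anchor vector field of $\tilde x$ is $h\mapsto \Ad_{h^{-1}}\ph\Ad_h\u(x) + \a(x)$, so that its derivative at $e$ picks up precisely $\ph[\a(x),\u(y)]_\d$ (using $\ph|_\g = 0$). Once the explicit descriptions of $\u$ and $\a$ on $L = H\times\l$ are in hand, both component computations are short and essentially mechanical.
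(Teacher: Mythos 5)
Your proposal is correct and follows essentially the same route as the paper: expand $[\varphi(x),\varphi(y)]$ in $\d=\g\oplus\h$ with the mixed terms as $\pg[\cdot,\cdot]$ and $\ph[\cdot,\cdot]$, then compute the $\h$-component from the anchor being a Lie algebroid morphism into the action algebroid $\mathrm{T}H\cong H\times\h$ (using $\ph\u(y)=0$ to collapse the derivative terms) and the $\g$-component from $\u$ being a Lie algebroid morphism. If anything, you are slightly more careful than the paper, which asserts the formula for $\u([x,y])$ without spelling out the bracket computation in $\g\times L$ that you supply.
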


\begin{thm}\label{vacantLAH} Let $E\gp\g$ be a vacant $LA$-groupoid over the Lie group $H$, with associated $H$-equivariant Lie algebra triple $(\d,\g,\h)$.  The $E$-modules over $M=H$, with $H$-action given by left multiplication, are classified by a choice of Lie algebra $\l$, and a Lie algebra morphism $\varphi:\,\l\to\d$.
\begin{proof}
Given an LA-groupoid module $L\to H$, from the above we get a Lie algebra $\l$ and the Lie algebra morphism $\varphi=\u\oplus \a$.\\
Now start with any Lie algebra $\l$ and a $\varphi:\,\l\to\d$. Define $L =H\times\l$ as a vector bundle over $H$, and let $\u:=\mathrm{pr}_\g\circ\varphi:\,\l\to\g$, and $\a:=\mathrm{pr}_\h\circ\varphi:\,\l\to\h$.  Define the structure map $\u:\,L\to\g$ to be $\u(h,\xi)=h\bullet\u(\xi)$, and the anchor $\a:L\to TH=H\times\h$ to be the bundle map induced from the map $\a$ above.  Define a bracket on the sections of $L$ which extends the Lie bracket on the constant sections of $L$ (induced by the bracket of $\l$), making $L$ into an action Lie algebroid.  The groupoid action formula is the same from the VB case:  $(h,g\bullet\u(\xi))\circ(g,\xi)=(hg,\xi)$.  Taking this $L$ now, we clearly get the same Lie algebra $\l$ and Lie algebra morphism $\varphi:\,\l\to\d$ we started with.\\
\end{proof}
\end{thm}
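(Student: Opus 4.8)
The plan is to prove the classification by exhibiting a bijection between $E$-modules $L\to H$ and pairs $(\l,\varphi)$, built in both directions out of the structural results already established for vacant $VB$- and $LA$-groupoid modules.

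First I would treat the passage from a module to its data. Given an $E$-module $L\to H$, Proposition \ref{EandP} applied with $K=\{e\}$ trivializes $L\cong H\times\l$, where $\l:=L_e$, and Proposition \ref{vacfor} shows the moment map takes the form $\u(h,\xi)=h\bullet\u(\xi)$ for a linear $\u:\l\to\g$. Proposition \ref{trivLiealgstr} identifies $\l$ with the constant sections $\Gamma(L)^H$ and thereby endows it with a Lie bracket; Proposition \ref{anchorid} shows the anchor is determined by its restriction $\a:\l\to\h$ at $e$; and Proposition \ref{uplusa} shows that $\varphi:=\u\oplus\a:\l\to\d$ is a Lie algebra morphism. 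This extracts the required pair $(\l,\varphi)$.

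Conversely, I would construct a module from a pair $(\l,\varphi)$. Put $\u:=\pg\circ\varphi$ and $\a:=\ph\circ\varphi$, take $L:=H\times\l$ as a vector bundle over $H$, and equip it with: the structure map $\u(h,\xi):=h\bullet\u(\xi)$; the anchor $\a(h,\xi):=\big(h,\,\Ad_{h^{-1}}\ph\Ad_h\u(\xi)+\a(\xi)\big)$ in left-trivialization, the shape forced by Proposition \ref{anchorid}; the bracket on $\Gamma(L)$ extending the bracket of $\l$ on constant sections via the Leibniz rule (the action Lie algebroid bracket for the induced $\l$-action on $H$); and the groupoid action $(h,g\bullet\u(\xi))\circ(g,\xi):=(hg,\xi)$ (cf. Equation \eqref{VBac} and Proposition \ref{vacfor}). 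I would then check that $L$ so equipped is a Lie algebroid, that the groupoid action is well defined and associative --- which is exactly the $VB$-level content of Theorem \ref{vacantVBmd} --- and that the graph of the action is a Lie subalgebroid of $L\times E\times L$. Finally, I would confirm the two constructions are mutually inverse: reading the data off the module just built returns $(\l,\varphi)$ verbatim ($\l=L_e$ with its constant-section bracket, $\u$ and $\a$ the values of the bundle maps at $e$), while starting from an arbitrary module, the $VB$-trivialization together with Proposition \ref{anchorid} and the fact that a Lie algebroid bracket on $H\times\l$ is pinned down by its values on constant sections recover the original Lie algebroid and groupoid action.

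The hard part will be the converse direction: showing that the prescribed anchor and bracket genuinely assemble into a Lie algebroid, and that the graph of the groupoid action is a Lie subalgebroid of $L\times E\times L$. Both consume the hypothesis that $\varphi$ is a Lie algebra morphism (not merely linear) in an essential way --- it is precisely what makes $\xi\mapsto\big(h\mapsto\Ad_{h^{-1}}\ph\Ad_h\u(\xi)+\a(\xi)\big)$ a Lie algebra action of $\l$ on $H$, so that $L=H\times\l$ is a bona fide action Lie algebroid; I expect to verify this by running the computation of Proposition \ref{uplusa} in reverse, using the bracket identity in $\d$ together with the $\Ad$-invariance of $\h\subset\d$. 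Granting that, the subalgebroid condition should follow from the same ``related sections have related brackets'' argument used to prove Proposition \ref{trivLiealgstr}, now applied to the groupoid action morphism $E\times L\da L$ rather than to the trivialization morphism.
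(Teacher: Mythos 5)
Your proposal is correct and follows essentially the same route as the paper: extract $(\l,\varphi)$ via Propositions \ref{EandP}, \ref{vacfor}, \ref{trivLiealgstr}, \ref{anchorid}, and \ref{uplusa}, then reconstruct $L=H\times\l$ as the action Lie algebroid for $\rho\circ\varphi$ with the same structure map, anchor, and groupoid action formula. You spell out more explicitly than the paper the verifications it leaves implicit (that $\varphi$ being a Lie algebra morphism makes $H\times\l$ a genuine action Lie algebroid, and that the graph of the action is a Lie subalgebroid), but the underlying argument is the same.
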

\subsection{For $K\neq\{e\}$}
For the non-trivial case $K\neq\left\{e\right\}$, we know from Proposition \ref{EandP} that the $E$-module $L\to H/K$ will be $H\times_K \l$ for some vector space $\l$.  This time, however, there is no Lie algebra structure on $\l$.  For this case, we consider the pullback Lie algebroid of $L$ (Definition \ref{pbLA}) under the quotient map $\pi:\,H\to H/K$,

\[
\pi^!L =\left\{(x,v)\in L\times \mathrm{T}H\, \vert\, \a(x) = \mathrm{T}\pi(v)\,\right\}.
\]

\begin{prop}\label{pbLm} Let $E$ be a vacant $LA$-groupoid over $H$, and $L$ an $E$-module over $H/K$ with moment map $\u$.  For this Proposition, let $\a_E,\a_L$ denote the anchors of $E,L$, respectively.  If $\pi:\,H\to H/K$ is the natural projection, then $\pi^!L$ is an $E$-module; by Proposition \ref{EandP}, it trivialises as $\pi^!L= H\times\tilde{\l}$.
\begin{proof}
The direct product groupoid $(\mathrm{T}H\times E)\to H\times H$ acts on $(\mathrm{T}H\times L)\to H\times H/K$ with the structure map $\u':\, (\mathrm{T}H\times L) \to \g$ defined as $\u'(v,x) = \u(x)$.  Diagrammatically:
\begin{center}
\begin{tikzpicture}
  \node (A) {$\mathrm{T}H$};
  \node (B) [right of=A,node distance=1.2cm] {$0$};
  \node (C) [below of=A,node distance=1.2cm] {$H$};
  \node (D) [right of=C,node distance=1.2cm] {pt};

  \node (t) [node distance=0.55cm, right of=B, below of=B] {$\times$};

  \node (G) [right of=B, node distance=1cm] {$E$};
  \node (H) [right of=G,node distance=1.2cm] {$\g$};
  \node (I) [below of=G,node distance=1.2cm] {$H$};
  \node (J) [right of=I,node distance=1.2cm] {pt};

  \node (ac) [node distance=0.55cm, right of=H, below of=H] {$\act$};

  \node (K) [right of=H, node distance=1cm] {$\mathrm{T}H$};
  \node (L) [right of=K,node distance=1.2cm] {$0$};
  \node (M) [below of=K,node distance=1.2cm] {$H$};
  \node (N) [right of=M,node distance=1.2cm] {pt};

  \node (s) [node distance=0.55cm, right of=L, below of=L] {$\times$};

  \node (O) [right of=L, node distance=1cm] {$L$};
  \node (P) [right of=O,node distance=1.2cm] {$\g$};
  \node (Q) [below of=O,node distance=1.2cm] {$H/K$};
  \node (R) [right of=Q,node distance=1.2cm] {pt};

  \draw[->] (A) to node {} (C);
  \draw[->] (B) to node [swap] {} (D);
  \draw[->] (A.10) to node {} (B.160);
  \draw[->] (A.350) to node [swap] {} (B.200);
  \draw[->] (C.20) to node {} (D.160);
  \draw[->] (C.340) to node [swap] {} (D.200);
  \draw[->] (G) to node {} (I);
  \draw[->] (H) to node [swap] {} (J);
  \draw[->] (G.16) to node {} (H.160);
  \draw[->] (G.344) to node [swap] {} (H.200);
  \draw[->] (I.20) to node {} (J.160);
  \draw[->] (I.340) to node [swap] {} (J.200);
  \draw[->] (K) to node {} (M);
  \draw[->] (L) to node [swap] {} (N);
  \draw[->] (K.10) to node {} (L.160);
  \draw[->] (K.350) to node [swap] {} (L.200);
  \draw[->] (M.20) to node {} (N.160);
  \draw[->] (M.340) to node [swap] {} (N.200);
  \draw[->] (O) to node {$\u$} (P);
  \draw[->] (O) to node [swap] {} (Q);
  \draw[->] (P) to node [swap] {} (R);
  \draw[->] (Q) to node {} (R);

\end{tikzpicture}
\end{center}
The $LA$-groupoid $E$ injects into the direct sum groupoid $\mathrm{T}H\times E\to H\times H$ via the map $\a_E\oplus \mathrm{id}$.  This is clearly injective, and retains the groupoid structure with the maps $\s(\a_E(\zeta),\zeta) = \s(\zeta)$, $\t(\a_E(\zeta),\zeta) = \t(\zeta)$, and for $\zeta_1,\zeta_2\in E$ such that $\s(\zeta_1)=\t(\zeta_2)$,
\[
(\a_E(\zeta_1),\zeta_1)\circ (\a_E(\zeta_2),\zeta_2) = ( \a_E(\zeta_1)\cdot \a_E(\zeta_2), \zeta_1\circ \zeta_2) = (\a_E(\zeta_1\circ \zeta_2), \zeta_1\circ \zeta_2).
\]
This copy of $E$ acts as a groupoid on $\pi^!L \hookrightarrow \mathrm{T}H\times L$: if $(v,x)\in \pi^!L$, and $(\a_E(\zeta),\zeta)\in E$ such that $s(\zeta) = \u(x)$, then
\[
(\a_E(\zeta),\zeta)\circ(v,x) = (\a_E(\zeta)\cdot v, \zeta\circ x).
\]
Since $\mathrm{T}\pi$ is equivariant for the $\mathrm{T}H$ actions, we have
\[
\a_L(\zeta\circ x) = \a_E(\zeta) \cdot \a_L(x) = \mathrm{T}\pi(\a_E(\zeta)\cdot v)
\]
meaning $(\a_E(\zeta),\zeta)\circ(v,x) \in \pi^!L$.  Thus $\pi^!L\cong H\times\tilde{\l}$ with $\tilde{\l}$ a Lie algebra.  There is also a clear injection of $\k\hookrightarrow \l$, since $\k$ is the fibrewise kernel of $\mathrm{T}\pi$.
\end{proof}
\end{prop}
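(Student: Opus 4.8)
The plan is to realise $\pi^!L$ as an $E$-module by embedding it in a product groupoid module and restricting the action. First I would form the product $LA$-groupoid $\mathrm{T}H\times E\gp 0\times\g$ over $H\times H$, which acts on the product $\mathrm{T}H\times L\to H\times H/K$ with moment map $\u'(v,x)=\u(x)$; here $\mathrm{T}H\gp\mathrm{pt}$ is the tangent group acting on $\mathrm{T}H$ by the tangent lift of left multiplication, and $E$ acts on $L$ as given. Next I would embed $E$ into $\mathrm{T}H\times E$ via $\zeta\mapsto(\a_E(\zeta),\zeta)$: since the anchor $\a_E\colon E\to\mathrm{T}H$ of an $LA$-groupoid is simultaneously a groupoid morphism and a Lie algebroid morphism, this is an injective morphism of $LA$-groupoids, with $\s(\a_E(\zeta),\zeta)=\s(\zeta)$, $\t(\a_E(\zeta),\zeta)=\t(\zeta)$, and multiplication $(\a_E(\zeta_1),\zeta_1)\circ(\a_E(\zeta_2),\zeta_2)=(\a_E(\zeta_1\circ\zeta_2),\zeta_1\circ\zeta_2)$ whenever $\s(\zeta_1)=\t(\zeta_2)$.

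The crucial step is to check that this copy of $E$ preserves $\pi^!L=\{(x,v)\in L\times\mathrm{T}H\mid \a_L(x)=\mathrm{T}\pi(v)\}$, regarded inside $\mathrm{T}H\times L$. Given $(v,x)\in\pi^!L$ and $\zeta\in E$ with $\s(\zeta)=\u(x)$, the product in $\mathrm{T}H\times L$ is $(\a_E(\zeta),\zeta)\circ(v,x)=(\a_E(\zeta)\cdot v,\,\zeta\circ x)$, and one must verify $\a_L(\zeta\circ x)=\mathrm{T}\pi(\a_E(\zeta)\cdot v)$. This follows because the anchor of the graph of the $E$-action is tangent to the graph of the $H$-action (Definition \ref{LAmdl}, as exploited already in Proposition \ref{anchorid}), so $\a_L(\zeta\circ x)=\a_E(\zeta)\cdot\a_L(x)=\a_E(\zeta)\cdot\mathrm{T}\pi(v)$, and then $\mathrm{T}\pi$ is equivariant for the tangent-lifted $H$-actions because $\pi\colon H\to H/K$ is $H$-equivariant, giving $\a_E(\zeta)\cdot\mathrm{T}\pi(v)=\mathrm{T}\pi(\a_E(\zeta)\cdot v)$. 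Hence the action of $E$ restricts to $\pi^!L$, making $\pi^!L$ an $E$-module over $H$.

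Finally, since $\pi^!L$ is an $E$-module over the homogeneous space $H=H/\{e\}$, Proposition \ref{EandP} applies and yields the trivialisation $\pi^!L\cong H\times\tilde{\l}$ with $\tilde{\l}=(\pi^!L)_e$, and Proposition \ref{trivLiealgstr} endows $\tilde{\l}$ with a Lie algebra structure. I would close by recording the inclusion $\k\hookrightarrow\tilde{\l}$: for each $v\in\k=\ker(\mathrm{T}_e\pi)$ the pair $(v,0)$ lies in $\pi^!L$ since $\a_L(0)=0=\mathrm{T}\pi(v)$, and restricting to the fibre over $e$ embeds $\k$ into $\tilde{\l}$. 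I expect the main obstacle to be the verification in the second paragraph, namely isolating the precise intertwining identities between $\a_L$, $\u$, the $E$-action and the tangent lift of $H\act H/K$, and confirming that $\mathrm{T}H\times E$ genuinely acts on $\mathrm{T}H\times L$ compatibly with the embedding $\zeta\mapsto(\a_E(\zeta),\zeta)$; everything after that is bookkeeping with the vacant trivialisation.
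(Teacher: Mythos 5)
Your proposal follows the paper's proof essentially verbatim: same product groupoid $\mathrm{T}H\times E$ acting on $\mathrm{T}H\times L$, same embedding $\zeta\mapsto(\a_E(\zeta),\zeta)$, and the same anchor-equivariance check that the action preserves $\pi^!L$. The only additions are minor elaborations (explicitly invoking Proposition \ref{trivLiealgstr} for the Lie algebra structure on $\tilde{\l}$ and spelling out why $\k$ embeds), so the argument is correct and matches the paper's route.
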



\begin{thm}\label{LAmodthm} For a vacant LA-groupoid $E\to H$, and a $H$-homogeneous space $H/K$, the $E$-modules $L$ over $H/K$ are classified by the choice of a Lie algebra $\tilde{\l}$ with an injection $\k\hookrightarrow\tilde{\l}$ as a Lie subalgebra, an action of $K$ on $\tilde{\l}$ by Lie algebra automorphisms, and a K-equivariant Lie algebra homomorphism $\omega:\,\tilde{\l}\to \d$ such that $\omega|_\k=\text{id}$.
\begin{proof}
First, let $L\to H/K$ with $\u:\,L\to\g$ be an $E$-module, and $\pi:\,H\to H/K$.  Then $L\cong H\times_K\l$ via Proposition \ref{EandP}, with $\l$ a vector space.  By Example \ref{pullbmod}, the pullback Lie algebroid $\pi^!L$ carries the trivialisation $\pi^!L\cong H\times\tilde{\l}$, with $\tilde{\l}$ a Lie algebra.  Since $\k=\ker\mathrm{T}\pi$, we have an inclusion of Lie algebras $\k\hookrightarrow \tilde{\l}=\pi^!L_e$, $\kappa\mapsto (0,\kappa)$.  Define
    \[ \omega:= \a\oplus\u:\, \tilde{\l}\to \d. \]
By Proposition \ref{uplusa}, this is a $K$-equivariant Lie algebra morphism for which $\omega(\kappa) = \kappa$ for all $\kappa\in\k$, since the anchor of $\pi^!L$ is the projection to the $\mathrm{T}H$ component.\\
Each such $L$ produces data unique to it: suppose $L_1,L_2$ yield the same $\tilde{\l}$; then by Proposition \ref{CourKred},
    \[ L_1\cong (\pi^!L_1)//K \cong (H\times\tilde{\l})//K \cong (\pi^!L_2)//K \cong L_2. \]
All that remains to be shown is that each $(\tilde{\l},\omega)$ arises from some $L$.  Form $H\times\tilde{\l}\to H$ as an action Lie algebroid for the action
    \[ \a:\,\tilde{\l}\to\mathfrak{X}(H), \quad \a(x)_h = (h, \Ad_{h^{-1}}\ph\Ad_h\omega(x)) \]
with $\mathrm{T}H$ in left trivialisation.  The map $\u':=\pg\circ\omega$ is a moment map, making $H\times\tilde{\l}$ into an $E$-module.  The Lie group $K$ acts on $H\times\l$ with generators $\rho(\kappa)=\kappa^L$, allowing for the reduction
    \[ L:= (H\times\tilde{\l})//K \cong H\times_K(\tilde{\l}/\k). \]
The map $\u'$ descends to $L$, since it is $0$ on $H\times\k$, and so defines a moment map for an $E$ action on $L$.  Hence, $L$ is an $E$-module which produces $(\tilde{\l},\omega)$ as
    \[ \pi^!L = \pi^!((H\times\tilde{\l})//K) \cong H\times\tilde{\l} \]
by Proposition \ref{LAlgpbk}.
\end{proof}
\end{thm}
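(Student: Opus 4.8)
The plan is to prove the classification in two directions, as is standard for such correspondence theorems, and to lean heavily on the machinery already developed in this chapter: Proposition \ref{EandP} (which gives $L\cong H\times_K\l$ and the induced $H$-action), Proposition \ref{pbLm} (which says $\pi^!L$ is again an $E$-module and trivialises as $H\times\tilde\l$ with $\tilde\l$ a Lie algebra), Proposition \ref{uplusa} (which produces the Lie algebra morphism $\varphi=\u\oplus\a$ in the trivial case), and Corollary \ref{otherLiered} / Proposition \ref{CourKred} (which give $(\pi^!L)//K\cong L$ and $\pi^!((H\times\tilde\l)//K)\cong H\times\tilde\l$). The key organising idea is that the non-trivial $K$-case is obtained from the trivial $K=\{e\}$ case (Theorem \ref{vacantLAH}) by passing to the pullback along $\pi\colon H\to H/K$ and then reducing; so almost every ingredient is already on the table and the proof is mostly an assembly job.

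First I would handle the direction ``$E$-module $\Rightarrow$ data''. Given an $E$-module $L\to H/K$ with moment map $\u$, Proposition \ref{EandP} gives $L\cong H\times_K\l$ with $\l$ a vector space and a $K$-action on it. Forming the pullback $\pi^!L$, Proposition \ref{pbLm} (via Example \ref{pullbmod}) trivialises it as $H\times\tilde\l$ with $\tilde\l$ a Lie algebra carrying an induced $K$-action by Lie algebra automorphisms, and since $\k=\ker\mathrm{T}\pi$ sits fibrewise inside $\pi^!L$ we get the Lie subalgebra inclusion $\k\hookrightarrow\tilde\l$. Applying Proposition \ref{uplusa} to the vacant $LA$-groupoid-module $\pi^!L\to H$ (whose anchor is projection to the $\mathrm{T}H$-factor, so it restricts to the identity on $\k$) yields the $K$-equivariant Lie algebra homomorphism $\omega:=\a\oplus\u\colon\tilde\l\to\d$ with $\omega|_\k=\mathrm{id}$. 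That this data determines $L$ uniquely follows from Corollary \ref{otherLiered}: $L\cong(\pi^!L)//K\cong(H\times\tilde\l)//K$ is recovered from $(\tilde\l,\omega)$ together with the $K$-action alone.

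For the reverse direction, given $(\tilde\l,\omega,K)$ with $\k\hookrightarrow\tilde\l$ and $\omega|_\k=\mathrm{id}$, I would first build $H\times\tilde\l\to H$ as an action Lie algebroid for the action $\a(x)_h=(h,\Ad_{h^{-1}}\ph\Ad_h\,\omega(x))$ (left-trivialising $\mathrm{T}H$, mimicking Equation \eqref{anchorvac}), with moment map $\u':=\pg\circ\omega$; one checks, exactly as in Theorem \ref{vacantLAH}, that this is an $E$-module. Then $K$ acts on $H\times\tilde\l$ with isotropic-type generators $\rho(\kappa)=\kappa^L$ (the condition $\omega|_\k=\mathrm{id}$ is what makes $\rho$ genuinely define generators for the $K$-action, and the $K$-action being principal on $H$ is automatic), so Theorem \ref{KactonEthm} produces the reduced Lie algebroid $L:=(H\times\tilde\l)//K\cong H\times_K(\tilde\l/\k)$ over $H/K$; since $\u'$ vanishes on $H\times\k$ it descends and makes $L$ an $E$-module. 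Finally $\pi^!L\cong H\times\tilde\l$ by Proposition \ref{LAlgpbk}, so the recovered data is $(\tilde\l,\omega)$, closing the loop.

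The main obstacle I expect is not any single computation but the careful bookkeeping of how the $K$-action, the $\k$-inclusion, and the condition $\omega|_\k=\mathrm{id}$ interact — in particular verifying that $\rho(\kappa)=\kappa^L$ really \emph{defines generators} for the $K$-action on $H\times\tilde\l$ in the sense of Section \ref{laredk} (this is where $\omega|_\k=\mathrm{id}$, i.e.\ $\a|_\k=\mathrm{id}_\k$ and $\u'|_\k=0$, is used), and that the reduction $(H\times\tilde\l)//K$ is genuinely an $E$-module with the descended moment map, compatibly with the trivialisation $L\cong H\times_K(\tilde\l/\k)$ of Proposition \ref{EandP}. Everything else — the Lie algebra structure on $\tilde\l$, the morphism property of $\omega$, the pullback-reduction inverse identities — is quoted verbatim from earlier results.
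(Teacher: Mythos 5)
Your proposal follows essentially the same route as the paper's proof: pull back along $\pi$ to trivialise as $H\times\tilde\l$, extract $\omega=\a\oplus\u$ via Proposition \ref{uplusa}, recover $L$ as $(H\times\tilde\l)//K$ for uniqueness, and reconstruct from the data via the action Lie algebroid with moment map $\pg\circ\omega$ followed by $K$-reduction and Proposition \ref{LAlgpbk}. The argument is correct and the only difference is cosmetic (citing Corollary \ref{otherLiered} rather than Proposition \ref{CourKred} for the reduction--pullback inverse identity).
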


For the vacant $LA$-groupoid $E$, Theorem \ref{vbdualgrpd} gives us that $E^*$ is a groupoid over $(\mathrm{core}(E))^* = 0$.  The Lie algebroid structure of $E$ makes $E^*$ a Poisson manifold by Theorem \ref{EdualPoisson}, and since the groupoid multiplication $E\times E\da E$ is a comorphism of Lie algebroids, this means the group multiplication $E^*\times{E^*}\to{E^*}$ is a Poisson morphism, making $E^*$ a Poisson Lie group.  We recall that vacant $LA$-groupoids over $H$ are classified by $H$-equivariant Lie algebra triples $(\d,\g,\h)$ (\cite{mack1}); since $E^*$ is a Poisson Lie group, it is classified by a Manin triple, given (\cite{davidpavol}) by
    \[ (\d\ltimes\d^*,\g\ltimes\ann(\g),\h\ltimes\ann(\h) ). \]
If $L\to M$ is an $E$-module such that the groupoid action is a comorphism of Lie algebroids, then the dual group action
    \[ E^*\times L^*\to L^* \]
is a Poisson action.  This allows us to define vacant $LA$ homogeneous spaces.

\begin{dfn}  Let $E$ be an $LA$-groupoid over $H$.  An $LA$ homogeneous space for $E$ is a Lie algebroid $L\to H/K$, which is an $E$-module such that $L^*$ is a Poisson homogeneous space for $E^*$.
\end{dfn}
As a note: we omit the condition that the action of $E$ on $L$ is a comorphism, as Equation \eqref{VBac} ensures this for the case that $M=H/K$.

\begin{thm}\label{LAhomgsp} Let $E$ be a vacant $LA$-groupoid over $H$, classified by the $H$-equivariant Lie algebra triple $(\d,\g,\h)$.  The $LA$ homogeneous spaces $L\to H/K$ are classified by $K$-invariant subalgebras $\c\subset\d$ such that $\c\cap\h=\k$.
\begin{proof}
By Theorem \ref{LAmodthm}, any $LA$ module $L\to H/K$ for $E$ is classified by a Lie algebra $\tl$ with $\k\hookrightarrow\tl$, an action of $K$ on $\tl$ by Lie automorphisms, and a $K$-equivariant Lie algebra morphism $\omega:\,\tl\to\d$ such that $\omega|_\k=\mathrm{id}$.  The group action of $E^*\cong H\times\g^*$ on $L^*\cong H\times_K\l^*$ is
    \[ (h_1,\alpha)\cdot [h_2,\eta] = [h_1h_2, \eta + \u^*(h_2^{-1}\bullet\alpha)] \]
via Equation \eqref{Vdmdeq}.  Thus, the action of $E^*$ on $L^*$ is transitive if and only if $\u^*$ is surjective, if and only if $\u:\,\l\to\g$ is injective.  For any $\bar{x}\in \tl/\k\cong\l$, $\u$ is defined as
    \[ \u(\bar{x}) = \pg(\omega(x)) \]
for any lift $x\in \tl$.  So $\u(\bar{x})=0$ if and only if $\omega(x) \in \h$; we conclude that $\u$ is injective if and only if $\omega(\lt)\cap\h=\k$.  We also conclude that if $\u$ is injective, then $\omega$ is injective: since $\omega(x)=\pg(x)+\ph(x)$ for any $x\in \lt$, if $\u$ is injective and $\pg(x)=0$, then $\ph(x)$ must be in $\k$, but $\omega$ is the identity on $\k$, and hence if $\omega(x) = 0$, then $x=0$.  Define
    \[ c:= \omega(\lt) \subset \d,\]
which is a $K$-invariant subalgebra, since $\omega$ is a $K$-equivariant Lie al
It is clear that each data set $K\act(\tl,\omega)$ produces a $\c$ unique to it.\\
Given a $K$-invariant $\c\subset\d$ with $\c\cap\h=\k$, define $\lt:=\c$, and $\omega=\mathrm{id}$.  This gives an $LA$-module $L$ for $E$ by Theorem \ref{LAmodthm}; since $\c\cap\h=\k$, this means $\u$ is injective, and hence the action of $E^*$ on $L^*$ is transitive.  So $L\to H/K$ is the $LA$ homogeneous space for $E$ which gives $\c$.
\end{proof}
\end{thm}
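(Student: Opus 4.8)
The plan is to reduce Theorem~\ref{LAhomgsp} to the module classification of Theorem~\ref{LAmodthm} and then characterise, purely algebraically, the extra condition that the dual Poisson action be transitive. First I would recall that by Theorem~\ref{LAmodthm} every $E$-module $L\to H/K$ is encoded by a quadruple: a Lie algebra $\tilde{\l}$, an embedding $\k\hookrightarrow\tilde{\l}$ as a Lie subalgebra, a $K$-action on $\tilde{\l}$ by Lie automorphisms, and a $K$-equivariant Lie algebra homomorphism $\omega:\,\tilde{\l}\to\d$ with $\omega|_\k=\mathrm{id}$; concretely $L\cong H\times_K(\tilde{\l}/\k)$ and $\tilde{\l}\cong\pi^!L_e$ for $\pi:\,H\to H/K$. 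An $LA$ homogeneous space is then such a module subject to the single requirement that $L^*$ be a Poisson homogeneous space for the Poisson Lie group $E^*$, i.e.\ that $E^*$ act transitively on $L^*$.

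Next I would translate transitivity into algebra. Dualising the trivialisations $E\cong H\times\g$, $L\cong H\times_K\l$ and applying the dual-module action formula \eqref{Vdmdeq}, the action of $(h_1,\alpha)\in E^*\cong H\times\g^*$ on $[h_2,\eta]\in L^*\cong H\times_K\l^*$ is $(h_1,\alpha)\cdot[h_2,\eta]=[h_1h_2,\,\eta+\u^*(h_2^{-1}\bullet\alpha)]$, so on each fibre the group acts by translation through $\u^*:\,\g^*\to\l^*$. Hence the action is transitive if and only if $\u^*$ is surjective, equivalently if and only if $\u:\,\l\to\g$ is injective. Writing $\u$ through the quotient as $\u(\bar x)=\pg(\omega(x))$ for a lift $x\in\tilde{\l}$, we get $\u(\bar x)=0\iff\omega(x)\in\h$, so $\u$ is injective precisely when $\omega(\tilde{\l})\cap\h=\k$.

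Then I would note that once $\u$ is injective, $\omega$ is itself injective: using the transverse decomposition $\d=\g\oplus\h$, if $\omega(x)=0$ then $\pg(\omega(x))=0$ forces $\omega(x)=\ph(\omega(x))\in\omega(\tilde{\l})\cap\h=\k$, and since $\omega|_\k=\mathrm{id}$ this gives $x=\omega(x)=0$. Therefore $\c:=\omega(\tilde{\l})\subset\d$ is a subalgebra isomorphic to $\tilde{\l}$; it is $K$-invariant because $\omega$ is $K$-equivariant and $K$ acts on $\d$ by $\Ad$; and $\c\cap\h=\k$. Conversely, given a $K$-invariant subalgebra $\c\subset\d$ with $\c\cap\h=\k$, I would set $\tilde{\l}:=\c$ with the $\Ad$-action of $K$ (which preserves $\c$), the inclusion $\k=\c\cap\h\hookrightarrow\c$, and $\omega:=\mathrm{id}:\,\c\hookrightarrow\d$; these meet the hypotheses of Theorem~\ref{LAmodthm} and produce an $E$-module $L\to H/K$ whose associated $\u$ is injective (as $\c\cap\h=\k$), so $E^*$ acts transitively on $L^*$ and $L$ is an $LA$ homogeneous space, visibly recovering $\c=\omega(\tilde{\l})$. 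Bijectivity follows because injectivity of $\u$ makes $\omega$ an isomorphism onto $\c$, so $(\tilde{\l},\omega,K)$ is determined up to isomorphism by $\c$, and $L$ is determined by that data via Theorem~\ref{LAmodthm} together with $(\pi^!L)//K\cong L$ (Proposition~\ref{LAlgpbk}).

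I expect the one genuinely delicate step to be the second one: pinning down the dual action on $L^*\cong H\times_K\l^*$ and confirming that transitivity there is exactly surjectivity of $\u^*$. This requires applying the general $VB$-module duality and formula \eqref{Vdmdeq} correctly through the $K$-associated-bundle description, and checking that the moment map on $L^*$ and the Poisson structure on $E^*$ are indeed the ones induced by the $LA$-structures (Theorems~\ref{EdualPoisson} and~\ref{Edualmor}). Everything else is bookkeeping with the transverse decomposition $\d=\g\oplus\h$ and the normalisation $\omega|_\k=\mathrm{id}$.
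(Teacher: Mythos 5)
Your proposal is correct and follows essentially the same route as the paper's proof: reduce to Theorem \ref{LAmodthm}, identify transitivity of the dual action of $E^*$ on $L^*$ with injectivity of $\u$ via the translation formula coming from Equation \eqref{Vdmdeq}, and pass back and forth between the data $(\tilde{\l},\omega)$ and $\c=\omega(\tilde{\l})$. The only nit is in your injectivity argument for $\omega$: from $\omega(x)\in\k$ you cannot directly write $x=\omega(x)$; you should first conclude $x\in\k$ (since $\u(\bar{x})=\pg(\omega(x))=0$ and $\u$ is injective, so $\bar{x}=0$) and only then invoke $\omega|_\k=\mathrm{id}$ --- a compression also present in the paper's own wording.
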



\chapter{Almost-Dirac Lie Groups, Homogeneous Spaces}
In the next step towards the Dirac Lie group case, we consider $VB$-groupoids $\A\gp\g$ over $H\gp\mathrm{pt}$ with a multiplicative bundle metric, containing a vacant Lagrangian sub-groupoid $E\subset \A$. In essence, this is the Dirac case, but ``forgetting" the Courant bracket structure, which we will add in later.
\section{Metrized $VB$-Groupoids, Almost-Dirac Lie Groups}\label{subsec:vbg}
\begin{dfn} An \emph{almost-Dirac structure} on a manifold $M$ is a pair $(\A,E)$, with $\A\to M$ being a metrized vector bundle, and $E\subseteq \A$ a Lagrangian sub-bundle.  A \emph{morphism of almost-Dirac structures}
    \[ R:\,(\A,E)\da (\A',E') \]
with underlying map $\Phi:\,M\to M'$ is a Lagrangian relation $\A\to \A'$ along the graph of $\Phi$ with the property that with the property that each element of $E'_{\Phi(m)}$ is $R$-related to a unique element of $E_m$.
\end{dfn}
\begin{dfn}  A \emph{metrized VB-groupoid} is a $VB$-groupoid (Definition \ref{VBgpdf}) $\A$ over a Lie group $H$
\begin{center}
\begin{tikzpicture}

  \node (G)   {$\A$};
  \node (H) [right of=G, node distance=1.5cm] {$\A^{(0)}$};
  \node (I) [below of=G, node distance=1.3cm] {$H$};
  \node (J) [right of=I, node distance=1.5cm] {pt};

  \path[->]
([yshift= 2pt]G.east) edge node[above] {} ([yshift= 2pt]H.west)
([yshift= -2pt]G.east) edge node[below] {} ([yshift= -2pt]H.west);
\path[->]
([yshift= 2pt]I.east) edge node[above] {} ([yshift= 2pt]J.west)
([yshift= -2pt]I.east) edge node[below] {} ([yshift= -2pt]J.west);
  \draw[->] (G) to node {} (I);
  \draw[->] (H) to node [swap] {} (J);
  \end{tikzpicture}
\end{center}
with a multiplicative bundle metric $\<\cdot,\cdot\>$ (Definition \ref{multmet}).
\end{dfn}
\begin{dfn}  Let $H$ be a Lie group.  An \emph{almost-Dirac Lie group} is a triple $(H,\A,E)$, with $(\A,E)\to H$ an almost-Dirac structure such that $\A$ is a metrized $VB$-groupoid, $E\subseteq\A$ is a vacant $VB$-subgroupoid, and the groupoid multiplication Mult: $\A\times\A\da\A$ is a morphism of almost-Dirac structures.  A \emph{morphism of almost-Dirac Lie group structures}
    \[ R:\, (\A,E)\da (\A',E') \]
with underlying Lie group morphism $\Phi:\,H\to H'$ is a morphism of almost-Dirac structures such that the bundle morphism $\A\to\A'$ is a $VB$-groupoid morphism.
\end{dfn}
As a note, the condition that Mult: $\A\times\A\da\A$ is a Lagrangian relation is equivalent to the metric on $\A$ being multiplicative.\\
Let $(H,\A,E)$ be an almost-Dirac Lie group.  By Proposition \ref{vtriv} and Equation \ref{vacantgrm}, $E=H\times \g$ equipped with an $H$-action on $\g$ denoted by $\bullet$; we have the formulae
\begin{align*}
\s(h,\zeta)=\zeta,\quad \t(h,\zeta)=h\bullet\zeta\\
(h_1,\zeta_1)\circ(h_2,\zeta_2) = (h_1h_2,\zeta_2)
\end{align*}
for $\zeta_i\in\g$ such that $\s(h_1,\zeta_1)=\t(h_2,\zeta_2)$.  Since the source and target maps $\s,\t:\,E\to \g$ are fibrewise isomorphisms, we have the following equalities as vector bundles:
\begin{equation*}
\A = E\oplus\ker(\s)=E\oplus\ker(\t).
\end{equation*}
Let $j:\,\A\to E$ be the projection along $\ker(\t)$, and let $\q=\A_e$.  Via Proposition 3.9 in \cite{lmdir}, the trivialisation of $E$ extends to a trivialisation $\A = H\times\q$ defined by the map
\[\A\to\q,\,x\mapsto j(x)^{-1}\circ x.\]
As a direct consequence of this trivialisation, any element $(h,\xi)\in\A$ can be uniquely written as:
\begin{equation}\label{jtriveqn} (h,\xi) = j(h,\xi)\circ(e,\xi).\end{equation}
Let $\s,\t$ also denote the source and target at the identity of $\A$, i.e.: the projection maps $\s,\t:\,\q \to \g$.  We have the vector space decompositions
    \[\q\cong \g\oplus\ker(\s)\cong \g\oplus\ker(\t).\]
Given any $(h,\xi)\in\A$, and decomposing as in Equation \eqref{jtriveqn}, we have $\s(j(h,\xi))=\t(e,\xi)=\t(\xi)$, meaning that $j(h,\xi) = (h,\t(\xi))\in E$.  Hence
    \[\t(h,\xi)=\t(h,\t(\xi))=h\bullet \t(\xi),\quad\s(h,\xi)=\s(e,\xi)=\s(\xi).\]
Given an element $(e,\xi)\in\q$, there exist unique elements $\zeta,\eta$ in $E_h$ and $E_{h^{-1}}$ respectively such that $(e,\xi)\circ\eta$ and $\zeta\circ(e,\xi)$ are defined.  Specifically, $\zeta=(h,\t(\xi))$, and $\eta=(h^{-1},h\bullet \s(\xi))=(h,\s(\xi))^{-1}$.
\begin{prop}\label{conjact1}  Given any $h\in H$, there is an associated map $\q \to \q$ given by
\begin{equation}\label{conjact}
\xi \mapsto (h,\t(\xi))\circ(e,\xi)\circ(h^{-1},h\bullet \s(\xi)).
\end{equation}
This is a group action $H\act \q$ by groupoid automorphisms, which extends the $\bullet$ action on $\g\subset\q$, and preserves the metric.
\begin{proof}
For a given $h\in H$, denote the map in Equation \eqref{conjact} by $h\cdot \xi = \mu\circ\xi\circ\nu$, with $\xi\in\q$, $\mu\in E_h,\,\nu\in E_{h^{-1}}$.  For $h_1,h_2\in H$, we have
\begin{align*}
h_1\cdot (h_2\cdot \xi) &= h_1\cdot  ( \mu_2\circ\xi\circ\nu_2) \\
        &= \mu_1\circ(\mu_2\circ\xi\circ\nu_2)\circ\nu_1\\
        &= (\mu_1\circ\mu_2)\circ \xi \circ (\nu_2\circ\nu_1)\\
        &= (h_1h_2)\cdot \xi
\end{align*}
since $\mu_1\circ\mu_2\in E_{h_1h_2}$, and $\nu_2\circ\nu_1\in E_{(h_1h_2)^{-1}}$. It is clear that $e\cdot \xi=\xi$ for any $\xi\in \q$.  Hence, this is a group action.\\
Given any $\zeta\in \g$, we have
\begin{align*}
h\cdot \zeta &= (h,\zeta)\circ(e,\zeta)\circ(h^{-1},h\bullet \zeta)\\
    &= (h,\zeta)\circ (h^{-1},h\bullet \zeta)\\
    &= (e, h\bullet \zeta)\\
    &= h\bullet \zeta.
\end{align*}
So the action restricted to $\g$ is just $\bullet$. We have
\begin{align*}
\s(h\cdot \xi) &= s\left((h,\t(\xi))\circ(e,\xi)\circ(h^{-1},h\bullet \s(\xi))\right) \\
         &= \s(h^{-1},h\bullet \s(\xi))\\
         &= h\bullet \s(\xi), \\
\t(h\cdot \xi) &= t\left((h,\t(\xi))\circ(e,\xi)\circ(h^{-1},h\bullet \s(\xi))\right)\\
         &= \t(h,\t(\xi))\\
         &= h\bullet \t(\xi).
\end{align*}
Hence, given two composable elements $\xi_1,\xi_2\,\,\in\q$, and $h\in H$, it's clear that $(h\cdot \xi_1),(h\cdot \xi_2)$ are again composable: $\s(h\cdot \xi_1)=h\bullet \s(\xi_1) = h\bullet \t(\xi_2)=\t(h\cdot \xi_2)$.  We have
\begin{align*}
h\cdot (\xi_1\circ\xi_2) &= \mu\circ(\xi_1\circ\xi_2)\circ\nu \\
                    &= \mu\circ\xi_1\circ(\tau^{-1}\circ\tau)\circ\xi_2\circ\nu\\
                    &= (\mu\circ\xi_1\circ\tau^{-1})\circ(\tau\circ\xi_2\circ\nu)\\
                    &= (h\cdot \xi_1)\circ(h\cdot \xi_2)
\end{align*}
where $\tau\in E_h$, $\tau^{-1}\in E_{h^{-1}}$. From this, it is clear that $h\cdot (\xi^{-1}) = (h\cdot \xi)^{-1}$.  Hence, this is an action of $H$ on $\q$ by groupoid automorphisms which extends the $\bullet$ action on $\g$, given by the target map.\\
The inner product is invariant under the action, as if $\xi,\eta\,\in\q$ and $h\in H$, then:
\begin{align*}
\langle h\bullet\xi,h\bullet\eta\rangle &= \langle(h,\t(\xi))\circ(e,\xi)\circ(h^{-1},h\bullet \s(\xi)),(h,\t(\eta))\circ(e,\eta)\circ(h^{-1},h\bullet \s(\eta))\rangle\\
  &= \langle \t(\xi),\t(\eta)\rangle+\langle \xi,\eta \rangle + \langle h\bullet \s(\xi), h\bullet \s(\eta)\rangle \\
  &= \langle \xi,\eta\rangle
\end{align*}
where the first and third term vanish since $\g\subset\q$ is Lagrangian, and $\g$ is $H$-stable under the $\bullet$ action.
\end{proof}
\end{prop}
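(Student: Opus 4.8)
The plan is to verify each clause of the statement directly from the explicit structure already set up: $E=H\times\g$ with $\s(h,\zeta)=\zeta$, $\t(h,\zeta)=h\bullet\zeta$, product $(h_1,\zeta_1)\circ(h_2,\zeta_2)=(h_1h_2,\zeta_2)$ as in \eqref{vacantgrm} (hence $(h,\zeta)^{-1}=(h^{-1},h\bullet\zeta)$), the decompositions $\q\cong\g\oplus\ker(\s)\cong\g\oplus\ker(\t)$ with $\s,\t\colon\q\to\g$ the identity on $\g$, associativity of the groupoid multiplication of $\A$, and multiplicativity of the metric. Throughout I abbreviate $h\cdot\xi=\mu_\xi\circ\xi\circ\nu_\xi$ with $\mu_\xi:=(h,\t(\xi))\in E_h$ and $\nu_\xi:=(h,\s(\xi))^{-1}=(h^{-1},h\bullet\s(\xi))\in E_{h^{-1}}$; the map $\xi\mapsto h\cdot\xi$ is then automatically linear, since $\s$, $\t$ and the groupoid multiplication are.

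First I would confirm that the composite in \eqref{conjact} is defined and lands in $\q$: composability of $\mu_\xi$ with $(e,\xi)$ holds because $\s(\mu_\xi)=\t(\xi)=e\bullet\t(\xi)=\t(e,\xi)$, and composability of $(e,\xi)$ with $\nu_\xi$ because $\s(e,\xi)=\s(\xi)=h^{-1}\bullet h\bullet\s(\xi)=\t(\nu_\xi)$; since $\mu_\xi\in E_h$, $(e,\xi)\in\A_e$, $\nu_\xi\in E_{h^{-1}}$, the product lies over $h\cdot e\cdot h^{-1}=e$. The same computation records $\t(h\cdot\xi)=\t(\mu_\xi)=h\bullet\t(\xi)$ and $\s(h\cdot\xi)=\s(\nu_\xi)=h\bullet\s(\xi)$, which I use below.

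Next, the group-action axioms. For $e\cdot\xi$ the factors $(e,\t(\xi))$ and $(e,\s(\xi))$ are the units $1_{\t(\xi)}$, $1_{\s(\xi)}$, so $e\cdot\xi=\xi$. For $h_1\cdot(h_2\cdot\xi)=(h_1h_2)\cdot\xi$ I would expand both triple composites, use associativity in $\A$ to gather the two $E$-factors on each side, and collapse them by the product rule in $E$: the left ones give $(h_1,h_2\bullet\t(\xi))\circ(h_2,\t(\xi))=(h_1h_2,\t(\xi))$ and the right ones $(h_2^{-1},h_2\bullet\s(\xi))\circ(h_1^{-1},h_1h_2\bullet\s(\xi))=((h_1h_2)^{-1},h_1h_2\bullet\s(\xi))$, so reassembling yields $(h_1h_2)\cdot\xi$. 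Compatibility with $\bullet$ is immediate: for $\zeta\in\g$ we have $\t(\zeta)=\s(\zeta)=\zeta$, hence $h\cdot\zeta=(h,\zeta)\circ 1_\zeta\circ(h^{-1},h\bullet\zeta)=(h,\zeta)\circ(h^{-1},h\bullet\zeta)=(e,h\bullet\zeta)=h\bullet\zeta$.

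For the groupoid-automorphism property, the source/target formulas already give that composable pairs map to composable pairs ($\s(h\cdot\xi_1)=h\bullet\s(\xi_1)=h\bullet\t(\xi_2)=\t(h\cdot\xi_2)$ when $\s(\xi_1)=\t(\xi_2)$), and $h\cdot(\xi_1\circ\xi_2)=(h\cdot\xi_1)\circ(h\cdot\xi_2)$ follows by inserting $\tau^{-1}\circ\tau=1_{\s(\xi_1)}$ with $\tau:=(h,\s(\xi_1))=(h,\t(\xi_2))\in E_h$ into $\mu_{\xi_1\circ\xi_2}\circ\xi_1\circ\xi_2\circ\nu_{\xi_1\circ\xi_2}$ and regrouping as $(\mu_{\xi_1}\circ\xi_1\circ\tau^{-1})\circ(\tau\circ\xi_2\circ\nu_{\xi_2})=(h\cdot\xi_1)\circ(h\cdot\xi_2)$, using $\mu_{\xi_1\circ\xi_2}=\mu_{\xi_1}$, $\nu_{\xi_1\circ\xi_2}=\nu_{\xi_2}$, $\tau^{-1}=\nu_{\xi_1}$; preservation of units and inverses is then automatic. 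Finally, for the metric, multiplicativity applied twice gives $\langle h\cdot\xi,h\cdot\eta\rangle=\langle\mu_\xi\circ\xi\circ\nu_\xi,\,\mu_\eta\circ\eta\circ\nu_\eta\rangle=\langle\mu_\xi,\mu_\eta\rangle+\langle\xi,\eta\rangle+\langle\nu_\xi,\nu_\eta\rangle=\langle\xi,\eta\rangle$, the outer two terms vanishing because $\mu_\xi,\mu_\eta\in E_h$ and $\nu_\xi,\nu_\eta\in E_{h^{-1}}$ and $E$ is Lagrangian, hence isotropic, in $\A$. The one point needing care is the index bookkeeping in the action axiom — keeping straight that the outer $E$-factors of $h_1\cdot(h_2\cdot\xi)$ are indexed by $h_2\bullet\t(\xi)$ and $h_2\bullet\s(\xi)$ rather than $\t(\xi),\s(\xi)$ — but once the source/target formulas are established this is purely mechanical.
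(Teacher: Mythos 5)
Your proof is correct and follows essentially the same route as the paper's: the same regrouping-by-associativity argument for the action axiom, the same insertion of $\tau^{-1}\circ\tau$ for compatibility with the groupoid product, and the same use of multiplicativity plus isotropy of $E$ for metric invariance. The extra details you supply (explicit composability checks, linearity, and the index bookkeeping for the outer $E$-factors) are points the paper leaves implicit, and they check out.
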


We will denote the full action as $\bullet$ for the remainder.  This action allows us to determine an exact formula for the groupoid multiplication in $\A$ based on that in $\q$.
\begin{lem} Let $(H,\A,E)$ be an almost-Dirac Lie group.  Let $(h_1,\xi),(h_2,\eta)\in \A$ such that $\s(h_1,\xi)=\t(h_2,\eta)$.  Then the groupoid multiplication is given by
\begin{equation}\label{metvbmult}
(h_1,\xi)\circ(h_2,\eta) = \big(h_1h_2, \eta+(1-\s)(h_2^{-1}\bullet\xi)\big) = \big(h_1h_2, \eta + h_2^{-1}\bullet (1-\s)\xi\big).
\end{equation}
\begin{proof}
Given an element $(h,\xi)\in \A$, we can use the action to decompose it in two different ways as the product of something in $E$ and something in $\A_e$, as
\begin{equation*}
(h,\xi) = (h,\t(\xi))\circ(e,\xi) = (e,h\bullet\xi)\circ(h,\s(\xi)).
\end{equation*}
Consider now $(h_1,\xi),(h_2,\eta)\in\A$ such that $\s(h_1,\xi)=\t(h_2,\eta)$, i.e.:$\s(\xi)=h_2\bullet\t(eta)$.  The multiplication is then given as:
\begin{align*}
(h_1,\xi)\circ(h_2,\eta) &= (h_1,\t(\xi))\circ(e,\xi)\circ(e,h_2\bullet\eta)\circ(h_2,\s(\eta))\\
                      &= (h_1,\t(\xi))\circ(e,(\xi\circ(h_2\bullet\eta)))\circ(h_2,\s(\eta))\\
                      &= (h_1,\t(\xi))\circ(e,h_2\bullet((h_2^{-1}\bullet\xi)\circ\eta))\circ(h_2,\s((h_2^{-1}\bullet\xi)\circ\eta))\\
                      &= (h_1,\t(\xi))\circ(h_2, (h_2^{-1}\bullet\xi)\circ\eta)\\
                      &= (h_1h_2, (h_2^{-1}\bullet\xi)\circ\eta).
\end{align*}
Since the identity fibre is a metrized linear groupoid, $\q\rightrightarrows\g$, we use Equation \eqref{linmuleq} to deduce:
\[ (h_1,\xi)\circ(h_2,\eta) = \big(h_1h_2, \eta+(1-\s)(h_2^{-1}\bullet\xi)\big) = \big(h_1h_2, \eta + h_2^{-1}\bullet (1-\s)\xi\big).\qedhere\]
\end{proof}
\end{lem}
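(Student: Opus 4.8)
The strategy is to express the multiplication of two general elements of $\A$ in terms of the multiplication of the fibre groupoid $\q\gp\g$, for which Lemma \ref{linmul} already supplies the closed formula $\xi\circ\eta = \eta+(1-\s)\xi$. The two ingredients that make this reduction possible are the $j$-trivialisation of $\A$ (Equation \eqref{jtriveqn}), which writes any $(h,\xi)\in\A$ as $(h,\t(\xi))\circ(e,\xi)$, and the conjugation action $\bullet$ of $H$ on $\q$ by groupoid automorphisms (Proposition \ref{conjact1}); after noting that $(h^{-1},h\bullet\s(\xi))^{-1} = (h,\s(\xi))$ in $E=H\times\g$, the latter yields the complementary factorisation $(h,\xi) = (e,h\bullet\xi)\circ(h,\s(\xi))$. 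Everything else is bookkeeping around these two identities.

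First I would establish these two decompositions and verify the source/target compatibilities they use, relying on $\s(h,\xi)=\s(\xi)$, $\t(h,\xi) = h\bullet\t(\xi)$, and the multiplication \eqref{vacantgrm} in $E$. Then, given composable $(h_1,\xi),(h_2,\eta)\in\A$, so that $\s(\xi) = h_2\bullet\t(\eta)$, I would substitute the $j$-decomposition for $(h_1,\xi)$ and the complementary one for $(h_2,\eta)$, producing the four-term product $(h_1,\t(\xi))\circ(e,\xi)\circ(e,h_2\bullet\eta)\circ(h_2,\s(\eta))$. The inner pair $(e,\xi)\circ(e,h_2\bullet\eta)$ is a product in $\q$; pulling $h_2$ through it via the automorphism property of $\bullet$ rewrites it as $(e,\,h_2\bullet((h_2^{-1}\bullet\xi)\circ\eta))$, and reassembling the last three factors by the complementary decomposition in reverse collapses the expression to $(h_1,\t(\xi))\circ(h_2,\,(h_2^{-1}\bullet\xi)\circ\eta)$. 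Since the first factor lies in $E$ and $\t(\xi) = h_2\bullet\t\big((h_2^{-1}\bullet\xi)\circ\eta\big)$, one final use of the $j$-decomposition together with the $E$-multiplication rule $(h_1,\zeta_1)\circ(h_2,\zeta_2) = (h_1h_2,\zeta_2)$ gives $(h_1h_2,\,(h_2^{-1}\bullet\xi)\circ\eta)$.

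To conclude, I would apply Lemma \ref{linmul} inside $\q$ to obtain $(h_2^{-1}\bullet\xi)\circ\eta = \eta+(1-\s)(h_2^{-1}\bullet\xi)$, and use the $\bullet$-equivariance of $\s$ to rewrite $(1-\s)(h_2^{-1}\bullet\xi) = h_2^{-1}\bullet(1-\s)\xi$, which produces both displayed forms of the answer. I do not expect a genuine obstacle here: the mathematical content is entirely the two factorisations of elements of $\A$, and the only thing demanding care is the bookkeeping — keeping track of which of the three products ($E$, $\q$, or $\A$) is in play at each stage, and re-checking composability after each rewriting, since the $\bullet$-action intertwines the source and target maps in a way that is invoked repeatedly.
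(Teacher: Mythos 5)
Your proposal is correct and follows essentially the same route as the paper: both proofs use the two factorisations $(h,\xi)=(h,\t(\xi))\circ(e,\xi)=(e,h\bullet\xi)\circ(h,\s(\xi))$, conjugate the inner product in $\q$ by $h_2$ via the automorphism property of $\bullet$, collapse to $(h_1h_2,(h_2^{-1}\bullet\xi)\circ\eta)$, and finish with the linear-groupoid formula of Lemma \ref{linmul} together with the $\bullet$-equivariance of $\s$.
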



Remembering via Theorem \ref{mlincl} that metrized linear groupoids $\q\gp\g$ are classified by a choice of vector space $\g$, and $\lambda\in S^2\g$, we now have sufficient data to classify the almost-Dirac Lie group case.
\begin{thm}\label{almDirLieG} Almost-Dirac Lie groups
\begin{center}
\begin{tikzpicture}

  \node (G)   {$(\A,E)$};
  \node (H) [right of=G, node distance=1.5cm] {$\g$};
  \node (I) [below of=G, node distance=1.3cm] {$H$};
  \node (J) [right of=I, node distance=1.5cm] {pt};

  \path[->]
([yshift= 2pt]G.east) edge node[above] {} ([yshift= 2pt]H.west)
([yshift= -2pt]G.east) edge node[below] {} ([yshift= -2pt]H.west);
\path[->]
([yshift= 2pt]I.east) edge node[above] {} ([yshift= 2pt]J.west)
([yshift= -2pt]I.east) edge node[below] {} ([yshift= -2pt]J.west);
  \draw[->] (G) to node {} (I);
  \draw[->] (H) to node [swap] {} (J);
  \end{tikzpicture}
\end{center}
are classified by the data: a vector space $\g$ together with an element $\lambda\in S^2\g$ (yielding a metrized linear groupoid $\q=\g\times\g^*_\lambda\gp\g$ by Theorem \ref{mlincl}) and a Lie group action $H\act\q$ by groupoid automorphisms compatible with the metric.
\begin{proof}
Start with such an almost-Dirac Lie group  $(H,\A,E)$.  By the trivialisation induced by Equation \eqref{jtriveqn}, $(\A,E)\cong(H\times\q, H\times\g)$, and the classification of Theorem \ref{mlincl}, $(\A,E)$ yields $\g$ with $\lambda\in S^2\g$.  By Proposition \ref{conjact1}, $H$ acts on $\q\gp\g$ by groupoid automorphisms, denoted $\bullet$, and this action is compatible with the metric.  Theorem \ref{mlincl} and Equation \eqref{metvbmult} guarantee that no two distinct almost-Dirac Lie groups yield the same $(\g,\lambda,\bullet)$.\\
Let $(\g,\lambda,\bullet)$ be any set of such data.  The first two data together uniquely determine a metrized linear groupoid $\q\gp\g$; let $\s,\t$ denote its source and target, and $\circ$ the groupoid multiplication.  We can form the $VB$-groupoid $(\A,E)=(H\times\q,H\times\g)$ with structure maps
    \begin{align*}
     \s(h,\xi) &= \xi,\quad \t(h,\xi) = h\bullet\t(\xi)\\
     (h_1,\xi_1)&\circ (h_2,\xi_2) = \big(h_1h_2,\xi_2 + {h_2}^{-1}\bullet((1-\s)\xi_1)\big)
     \end{align*}
for $\s(h_1,\xi_1)=\t(h_2,\xi_2)$, and with a bundle metric induced by the metric on $\q$, $\<(h,\xi_1),(h,\xi_2)\>=\<\xi_1,\xi_2\>$.  These structure maps ensure that $(H,\A,E)$ is an almost-Dirac Lie group:  $E$ is vacant, and must be Lagrangian by Proposition \ref{lgperp}. Since $\A_e\gp E_e$ is a metrized linear groupoid whose metric is $\bullet$ invariant, this means the metric induced on $\A$ is multiplicative.  Finally the groupoid multiplication guarantees that for any $(h_1h_2,\zeta)\in E_{h_1h_2}$, there is a unique element of $(E\times E)_{(h_1,h_2)}$ which is Mult-related to it, namely $((h_1,h_2\bullet\zeta),(h_2,\zeta))$.
Therefore, this $(H,\A,E)$ clearly yields the desired data $(\g,\lambda,\bullet)$.
\end{proof}
\end{thm}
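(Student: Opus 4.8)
The plan is to prove this as a two-way correspondence: show that the passage from an almost-Dirac Lie group to the triple $(\g,\lambda,\bullet)$ is well-defined and injective, and conversely that every such triple is realised by an almost-Dirac Lie group structure that returns this triple.

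For the forward direction I would start from an almost-Dirac Lie group $(H,\A,E)$. By Proposition \ref{vtriv} the vacant subgroupoid trivialises as $E\cong H\times\g$ with $\g=E^{(0)}$, and via the trivialisation used in \eqref{jtriveqn} (Proposition 3.9 of \cite{lmdir}) the ambient bundle trivialises as $\A\cong H\times\q$ with $\q=\A_e$. Restricting the $VB$-groupoid structure and the multiplicative bundle metric of $\A$ to the identity fibre $h=e$ makes $\q\gp\g$ a metrized linear groupoid, so Theorem \ref{mlincl} produces a well-defined $\lambda\in S^2\g$. Proposition \ref{conjact1} supplies the action $\bullet\colon H\act\q$ by groupoid automorphisms that extends the target action on $\g$ and preserves the metric. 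Thus $(H,\A,E)$ determines the triple $(\g,\lambda,\bullet)$. Injectivity is immediate from the explicit formulae already established: the source and target of $\A$ are $\s(h,\xi)=\s(\xi)$ and $\t(h,\xi)=h\bullet\t(\xi)$, and Equation \eqref{metvbmult} expresses $\mathrm{Mult}_\A$ entirely in terms of $\bullet$ and the multiplication of $\q$, which by Theorem \ref{mlincl} is itself determined by $(\g,\lambda)$; hence two almost-Dirac Lie groups yielding the same triple are identified under their trivialisations.

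For the reverse direction, given $(\g,\lambda,\bullet)$ I would form the metrized linear groupoid $\q=\g\times\g_\lambda^*\gp\g$ from Theorem \ref{mlincl}, set $\A=H\times\q$ and $E=H\times\g$, define source, target, and multiplication by the formulae in the statement (equivalently \eqref{metvbmult}), and put the bundle metric $\langle(h,\xi_1),(h,\xi_2)\rangle=\langle\xi_1,\xi_2\rangle$. Then I would check, in order: (i) the groupoid axioms, where associativity of the twisted multiplication follows from associativity in $\q$ together with the fact that $h^{-1}\bullet$ commutes with $1-\s$ and with $\circ$ (this is where "$\bullet$ by groupoid automorphisms" is used); (ii) that $\mathrm{Gr}(\mathrm{Mult}_\A)$ is a vector subbundle along $\mathrm{Gr}(\mathrm{Mult}_H)$, which is clear from the linearity of the formula; (iii) that $E$ is vacant ($E^{(0)}=E_e=\g$) and a subgroupoid; (iv) that $E$ is Lagrangian in $\A$, which holds fibrewise by Proposition \ref{lgperp}; (v) that the bundle metric is multiplicative, using the explicit multiplication and the $\bullet$-invariance of the metric on $\q$; and (vi) that $\mathrm{Mult}_\A$ is a morphism of almost-Dirac structures, the unique $E$-lift of $(h_1h_2,\zeta)\in E_{h_1h_2}$ being $\big((h_1,h_2\bullet\zeta),(h_2,\zeta)\big)$ (uniqueness because on $E$ one has $\s|_\g=\mathrm{id}$, so $(1-\s)\zeta_1=0$ and the product collapses to $(h_1h_2,\zeta_2)$, forcing $\zeta_2=\zeta$ and then $\zeta_1=h_2\bullet\zeta$ by composability). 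Finally, reading off the triple of this constructed $(\A,E)$ recovers $(\g,\lambda,\bullet)$.

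I expect the main obstacle to be steps (i) and (v): verifying that the twisted multiplication \eqref{metvbmult} is genuinely associative and that the induced bundle metric is genuinely multiplicative. These are exactly the points where the two hypotheses on $\bullet$ — that it acts by groupoid automorphisms, and that it preserves the metric of $\q$ — are consumed, and they reduce to direct but slightly delicate computations using $\s(h,\xi)=\s(\xi)$, $\ker(\s)=\ker(\t)^\perp$ (Proposition \ref{lgperp}), and the $H$-equivariance of $\s,\t$. The remaining conditions (Lagrangian, vacant, unique lift) are then essentially formal consequences of the explicit description.
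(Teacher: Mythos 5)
Your proposal is correct and follows essentially the same route as the paper's proof: extract $(\g,\lambda,\bullet)$ via the trivialisations of Proposition \ref{vtriv} and Equation \eqref{jtriveqn} together with Theorem \ref{mlincl} and Proposition \ref{conjact1}, get injectivity from the explicit multiplication formula \eqref{metvbmult}, and reverse the construction by building $(H\times\q,H\times\g)$ with the stated structure maps and verifying the almost-Dirac axioms. Your itemised checks (i)--(vi) simply spell out in more detail the verifications the paper compresses into its final paragraph.
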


Given an almost-Dirac Lie group $(H,\A,E)$, the quotient $\A/E\cong H\times\g^*$ is a group:  since $E$ is Lagrangian, $(\A/E)\cong E^*$, which by Theorem \ref{vbdualgrpd} is a groupoid over $(\mathrm{core}(E))^* = 0$.  The multiplication in $\A/E$ is given by:
    \begin{equation}\label{AEgmulteq} (h_1,\alpha_1)\cdot(h_2,\alpha_2) = (h_1h_2, \alpha_2+h_2^{-1}\bullet\alpha_1) \end{equation}
for $\alpha_i\in \g^*$; by Remark \ref{annr}, the graph of the groupoid multiplication in $E^*$ must be the annihilator of the graph of the groupoid multiplication in $E$.  By Equation \eqref{vacantgrm}, the graph of the multiplication in $E$ is
    \[ \{\big((h_1h_2,\zeta), (h_1,h_2\bullet\zeta),(h_1,\zeta)\big)\} \subset E\times E\times E\]
for $\zeta\in \g$, $h_i\in H$, which annihilates the graph given by Equation \eqref{AEgmulteq}.

\section{Almost-Dirac Homogeneous Spaces}\label{subsec:vbhs}
\begin{dfn} Let $\A\gp\g$ be a metrized $VB$-groupoid over a Lie group $H$.  A \emph{metrized $\A$-module} is a metrized vector bundle $\P$ over an $H$-manifold $M$, with a $VB$-groupoid action by $\A$ with moment map $\u:\,\P\to\g$
\begin{center}
\begin{tikzpicture}
  \node (P) {$\A\times\P$};
  \node (B) [right of=P] {$\P$};
  \node (A) [below of=P, node distance=1.5cm] {$H\times M$};
  \node (C) [right of=A] {$M$};

  \draw[->, dashed] (P) to node {} (B);
  \draw[->] (P) to node [swap] {} (A);
  \draw[->] (A) to node [swap] {} (C);
  \draw[->] (B) to node {} (C);
\end{tikzpicture}
\end{center}
along the group action $H\act M$, which is compatible with the metrics in the sense that for any composable $\xi_i\in \A, x_i\in \P$ in appropriate fibres:
    \[ \< \xi_1\circ x_1, \xi_2\circ x_2\> = \<\xi_1,\xi_2\> + \<x_1,x_2\>. \]
We may also represent this type of action with the diagram
\begin{center}
\begin{tikzpicture}
  \node (G)  {$\A$};
  \node (H) [right of=G,node distance=1.5cm] {$\g$};
  \node (I) [below of=G,node distance=1.3cm] {$H$};
  \node (J) [right of=I,node distance=1.5cm] {pt};

  \node (ac) [node distance=0.55cm, right of=H, below of=H] {$\act$};

  \node (K) [right of=H, node distance=1.1cm] {$\P$};
  \node (L) [right of=K,node distance=1.5cm] {$\g$};
  \node (M) [below of=K,node distance=1.3cm] {$M$};
  \node (N) [right of=M,node distance=1.5cm] {pt};

  \draw[->] (G) to node {} (I);
  \draw[->] (H) to node [swap] {} (J);
  \path[->]
([yshift= 2pt]G.east) edge node[above] {} ([yshift= 2pt]H.west)
([yshift= -2pt]G.east) edge node[below] {} ([yshift= -2pt]H.west);
\path[->]
([yshift= 2pt]I.east) edge node[above] {} ([yshift= 2pt]J.west)
([yshift= -2pt]I.east) edge node[below] {} ([yshift= -2pt]J.west);
  \path[->]
([yshift= 2pt]G.east) edge node[above] {} ([yshift= 2pt]H.west)
([yshift= -2pt]G.east) edge node[below] {} ([yshift= -2pt]H.west);
\path[->]
([yshift= 2pt]I.east) edge node[above] {} ([yshift= 2pt]J.west)
([yshift= -2pt]I.east) edge node[below] {} ([yshift= -2pt]J.west);

  \draw[->] (K) to node {} (M);
  \draw[->] (L) to node {} (N);
  \draw[->] (K) to node {$\u$} (L);
  \draw[->] (M) to node {} (N);
\end{tikzpicture}
\end{center}
\end{dfn}

\begin{dfn}  Let $(H,\A,E)$ be an almost-Dirac Lie group.  If $(M,\P,L)$ is an almost-Dirac structure on an $H$-manifold $M$, such that $\P$ is a metrized $\A$-module, and $L\subseteq\P$ is a Lagrangian sub-bundle, the action of $(\A,E)$ on $(\P,L)$ is called an \emph{almost-Dirac Lie group action} if every element of $L_{h\cdot m}$ is uniquely related by the groupoid action to an element of $E_h\times L_m$, i.e.: the action map Act: $\A\times\P\da\P$ is a morphism of almost-Dirac structures.  We call $(M,\P,L)$ an \emph{almost-Dirac module} for $(H,\A,E)$.
\end{dfn}
From the definition, we see that $E$ acts on $L$. For any $h\in H,\,m\in M$, let $\zeta\in E_h,\,x\in L_m$ such that $\zeta\circ x$ is defined. From the definition, any $y\in L_{h.m}$ can be uniquely written as $\zeta'\circ y'$ for $\zeta'\in E_h$, $y'\in L_m$.  Thus,
    \[ \< y, \zeta\circ x\> = \< \zeta'\circ y',\zeta\circ x\> = \<\zeta',\zeta\> + \< y',x\> = 0 \]
and since $L$ is Lagrangian, this means $\zeta\circ x \in L_{h.m}$.\\
We concern ourselves with the case that $M=H/K$.  Let $(H/K,\P,L)$ be an almost-Dirac module for $(H,\A,E)$.  Specifically, this makes $\P$ an $E$-module, and so by Proposition \ref{EandP} we have $\P= H\times_K \p$ for some vector space $\p$.  Since $E$ acts on $L$, Proposition \ref{EandP} gives $L=H\times_K \l$ for a Lagrangian $\l\subset \p$.   The map $\u:\,\P\to \g$ takes the form $\u([h,y])=h\bullet\u(y)$ for the linear map $\u:\,\p \to \g$.  Given any $[h,y]\in \P$, we can decompose $[h,y] = (h,\u(y))\circ[e,y]$. \\
\begin{prop}\label{AEeqqg}
The action of $\A$ on $\P$ is determined by the action of $\q$ on $\p$, with formula
    \begin{equation}\label{aldirac} (h_1,(\zeta,\alpha))\circ[h_2,x] = [h_1h_2,\,x+\u^*({h_2}^{-1}\bullet\alpha)] \end{equation}
such that $\s(h,(\zeta,\alpha))=\u([h_2,x])$.
\begin{proof}
For brevity, write $\xi := (\zeta,\alpha)\in \q=\g\times\g^*_\lambda$.  Let $(h_1,\zeta)\in \A$, and $[h_2,x]\in \P$ such that $\s(h_1,\zeta) = \u([h_2,x])$.  Using the formula for the $E$ action on $\P$ from Equation \eqref{VBac}, and the formula for the $\q$ action on $\p$ by Equation \eqref{uustarform}, we have:
\begin{align*}
 (h_1,\xi)\circ [h_2,x] &= (h_1,\xi)\circ(h_2,\u(x))\circ [e,x]\\
                          &= (h_1h_2, \u(x) + {h_2}^{-1}\bullet(1-\s)\xi)\circ [e,x]\\
                          &= (h_1h_2, (h_1h_2)\bullet(\u(x)+\t({h_2}^{-1}\bullet(1-\s)\xi))\circ(e, \u(x)+{h_2}^{-1}\bullet(1-\s)\xi)\circ[e,x]\\
                          &= (h_1h_2, (h_1h_2)\bullet(\u(x)+\t({h_2}^{-1}\bullet(1-\s)\xi)) \circ [e, x + \u^*({h_2}^{-1}\bullet(1-\s)\xi)]\\
                          &= [h_1h_2, x + \u^*({h_2}^{-1}\bullet(1-\s)\xi)].
\end{align*}
For $\xi = (\zeta,\alpha)$, $(1-\s)\xi = \alpha$, and hence Equation \eqref{aldirac} follows.
\end{proof}
\end{prop}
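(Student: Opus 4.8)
The plan is to express the $\A$-action on $\P$ in terms of data we already control: the $E$-action on $\P$ (Equation~\eqref{VBac}), the groupoid multiplication of $\A$ (Equation~\eqref{metvbmult}), and the metrized linear module action of $\q$ on $\p$ (Equation~\eqref{uustarform} from Proposition~\ref{uustar}). The crucial structural input is the associated-bundle picture $\P\cong H\times_K\p$ of Proposition~\ref{EandP}, together with the moment-map formula $\u([h,y])=h\bullet\u(y)$ of Proposition~\ref{vacfor}: these let every element of $\P$ be written as $[h,y]=(h,\u(y))\circ[e,y]$, i.e.\ as an $E$-translate of an element of the identity coset fibre $\p$.

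First I would fix $(h_1,\xi)\in\A$ with $\xi=(\zeta,\alpha)\in\q=\g\times\g^*_\lambda$ and $[h_2,x]\in\P$ satisfying $\s(h_1,\xi)=\u([h_2,x])$, write $[h_2,x]=(h_2,\u(x))\circ[e,x]$, and use associativity of the groupoid action to push the computation into $\A$:
\[
(h_1,\xi)\circ[h_2,x]=\big((h_1,\xi)\circ(h_2,\u(x))\big)\circ[e,x].
\]
The $\A$-multiplication $(h_1,\xi)\circ(h_2,\u(x))$ is evaluated by Equation~\eqref{metvbmult}, producing $(h_1h_2,\eta)$ with $\eta=\u(x)+h_2^{-1}\bullet(1-\s)\xi$. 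Because the conjugation action $\bullet$ of Proposition~\ref{conjact1} is by groupoid automorphisms with $\g$ stable, $\s$ intertwines $\bullet$ and annihilates $(1-\s)\xi$; hence $\s(\eta)=\u(x)$, confirming that $(h_1h_2,\eta)$ is composable with $[e,x]\in\P_{eK}$.

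Next I would split $(h_1h_2,\eta)=(h_1h_2,\t(\eta))\circ(e,\eta)$ into its $E$-factor $(h_1h_2,\t(\eta))$ and its identity-fibre factor $(e,\eta)\in\q$, and apply associativity once more. Using the decomposition $\q=\g\oplus\g^*$ of Theorem~\ref{mlincl} and the fact $(1-\s)\xi=\alpha$, one reads off $\eta=(\u(x),\,h_2^{-1}\bullet\alpha)$, so the identity-fibre factor acts on $x\in\p$ by Equation~\eqref{uustarform} to give $(e,\eta)\circ[e,x]=[e,\,x+\u^*(h_2^{-1}\bullet\alpha)]$. The remaining $E$-factor then acts by Equation~\eqref{VBac}, merely transporting the base point from $e$ to $h_1h_2$; a one-line check that the source of $(h_1h_2,\t(\eta))$ equals the moment value $\u\big(x+\u^*(h_2^{-1}\bullet\alpha)\big)$, which follows from $\u\circ\u^*=\lambda^\#$ together with $\t(\zeta,\alpha)=\zeta+\lambda^\#(\alpha)$ (Equation~\eqref{lingpstr}), licenses this last step and yields $(h_1,\xi)\circ[h_2,x]=[h_1h_2,\,x+\u^*(h_2^{-1}\bullet\alpha)]$, i.e.\ Equation~\eqref{aldirac}.

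I expect no deep obstacle: the one point needing care, rather than the routine bookkeeping, is keeping the three incarnations of $\bullet$ separate — its action on $\g$, the induced action on $\ker(\s)\cong\g^*$, and the conjugation action on all of $\q$ — and verifying at each stage that the elements being composed actually match source to target. Once the identifications of Theorem~\ref{mlincl} and the compatibility $\u\circ\u^*=\lambda^\#$ are used consistently, the argument is essentially a chain of two associativity applications sandwiching the already-established $E$- and $\q$-actions.
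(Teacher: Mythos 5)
Your proposal is correct and follows essentially the same route as the paper's proof: decompose $[h_2,x]=(h_2,\u(x))\circ[e,x]$, multiply in $\A$ via Equation \eqref{metvbmult}, split the result into an $E$-factor and an identity-fibre factor, and apply Equations \eqref{uustarform} and \eqref{VBac} in turn. The extra composability checks you flag (that $\s(\eta)=\u(x)$ and that the $E$-factor's source matches the new moment value) are sound and are left implicit in the paper's computation.
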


\begin{thm}\label{almDmod}
Given an almost-Dirac Lie group $(H,\A,E)$, its almost-Dirac modules $(H/K,\P,L)$ are classified by the following data:
\begin{enumerate}
    \setlength{\itemsep}{0pt}
    \setlength{\parskip}{0pt}
    \setlength{\parsep}{0pt}
    \item A choice of metrized vector space $\p$ with a $K$-action preserving the metric;
    \item A Lagrangian subspace $\l\subseteq \p$ stable under this $K$-action;
    \item A $K$-equivariant map $\u:\,\p\to\g$ such that $\u\circ\u^*=\lambda^\#$ (with $\lambda\in S^2\g$ classifying $\q=\A^{(0)}\vert_e$).
\end{enumerate}
\begin{proof}
By Theorem \ref{almDirLieG}, $(\A,E) = ( H\times\q,H\times\g)$ with $\q=\g\times\g^*_\lambda$ for $\lambda\in S^2\g$.  Begin with an $(H,\A,E)$- module $(H/K,\P,L)$.  As $(\P,L)$ are $E$-modules, Theorem \ref{vacantVBmd} gives the structure
    \[ (\P,L) = (H\times_K\p, H\times_K\l), \]
with $\p$ a vector space with a $K$ action, $\l\subseteq \p$ subspace, and $K$-equivariant moment map $\u:\,\p\to \g$. By Proposition \ref{AEeqqg}, the action of $\q$ on $\p$ determines an action of $\A$ on $\P$, so with Theorem \ref{linmods}, we conclude that $\p$ is a metrized vector space with $\l\subset\p$ Lagrangian, $\u\circ\u^*=\lambda^\#$, and the induced metric on $\P$ is $K$-invariant. It is clear that this set of data is unique to $(H/K,\P,L)$.\\
Suppose $K\act(\p,\l,\u)$ are given.  We construct
    \[(H/K,\P,L)=(H/K,H\times_K\p,H\times_K\l)\]
with $\u\:,\P\to\g$ defined by $\u([h,x]) = h\bullet\u(x)$.  Theorem \ref{linmods} gives us that $(\p,\l,\u)$ is a metrized module for $(\q,\g)$, and then we define the full groupoid action via Equation \eqref{aldirac}.  This is compatible with the metrics, as
    \begin{align*} \<x+\u^*({h_2}^{-1}\bullet\alpha),x+\u^*({h_2}^{-1}\bullet\alpha)\> &= \<x,x\> + 2\<h_2\bullet\u(x),\alpha\> + \lambda(\alpha,\alpha)\>\\
                &= \< (h_2\bullet\u(x),\alpha),(h_2\bullet\u(x),\alpha)\> + \<x,x\>.
    \end{align*}
Lastly, any element $[h_1h_2,x]\in L_{h_1h_2K}$ is uniquely Act-related to an element of $(E\times L)_{(h_1,h_2K)}$, namely $\big((h_1,h_2\bullet\u(x)),[h_2,x]\big)$.  Hence, $(H/K,\P,L)$ is an almost Dirac module associated to the data $K\act(\p,\l,\u)$.

\end{proof}
\end{thm}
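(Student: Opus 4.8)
The plan is to assemble the theorem from the structural results already in hand, separating the problem into its $VB$-groupoid layer and its linear-algebra layer. By Theorem \ref{almDirLieG} we may assume $(\A,E)=(H\times\q,\,H\times\g)$ with $\q=\g\times\g^*_\lambda$ for some $\lambda\in S^2\g$. The starting point for the forward direction is the observation, recorded just after the definition of an almost-Dirac module, that such a module $(H/K,\P,L)$ is in particular a pair of $E$-modules $L\subseteq\P$ over $H/K$: indeed $E\subset\A$ is a vacant $VB$-subgroupoid and it acts on the Lagrangian sub-bundle $L$. So the first step is to invoke Theorem \ref{vacantVBmd}, which presents both bundles as associated bundles $(\P,L)\cong(H\times_K\p,\,H\times_K\l)$, where $\p=\P_{eK}$ carries a $K$-action, $\l=L_{eK}\subseteq\p$ is a $K$-stable subspace, and $\u\colon\p\to\g$ is a $K$-equivariant moment map.

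Next I would harvest the extra data coming from the metric. Since the $\A$-action is compatible with the metrics --- and hence so is the $E$-action, and, via Proposition \ref{EandP}, the $K$-action on $\p$ --- the fibre $\p=\P_{eK}$ is metrized, $K$ acts by isometries, and $\P\cong H\times_K\p$ as a metrized bundle; the Lagrangian condition on $L$ descends fibrewise to make $\l\subseteq\p$ Lagrangian. Then Proposition \ref{AEeqqg} shows the full $\A$-action on $\P$ is determined by, and restricts to, the $\q$-action on $\p$, so Proposition \ref{uustar} and Theorem \ref{linmods} apply and give the last required relation $\u\circ\u^*=\lambda^\#$. Uniqueness of the triple $(\p,\l,\u)$ with its $K$-action is immediate, since all of it is read off over $eK$.

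For the converse I would run this backwards. Given $K\act(\p,\l,\u)$ with $\p$ metrized, $\l\subseteq\p$ Lagrangian and $K$-stable, and $\u\circ\u^*=\lambda^\#$, set $(\P,L)=(H\times_K\p,\,H\times_K\l)$ over $H/K$, define $\u([h,x])=h\bullet\u(x)$ (well-defined by $K$-equivariance, exactly as in Theorem \ref{vacantVBmd}), and define the $\A$-action by formula \eqref{aldirac}. Theorem \ref{linmods} certifies that $(\p,\l,\u)$ is a metrized $\q$-module, so the fibrewise metric is multiplicative and the induced metric on $\P$ is well-defined; a short expansion of $\langle x+\u^*({h_2}^{-1}\bullet\alpha),\,x+\u^*({h_2}^{-1}\bullet\alpha)\rangle$ using $\u\circ\u^*=\lambda^\#$ shows the $\A$-action is metric-compatible. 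It then remains to verify that $\mathrm{Act}\colon\A\times\P\da\P$ is a morphism of almost-Dirac structures, i.e.\ that every $[h_1h_2,x]\in L_{(h_1h_2)K}$ is Act-related to a \emph{unique} element of $E_{h_1}\times L_{h_2K}$; existence is \eqref{aldirac} with $\alpha=0$, namely $\big((h_1,h_2\bullet\u(x)),[h_2,x]\big)$, and uniqueness follows because $\s\colon E\to\g$ is a fibrewise isomorphism, so $E_{h_1}$ is pinned down by the source constraint.

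I expect the delicate point to be precisely this ``unique lift'' property, which is what separates an almost-Dirac module from a bare metrized $\A$-module: it relies on the vacancy of $E$ (so that $E_h\cong\g$ for every $h\in H$) and on the fact, packaged in Propositions \ref{EandP} and \ref{AEeqqg}, that the trivialisations of $\A$ and $\P$ by $E$ are simultaneously compatible with the groupoid actions, the metrics, and the Lagrangian conditions. Everything else is bookkeeping with the associated-bundle descriptions, together with an appeal to the linear classification of Theorem \ref{linmods}.
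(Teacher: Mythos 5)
Your proposal is correct and follows essentially the same route as the paper's proof: reduce to the normal form of Theorem \ref{almDirLieG}, extract the associated-bundle data $(\p,\l,\u)$ with its $K$-action via Theorem \ref{vacantVBmd} and Proposition \ref{AEeqqg}, apply Theorem \ref{linmods} at the identity fibre to obtain the metric conditions and $\u\circ\u^*=\lambda^\#$, and reverse the construction using Equation \eqref{aldirac} together with the unique-lift check coming from the vacancy of $E$. Your added remark that uniqueness of the Act-related element is pinned down by the source constraint and the fibrewise isomorphism $\s\colon E_h\to\g$ is exactly the content the paper compresses into its final sentence.
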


\begin{prop}
Let $(H,\A,E)$ be an almost-Dirac Lie group, and let $(H/K,\P,L)$ be an $(H,\A,E)$- module. The group $\A/E$ acts on $\P/L$ by the formula
    \begin{equation}\label{AEPLeq} (h_1,\alpha)\cdot[h_2,\overline{x}] = [h_1h_2, \overline{x} + \overline{\u^*({h_2}^{-1}\bullet\alpha)}], \end{equation}
where the bar denotes the image of an element of $\p$ in the quotient $\p/\l$.
\begin{proof}
The action follows from Lemma \ref{VBquotactt}.  For any $(h_i,\alpha_i)\in \A/E,\,[h_3,\overline{x}]\in \P/L$, we have
    \begin{align*}  (h_1,\alpha_1)\cdot((h_2,\alpha_2)\cdot[h_3,\overline{x}] &= (h_1,\alpha_1)\cdot [h_2h_3,\overline{x} + \overline{\u^*(\h_3^{-1}\bullet\alpha_2)}] \\
                        &= [h_1h_2h_3,\overline{x} + \overline{\u^*(\h_3^{-1}\bullet\alpha_2)} + \overline{\u^*((h_2h_3)^{-1}\bullet\alpha_1)}]\\
                        &= (h_1h_2,\alpha_2 + \h_2^{-1}\bullet\alpha_1)\cdot [h_3,\overline{x}],
    \end{align*}
and
    \[ (e,0)\cdot[h,\overline{x}] = [h,\overline{x} + \overline{\u^*(0)}] = [h,x]. \]
\end{proof}
\end{prop}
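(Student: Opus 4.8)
The plan is to get the action for free from the reduction machinery and then read off the explicit formula by choosing convenient representatives. Recall from the discussion just before Theorem~\ref{almDmod} that $E$ acts on $L$, so $L\subseteq\P$ is a submodule with respect to the $VB$-subgroupoid $E\subseteq\A$. By Lemma~\ref{VBquotactt} the quotient $\P/L$ is therefore a module for the quotient groupoid $\A/E$. As noted after Theorem~\ref{almDirLieG}, $\A/E\cong E^*$ is a groupoid over $(\mathrm{core}(E))^*=0$, i.e. a group; hence the module structure on $\P/L$ is exactly a group action, determined by $\overline\xi\cdot\overline y=\overline{\xi\circ y}$ for any composable lifts $\xi\in\A$, $y\in\P$.

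Next I would compute this in the normal forms $(\A,E)=(H\times\q,H\times\g)$ with $\q=\g\times\g^*_\lambda$ and $(\P,L)=(H\times_K\p,H\times_K\l)$ given by Theorems~\ref{almDirLieG} and~\ref{almDmod}. Given $(h_1,\alpha)\in\A/E$ and $[h_2,\overline x]\in\P/L$, take the lift $x\in\p$ of $\overline x$ and the lift $(h_1,(\zeta,\alpha))\in\A$ with $\zeta=h_2\bullet\u(x)$; this is the unique choice making $\s(h_1,(\zeta,\alpha))=\zeta=\u([h_2,x])$, so the two lifts are composable. Proposition~\ref{AEeqqg} then gives
\[
(h_1,(\zeta,\alpha))\circ[h_2,x]=[h_1h_2,\; x+\u^*({h_2}^{-1}\bullet\alpha)],
\]
and projecting to $\P/L$ yields the stated formula. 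The right-hand side manifestly depends only on $\alpha$ and on $\overline x$: replacing $x$ by $x+\ell$ with $\ell\in\l$ changes $x+\u^*({h_2}^{-1}\bullet\alpha)$ by the same $\ell\in\l$, and $\zeta$ does not appear; this is the descent guaranteed abstractly by Lemma~\ref{VBquotactt}.

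Finally I would double-check the group-action axioms directly, using the multiplication $(h_1,\alpha_1)\cdot(h_2,\alpha_2)=(h_1h_2,\alpha_2+h_2^{-1}\bullet\alpha_1)$ in $\A/E$ from Equation~\eqref{AEgmulteq}: associativity reduces, after applying linearity of $\u^*$ and of the maps $h\bullet(\cdot)$ on $\g^*$, to the identity $(h_2h_3)^{-1}\bullet\alpha_1=h_3^{-1}\bullet(h_2^{-1}\bullet\alpha_1)$, which is just the action axiom for $\bullet$ on $\g^*$; and $(e,0)$ acts trivially since $\u^*(0)=0$. There is no genuine difficulty here — the content is pure bookkeeping, and the only point needing a little care is the verification that the chosen composable lift produces a formula independent of all auxiliary data, so that the prescription genuinely descends to $\P/L$; everything else is immediate from Proposition~\ref{AEeqqg}, Lemma~\ref{VBquotactt}, and Equation~\eqref{AEgmulteq}.
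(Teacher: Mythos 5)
Your proposal is correct and follows essentially the same route as the paper: descend the groupoid action to the quotient via Lemma \ref{VBquotactt}, read off the explicit formula from the module action in Proposition \ref{AEeqqg} (Equation \eqref{aldirac}), and verify compatibility with the group law \eqref{AEgmulteq} and triviality of the identity. You are somewhat more explicit than the paper about the choice of composable lifts and the independence of that choice, but the content is the same.
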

This allows us to define homogeneous spaces for $(\A,E)$ analogously to the metrized linear groupoid case.
\begin{dfn}
If $(H,\A,E)$ is an almost-Dirac Lie group, then an \emph{homogeneous space} for $(H,\A,E)$ is an almost-Dirac module $(H/K,\P,L)$ such that $\A/E\act \P/L$ transitively.
\end{dfn}

\begin{prop}\label{transiff} A $VB$-group (i.e.: $V^{(0)}=\mathrm{pt}$) acts transitively on a vector bundle over an $H$-homogeneous space
\begin{center}
\begin{tikzpicture}
  \node (G)  {$V$};
  \node (H) [right of=G,node distance=1.5cm] {$0$};
  \node (I) [below of=G,node distance=1.3cm] {$H$};
  \node (J) [right of=I,node distance=1.5cm] {pt};

  \node (ac) [node distance=0.55cm, right of=H, below of=H] {$\act$};

  \node (K) [right of=H, node distance=1.1cm] {$W$};
  \node (L) [right of=K,node distance=1.5cm] {$0$};
  \node (M) [below of=K,node distance=1.3cm] {$H/K$};
  \node (N) [right of=M,node distance=1.5cm] {pt};

  \draw[->] (G) to node {} (I);
  \draw[->] (H) to node [swap] {} (J);
  \path[->]
([yshift= 2pt]G.east) edge node[above] {} ([yshift= 2pt]H.west)
([yshift= -2pt]G.east) edge node[below] {} ([yshift= -2pt]H.west);
\path[->]
([yshift= 2pt]I.east) edge node[above] {} ([yshift= 2pt]J.west)
([yshift= -2pt]I.east) edge node[below] {} ([yshift= -2pt]J.west);
  \path[->]
([yshift= 2pt]G.east) edge node[above] {} ([yshift= 2pt]H.west)
([yshift= -2pt]G.east) edge node[below] {} ([yshift= -2pt]H.west);
\path[->]
([yshift= 2pt]I.east) edge node[above] {} ([yshift= 2pt]J.west)
([yshift= -2pt]I.east) edge node[below] {} ([yshift= -2pt]J.west);

  \draw[->] (K) to node {} (M);
  \draw[->] (L) to node {} (N);
  \draw[->] (K) to node {$\u$} (L);
  \draw[->] (M) to node {} (N);
\end{tikzpicture}
\end{center}
if and only if $V_e$ acts transitively on $W_{eK}$.
\begin{proof}
The action $V\act W$ is transitive if and only if the bundle map $V\to W$, $v\mapsto v.0_{eK}$ is surjective, if and only if the map $V_e\to W_{eK}$, $v\mapsto v.0_{eK}$ is surjective, if and only if $V_e\act W_{eK}$ is transitive.
\end{proof}
\end{prop}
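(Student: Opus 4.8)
The plan is to factor transitivity through a single ``orbit map'' and then use the homogeneity of $H/K$ to push everything down to the fibre over $eK$. Write $\pi\colon H\to H/K$ for the quotient map, $0_{eK}$ for the zero vector of $W_{eK}$, and define $\Psi\colon V\to W$ by $\Psi(v)=v\cdot 0_{eK}$. This is everywhere defined, since the moment map $W\to V^{(0)}=\mathrm{pt}$ imposes no composability constraint, and it covers $\pi$. Its restriction to the subgroup $V_e\subseteq V$ (a subgroup because it is the fibre over $e$ and the groupoid product covers multiplication in $H$) is a linear map $\Psi_e\colon V_e\to W_{eK}$, using that the graph of the action is a sub-bundle.

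First I would record the equivalence: $V\act W$ is transitive if and only if $\Psi$ is surjective, and, by the identical argument, $V_e\act W_{eK}$ is transitive if and only if $\Psi_e$ is surjective. The forward implications are immediate. For the converses, given $w_1,w_2$ in $W$ (resp. in $W_{eK}$) pick $v_i$ with $\Psi(v_i)=w_i$; then $v_2\circ v_1^{-1}$ is composable with $w_1$ (again because $\u$ is valued in a point), and the groupoid-action axioms give $(v_2\circ v_1^{-1})\cdot w_1=(v_2\circ v_1^{-1})\cdot(v_1\cdot 0_{eK})=v_2\cdot 0_{eK}=w_2$.

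The substantive step is to show that $\Psi$ is surjective if and only if $\Psi_e$ is. Here I would use that $\mathrm{ran}(\Psi)\subseteq W$ is invariant under the $V$-action: if $w=v_0\cdot 0_{eK}$ then $v\cdot w=(v\circ v_0)\cdot 0_{eK}\in\mathrm{ran}(\Psi)$. Since the zero section of $V$ is a subgroupoid (as $\mathrm{Gr}(\mathrm{Mult}_V)$ is a sub-bundle), for each $h\in H$ the map $w\mapsto 0_h\cdot w$ is a linear isomorphism $W_m\to W_{h\cdot m}$, and $V$-invariance of $\mathrm{ran}(\Psi)$ together with $0_h\cdot(0_{h^{-1}}\cdot w)=w$ shows this isomorphism restricts to a bijection $\mathrm{ran}(\Psi)\cap W_m\to \mathrm{ran}(\Psi)\cap W_{h\cdot m}$. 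As $H$ acts transitively on $H/K$, it follows that $\mathrm{ran}(\Psi)$ meets every fibre of $W$ in the whole fibre exactly when it does so for $W_{eK}$; and since $\Psi(V_h)\subseteq W_{\pi(h)}$, one has $\mathrm{ran}(\Psi)\cap W_{eK}=\Psi(V_e)=\mathrm{ran}(\Psi_e)$. Chaining the three equivalences proves the proposition.

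I expect the only delicate point to be this last reduction — confirming that $\mathrm{ran}(\Psi)$ is genuinely $V$-invariant and that its intersection with $W_{eK}$ is exactly $\mathrm{ran}(\Psi_e)$ — while the remaining equivalences are formal manipulations with the groupoid- and module-action identities.
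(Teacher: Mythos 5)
Your proof is correct and follows essentially the same route as the paper, which simply chains the three equivalences (transitive $\Leftrightarrow$ $\Psi$ surjective $\Leftrightarrow$ $\Psi_e$ surjective $\Leftrightarrow$ fibre-transitive) and leaves implicit the translation-by-the-zero-section argument that you spell out. One small point to tighten: from $\Psi(V_h)\subseteq W_{\pi(h)}$ alone one only gets $\mathrm{ran}(\Psi)\cap W_{eK}=\bigcup_{k\in K}\Psi(V_k)$, which a priori could exceed $\Psi(V_e)$; the claimed equality does hold, but because $0_{k^{-1}}\cdot 0_{eK}=0_{eK}$ gives $\Psi(v)=\Psi(v\circ 0_{k^{-1}})$ with $v\circ 0_{k^{-1}}\in V_e$ for every $v\in V_k$, not merely because $\Psi$ covers $\pi$.
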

\begin{rmrk} In particular, if $(H,\A,E)=(H,H\times\q,H\times\g)$ is an almost-Dirac Lie group, and $(H/K,\P,L)=(H/K,H\times_K\p,H\times_K\l)$ is an almost-Dirac module, then $\A/E\act \P/L$ transitively if and only if $\q/\g\act \p/\l$ transitively.
\end{rmrk}

Now we have all the pieces required to classify our full homogeneous spaces.
\begin{lem}\label{alDirh}
Let $(H,\A,E)$ be an almost-Dirac Lie group, classified by a vector space $\g$ together with an element $\lambda\in S^2\g$ (giving a metrized linear groupoid $\q=\g\times\g^*_\lambda\gp\g$), and an action of $H$ on $\q$ by groupoid automorphisms. The homogeneous spaces $(H/K,\P,L)$ for $(H,\A,E)$ are classified by $\lambda$-coisotropic subspaces $\l\subset\g$ which are stable under the induced $K$ action.  As such, they have the normal form
    \[ (\P,L) = \big(H\times_K(\s^{-1}(\l)/\s^{-1}(\l)^\perp), H\times_K\l\big). \]
\begin{proof}
Begin with such an homogeneous space $(H/K,\P,L)$.  Since $(H/K,\P,L)$ is an almost-Dirac module for $(H,\A,E)$, Theorem \ref{almDmod} gives a unique pair $(\p,\l)$ with $\p$ a metrized vector space, and $\l\subset \p$ a Lagrangian subspace.  In addition, the theorem yields a $K$ action on $\l$, and a $K$-equivariant map $\u:\,\l\to\g$.  $\A/E$ acting transitively on $\P/L$ means $\q/\g \act \p/\l$ transitively (Proposition \ref{transiff}), and so by Remark \ref{qlinhomH}, $\u$ injects $\l$ into $\g$ as a $K$-stable $\lambda$-coisotropic subspace.  By Theorem \ref{qlinhom}, we have
    \[ \p:=\s^{-1}(\u(\l))/\s^{-1}(\u(\l))^\perp. \]
To show that every $K$-stable $\lambda$-coisotropic $\l\subset\g$ arises in this way, we construct the bundle pair
    \[ (\P,L) = \big(H\times_K(\s^{-1}(\l)/\s^{-1}(\l)^\perp),H\times_K\l\big) \]
for $\s:\,\q\to\g$, with bundle metric induced by that on $\q$ and with moment map $\u:\,\p\to\g$ given by the target map descending to the quotient (making it $K$ equivariant as the bullet action is by groupoid automorphisms).  The full moment map is then defined by $\u([h,x]) = h\bullet\u(x)$, and hence $(H/K,\P,L)$ is an almost-Dirac module by Theorem \ref{almDmod}; it is an homogeneous space as $\l$ is $\lambda$-coisotropic, by Remark \ref{qlinhomH}.
\end{proof}
\end{lem}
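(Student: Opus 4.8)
The plan is to obtain the lemma as an assembly of three results already in hand: the classification of almost-Dirac modules over $H/K$ (Theorem \ref{almDmod}), the reduction of the transitivity condition to the identity fibres (Proposition \ref{transiff} and the remark following it), and the $K$-equivariant classification of homogeneous spaces for metrized linear groupoids (Theorem \ref{qlinhom}, in the form of Remark \ref{qlinhomH}). The guiding observation is that an almost-Dirac homogeneous space carries no more data than a metrized $\q$-module $(\p,\l,\u)$ with a compatible $K$-action, together with the transitivity hypothesis, and the latter is a purely fibrewise condition.

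For the forward direction I would start with an almost-Dirac homogeneous space $(H/K,\P,L)$ for $(H,\A,E)$. Being in particular an almost-Dirac module, Theorem \ref{almDmod} furnishes a metrized vector space $\p$ with a metric-preserving $K$-action, a $K$-stable Lagrangian $\l\subseteq\p$, and a $K$-equivariant $\u:\p\to\g$ with $\u\circ\u^*=\lambda^\#$; equivalently, by Theorem \ref{linmods}, this is a metrized $\q$-module with compatible $K$-action. The hypothesis that $\A/E\act\P/L$ is transitive is, by Proposition \ref{transiff} and the remark after it, the same as $\q/\g\act\p/\l$ being transitive, so $(\p,\l)$ is a $K$-equivariant homogeneous space for the metrized linear groupoid $(\q,\g)$. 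Remark \ref{qlinhomH} then tells us that $\u$ embeds $\l$ into $\g$ as a $K$-stable $\lambda$-coisotropic subspace and supplies a $K$-equivariant isomorphism $\p\cong\s^{-1}(\u(\l))/\s^{-1}(\u(\l))^\perp$; combined with the identifications $\P=H\times_K\p$, $L=H\times_K\l$ of Theorem \ref{vacantVBmd}, this is exactly the asserted normal form. Since that normal form is built only out of the subspace $\u(\l)\subseteq\g$, distinct homogeneous spaces produce distinct $K$-stable $\lambda$-coisotropic subspaces of $\g$, giving injectivity of the correspondence.

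For the converse, given a $K$-stable $\lambda$-coisotropic $\l\subseteq\g$, I would set $C=\s^{-1}(\l)\subseteq\q$ and $\p=C/C^\perp$; as in the proof of Theorem \ref{qlinhom}, $\l\hookrightarrow\p$ is Lagrangian and the target map descends to $\u:\p\to\g$ with $\u\circ\u^*=\lambda^\#$ and $\u|_\l=\mathrm{id}$. Because the $\bullet$-action of $H$ on $\q$ is by groupoid automorphisms it commutes with $\s$ and preserves the metric, so $K$ preserves $C$ and $C^\perp$ and acts on $\p$ with $\u$ $K$-equivariant; Theorem \ref{linmods} then makes $(\p,\l,\u)$ a metrized $\q$-module with compatible $K$-action, and Theorem \ref{almDmod} turns it into an almost-Dirac module $(H/K,\P,L)=(H/K,H\times_K\p,H\times_K\l)$ with moment map $\u([h,x])=h\bullet\u(x)$ and groupoid action \eqref{aldirac}. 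Transitivity of $\q/\g\act\p/\l$, which holds by Remark \ref{qlinhomH} since $\l$ is $\lambda$-coisotropic, upgrades via Proposition \ref{transiff} to transitivity of $\A/E\act\P/L$, so $(H/K,\P,L)$ is an almost-Dirac homogeneous space; by construction it returns the chosen $\l$.

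The only substantive point beyond this bookkeeping is the equivariance/compatibility check that makes Remark \ref{qlinhomH} directly applicable: one must verify that the $H$-action on $\q$, being by groupoid automorphisms, preserves the source map (hence $C=\s^{-1}(\l)$ and its perpendicular $C^\perp$) and the metric, and that the $K$-action it induces on $\p=C/C^\perp$ is precisely the one produced by Theorem \ref{almDmod}. I expect this — rather than anything in the core argument, which is a direct application of Theorems \ref{almDmod}, \ref{qlinhom}, \ref{linmods} and Proposition \ref{transiff} — to be the main, and comparatively minor, obstacle.
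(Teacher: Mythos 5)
Your proposal is correct and follows essentially the same route as the paper's proof: apply Theorem \ref{almDmod} to reduce to fibrewise data, use Proposition \ref{transiff} to transfer transitivity to $\q/\g\act\p/\l$, and invoke Remark \ref{qlinhomH} (the $K$-equivariant form of Theorem \ref{qlinhom}) for both directions. The extra equivariance check you flag at the end is exactly the point the paper disposes of by noting that the $\bullet$-action is by groupoid automorphisms preserving the metric, so nothing further is needed.
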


\chapter{Dirac Homogeneous Spaces}
In this section, we give the classification of homogeneous spaces for the Dirac case; given a Dirac Lie group $(H,\A,E)$, and an $H$-homogeneous space $H/K$, the classification is given in terms of $K$-invariant coisotropic subalgebras $\c\subset\d$, with $\c\cap\h=\k:=\mathrm{Lie}(K)$.

\section{Preparations}
\subsection{Identifications of $\lambda$, Preimages of Coisotropic Subspaces}
Let $(H,\A,E)$ be a Dirac Lie group with associated $H$-equivariant Dirac Manin triple $(\d,\g,\h)_\beta$, with $\beta\in S^2\d$.  Let $(\q,\g,\r)_\gamma$ be the Dirac Manin triple constructed from $(\d,\g,\h)_\beta$ (Section \ref{prelimdirac}) such that $\g\subset\q$ is Lagrangian, and $\gamma\in S^2\q$ is non-degenerate, with Lie algebra morphism $f:\q\to\d$.  From the construction, we have $f(\gamma)=\beta$, and $\r^\perp\cong\g^*$.\\
The identity fibre of $\A$ is a metrized linear groupoid $\q\gp\g$, with $\lambda\in S^2\g$ defined as the image of $\gamma$ under $\pg:\,S^2\q\to S^2\g$, (where $\pg:\,\q\to\g$ is the projection along $\r$), i.e.: $\lambda^\#=\pg\gamma^\#|_{\g^*}$.
\begin{lem}\label{lamequiv} Let $\pg:\,\q\to\g$ be the projection along $\r$, and (for this lemma only) let $\pg'$ denote the projection $\d\to\g$ along $\h$. If $\lambda = \pg(\gamma)$, then also $\lambda=\pg'(\beta)$.
\begin{proof}
Since $f(\gamma)=\beta$, and $\pg = \pg'\circ f$, we also have $\lambda = \pg'(\beta)$.
\end{proof}
\end{lem}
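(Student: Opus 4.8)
The plan is to derive the identity $\lambda=\pg'(\beta)$ from the two facts already recorded in the preamble to the lemma — namely $f(\gamma)=\beta$ and $\lambda=\pg(\gamma)$ — together with the single compatibility statement $\pg=\pg'\circ f$ between the two $\g$-projections. Once that compatibility is available, functoriality of the second symmetric power gives $S^2(\pg)=S^2(\pg')\circ S^2(f)$, and evaluating on $\gamma\in S^2\q$ yields $\lambda=\pg(\gamma)=\pg'\!\big(f(\gamma)\big)=\pg'(\beta)$, which is the assertion. So everything reduces to the linear-algebra identity $\pg=\pg'\circ f$ as maps $\q\to\g$.

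To verify $\pg=\pg'\circ f$, I would unpack the construction of $(\q,\g,\r)_\gamma$ from Section \ref{prelimdirac}. The morphism $f\colon\q\to\d$ is the descent of the target map of the groupoid $\d\times\d^*_\beta\gp\d$, and under the embedding $\g\hookrightarrow\q$ coming from that construction one has $f|_\g=\mathrm{id}_\g$ (it sends the class of $(x,0)$ to $x$); moreover $\r=f^{-1}(\h)$, so $f(\r)\subseteq\h$; and since $(\q,\g,\r)$ and $(\d,\g,\h)$ are Manin triples we have the direct sum decompositions $\q=\g\oplus\r$ and $\d=\g\oplus\h$ that define $\pg$ and $\pg'$ respectively. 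Then for $x=x_\g+x_\r\in\q$ with $x_\g\in\g$ and $x_\r\in\r$, we get $f(x)=x_\g+f(x_\r)$ with $f(x_\r)\in\h$, hence $\pg'\!\big(f(x)\big)=x_\g=\pg(x)$. Since $\lambda$ is literally defined by $\lambda^\#=\pg\,\gamma^\#|_{\g^*}$, one may alternatively run the same computation on sharp maps: $\pg=\pg'\circ f$ dualizes to $\pg^*=f^*\circ(\pg')^*$, which is exactly the assertion that $f^*$ identifies $\ann(\h)\subset\d^*$ with $\r^\perp\cong\g^*\subset\q^*$ compatibly with the two inclusions of $\g^*$; combining this with $\beta^\#=f\circ\gamma^\#\circ f^*$ (the sharp form of $f(\gamma)=\beta$) gives $\pg'\circ\beta^\#|_{\ann(\h)}=\pg\circ\gamma^\#|_{\g^*}=\lambda^\#$.

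The step I expect to be the only real point of care — though it is genuinely routine — is justifying $f|_\g=\mathrm{id}_\g$ and $f(\r)\subseteq\h$, i.e. that the copy of $\g$ inside $\q$ maps isomorphically onto the copy of $\g$ inside $\d$ and that $\r$ is carried into $\h$. Both are immediate from the explicit description $(\q,\g,\r)=(\s^{-1}(\g)/\s^{-1}(\g)^\perp,\,\g,\,f^{-1}(\h))$ and the formula $\t(x,\alpha)=x+\beta^\#(\alpha)$ for the target on $\d\times\d^*_\beta$, but they are precisely what makes $\pg'\circ f$ equal to $\pg$ rather than to some unrelated projection; everything else is formal.
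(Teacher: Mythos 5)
Your proposal is correct and follows the same route as the paper's proof, which simply invokes $f(\gamma)=\beta$ together with $\pg=\pg'\circ f$ and concludes by functoriality of $S^2$. The only difference is that you spell out why $\pg=\pg'\circ f$ holds (via $f|_\g=\mathrm{id}$, $f(\r)\subseteq\h$, and the decompositions $\q=\g\oplus\r$, $\d=\g\oplus\h$), which the paper leaves implicit.
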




\begin{lem}\label{fstarann} Let $V_i$ be vector spaces with bilinear forms $\beta_i\in S^2 V_i$, and let $f:V_1\to V_2$ be a linear map such that $f(\beta_1)=\beta_2$.
    \begin{enumerate}
        \item A subspace $W_2\subset V_2$ is $\beta_2$-coisotropic if and only if $f^{-1}(W_2)$ is $\beta_1$-coisotropic.
        \item If $W_1\subset V_1$ is $\beta_1$-coisotropic, this implies $f(W_1)$ is $\beta_2$-coisotropic.
    \end{enumerate}
\begin{proof}
\begin{enumerate}
\item Since $f(\beta_1)=\beta_2$, we have $\beta_2(\alpha,\alpha') = \beta_1(f^*(\alpha),f^*(\alpha'))$ for any $\alpha,\alpha'\in V_2^*$.  We also have $f^*(\ann(W_2)) = \ann(f^{-1}(W_2))$, since $\xi\in f^{-1}(W_2)$ if and only if $\forall \alpha\in\ann(W_2)$
        \[ 0 = \<\alpha,f(\xi)\> = \< f^*(\alpha),\xi\> = 0 \]
if and only if $\xi\in \ann(f^*(\ann(W_2)))$.\\  Hence
\begin{align*} W_2 \mathrm{\,\,coisotropic\,\,} &\Leftrightarrow\,\, \forall\, \alpha,\alpha'\in \ann(W_2), \,\beta_2(\alpha,\alpha')=0\\
                            &\Leftrightarrow \,\,\forall\, \alpha,\alpha'\in \ann(W_2), \,\beta_1(f^*(\alpha),f^*(\alpha'))=0\\
                            &\Leftrightarrow \,\,\forall\,\eta,\eta'\in \ann(f^{-1}(W_2)),\,\beta_1(\eta,\eta')=0\\
                            &\Leftrightarrow \,\,f^{-1}(W_2) \mathrm{\,\,coisotropic\,\,}
\end{align*}
    \item We have $f(W_1)$ is $\beta_2$-coisotropic if and only if $f^{-1}(f(W_1))$ is $\beta_1$-coisotropic, by Part 1.  But $f^{-1}(f(W_1))\supseteq W_1$ is $\beta_1$-coisotropic. \qedhere
\end{enumerate}
\end{proof}
\end{lem}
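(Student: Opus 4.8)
The statement is a piece of linear algebra, and the plan is to push the coisotropy condition across $f$ by means of its transpose $f^*\colon V_2^*\to V_1^*$. First I would record two elementary identities. Since $f(\beta_1)=\beta_2$, one has $\beta_2(\alpha,\alpha')=\beta_1(f^*\alpha,f^*\alpha')$ for all $\alpha,\alpha'\in V_2^*$ (equivalently $\beta_2^\#=f\circ\beta_1^\#\circ f^*$). Secondly, $f^*\big(\ann(W_2)\big)=\ann\big(f^{-1}(W_2)\big)$: for $\xi\in V_1$ we have $\xi\in f^{-1}(W_2)$ if and only if $f(\xi)\in W_2=\ann(\ann(W_2))$, that is, if and only if $\<f^*\alpha,\xi\>=\<\alpha,f(\xi)\>=0$ for every $\alpha\in\ann(W_2)$; this says $f^{-1}(W_2)=\ann\big(f^*(\ann(W_2))\big)$, and taking annihilators once more — using that in finite dimensions $\ann\circ\ann$ is the identity on subspaces — yields the claimed equality.

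For Part 1 I would then simply run the chain of equivalences: $W_2$ is $\beta_2$-coisotropic $\iff$ $\beta_2(\alpha,\alpha')=0$ for all $\alpha,\alpha'\in\ann(W_2)$ $\iff$ $\beta_1(f^*\alpha,f^*\alpha')=0$ for all such $\alpha,\alpha'$, by the first identity, $\iff$ $\beta_1(\eta,\eta')=0$ for all $\eta,\eta'\in\ann(f^{-1}(W_2))$, since by the second identity $f^*$ maps $\ann(W_2)$ onto $\ann(f^{-1}(W_2))$, $\iff$ $f^{-1}(W_2)$ is $\beta_1$-coisotropic.

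For Part 2 I would reduce to Part 1. Applying Part 1 with $W_2:=f(W_1)$, it suffices to check that $f^{-1}\big(f(W_1)\big)$ is $\beta_1$-coisotropic. But $W_1\subseteq f^{-1}(f(W_1))$, hence $\ann\big(f^{-1}(f(W_1))\big)\subseteq\ann(W_1)$; since $\beta_1$ vanishes identically on $\ann(W_1)$ by hypothesis, it vanishes a fortiori on the smaller subspace $\ann\big(f^{-1}(f(W_1))\big)$, so $f^{-1}(f(W_1))$ is $\beta_1$-coisotropic and Part 1 concludes.

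I do not expect a genuine obstacle here. The one point that wants a moment's care — the closest thing to a subtlety — is that one needs the \emph{equality} $f^*(\ann W_2)=\ann(f^{-1}W_2)$, not merely an inclusion, even though $f^*$ is in general neither injective nor surjective; this is precisely where finite-dimensionality (double-annihilator reflexivity) is used. Everything else is formal manipulation of annihilators together with the transported bilinear form.
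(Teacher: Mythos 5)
Your proof is correct and follows essentially the same route as the paper's: the same two identities ($\beta_2(\alpha,\alpha')=\beta_1(f^*\alpha,f^*\alpha')$ and $f^*(\ann(W_2))=\ann(f^{-1}(W_2))$), the same chain of equivalences for Part 1, and the same reduction of Part 2 to Part 1 via $W_1\subseteq f^{-1}(f(W_1))$. Your write-up is in fact slightly more careful than the paper's at the two places it glosses over, namely the double-annihilator step establishing the equality $f^*(\ann W_2)=\ann(f^{-1}W_2)$ and the observation that a subspace containing a coisotropic subspace is itself coisotropic.
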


\begin{cor} Let $\c\subseteq\d$.  Then $\c$ is $\beta$-coisotropic if and only if $f^{-1}(\c)$ is $\gamma$-coisotropic. Furthermore, if $\c\subset\d$ is $\beta$-coisotropic, then $\pg(\c)\subset\g$ is $\lambda$-coisotropic.
\end{cor}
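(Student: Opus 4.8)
The plan is to deduce both assertions directly from Lemma~\ref{fstarann}, feeding in the two compatibility facts that are already on record: the Lie algebra morphism $f\colon\q\to\d$ coming from the construction of $(\q,\g,\r)_\gamma$ out of $(\d,\g,\h)_\beta$ (Section~\ref{prelimdirac}) satisfies $f(\gamma)=\beta$, and by Lemma~\ref{lamequiv} one has $\lambda=\pg(\beta)$, where in this last equality $\pg$ denotes the projection $\d\to\g$ along $\h$ (the symbol $\lambda$ being defined as the image of $\gamma$ under the projection $\q\to\g$ along $\r$). So the corollary is a matter of invoking the general linear-algebra lemma with the right data.

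For the first equivalence, I would apply part~1 of Lemma~\ref{fstarann} with $V_1=\q$, $V_2=\d$, $\beta_1=\gamma$, $\beta_2=\beta$, and the linear map $f$; since $f(\gamma)=\beta$, the lemma yields immediately that $\c\subseteq\d$ is $\beta$-coisotropic if and only if $f^{-1}(\c)$ is $\gamma$-coisotropic, with nothing further to check. For the second assertion, I would apply part~2 of Lemma~\ref{fstarann} with $V_1=\d$, $V_2=\g$, $\beta_1=\beta$, $\beta_2=\lambda$, and the linear map $\pg\colon\d\to\g$ (projection along $\h$): the hypothesis $\pg(\beta)=\lambda$ is exactly Lemma~\ref{lamequiv}, so $\c$ being $\beta$-coisotropic forces $\pg(\c)$ to be $\lambda$-coisotropic. (One could instead route the second assertion through the first, noting that $f^{-1}(\c)$ is $\gamma$-coisotropic and then projecting to $\g$ along $\r$; this identifies with $\pg(\c)$ via $\pg=\pg\circ f$ on $\q$, but only after checking surjectivity of $f$, so the direct argument above is cleaner and I would present that.)

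I do not expect a genuine obstacle here — the content is entirely carried by Lemma~\ref{fstarann}. The only point requiring care is notational bookkeeping: $\pg$ is used for two different projections onto $\g$ (along $\h$ inside $\d$, and along $\r$ inside $\q$), so one must be explicit about which map, and hence which identity $\pg(\beta)=\lambda$ or $\pg(\gamma)=\lambda$, is being used at each step. Both identities are supplied, so once the data are matched up correctly the corollary follows in two lines.
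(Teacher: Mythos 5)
Your proposal is correct and is exactly the argument the paper intends: the corollary is stated as an immediate consequence of Lemma \ref{fstarann}, using $f(\gamma)=\beta$ for part 1 and $\lambda=\pg(\beta)$ from Lemma \ref{lamequiv} for part 2, precisely as you do. Your remark about the two different projections denoted $\pg$ is a fair point of bookkeeping, but there is no substantive difference from the paper's route.
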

\subsection{A Useful $LA$-Isomorphism}
\begin{prop}\label{THgHd} Let $H$ be a Lie group, let $(\d,\g,\h)$ be an $H$-equivariant Lie algebra triple (Definition \ref{lietrip}), and let $H\times\d$ be the action Lie algebroid for the action
    \[  \rho:\, \d \to \mathfrak{X}(H)\]
defined for any $\xi\in\d$ as
    \begin{equation}\label{diracanchor} \rho(\xi)_h = (h, \Ad_{h^{-1}}\ph\Ad_h \xi)\end{equation}
(in left trivialisation, T$H\cong H\times\h$).  Then $H\times\d$ and $\mathrm{T}H\times\g$ are isomorphic as Lie algebroids.\\
Here, $\mathrm{T}H\times\g$ is given the Lie algebroid structure determined by the bracket
    \[ [(\sigma_1,\zeta_1),(\sigma_2,\zeta_2)] = ([\sigma_1,\sigma_2],\,[\zeta_1,\zeta_2] + \L_{\sigma_1}\zeta_2-\L_{\sigma_2}\zeta_1)\]
for $(\sigma_i,\zeta_i)\in \Gamma(\mathrm{T}H\times\g)$, with anchor map as the projection to the first factor.
\begin{proof}
Let $\u:\,H\times\d\to\g$ be the map given by $\u(h,\xi) = \pg\Ad_h\xi$, and define
    \[\varphi:=(\a,\u):\, H\times\d \to \mathrm{T}H\times\g; \]
in left-trivialisation, $\varphi$ is given by
    \begin{align*} \varphi:\, H\times\d &\to \mathrm{T}H\times\g\\
                        (h,\xi) &\mapsto  ((h,\, \Ad_{h^{-1}}\ph\Ad_h \xi),\pg\Ad_h\xi)
    \end{align*}
which is invertible, meaning $\varphi$ is an isomorphism of vector bundles.  \\
For $\zeta,\xi\in \d$, we have
   \begin{align*}
   [\varphi(\zeta),\varphi(\xi)] &= [(\a(\zeta),\pg(\Ad_h\zeta)),(\a(\xi),(\pg(\Ad_h\xi))]\\
                                                      &= \big(\a([\zeta,\xi]),[\pg(\Ad_h\zeta),\pg(\Ad_h\xi)]_\q + \L_{\a(\zeta)}(\pg(\Ad_h\xi)) - \L_{\a(\xi)}(\pg(\Ad_h\zeta))\big).
   \end{align*}
   To compute the last two terms, we note
   \begin{equation*}
   \L_{\a(\zeta)}\pg(\Ad_h\xi)=\iota_{\a(\zeta)}\mathrm{d}(\pg(\Ad_h\xi)) = \iota_{\a(\zeta)}\pg[\theta^R,\Ad_h\xi] = \pg[\ph(\Ad_h\zeta),\Ad_h\xi].
   \end{equation*}
   But $\Ad_h\xi = \pg\Ad_h\xi + \pg\Ad_h\xi$, and $[\ph\Ad_h\zeta,\ph\Ad_h\xi]\in\h$, thus
   \begin{equation*}
   \L_{\a(\zeta)}\pg(\Ad_h\xi) = \pg[\ph\Ad_h\zeta,\pg\Ad_h\xi].
   \end{equation*}
   Hence, we can compute
   \begin{align*}
   [\pg&(\Ad_h\zeta),\pg(\Ad_h\xi)]+ \L_{\a(\zeta)}(\pg(\Ad_h\xi)) - \L_{\a(\xi)}(\pg(\Ad_h\zeta))\\
   &=\pg\Big([\pg(\Ad_h\zeta),\pg(\Ad_h\xi)] + +[\ph(\Ad_h\zeta),\pg(\Ad_h\xi)] +[\pg(\Ad_h\zeta),\ph(\Ad_h\xi)]\Big)\\
   &= \pg\big([\Ad_h\zeta,\Ad_h\xi] - [\ph(\Ad_h\zeta),\ph(\Ad_h\xi)]\big)\\
   &= \pg[\Ad_h\zeta,\Ad_h\xi]\\
   &= \pg\Ad_h[\zeta,\xi].
   \end{align*}
   We conclude
   \begin{equation*}
   [\varphi(\zeta),\varphi(\xi)] = \big(\a([\zeta,\xi]),\pg\Ad_h[\zeta,\xi]\big) = \varphi([\zeta,\xi]).\qedhere
   \end{equation*}
\end{proof}
\end{prop}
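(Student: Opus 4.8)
The plan is to write down an explicit vector-bundle isomorphism $\varphi\colon H\times\d\to\mathrm{T}H\times\g$ and check that it intertwines anchors and brackets. The natural candidate combines the anchor $\a=\rho$ with the ``moment-map-like'' projection obtained by transporting $\pg$ (the projection $\d\to\g$ along $\h$) by the adjoint action: set
\[ \u(h,\xi)=\pg\Ad_h\xi, \qquad \varphi:=(\a,\u). \]
In left trivialisation this reads $\varphi(h,\xi)=\big((h,\Ad_{h^{-1}}\ph\Ad_h\xi),\,\pg\Ad_h\xi\big)$.

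First I would verify that $\varphi$ is an isomorphism of vector bundles over $\mathrm{id}_H$. Fibrewise over $h$, the assignment $\xi\mapsto(\Ad_{h^{-1}}\ph\Ad_h\xi,\pg\Ad_h\xi)$ recovers $\ph\Ad_h\xi$ (apply $\Ad_h$ to the first component) and $\pg\Ad_h\xi$, hence $\Ad_h\xi$ and therefore $\xi$; since $\dim\d=\dim\h+\dim\g$ this map is a linear isomorphism. Compatibility with anchors is automatic: $\a$ is by definition the anchor of $H\times\d$, and projection onto the $\mathrm{T}H$-factor is the anchor of $\mathrm{T}H\times\g$, so the first component of $\varphi$ is precisely $\a$.

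The substantive step is bracket-compatibility. Since $\varphi$ is $C^\infty(H)$-linear and anchor-preserving, and both Lie algebroid brackets obey the Leibniz rule, it is enough to check $\varphi([\zeta,\xi])=[\varphi(\zeta),\varphi(\xi)]$ on constant sections $\zeta,\xi\in\d$ of the action algebroid (whose bracket is the constant section $[\zeta,\xi]_\d$). The $\mathrm{T}H$-components agree precisely because $\rho$ is a Lie-algebra homomorphism, which holds as $H\times\d$ is assumed to be an action algebroid. For the $\g$-components one must establish
\[ \pg\Ad_h[\zeta,\xi]=[\pg\Ad_h\zeta,\pg\Ad_h\xi]_\g+\L_{\a(\zeta)}(\pg\Ad_h\xi)-\L_{\a(\xi)}(\pg\Ad_h\zeta). \]
Here I would compute $\dr(\Ad_h\xi)=[\theta^R,\Ad_h\xi]$, where $\theta^R$ is the right Maurer--Cartan form, and note that the formula for $\a$ gives $\iota_{\a(\zeta)}\theta^R=\ph\Ad_h\zeta$; hence $\L_{\a(\zeta)}(\pg\Ad_h\xi)=\pg[\ph\Ad_h\zeta,\Ad_h\xi]=\pg[\ph\Ad_h\zeta,\pg\Ad_h\xi]$, the last equality because $[\h,\h]\subseteq\h$. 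Expanding $[\Ad_h\zeta,\Ad_h\xi]$ along $\d=\h\oplus\g$, the purely-$\h$ term $[\ph\Ad_h\zeta,\ph\Ad_h\xi]$ is killed by $\pg$, and the three remaining terms assemble exactly to the right-hand side above; but $\pg[\Ad_h\zeta,\Ad_h\xi]=\pg\Ad_h[\zeta,\xi]$ since $H$ acts on $\d$ by Lie automorphisms, matching the left-hand side.

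I expect the main obstacle to be the careful bookkeeping of left- versus right-trivialisations in the Lie-derivative computation: although $\a(\zeta)$ is prescribed by its left-trivialised value $\Ad_{h^{-1}}\ph\Ad_h\zeta$, contracting it against $\dr(\Ad_h\xi)$ naturally produces the right-trivialised value $\ph\Ad_h\zeta$, and getting this right is what makes the three terms collapse. Once bracket-compatibility is confirmed on constant sections, the Leibniz rule together with $C^\infty(H)$-linearity and anchor-compatibility of $\varphi$ propagates it to all sections, so $\varphi$ is an isomorphism of Lie algebroids.
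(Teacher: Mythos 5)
Your proposal is correct and follows essentially the same route as the paper: the same map $\varphi=(\a,\u)$ with $\u(h,\xi)=\pg\Ad_h\xi$, the same fibrewise invertibility check, and the same bracket computation via $\iota_{\a(\zeta)}\theta^R=\ph\Ad_h\zeta$ and the decomposition of $[\Ad_h\zeta,\Ad_h\xi]$ along $\d=\h\oplus\g$. Your explicit remark that Leibniz and $C^\infty(H)$-linearity reduce the check to constant sections is a welcome clarification the paper leaves implicit.
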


\subsection{Structure Maps}
In this section we look at compositions of specific structure maps for $CA$-groupoids and modules; this leads towards Proposition \ref{fpbeta}, producing a map $f_P$ which becomes an important module moment map for the pullbacks of Dirac homogeneous space $(H/K,\P,L)$.\\
Suppose $\A\gp\g$ is a $CA$-groupoid (Definition \ref{LAg}) over a group $H\gp\mathrm{pt}$, and let $\P\to M$ be a $CA$-module for $\A$ with moment map $\u:\,\P\to \g$.  From this we obtain maps $\u_m^*:\,\g^*\to\P_m$, and $\s_h^*,\t_h^*:\,\g^*\to\A_h$.  Since $\ker(\t_h)$ and $\ker(\s_h)$ are orthogonal by Proposition \ref{lgperp}, we have $\ker(\s_h) = \mathrm{ran}(\t_h^*)$.  Thus $\s_h\circ \t_h^* = 0$, implying that for all $\alpha\in \A^*_h$, $\t_h^*(\alpha)\circ 0_m \in \P_{h\cdot m}$ is defined.
\begin{lem} If $\P\to M$ is a $CA$-module for the $CA$-groupoid $\A\gp\g$ over $H\gp\{\mathrm{pt}\}$, with moment map $\u:\,\P\to \g$, then for any $\alpha\in \A^*\cong \A$
\[ \t_h^*(\alpha) \circ 0_m = \u_{h\cdot m}^*(\alpha). \]
\begin{proof}
Let $\xi\in \A_h$, $x\in \P_m$ such that $\s(\xi) = \u(x)$.  Then $\xi\circ x\in \P_{h\cdot m}$, and
\[ \<\xi\circ x, \u_{h\cdot m}^*(\alpha)\> = \< \u_{h\cdot m}(\xi\circ x),\alpha\> = \<\t_h(\xi),\alpha\> = \<\xi,\t^*_h(\alpha)\>. \]
This shows that $(\u^*_{h\cdot m}(\alpha),\t_h^*(\alpha),0_m)\in \P\times\overline{\A}\times\overline{\P}$ is orthogonal to the graph of the action map.  But the latter is Lagrangian by assumption, which proves the claim.
\end{proof}
\end{lem}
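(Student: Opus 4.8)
The plan is to establish the identity by a pairing (duality) argument that exploits the Lagrangian nature of the graph of the $CA$-module action. First I would check that $\t_h^*(\alpha)\circ 0_m$ is even defined: applying Proposition \ref{lgperp} fibrewise to $\A\gp\g$ gives $\ker(\s_h)=\ker(\t_h)^\perp=\mathrm{ran}(\t_h^*)$, so $\s_h(\t_h^*(\alpha))=0=\u(0_m)$, and hence $\t_h^*(\alpha)$ is composable with $0_m\in\P_m$, landing in $\P_{h\cdot m}$.

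Next I would take an arbitrary composable pair $\xi\in\A_h$, $x\in\P_m$ with $\s_h(\xi)=\u(x)$, so that $\xi\circ x\in\P_{h\cdot m}$, and compute, using the module axiom $\u(\xi\circ x)=\t_h(\xi)$ together with the definitions of the transpose maps,
\[ \<\xi\circ x,\,\u_{h\cdot m}^*(\alpha)\>_{\P} \;=\; \<\u_{h\cdot m}(\xi\circ x),\alpha\> \;=\; \<\t_h(\xi),\alpha\> \;=\; \<\xi,\,\t_h^*(\alpha)\>_{\A}. \]
This says exactly that the triple $(\u_{h\cdot m}^*(\alpha),\,\t_h^*(\alpha),\,0_m)\in\P\times\overline{\A}\times\overline{\P}$ is orthogonal to every element $(\xi\circ x,\xi,x)$ of the graph of the module action.

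To finish, I would invoke that the graph of the action map $\mathrm{Act}:\A\times\P\da\P$ is Lagrangian in $\P\times\overline{\A}\times\overline{\P}$ along the graph of the $H$-action on $M$ — this is part of the definition of a $CA$-module. Since a Lagrangian subbundle equals its own orthogonal complement, the triple $(\u_{h\cdot m}^*(\alpha),\t_h^*(\alpha),0_m)$ must itself lie in the graph; but the unique element of the graph whose $\overline{\A}$-component is $\t_h^*(\alpha)$ and whose $\overline{\P}$-component is $0_m$ is $(\t_h^*(\alpha)\circ 0_m,\t_h^*(\alpha),0_m)$ (the action is a genuine map), so the first components agree, yielding $\u_{h\cdot m}^*(\alpha)=\t_h^*(\alpha)\circ 0_m$.

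The one place that requires care — and the conceptual heart of the argument — is that the vectors $\xi\circ x$ need not span all of $\P_{h\cdot m}$, so equality of two covectors cannot be deduced from equality of their pairings with all such $\xi\circ x$ alone; it is precisely the maximal isotropy (Lagrangian property) of the graph that upgrades the orthogonality statement to membership. Everything else is a routine unwinding of the definitions of $\u^*$ and $\t_h^*$, and (in the alternative computation $\<\xi\circ x,\t_h^*(\alpha)\circ 0_m\>=\<\xi,\t_h^*(\alpha)\>+\<x,0_m\>$) of the compatibility of the groupoid action with the metric.
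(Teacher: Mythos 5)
Your proof is correct and follows essentially the same route as the paper's: the identical pairing computation $\<\xi\circ x,\u_{h\cdot m}^*(\alpha)\>=\<\xi,\t_h^*(\alpha)\>$ showing the triple $(\u_{h\cdot m}^*(\alpha),\t_h^*(\alpha),0_m)$ is orthogonal to the graph of the action, followed by the Lagrangian property of that graph. Your added remarks — verifying composability of $\t_h^*(\alpha)$ with $0_m$ via $\ker(\s_h)=\mathrm{ran}(\t_h^*)$, and explaining why maximal isotropy (rather than mere spanning) is what closes the argument — are details the paper leaves implicit, and they are accurate.
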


\subsection{Some Important Maps}
Let $H\times \d$ over $H$ be the action Lie algebroid for the action $\rho_1:\d \to \mathfrak{X}(H)$ given by Equation \eqref{diracanchor}.  Let $H\times\r$ over $H$ be the action Lie algebroid for the action $\rho_2:\r \to \mathfrak{X}(H)$, given by $\rho_2(\tau) = -f(\tau)^R$.  Define maps
\begin{align*}
\varphi:\, H\times \d&\to \mathrm{T}H\times\q, \qquad \qquad(h,\zeta)\mapsto(\a(h,\zeta),\pg(\Ad_h\zeta))\\
     \psi:\,H\times\r& \to \mathrm{T}H\times\q,\qquad\qquad (h,\tau)\mapsto (-f(\tau)^R,\tau).
\end{align*}
These maps are integral to the construction of the homogeneous spaces, so we will begin by proving a number of properties they have.
\begin{prop} The induced maps $\varphi:\,\d\to \Gamma(\mathrm{T}H\times\q)$, $\psi:\,\r\to\Gamma(\mathrm{T}H\times\q)$ are Lie algebra maps.
\begin{proof}  The result for $\varphi$ follows from Proposition \ref{THgHd}.\\
Secondly, for $\psi$: let $\nu,\eta\in\r$, then
\begin{align*}
  [\psi(\nu),\psi(\eta)] &= [(-f(\nu)^R,\nu),(-f(\eta)^R,\eta)]\\
                 &= ([f(\nu)^R,f(\eta)^R],\,[\nu,\eta]_\q - \L_{f(\nu)^R}\eta + \L_{f(\eta)^R}\nu)\\
                 &= (-f([\nu,\eta])^R,\, [\nu,\eta] + 0)  \\
                 &= \psi([\nu,\eta]).
  \end{align*}
  The 0 arises since $\nu,\eta$ are constant sections of $H\times\q$.
\end{proof}
\end{prop}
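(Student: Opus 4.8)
The plan is to treat $\varphi$ and $\psi$ separately, since the two assertions rest on different ingredients. For $\varphi$ essentially nothing new is needed: the formula $(h,\zeta)\mapsto(\a(h,\zeta),\pg(\Ad_h\zeta))$ is exactly the isomorphism of Proposition \ref{THgHd} attached to the underlying $H$-equivariant Lie algebra triple $(\d,\g,\h)$ — its second component lands in $\g$ — post-composed with the inclusion $\mathrm{T}H\times\g\hookrightarrow\mathrm{T}H\times\q$ induced by $\g\hookrightarrow\q$. Since $\g$ is a (Lagrangian) subalgebra of $\q$, the Lie algebroid $\mathrm{T}H\times\g$ of Proposition \ref{THgHd} sits inside $\mathrm{T}H\times\q$ (with the analogous bracket built from $[\,\cdot\,,\,\cdot\,]_\q$) as a Lie subalgebroid, so that inclusion is a morphism of Lie algebroids, hence a Lie algebra homomorphism on sections; composing with Proposition \ref{THgHd} gives the claim for $\varphi$.

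For $\psi$ I would compute directly with the bracket of sections of $\mathrm{T}H\times\q$. Fix $\nu,\eta\in\r$, regarded as constant sections of $H\times\q$, and collect the four inputs: $\r=f^{-1}(\h)$ is a subalgebra of $\q$, being the preimage of the subalgebra $\h\subset\d$ under the Lie algebra morphism $f$, so that $[\nu,\eta]_\q=[\nu,\eta]_\r$; $f$ is a Lie algebra morphism, so $[f(\nu),f(\eta)]_\d=f([\nu,\eta])\in\h$; the assignment $\h\to\mathfrak{X}(H)$, $\xi\mapsto\xi^R$, is a Lie algebra anti-homomorphism, so $[f(\nu)^R,f(\eta)^R]=-f([\nu,\eta])^R$; and $\nu,\eta$ constant forces $\L_{f(\nu)^R}\eta=\L_{f(\eta)^R}\nu=0$. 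Substituting into the bracket formula yields
\[
[\psi(\nu),\psi(\eta)]=\bigl[(-f(\nu)^R,\nu),(-f(\eta)^R,\eta)\bigr]=\Bigl([f(\nu)^R,f(\eta)^R],\ [\nu,\eta]_\q-\L_{f(\nu)^R}\eta+\L_{f(\eta)^R}\nu\Bigr),
\]
and the four inputs collapse the right-hand side to $\bigl(-f([\nu,\eta])^R,\ [\nu,\eta]\bigr)=\psi([\nu,\eta])$, as required.

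I do not expect a genuine obstacle: both halves are bookkeeping once Proposition \ref{THgHd} is available. The one place a slip can creep in is the sign in the first slot of $\psi$ — the built-in minus in $\psi(h,\tau)=(-f(\tau)^R,\tau)$ must cancel against the anti-homomorphism identity $[X^R,Y^R]=-[X,Y]^R$, so that $[\psi(\nu),\psi(\eta)]$ produces precisely $-f([\nu,\eta])^R$ rather than its negative. A secondary point to keep straight is that the computation takes place in the Lie algebra $\Gamma(\mathrm{T}H\times\q)$ of sections, so the $\L$-terms in the bracket genuinely occur; they vanish only because $\nu$ and $\eta$ were chosen constant.
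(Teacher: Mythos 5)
Your proposal is correct and follows essentially the same route as the paper: the $\varphi$ claim is reduced to Proposition \ref{THgHd} (your extra observation that one must post-compose with the Lie-subalgebroid inclusion $\mathrm{T}H\times\g\hookrightarrow\mathrm{T}H\times\q$ just makes the paper's one-line citation explicit), and the $\psi$ claim is the same direct bracket computation, with the same cancellation between the built-in minus sign in $\psi$ and the anti-homomorphism identity $[X^R,Y^R]=-[X,Y]^R$, and the same vanishing of the $\L$-terms for constant sections.
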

\begin{prop}\label{phipsimix} Let $\zeta\in \d$, and $\nu\in\r$.  Then
\begin{equation*}
[\psi(\nu), \varphi(\zeta)] = \psi\big(\pr[\nu,\pg(\Ad_h\zeta)]\big).
\end{equation*}
\begin{proof}
Computing, we have
\begin{align*}
[\psi(\nu),\varphi(\zeta)]&= [(-f(\nu)^R,\nu),(a(\zeta),\pg(\Ad_h\zeta))] \\
             &=\big([-f(\nu)^R,a(\zeta)], [\nu,\pg(\Ad_h\zeta)] + \L_{-f(\nu)^R}\pg(\Ad_h\zeta)\big)
\end{align*}
We see first that
\begin{align*}
\iota_{[-f(\nu)^R,\a(\zeta)]}\theta^R &= -\L_{f(\nu)^R}\iota_{\a(\zeta)}\theta^R +\iota_{\a(\zeta)}\L_{f(\nu)^R}\theta^R\\
              &=-\iota_{f(\nu)^R}\mathrm{d}\ph\Ad_h\zeta + \iota_{\a(\zeta)}[f(\nu),\theta^\R]\\
              &=-\iota_{f(\nu)^R}\ph[\theta^R,\Ad_h\zeta] + [f(\nu),\ph\Ad_h\zeta]\\
              &=-\ph\big([f(\nu),\Ad_h\zeta]-[f(\nu),\ph\Ad_h\zeta]\big)\\
              &=-\ph[f(\nu),\pg\Ad_h\zeta].
\end{align*}
We note that $-\ph[f(\nu),\pg\Ad_h\zeta] = f(\pr[\nu,\pg\Ad_h\zeta])$.  Secondly we see
\begin{align*}
 \L_{-f(\nu)^R}\pg(\Ad_h\zeta) &= -\iota_{f(\nu)^R}\pg[\theta^R,\Ad_h\zeta]\\
                    &=-\pg[f(\nu),\Ad_h\zeta]\\
                    &=-\pg[f(\nu),\pg(\Ad_h\zeta)].
 \end{align*}
 Hence, we have
 \begin{align*}
 [\psi(\nu),\varphi(\zeta)] &= ((-\ph[f(\nu),\pg\Ad_h\zeta])^R, [\nu,\pg\Ad_h\zeta] - \pg[f(\nu),\pg\Ad_h\zeta])\\
        &= (-f(\pr[\nu,\pg\Ad_h\zeta))^R, \pr[\nu,\pg\Ad_h\zeta])\\
        &=\psi(\pr([\nu,\pg\Ad_h\zeta])).
 \end{align*}
\end{proof}
\end{prop}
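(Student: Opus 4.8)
The plan is a direct computation using the explicit formulas $\varphi(h,\zeta) = (\a(\zeta), \pg(\Ad_h\zeta))$ and $\psi(h,\nu) = (-f(\nu)^R, \nu)$ together with the Lie algebroid bracket on $\mathrm{T}H \times \q$. First I would expand $[\psi(\nu), \varphi(\zeta)] = [(-f(\nu)^R, \nu), (\a(\zeta), \pg(\Ad_h\zeta))]$ via the bracket formula $[(\sigma_1,\zeta_1),(\sigma_2,\zeta_2)] = ([\sigma_1,\sigma_2],\, [\zeta_1,\zeta_2] + \L_{\sigma_1}\zeta_2 - \L_{\sigma_2}\zeta_1)$. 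Since $\nu$ is a constant section of $H \times \q$, the term $\L_{\a(\zeta)}\nu$ vanishes, so the $\q$-component reduces to $[\nu, \pg(\Ad_h\zeta)] + \L_{-f(\nu)^R}\pg(\Ad_h\zeta)$ and the $\mathrm{T}H$-component is $[-f(\nu)^R, \a(\zeta)]$.

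Next I would evaluate the vector field $[-f(\nu)^R, \a(\zeta)]$ by contracting it against the right Maurer--Cartan form $\theta^R$ and using Cartan calculus: $\iota_{[X,Y]}\theta^R = \L_X\iota_Y\theta^R - \iota_Y\L_X\theta^R$, together with $\iota_{\a(\zeta)}\theta^R = \ph\Ad_h\zeta$, $\L_{f(\nu)^R}\theta^R = [f(\nu),\theta^R]$, and the structure equation which gives $\mathrm{d}(\ph\Ad_h\zeta) = \ph[\theta^R, \Ad_h\zeta]$. After simplification the key point is that $f(\r) \subseteq \h$ (so $f(\nu)\in\h$) and $[\h,\h]\subseteq\h$, which lets one absorb the $\ph\Ad_h\zeta$-contribution and conclude $\iota_{[-f(\nu)^R, \a(\zeta)]}\theta^R = -\ph[f(\nu), \pg\Ad_h\zeta]$, i.e. $[-f(\nu)^R,\a(\zeta)] = \big(-\ph[f(\nu),\pg\Ad_h\zeta]\big)^R$. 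A parallel, shorter computation gives $\L_{-f(\nu)^R}\pg(\Ad_h\zeta) = -\pg[f(\nu), \pg\Ad_h\zeta]$, again using that $\pg$ annihilates $[f(\nu),\ph\Ad_h\zeta]\in\h$.

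Finally I would recognize both components as the image under $\psi$ of $\pr[\nu, \pg\Ad_h\zeta]$. This uses that $f$ intertwines the relevant projections --- $f\circ\pg = \pg\circ f$, $f|_\g = \mathrm{id}$, $f(\r)\subseteq\h$ --- so that $-\ph[f(\nu),\pg\Ad_h\zeta]$ equals $f(\pr[\nu,\pg\Ad_h\zeta])$ (with the $\q$-bracket in the argument), while $[\nu,\pg\Ad_h\zeta]_\q - \pg[f(\nu),\pg\Ad_h\zeta]$ equals $\pr[\nu,\pg\Ad_h\zeta]$. Matching against $\psi(\pr[\nu,\pg\Ad_h\zeta]) = \big(-f(\pr[\nu,\pg\Ad_h\zeta])^R,\ \pr[\nu,\pg\Ad_h\zeta]\big)$ then finishes the proof. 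I expect the main obstacle to be purely bookkeeping: getting the Cartan-calculus identities and the sign conventions for the right Maurer--Cartan form correct, and carefully distinguishing brackets and projections in $\q$ from those in $\d$ when passing through $f$; once these are pinned down the identity is mechanical.
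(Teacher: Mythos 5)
Your proposal is correct and follows essentially the same route as the paper: expand the bracket on $\mathrm{T}H\times\q$, compute the $\mathrm{T}H$-component by contracting $[-f(\nu)^R,\a(\zeta)]$ against $\theta^R$ with Cartan calculus, compute the $\q$-component via the Lie derivative term, and reassemble both as $\psi(\pr[\nu,\pg\Ad_h\zeta])$ using $f(\r)\subseteq\h$, $f|_\g=\mathrm{id}$, and the compatibility of $f$ with the projections. The identities you flag as the bookkeeping obstacles are exactly the ones the paper uses, so nothing is missing.
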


\begin{cor} If $\zeta\in\h$, then $\varphi(\zeta)=(\zeta^L,0)$, and hence $[\psi(\nu),\varphi(\zeta)] = 0$.
\end{cor}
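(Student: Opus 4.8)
The plan is to verify the two assertions in turn, each by a short direct computation from the definition of $\varphi$ together with the equivariance hypotheses built into the notion of an $H$-equivariant (Dirac) Manin triple.

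First I would unwind $\varphi(\zeta)$ for $\zeta\in\h$. By definition, $\varphi(h,\zeta)=\big(\a(h,\zeta),\pg(\Ad_h\zeta)\big)$, where in the left trivialisation $\mathrm{T}H\cong H\times\h$ the anchor is $\a(h,\zeta)=\big(h,\Ad_{h^{-1}}\ph\Ad_h\zeta\big)$ (Equation \eqref{diracanchor}). The decisive point is that the $H$-action on $\d$ restricts to the adjoint action of $H$ on $\h$, so $\Ad_h\zeta\in\h$ for all $h$ whenever $\zeta\in\h$. Since $\ph=\mathrm{pr}_\h$ is the projection along $\g$, this gives $\ph\Ad_h\zeta=\Ad_h\zeta$, hence $\a(h,\zeta)=(h,\Ad_{h^{-1}}\Ad_h\zeta)=(h,\zeta)$, which is precisely the value at $h$ of the left-invariant vector field $\zeta^L$. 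Likewise $\pg=\mathrm{pr}_\g$ kills $\h$, so $\pg\Ad_h\zeta=0$. Hence $\varphi(\zeta)=(\zeta^L,0)$.

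For the bracket, I would invoke Proposition \ref{phipsimix}, which gives $[\psi(\nu),\varphi(\zeta)]=\psi\big(\pr[\nu,\pg(\Ad_h\zeta)]\big)$ for any $\nu\in\r$; since $\pg(\Ad_h\zeta)=0$ by the first part, the argument is $\psi(\pr[\nu,0])=\psi(0)=0$. Equivalently, one may compute directly: with $\psi(\nu)=(-f(\nu)^R,\nu)$ and $\varphi(\zeta)=(\zeta^L,0)$, the $\mathrm{T}H$-component of the bracket is $[-f(\nu)^R,\zeta^L]=0$ because right- and left-invariant vector fields commute, while the $\q$-component is $[\nu,0]+\L_{-f(\nu)^R}0-\L_{\zeta^L}\nu=0$ since $\nu$ is a constant section of $H\times\q$.

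There is no real obstacle here; the only points demanding care are the correct use of the equivariance hypothesis (that $\Ad_h$ preserves $\h$) and keeping straight the conventions for the projections $\ph,\pg$ and for the left trivialisation of $\mathrm{T}H$.
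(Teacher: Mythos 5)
Your proof is correct and is essentially the argument the paper intends: the identity $\varphi(\zeta)=(\zeta^L,0)$ follows from $\Ad_h\zeta\in\h$ (the equivariance hypothesis), and the vanishing of the bracket is then immediate from Proposition \ref{phipsimix} with $\pg(\Ad_h\zeta)=0$. The alternative direct computation you sketch is also fine and consistent with the bracket formula on $\mathrm{T}H\times\q$.
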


\begin{cor} Let $\c\subset\d$ be a $\beta$-coisotropic subalgebra.  Then the sub-bundle
    \[ F:=\varphi(H\times \c)\oplus\psi(H\times\r)\subset \mathrm{T}H\times\q \]
is involutive.
\begin{proof}
The images of $\varphi$ and $\psi$ do not intersect, as the $\q$ component of the image of $\varphi$ is solely in $\g$, whereas the image of $\psi$ is solely in $\r$.  The previous propositions show that $F$ is involutive: checking the images of constant sections, we have
\begin{equation*}
[\varphi(\zeta_1)+\psi(\nu_1),\varphi(\zeta_2)+\psi(\nu_2)] = \varphi([\zeta_1,\zeta_2])+\psi\Big([\nu_1,\nu_2]+\pr\big([\nu_1,\pg\Ad_h\zeta_2]-[\nu_2,\pg\Ad_h\zeta_1]\big)\Big).
\end{equation*}
\end{proof}
\end{cor}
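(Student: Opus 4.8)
The plan is to deduce everything from the three bracket identities just established --- $[\varphi(\zeta_1),\varphi(\zeta_2)]=\varphi([\zeta_1,\zeta_2])$, $[\psi(\nu_1),\psi(\nu_2)]=\psi([\nu_1,\nu_2])$, and $[\psi(\nu),\varphi(\zeta)]=\psi(\pr[\nu,\pg(\Ad_h\zeta)])$ (Proposition \ref{phipsimix}) --- together with the Leibniz rule for the anchored bracket on $\mathrm{T}H\times\q$. First I would justify that $F$ is a genuine smooth sub-bundle: $\varphi$ is fibrewise injective and its $\q$-component lands in $\g=\pg(\d)$, whereas $\psi$ is fibrewise injective and its $\q$-component equals its argument, which lies in $\r$; since $\g\cap\r=0$, the images $\varphi(H\times\c)$ and $\psi(H\times\r)$ meet only along the zero section, so $F$ has the constant rank $\dim\c+\dim\r$ and the notation $\oplus$ is warranted.

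Next I would reduce involutivity to a check on generators. As $\c$ and $\r$ are fixed subspaces, $\Gamma(F)$ is generated over $C^\infty(H)$ by the constant sections $h\mapsto\varphi(\zeta)_h$ (for $\zeta\in\c$) and $h\mapsto\psi(\nu)_h$ (for $\nu\in\r$). Because $\mathrm{T}H\times\q$ carries the Lie algebroid structure with anchor equal to projection onto the $\mathrm{T}H$ factor, the identity $[\sigma,f\tau]=f[\sigma,\tau]+(\a(\sigma)f)\tau$ shows that the ``derivative'' terms arising when one expands a bracket of $C^\infty(H)$-combinations of generators are again $C^\infty(H)$-multiples of generators, hence sections of $F$; so it suffices to prove $[\varphi(\zeta_1)+\psi(\nu_1),\varphi(\zeta_2)+\psi(\nu_2)]\in\Gamma(F)$ for $\zeta_i\in\c$, $\nu_i\in\r$.

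For that, I would expand bilinearly into four terms and substitute. The pure-$\varphi$ term is $\varphi([\zeta_1,\zeta_2])$ by the Lie algebra morphism property of $\varphi$, and it lies in $\varphi(H\times\c)$ since $\c$ is a subalgebra; the pure-$\psi$ term is $\psi([\nu_1,\nu_2])$, which lies in $\psi(H\times\r)$ since $\r$ is a subalgebra; and the two mixed terms, using Proposition \ref{phipsimix} and antisymmetry, are $\psi(\pr[\nu_1,\pg(\Ad_h\zeta_2)])$ and $-\psi(\pr[\nu_2,\pg(\Ad_h\zeta_1)])$. The key observation is that although the argument of $\psi$ in each mixed term is a non-constant, $h$-dependent element of $\q$, it takes values in $\r$ at every point (that is exactly what the projection $\pr$ guarantees), so the term is again a section of $\psi(H\times\r)\subset F$. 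Collecting the four pieces yields the displayed formula $\varphi([\zeta_1,\zeta_2])+\psi\big([\nu_1,\nu_2]+\pr([\nu_1,\pg(\Ad_h\zeta_2)]-[\nu_2,\pg(\Ad_h\zeta_1)])\big)$, which lies in $\Gamma(F)$.

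There is no serious obstacle here; the only point requiring care is the bookkeeping in the last step --- recognising that the mixed brackets land in $\psi(H\times\r)$ despite their non-constant arguments, and that $\varphi([\zeta_1,\zeta_2])$ is a legitimate generator precisely because $\c$ is closed under $[\cdot,\cdot]$. Note also that the $\beta$-coisotropy of $\c$ plays no role in the involutivity argument itself (only the subalgebra properties of $\c$ and $\r$ and the three bracket formulas enter); it is retained as a standing hypothesis because $F$ is built for use in the homogeneous-space construction, where that condition is needed elsewhere.
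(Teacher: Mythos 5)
Your proof is correct and follows essentially the same route as the paper's: reduce involutivity to the constant sections, expand the bracket bilinearly, and apply the morphism properties of $\varphi$ and $\psi$ together with Proposition \ref{phipsimix} (and antisymmetry of the Lie algebroid bracket on $\mathrm{T}H\times\q$) for the mixed terms, noting that these land in $\psi(H\times\r)$ pointwise. The additional details you supply --- the direct-sum/constant-rank check via $\g\cap\r=0$, the Leibniz-rule reduction to generators, and the observation that the $\beta$-coisotropy of $\c$ is not actually used for involutivity --- are all accurate and merely make explicit what the paper leaves implicit.
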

\section{Dirac Homogeneous Space Construction}
In this section, we will look at both deriving the classifying data for Dirac homogeneous spaces, and then reconstructing homogeneous spaces from this data.
\begin{dfn}  Let $(H,\A,E)$ be a Dirac Lie group.  A \emph{Dirac module} $(M,\P,L)$ for $(H,\A,E)$ is a Manin Pair $(\P,L)$ over an $H$-manifold $M$, with $\P$ a $CA$-module for $\A$ such that the groupoid action
    \[ \mathrm{Act}:\,(\A,E)\times(\P,L)\da (\P,L) \]
is a Dirac morphism.  A \emph{Dirac homogeneous space} $(H/K,\P,L)$ for $(H,\A,E)$ is a Dirac module over an $H$-homogeneous space $H/K$, such that the group $\A/E$ acts transitively on $\P/L$.
\end{dfn}
\begin{rmrk} This definition differs from the notion of ``Dirac homogeneous space" presented in \cite{JotzDir}, being a Dirac structure in $\T(H/K)$ such that $(\T H,E)$ acts on it via a forward Dirac map.  For details and classification of this type of object, see Section 4 in \cite{JotzDir}.
\end{rmrk}
The main result of this chapter will be Theorem \ref{dirachom}, giving a classification of Dirac homogeneous spaces in terms of $K$-invariant $\beta$-coisotropic subalgebras $\c\subset\d$ such that $\c\cap\h=\k$.  This follows closely from Theorem \ref{LAhomgsp}:  since $\A/E\cong E^*$, and $\P/L\cong L^*$, the condition that $\A/E\act\P/L$ transitively is equivalent to $L$ being an $LA$ homogeneous space for $E$, which are classified by subalgebras $\c\subset\d$ with $\c\cap\h=\k$.

\begin{lem}\label{CAisDirac} Let $(H,\A,E)$ be a Dirac Lie group, and let $(\P,L)$ be a Manin Pair over $H/K$ such that $\P$ is a $CA$-module for $\A$, and $E$ acts on $L$.  Then the groupoid action
    \[ \mathrm{Act}:\,(\A,E)\times(\P,L)\da (\P,L) \]
is a Dirac morphism.
\begin{proof}
All that remains to show is that for every $x\in L_{h_1h_2K}$, there exists a unique $(\zeta,x')\in (E\times L)_{(h_1,h_2K)}$ such that $(\zeta,x')\sim_{\mathrm{Act}}x$.  By Proposition \ref{EandP},
    \[ (\P,L) = (H\times_K\p,H\times_K\l).\]
Hence by Equation \eqref{VBac}, any $[h_1h_2,x]\in L_{h_1h_1K}$ is uniquely Act-related to
    \[ (\zeta,x') = \big((h_1,\u([h_2,x]),[h_2,x]\big).\qedhere\]
\end{proof}
\end{lem}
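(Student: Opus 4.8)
The plan is to isolate the one new statement that must be checked. By hypothesis $\P$ is a $CA$-module for $\A$, so $\mathrm{Act}:\A\times\P\da\P$ is already a Courant morphism along the underlying map $H\times H/K\to H/K$, $(h,m)\mapsto h\cdot m$; to promote it to a morphism of Manin pairs (that is, a Dirac morphism) it therefore suffices to verify the lifting condition: for every $(h_1,h_2K)$ and every $x\in L_{h_1h_2K}$ there is a \emph{unique} pair $(\zeta,x')\in(E\times L)_{(h_1,h_2K)}$ with $\zeta\circ x'=x$.

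First I would record the normal forms. Since $E$ is a vacant $VB$-subgroupoid, Proposition \ref{vtriv} gives $E\cong H\times\g$, trivialized by the source map, so $\s\colon E_h\to\g$ is a linear isomorphism for every $h$. The hypothesis that $E$ acts on $L$ means exactly that the $E$-action on the $VB$-module $\P$ restricts to $L$; since $L\subseteq\P$ is a subbundle, this restricted action again has vector-subbundle graph, so $L$ is an $E$-module over $H/K$. Applying Proposition \ref{EandP} to $\P$ and to $L$ yields $\P\cong H\times_K\p$, $L\cong H\times_K\l$ with $\p=\P_{eK}$ and $\l=L_{eK}\subseteq\p$, moment map $\u([h,x])=h\bullet\u(x)$, and the $E$-action given by Equation \eqref{VBac}, namely $(h_1,\xi)\circ[h_2,x]=[h_1h_2,x]$ whenever $\s(h_1,\xi)=\u([h_2,x])$.

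With these in place the lifting condition is read off directly. Fix a representative $h_1h_2$ of its coset and write an arbitrary element of $L_{h_1h_2K}$ as $[h_1h_2,x]$ with $x\in\l$. Setting $\zeta:=(h_1,\u([h_2,x]))\in E_{h_1}$ and $x':=[h_2,x]\in L_{h_2K}$ one has $\s(\zeta)=\u(x')$, so the pair is composable, and Equation \eqref{VBac} gives $\zeta\circ x'=[h_1h_2,x]$; thus a lift exists. For uniqueness, let $(\zeta,x')\in(E\times L)_{(h_1,h_2K)}$ be any composable pair with $\zeta\circ x'=[h_1h_2,x]$ and write $x'=[h_2,x'_0]$; Equation \eqref{VBac} forces $[h_1h_2,x'_0]=[h_1h_2,x]$, hence $x'_0=x$ and $x'$ is determined, after which $\zeta$ is the unique element of $E_{h_1}$ with $\s(\zeta)=\u(x')$, because $\s\colon E_{h_1}\to\g$ is an isomorphism. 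Hence $\mathrm{Act}$ is a Dirac morphism.

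I do not anticipate a genuine obstacle: the substantive work — the trivializations $(\P,L)\cong(H\times_K\p,H\times_K\l)$ together with the explicit groupoid-action formula — is already supplied by Proposition \ref{EandP} and Equation \eqref{VBac}, and vacancy of $E$ hands over uniqueness for free. The only point needing a little care is bookkeeping in the associated bundle $H\times_K\p$: one must read the equality "$x'_0=x$" for a fixed coset representative of $h_1h_2K$, not merely up to the $K$-action, and I would spell that out.
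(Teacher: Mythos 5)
Your proposal is correct and follows essentially the same route as the paper's proof: invoke Proposition \ref{EandP} for the trivializations $(\P,L)\cong(H\times_K\p,H\times_K\l)$ and then read off from Equation \eqref{VBac} that $[h_1h_2,x]$ is uniquely Act-related to $\big((h_1,\u([h_2,x])),[h_2,x]\big)$. Your added detail on uniqueness (via vacancy of $E$ and fixing a coset representative) is a welcome elaboration of what the paper leaves implicit, but it is not a different argument.
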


Suppose $(H/K,\P,L)$ is a Dirac module for $(H,\A,E)$, and let $\pi:\,H\to H/K$ be the natural projection.  Then $(\tilde{\P},\tilde{L})=(\pi^!\P,\pi^!L)$ is a module for $(\A,E)$ as shown in Example \ref{pullbmod}.

\begin{prop}\label{fpbeta} Let $(H,\A,E)$ be a Dirac Lie group , and $(H,\pi^!\P,\pi^!L)=(H,H\times\tilde{\p},H\times\tilde{\l})$ the pullback of the Dirac homogeneous space $(H/K,\P,L)$ along the projection $\pi:\,H\to H/K$.  Then the map $f_{\tp}:\,\tilde{\p}\to \d$, given by $f_{\tp}:=\a_e\oplus\u_e$ satisfies
\[f_{\tp}\circ {f_{\tp}}^* = \beta^\#.\]
\begin{proof}
By Theorem \ref{almDmod} and Lemma \ref{lamequiv}, we have $\u\circ\u^*=\pg\circ\beta^\#|_{\g^*}$.  Next, we compute $\a_e\circ\u_e^*$ and $\u_e\circ \a_e^*$.\\  For this computation, let $\a_\A$ and $\a_\P$ denote the anchors of $\A$ and $\P$ respectively. For any $\alpha\in\g^*$, we have
\begin{align*}
        \a_{\P,e}(\u_e^*(\alpha)) &= \a_{\P,e}(\t_e^*(\alpha)\circ 0_e)\\
                                  &= \a_{\A,e}(\t_e^*(\alpha))\cdot \a_{\P,e}(0_e)\quad (\text{product\,in\, }\mathrm{T}H)\\
                                  &= \a_{\A,e}(\t_e^*(\alpha))\\
                                  &= \ph(\beta^\#(\alpha)),
\end{align*}
and hence $\a_e\circ\u_e^* = \ph\circ\beta^\#|_{\g^*}$.  Next, for any $\alpha\in\g^*,\,\nu\in\h^*$, we see
\begin{align*}
\<\u_e\circ\a_e^*(\nu),\alpha\> &= \<\a_e^*(\nu),\u_e^*(\alpha)\>\\
                               &=  \< \nu, \ph(\beta^\#(\alpha))\> \\
                               &= \< \beta^\#(\ph^*(\nu)),\alpha\>\\
                               &= \< \beta^\#(\nu),\alpha\>,
\end{align*}
where we used that $\ph^*:\,\h^*\hookrightarrow\d^*$ is the inclusion.  Since $\beta^\#$ takes $\h^*=\ann(\g)$ to $\g$, and $\alpha\in\g^*$ was arbitrary, this gives us $\u_e\circ\a_e^* = \beta^\#|_{\h^*}$.\\
Finally, let $\eta\in\d^*\cong\g^*\oplus\h^*$, and decompose as $\eta = \eta_{\g^*} + \eta_{\h^*}$; then
\begin{align*}
f_{\tp}\circ {f_{\tp}}^*(\eta) &= (\a_e+\u_e)\circ(\a_e^*+\u_e^*)(\eta_{\g^*} + \eta_{\h^*})\\
                     &= \a_e\circ\u_e^*(\eta_{\g^*}) + \u_e\circ\a_e^*(\eta_{\h^*}) + \u_e\circ\u_e^*(\eta_{\g^*}) \\
                     &= \pg\beta^\#(\eta_{\g^*}) + \beta^\#(\eta_{\h^*}) + (1-\ph)\beta^\#(\eta_{\g^*})\\
                     &= \beta^\#(\eta).\qedhere
\end{align*}
\end{proof}
\end{prop}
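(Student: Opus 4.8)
The plan is to split $f_{\tp}\circ f_{\tp}^{*}=\beta^{\#}$ into its matrix blocks relative to the transverse decompositions $\d=\g\oplus\h$ and $\d^{*}=\g^{*}\oplus\h^{*}$, where $\g^{*}=\ann(\h)$ and $\h^{*}=\ann(\g)$. With these identifications $f_{\tp}=\a_{e}\oplus\u_{e}$ has $\g$-component the moment map $\u_{e}\colon\tp\to\g$ and $\h$-component the anchor $\a_{e}\colon\tp\to\h$ of the $CA$-module $\pi^{!}\P$ restricted to its identity fibre $\tp$, so the dual $f_{\tp}^{*}$ has blocks $\u_{e}^{*}\colon\g^{*}\to\tp$ and $\a_{e}^{*}\colon\h^{*}\to\tp$, and $f_{\tp}\circ f_{\tp}^{*}$ is the sum of $\u_{e}\circ\u_{e}^{*}$, $\a_{e}\circ\u_{e}^{*}$, $\u_{e}\circ\a_{e}^{*}$ and $\a_{e}\circ\a_{e}^{*}$. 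On the other side, $\beta$-coisotropy of $\g$ forces $\beta^{\#}$ to map $\h^{*}=\ann(\g)$ into $\g$, so $\beta^{\#}|_{\h^{*}}$ has no $\h$-component; hence it suffices to establish $\u_{e}\circ\u_{e}^{*}=\pg\circ\beta^{\#}|_{\g^{*}}$, $\a_{e}\circ\u_{e}^{*}=\ph\circ\beta^{\#}|_{\g^{*}}$, $\u_{e}\circ\a_{e}^{*}=\beta^{\#}|_{\h^{*}}$ and $\a_{e}\circ\a_{e}^{*}=0$, and then reassemble using $\pg+\ph=\mathrm{id}_{\d}$.

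Three of these four blocks are formal. For $\u_{e}\circ\u_{e}^{*}$: since $(H/K,\P,L)$ is in particular an almost-Dirac module, Theorem~\ref{almDmod} (via Theorem~\ref{linmods}) gives $\u\circ\u^{*}=\lambda^{\#}$ for the element $\lambda\in S^{2}\g$ classifying the identity fibre of $\A$, and Lemma~\ref{lamequiv} identifies $\lambda$ with $\pg(\beta)$, giving $\u_{e}\circ\u_{e}^{*}=\pg\circ\beta^{\#}|_{\g^{*}}$. For $\a_{e}\circ\a_{e}^{*}$: by Example~\ref{pullbmod} the pullback $\pi^{!}\P$ is a $CA$-module, it is an action Courant algebroid with identity fibre $\tp$, and $\a_{e}$ and $\a_{e}^{*}$ are the identity-fibre restrictions of its anchor and of the metric-dual of its anchor, so this block vanishes because $\a\circ\a^{*}=0$ in any Courant algebroid. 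The block $\u_{e}\circ\a_{e}^{*}$ then reduces to $\a_{e}\circ\u_{e}^{*}$: for $\nu\in\h^{*}$ and $\alpha\in\g^{*}$, adjointness together with symmetry of $\beta^{\#}$ and the fact that $\h^{*}\hookrightarrow\d^{*}$ is the inclusion of $\ann(\g)$ give $\langle\u_{e}\a_{e}^{*}(\nu),\alpha\rangle=\langle\nu,\a_{e}\u_{e}^{*}(\alpha)\rangle=\langle\nu,\ph\beta^{\#}(\alpha)\rangle=\langle\beta^{\#}(\nu),\alpha\rangle$; since $\u_{e}\a_{e}^{*}(\nu)$ and $\beta^{\#}(\nu)$ both lie in $\g$ and $\g$ pairs nondegenerately with $\g^{*}$, they agree.

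The remaining, and genuinely computational, block is $\a_{e}\circ\u_{e}^{*}=\ph\circ\beta^{\#}|_{\g^{*}}$, and I expect this to be the main obstacle. The strategy is first to invoke the structure-map identity $\t_{e}^{*}(\alpha)\circ 0_{e}=\u_{e}^{*}(\alpha)$ (the lemma of the preceding subsection, applied to the $\A$-module $\pi^{!}\P$), and then to push it through the anchor of $\pi^{!}\P$: because $\mathrm{Act}\colon(\A,E)\times(\pi^{!}\P,\pi^{!}L)\da(\pi^{!}\P,\pi^{!}L)$ is a Courant morphism, its anchor intertwines the groupoid product with multiplication in $\mathrm{T}H$, and $\a(0_{e})=0$, so $\a_{e}(\u_{e}^{*}(\alpha))=\a_{\A,e}(\t_{e}^{*}(\alpha))$. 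It then remains to evaluate $\a_{\A,e}\circ\t_{e}^{*}$ from the explicit normal form of the Dirac Lie group $\A$ in \cite{lmdir}: the anchor formula $\iota_{\rho(\zeta)}\theta^{L}_{h}=\Ad_{h^{-1}}\ph\Ad_{h}f(\zeta)$ specialises at $h=e$ to $\a_{\A,e}(\zeta)=\ph f(\zeta)$, and combining this with $\t_{e}^{*}=f^{*}|_{\g^{*}}$ and the identity $f\circ f^{*}=\beta^{\#}$ (a restatement of $f(\gamma)=\beta$ from the construction of $(\q,\g,\r)_{\gamma}$) yields $\a_{e}\circ\u_{e}^{*}=\ph\circ\beta^{\#}|_{\g^{*}}$. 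Finally, writing $\eta=\eta_{\g^{*}}+\eta_{\h^{*}}$ and summing the four blocks gives $f_{\tp}f_{\tp}^{*}(\eta)=(\pg+\ph)\beta^{\#}(\eta_{\g^{*}})+\beta^{\#}(\eta_{\h^{*}})=\beta^{\#}(\eta_{\g^{*}})+\beta^{\#}(\eta_{\h^{*}})=\beta^{\#}(\eta)$.
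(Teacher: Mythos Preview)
Your proposal is correct and follows essentially the same route as the paper: both decompose $f_{\tp}\circ f_{\tp}^{*}$ into the $\g^{*}/\h^{*}$ blocks, obtain $\u_{e}\u_{e}^{*}$ from Theorem~\ref{almDmod} and Lemma~\ref{lamequiv}, compute $\a_{e}\u_{e}^{*}$ via the structure-map identity $\u_{e}^{*}(\alpha)=\t_{e}^{*}(\alpha)\circ 0_{e}$ pushed through the anchor, deduce $\u_{e}\a_{e}^{*}$ by adjointness, and reassemble. Your version is slightly more explicit in two places---you justify $\a_{\A,e}\t_{e}^{*}=\ph\beta^{\#}|_{\g^{*}}$ by factoring through $f$ and $f^{*}$, and you name the $\a_{e}\a_{e}^{*}=0$ block separately (the paper silently drops it in the final expansion)---but neither changes the argument.
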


\subsection{From Homogeneous Spaces to $\c$}\label{Homtoc}
Let $(H,\A,E) = (H,H\times\q,H\times\g)$ be the Dirac Lie group associated to the Dirac Manin triple $(\d,\g,\h)_\beta$.  Let $K$ be a closed Lie subgroup of $H$, and let $(H/K,\P,L)$ be a Dirac homogeneous space for $(H,\A,E)$, with moment map $\u$.  Let $\p=\P_{eK}$, and $\l=L_{eK}$ so that $\p$ is $K$-invariant a metrized vector space with $\l\subset\p$ a $K$-invariant Lagrangian subspace. From Proposition \ref{EandP}, we know that
\[ (\P,L) \cong (H\times_K\p, H\times_K \l). \]
The moment map $\u$ injects $\l$ into $\g$ by Proposition \ref{lininj}.  If $\pi:\,H\to H/K$ is the natural projection, we consider the pullback Courant and Lie algebroids $(\pi^!\P,\pi^!L)$.  As these are also modules for $(H,\A,E)$ by Example \ref{pullbmod}, they must be of the form
\[ (\pi^!\P,\pi^!L) \cong (H\times\tilde{\p},H\times\tilde{\l}) \]
as a result of Theorem \ref{almDmod}.  By Proposition \ref{trivLiealgstr}, $\tl$ is a Lie algebra; indeed, the same argument shows that $\tp$, viewed as $H$-invariant sections of $\pi^!\P$, is closed under the Courant bracket.  As these are the sections which are constant relative to the trivialisation of $\pi^!\P$ by $E$, we can conclude from property 3. in Definition \ref{courdef} that the Courant bracket restricted to $\Gamma(\pi^!\P)^H$ is a Lie bracket, and hence $\tp$ is a Lie algebra.  Let $f_{\tp}:= \u_e\oplus\a_e:\, \tilde{\p}\to \d$; by Proposition \ref{uplusa}, it is a Lie algebra morphism, and by Proposition \ref{fpbeta}, $f_{\tp}\circ {f_{\tp}}^* = \beta^\#$.  Recall the formula for the pullback Lie algebroid:
    \[ \pi^!L := \{ (x,\zeta)\in \mathrm{T}H\times L\,|\, \mathrm{T}\pi(x) = \a(\zeta)\}. \]
Since $\k=\ker((\mathrm{T}\pi)_e)$, we have $\k\subset\tilde{\l}$ and $\tilde{\l}/\k=\l$; the maps $\u:\l\to\g$ and $\a_e:\,\k\to\h$ are injective, we conclude that $f_p$ injects $\tilde{\l}$ into $\d$ as a Lie subalgebra.  Hence, by Theorem \ref{qlinhom}, $(\tilde{\p},\tilde{\l})$ is a linear homogeneous space for the metrized linear groupoid $\d\times\d^*_\beta\gp\d$. In particular, this means $f_{\tp}$ injects $\tilde{\l}$ as a $\beta$-coisotropic subalgebra of $\d$.  We can see that $f_{\tp}(\tilde{\l})\cap\h=\k$, since the anchor map on $\pi^!L$ is the projection to the T$H$ factor, and the kernel of of $\mathrm{T}\pi_e$ is precisely $\k$.  Since $\l$ has a $K$-action, $\tilde{\l}$ inherits a $K$-action; since $f_{\tp}$ is $K$-equivariant, we conclude $f_{\tp}(\tilde{\l})$ is $K$-invariant.\\
We define
    \[\c:=f_{\tp}(\tilde{\l})\subset\d.\]

\subsection{From $\c$ to Dirac Homogeneous Spaces}
We now use $K$-invariant $\beta$-coisotropic subalgebras $\c\subset\d$ with $\c\cap\h=\k$ to construct Dirac homogeneous spaces $(H/K,\P,L)$.  We proceed by a reduction in stages: parallel to the construction of $\A$ as the coisotropic reduction $\A=C/C^\perp$, we construct a Courant algebroid of the form $D/D^\perp$ from $D\subseteq \T H\times\bar{\q}$, and then further reduce to $D_1/{D_1}^\perp$ for $D_1\subseteq \T(H/K)\times\bar{\q}$ defined as $D_1 = D//K$.  If $K=\{e\}$, then $(\P,L) = (D/D^\perp,H\times\c)$ is the homogeneous space associated to $\c$, and if $K$ is nontrivial then $(\P,L)=(D_1/{D_1}^\perp,H\times_K(\c/\k))$ is the homogeneous space.  In this section, we will use the identifications
    \begin{equation}\label{tangid} \T H = H\times(\h\oplus\h^*), \quad \T(H/K) = H\times_K(\h/\k\oplus\ann(\k)).\end{equation}
Let $(H,\A,E)$ be a Dirac Lie group, with associated Dirac Manin triple $(\d,\g,\h)_\beta$.  Let $K$ be a closed Lie subgroup of $H$ with Lie algebra $\k$, and let $\c\subseteq\d$ a $K$-invariant $\beta$-coisotropic subalgebra such that $\c\cap\h=\k$.  By Proposition 3.8 in \cite{lmdir}, $\A$ is constructed by considering the $CA$-groupoid
    \[\T H\times\bar{\q}\times\q\gp\h^*\times\q,\]
and taking the quotient $C/C^\perp$ where
   \begin{equation}\label{defC} C = \{(v,\alpha;\zeta,\zeta')\,|\,\iota_v\theta^L = f(\zeta')-\Ad_{h^{-1}}f(\zeta)\} \end{equation}
is an involutive, coisotropic subgroupoid, with $C^{(0)}=\h^*\times\q$.  The orthogonal $C^\perp$ is a subgroupoid with $(C^\perp)^{(0)} = \d^*$, embedded by the map
    \[ \mu \mapsto (-\mathrm{pr}_{\h^*}\mu,f^*(\mu)),\]
and hence $C/C^\perp$ is a groupoid over $(\h^*\times\q)/\d^*\cong \g$ as required.  The reduction
    \[ E:= (\mathrm{T}H\times\g\oplus\g)\cap C / (\mathrm{T}H\times\g\oplus\g) \cap C^\perp \]
is a Dirac structure in $\A$, making $(H,\A,E)$ a Dirac Lie group.\\
The Courant algebroid $\T H\times\bar{\q}$ is a $CA$-module for $\T H\times\bar{\q}\times\q$ by the moment map $\u((v,\alpha),\xi) = (\Ad_h\alpha,\t(\xi))$ for $((v,\alpha),\xi)\in \T_hH\times\bar{\q}$.  $\T H\times\bar{\q}$ contains an isomorphic\footnote{Using a restriction of the isomorphism $H\times\d\cong \mathrm{T}H\times\g$ from Proposition \ref{THgHd}} copy of $H\times\c$ given by
    \[ H\times\c \cong \{(v,\zeta)\in \mathrm{T}H\times\g\,|\,\Ad_{h^{-1}}\zeta+\iota_v\theta^L\in \c\} . \]
This can be written as $D\cap(\mathrm{T}H\times\g)$ with
    \begin{equation}\label{Ddef} D = \{(v,\alpha,\xi)\in \T H\times\bar{\q}\,|\,\Ad_{h^{-1}}f(\xi)+\iota_v\theta^L\in\c\}, \end{equation}
since we have
    \begin{equation}
    \label{DcapTHg}
    \begin{split}
    D\cap \big(\mathrm{T}H\times\g\big) &= \{ (v,0,\zeta)\in \T H\times\bar{\q}\,|\,\Ad_{h^{-1}}f(\zeta)+\iota_v\theta^L\in \c,\,\zeta\in\g\}\\
                                    &= \{ (v,0,\zeta)\,|\,\Ad_{h^{-1}}\zeta+\iota_v\theta^L\in \c\} \\
                                    &\cong H\times\c,
    \end{split}
    \end{equation}
as $f|_\g = \mathrm{id}$.
\begin{prop}\label{DredKe} $D$ is a coisotropic, involutive sub-bundle of $\T H\times\bar{\q}$, of rank $(\dim\c+\dim\d)$.  It is a $VB$-module over $C\gp\h^*\times\q$ with $\u(D^\perp)\subseteq (C^\perp)^{(0)}$, and it follows $\tilde{\P}:=D/D^\perp$ is a $CA$-module over $\A=C/C^\perp$, of rank $2\dim\c$.  The reduction of $\mathrm{T}H\times\g$ with respect to $D$ is Dirac structure $\tilde{L}\subset \tilde{\P}$, canonically isomorphic to $H\times\c$, which is an $E$-module for the Dirac structure $E\subseteq\A$.
\begin{proof}
\begin{enumerate}
\item First, we will show that $D$ is coisotropic.  By Equation \eqref{DcapTHg}, we have
    \[ D\cap\big(\mathrm{T}H\times\g\big)=H\times\c.\]
    Furthermore, $D$ contains a copy of T$^*H$ as elements $(0,\alpha,0)$, as well as a copy of $H\times\r$ with $\r=f^{-1}(\h)\subset\q$ embedded as elements $(v,0,\tau)$ with $\tau\in \r$ and $\iota_v\theta^L_h= -\Ad_{h^{-1}}f(\tau)$.  Since every element of $D$ can be decomposed uniquely as a sum of elements in each of these subsets, we can write $D$ as a direct sum

    \begin{equation}\label{Ddecomp} D\cong (H\times\c)\oplus\mathrm{T}^*H\oplus (H\times\r) \end{equation}
and so $D$ is a sub-bundle of rank $\dim\c+\dim\h+\dim\r=\dim\c+\dim\d$.  Hence, $D^\perp$ has rank $\dim\d-\dim\c$. We claim it is described by
    \begin{equation}\label{Dperpdf} D^\perp = \left\{ \big(0, -\<\theta^L_h,\delta\>,f^*(\Ad_h\delta)\big)\,|\,\delta\in\ann_{\d^*}(\c)\right\}. \end{equation}
Indeed, for any $(v,\alpha,\zeta)\in D$, we have
    \[ -\<\iota_v\theta^L_h,\delta\> - \<\zeta,f^*(\Ad_h\delta)\> = -\big(\<\iota_v\theta^L_h + \Ad_{h^{-1}}f(\zeta),\delta\>\big) = 0. \]
Furthermore, the right hand side of Equation \eqref{Dperpdf} is the correct rank as $\<\theta_h^L,\delta\>=0$ if and only if $\delta\in\ann_{\d^*}(\h)$, and hence $\Ad_h\delta\in \ann_{\d^*}(\h)$.  The map $f^*:\d^*\to\q^*$, however, restricts to an isomorphism $\ann_{\d^*}(\h)\to \g^* = r^\perp$, so
    \[ \delta \mapsto (-\<\theta^L_h,\delta\>,f^*(\Ad_h\delta)) \]
is injective, and rk$D^\perp = \dim(\ann_{\d^*}(\c)) = \dim\d - \dim\c$ as required.  We have $D^\perp\subset D$ since $\c$ is $\beta$-coisotropic, and $\beta^\#$ is $H$-equivariant:
    \[ \Ad_{h^{-1}}f(f^*(\Ad_h\delta)) = \Ad_{h^{-1}}\beta^\#(\Ad_h(\delta)) = \beta^\#(\delta)\in \beta^\#(\ann_{\d^*}(\c)) \subset \c. \]
\item Next, we'll show that $D$ is a module for the groupoid action of $C$:  let $(v,\alpha;\zeta,\zeta')\in C_h$ and $(v_1,\alpha_1;\zeta_1)\in D_{h_1}$ be composable.  Thus, we must have
    \[ \zeta' = \zeta_1,\qquad \alpha = \Ad_{h_1}\alpha_1. \]
Using left trivialisation, the composition is
    \[ (v,\alpha;\zeta,\zeta_1)\circ(v_1,\alpha_1;\zeta_1) = (\Ad_{h_1^{-1}}v+v_1,\alpha_1; \zeta). \]
To verify that this is in $D$, we note (using $\Ad_{h^{-1}}f(\zeta) = f(\zeta')-v$ by definition of $C$)
    \begin{equation}
    \label{CactonD}
    \begin{split}
        \Ad_{h_1^{-1}}\Ad_{h^{-1}}f(\zeta) + \Ad_{h_1^{-1}}v+v_1 &= \Ad_{h_1^{-1}}(f(\zeta')-v)+\Ad_{h_1^{-1}}v + v_1\\
                                    &= \Ad_{h_1^{-1}}f(\zeta_1)+v_1 \,\,\in\c.
    \end{split}
    \end{equation}
We also notice that $\u(D^\perp)\subset (C^\perp)^{(0)}$ as
    \[ \u\big(0, -\<\theta^L_h,\delta\>,f^*(\Ad_h\delta)\big) = (-\mathrm{pr}_{\h^*}\Ad_h\delta, f^*(\Ad_h\delta)). \]
Consequently, $D/D^\perp$ is a $C/C^\perp$ module (by Lemma \ref{VBquotactt}).
\item To show that $D$ is involutive, we note by Equation \eqref{Ddecomp} that the space of sections of $D$ is spanned by sections of the form
    \[ \varphi(\zeta) = (\a(\zeta),\pg(\Ad_h\zeta)) \]
for $\zeta\in \c$, sections of the form
    \[\psi(\tau) = (-f(\tau)^R,\tau) \]
for $\tau\in\r$, and sections $\mu\in \Gamma(\mathrm{T}^*H)$ .  As shown in Proposition \ref{phipsimix}, both $\varphi,\,\psi$ are bracket preserving.  In $\T H\times\q$, the bracket of any section of $\mathrm{T}^*H$ with any other section is again a section of $\mathrm{T}^*H$:
    \[ \Cour{(0,\mu_1,0),(v,\mu_2,\xi)} = (0, -\iota_v \mathrm{d}\mu_1, 0) \]
and hence, we only need consider the Courant brackets of the form $\Cour{\psi(\tau), \varphi(\zeta)}$.  But by Proposition \ref{phipsimix}, this is
    \[\Cour{\psi(\tau), \varphi(\zeta)} = \psi\big(\pr[\tau,\pg(\Ad_h\zeta)]\big) \in \Gamma(D).\]
Similarly, $\Cour{\varphi(\zeta), \psi(\tau)} = -\Cour{\psi(\tau), \varphi(\zeta)} + \a^*(\mathrm{d}\<\varphi(\zeta),\psi(\tau)\>)\, \in\, \Gamma(D)$, since $D$ contains the range of $\a^*$.  Hence $D$ is involutive, and so $D/D^\perp$ is a Courant algebroid by Proposition \ref{CAcoisred}. The graph of the groupoid action of $\A=C/C^\perp$ on $\P=D/D^\perp$ is obtained by reduction from the graph of the groupoid action of $\T H\times\bar{\q}\times\q$ on $\T H\times\bar{\q}$, and hence it is a $CA$-morphism $\A\times\P\da\P$.

\item Lastly, we consider the reduction of $\mathrm{T}H\times\g$.  The map $\psi$ injects $H\times\r$ into $C$, and the sum of the sub-bundles gives
\[\mathrm{T}H\times\g + \psi(H\times\r) = \mathrm{T}H\times\q.\]
Thus, we have $D+(\mathrm{T}H\times\g) = \T H\times\q$, which gives us $D^\perp\cap(\mathrm{T}H\times\g)=0$.  But the intersection of $D$ with $\mathrm{T}H\times\g\cong H\times\d$ is $H\times\c$.  This means that the reduction of T$H\times\g$ is $\tilde{L}\cong H\times\c$, embedded as a Dirac structure of $\tilde{\P}$.  $\tilde{L}$ carries an $E$-action, as if $(v,0,\zeta,\zeta')\in ((\mathrm{T}H\times\g\oplus\g)\cap C)_h$, $(v_1,\zeta')\in ((\mathrm{T}H\times\g)\cap D)_{h_1}$, then in left trivialisation
    \[ (v,0,\zeta,\zeta')\circ (v_1,\zeta') = (v_1 + \Ad_{{h_1}^{-1}}v, \zeta);\]
this is in $((\mathrm{T}H\times\g)\cap D)_{hh_1}$ by a similar computation to Equation \eqref{CactonD}, and hence the reduction yields an action of $E$ on $\tilde{L}$.
\end{enumerate}
\end{proof}
\end{prop}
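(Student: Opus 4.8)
The plan is to verify the four assertions of the proposition in turn, always working with the explicit description of $D$ from Equation \eqref{Ddef} and with the decomposition of its sections into the families $\varphi(H\times\c)$, $\psi(H\times\r)$ and $\Gamma(\T^*H)$ introduced in the propositions immediately preceding. First I would record the direct-sum decomposition
\[ D\;\cong\;(H\times\c)\oplus\T^*H\oplus(H\times\r), \]
obtained by observing that $\T^*H$ sits in $D$ as the elements $(0,\alpha,0)$, that $D\cap(\mathrm TH\times\g)\cong H\times\c$ by Equation \eqref{DcapTHg}, that $\psi(H\times\r)\subset D$ since $\iota_{-f(\tau)^R}\theta^L_h=-\Ad_{h^{-1}}f(\tau)$, and that these three sub-bundles span $D$ with trivial pairwise intersections; as $\dim\r=\dim\g$ and $\dim\d=\dim\g+\dim\h$ this yields $\rk D=\dim\c+\dim\d$. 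Next I would guess and check that
\[ D^\perp=\bigl\{\bigl(0,\,-\langle\theta^L_h,\delta\rangle,\,f^*(\Ad_h\delta)\bigr)\ \big|\ \delta\in\ann_{\d^*}(\c)\bigr\}, \]
the inclusion "$\subseteq$" following from pairing a candidate against a general $(v,\alpha,\zeta)\in D$ and using the defining relation $\iota_v\theta^L+\Ad_{h^{-1}}f(\zeta)\in\c$, and the dimension count following from the facts that $\langle\theta^L_h,\delta\rangle=0$ exactly when $\delta\in\ann_{\d^*}(\h)$ and that $f^*$ restricts to an isomorphism $\ann_{\d^*}(\h)\to\r^\perp$, so that $\delta\mapsto(-\langle\theta^L_h,\delta\rangle,f^*(\Ad_h\delta))$ is injective with image of dimension $\dim\d-\dim\c$.

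The coisotropy of $D$, i.e.\ the inclusion $D^\perp\subseteq D$, is the place where the $\beta$-coisotropy of $\c$ is used: for $\delta\in\ann_{\d^*}(\c)$ one computes $\Ad_{h^{-1}}f\bigl(f^*(\Ad_h\delta)\bigr)=\Ad_{h^{-1}}\beta^\#(\Ad_h\delta)=\beta^\#(\delta)$ using $f\circ f^*=\beta^\#$ (a consequence of $f(\gamma)=\beta$) together with the $H$-equivariance of $\beta^\#$, and $\beta^\#(\delta)\in\beta^\#(\ann_{\d^*}(\c))\subseteq\c$ precisely because $\c$ is $\beta$-coisotropic. Involutivity of $D$ then follows from the corollary to Proposition \ref{phipsimix} giving involutivity of $\varphi(H\times\c)\oplus\psi(H\times\r)$ (which uses that $\c$ and $\r$ are subalgebras), combined with the observations that in $\T H\times\bar{\q}$ the Courant bracket of any section of $\T^*H$ with an arbitrary section again lies in $\Gamma(\T^*H)\subset\Gamma(D)$ and that the symmetric correction $\a^*\dr\langle\varphi(\zeta),\psi(\tau)\rangle$ also lies in $\Gamma(\T^*H)$; since in addition $\a(D)\subseteq TH$ and $\a(D^\perp)=0$, Proposition \ref{CAcoisred} will give a Courant algebroid $D/D^\perp$ over $H$ once the module statements are in place.

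For the module structure I would use the structure maps of the $CA$-groupoid $\T H\times\bar{\q}\times\q\gp\h^*\times\q$ and of its $CA$-module $\T H\times\bar{\q}$, with moment map $\u((v,\alpha),\xi)=(\Ad_h\alpha,\t(\xi))$, and check directly that composing $(v,\alpha;\zeta,\zeta')\in C_h$ with $(v_1,\alpha_1;\zeta_1)\in D_{h_1}$ lands again in $D_{hh_1}$---the computation being exactly Equation \eqref{CactonD}, now allowing $\zeta_1$ in all of $\q$---and that $\u(D^\perp)$ consists of elements $(-\mathrm{pr}_{\h^*}\mu,f^*(\mu))$ with $\mu\in\d^*$, hence lies inside the embedded copy $(C^\perp)^{(0)}=\d^*$. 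By Lemma \ref{VBquotactt} this makes $\tilde{\P}=D/D^\perp$ a module over $\A=C/C^\perp$, of rank $\dim D-\dim D^\perp=2\dim\c$, and, the graph of the $\A$-action on $\tilde{\P}$ being the reduction of the graph of the $(\T H\times\bar{\q}\times\q)$-action on $\T H\times\bar{\q}$, it is a Courant morphism. Finally, for the reduction of $\mathrm TH\times\g$: Equation \eqref{DcapTHg} gives $D\cap(\mathrm TH\times\g)\cong H\times\c$, and $(D\cap(\mathrm TH\times\g))\cap D^\perp=0$ because the $\q$-components of the two sides lie in $\g$ and in $\r^\perp$ respectively, and $\g\cap\r^\perp=0$ ($\g$ being Lagrangian with $\g+\r=\q$); hence the image $\tilde L$ of $H\times\c$ in $\tilde{\P}$ is isomorphic to $H\times\c$, is isotropic (its $\T^*H$-component vanishes and its $\q$-component lies in the Lagrangian $\g$), has rank $\dim\c=\tfrac12\rk\tilde{\P}$, hence is Lagrangian, and is involutive since $\varphi$ preserves brackets and $\c$ is a subalgebra, so $\tilde L$ is a Dirac structure in $\tilde{\P}$; applying the same reduction to the subgroupoid $(\mathrm TH\times\g\oplus\g)\cap C$ acting on $(\mathrm TH\times\g)\cap D$---a computation like Equation \eqref{CactonD} with all data restricted to $\g$---shows the action descends, so that $\tilde L$ is an $E$-module. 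The step I expect to be the main obstacle is pinning down $D^\perp$ explicitly and verifying $D^\perp\subseteq D$ cleanly, since this is the hinge on which the $\beta$-coisotropy of $\c$, the identity $f\circ f^*=\beta^\#$ and the $H$-equivariance of $\beta^\#$ all turn; the bookkeeping of the stagewise reduction producing $\tilde L$ as exactly $H\times\c$ is a close second.
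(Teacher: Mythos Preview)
Your proposal is correct and follows essentially the same route as the paper's proof: the same direct-sum decomposition of $D$, the same explicit formula for $D^\perp$ verified by pairing and dimension count, coisotropy via $\beta$-coisotropy of $\c$ and $f\circ f^*=\beta^\#$, involutivity via the $\varphi$/$\psi$ calculus of Proposition~\ref{phipsimix}, and the module and reduction arguments. One small imprecision to watch in your final step: it is not true that the $\q$-component of every element of $D^\perp$ lies in $\r^\perp$; rather, an element of $D^\perp\cap(\mathrm{T}H\times\g)$ must have vanishing $\mathrm{T}^*H$-component, which forces $\delta\in\ann_{\d^*}(\h)$, and \emph{then} $f^*(\Ad_h\delta)\in\r^\perp$ --- the paper avoids this by instead noting $D+(\mathrm{T}H\times\g)=\T H\times\bar\q$ directly from $\psi(H\times\r)\subset D$.
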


\begin{cor}\label{DDpDirmod} The module $(H,\tilde{\P},\tilde{L}) = (H, D/D^\perp,H\times\c)$ is a Dirac module for $(H,\A,E)$.
\begin{proof}
This follows from Lemma \ref{CAisDirac} since the action $\A\act\tilde{P}$ is a $CA$-morphism, and $E$ acts on $L$.
\end{proof}
\end{cor}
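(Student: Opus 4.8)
The plan is to verify the three requirements in the definition of a Dirac module for $(H,\A,E)$ and then invoke Lemma~\ref{CAisDirac}. By definition, to say that $(H,\tilde{\P},\tilde{L})$ is a Dirac module we need: (i) $(\tilde{\P},\tilde{L})$ is a Manin pair over the $H$-manifold $M=H$ (the case $K=\{e\}$, with $H$ acting on itself by left multiplication); (ii) $\tilde{\P}$ is a $CA$-module for $\A$; and (iii) the groupoid action $\mathrm{Act}:\,(\A,E)\times(\tilde{\P},\tilde{L})\da(\tilde{\P},\tilde{L})$ is a Dirac morphism. Since almost all of this has already been assembled in Proposition~\ref{DredKe}, the task is to read off the relevant statements and check they match the hypotheses of Lemma~\ref{CAisDirac}.

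First I would record that $\tilde{\P}=D/D^\perp$ is a Courant algebroid: Proposition~\ref{DredKe} shows $D\subset\T H\times\bar{\q}$ is coisotropic and involutive with $\a(D)\subset\mathrm{T}H$ and $\a(D^\perp)=0$, so Proposition~\ref{CAcoisred} applies. That same proposition produces the Dirac structure $\tilde{L}\subset\tilde{\P}$ (canonically isomorphic to $H\times\c$) as the reduction of $\mathrm{T}H\times\g$ with respect to $D$; hence $(\tilde{\P},\tilde{L})$ is a Manin pair over $H$, which is (i). Requirement (ii) is the last assertion of Proposition~\ref{DredKe}: the graph of the $\A=C/C^\perp$ action on $\tilde{\P}=D/D^\perp$ is obtained by coisotropic reduction from the graph of the $\T H\times\bar{\q}\times\q$ action on $\T H\times\bar{\q}$, and is therefore itself a Courant morphism $\A\times\tilde{\P}\da\tilde{\P}$. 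Finally, Proposition~\ref{DredKe} also establishes that $\tilde{L}$ carries an action of the Dirac structure $E\subseteq\A$.

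With these facts in hand, $(\tilde{\P},\tilde{L})$ over $M=H=H/\{e\}$ is a Manin pair, $\tilde{\P}$ is a $CA$-module for $\A$, and $E$ acts on $\tilde{L}$, so Lemma~\ref{CAisDirac} applies verbatim and yields that $\mathrm{Act}:\,(\A,E)\times(\tilde{\P},\tilde{L})\da(\tilde{\P},\tilde{L})$ is a Dirac morphism. This is exactly (iii), completing the argument. The one point that is not pure bookkeeping is the uniqueness clause buried in (iii): every element of $\tilde{L}_{h_1h_2}$ must be $\mathrm{Act}$-related to a \emph{unique} element of $(E\times\tilde{L})_{(h_1,h_2)}$, and I would expect this to be the real obstacle. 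However, it is precisely what Lemma~\ref{CAisDirac} disposes of: using the trivialisation $(\tilde{\P},\tilde{L})\cong(H\times\tilde{\p},H\times\c)$ from Proposition~\ref{EandP} together with the explicit groupoid-action formula of Equation~\eqref{VBac}, the only admissible lift of $[h_1h_2,x]$ is $\big((h_1,\u([h_2,x])),[h_2,x]\big)$, so no additional verification is needed and the corollary follows.
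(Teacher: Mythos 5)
Your proposal is correct and takes essentially the same route as the paper: both reduce the claim to Lemma \ref{CAisDirac} after noting that Proposition \ref{DredKe} supplies the Manin pair, the $CA$-module structure on $\tilde{\P}$, and the $E$-action on $\tilde{L}$. Your version simply spells out the bookkeeping that the paper's one-line proof leaves implicit.
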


We will now use the action of the subgroup $K$ to further reduce: $K$ acts on $\T H\times\bar{\q}$, lifting the action $k.h = hk^{-1}$ on $H$, and the trivial action on $\q$. The generators for this action come from the inclusion
    \begin{equation}\label{Kgeninj} \k\to\Gamma(\T H\times\bar{\q}), \quad \kappa\mapsto ((\kappa^L,0);0) = \varphi(\kappa)\end{equation}
and are clearly isotropic.  Using this action, we can consider the reduction of $\T H\times\bar{\q}$.  Since, in left trivialisation, $\varphi(H\times\k) = (H\times(\k\times \{0\}))\times \{0\}$, we can conclude
    \[ \varphi(H\times\k)^\perp = (H\times(\h\times\ann(\k)))\times\bar{\q} \]
and hence $(\T H\times\bar{\q})//K = \T(H/K)\times\bar{\q}$.  Considering the reduction of $D\subset\T H\times\bar{\q}$, we notice that $\varphi(H\times\k)\subset D$, and
    \[ D\cap\varphi(H\times\k)^\perp = \{((h,v,\alpha);\zeta)\,|\,\alpha\in\ann(\k),\,v+\Ad_{h^{-1}}f(\zeta)\in\c\}. \]
Thus, the reduction gives us
    \[ D_1:= D//K = \{([h,(-\Ad_{h^{-1}}f(\zeta))_{\h/\k},\alpha];\zeta)\,|\,f(\zeta)\in\Ad_h(\c+\h)\} \subset \T(H/K)\times\bar{\q} \]
which, by virtue of the reduction, is an involutive sub-bundle. As a note, the subscript ${\h/\k}$ denotes the image under the map
    \[(\c+\h) \to (\c+\h)/\c \cong \h/(\c\cap\h) = \h/\k.\]
\begin{prop}
${D_1}$ is a coisotropic subbundle of $\T(H/K)\times\bar{\q}$ of rank $\dim\h/\k+\dim\g+\dim \c/\k$.  Its perpendicular is given by
  \begin{equation*}
  {D_1}^\perp = \{ ([h,0,-\Ad_{h^{-1}}\mathrm{pr}_{\h^*}(\delta)];f^*(\delta))\,|\,\delta\in \ann_{\d^*}(\Ad_h\c)\}
  \end{equation*}
and satisfies $\a({D_1}^\perp) = 0$.
\begin{proof}
\begin{enumerate}
\item The rank and coisotropicity of $D_1$ follows from the reduction:  $D$ is coisotropic, and since $\varphi(H\times\k)\cap D^\perp=0$, we have $\varphi(H\times\k)^\perp+D=\T H\times\bar{\q}$.  Hence,
        \begin{align*} \rk\big(D\cap\varphi(H\times\k)^\perp\big) &= \rk(D)+\rk\big(\varphi(H\times\k)^\perp\big) - \rk(\T H\times\bar{\q})\\
                            &=  \rk(D) - \rk(H\times\k),
        \end{align*}
    and
        \[ \rk\big(D\cap\varphi(H\times\k)\big) = \rk(H\times\k),\]
    and so we conclude that $\rk(D^\perp) = \rk(D) - 2\dim\k$.  Since $D$ is coisotropic, $D_1$ must be coisotropic by reduction.

\item Next, we give a description of $D^\perp$. Since rk$D_1 = \dim\d + \dim \c - 2\dim\k$, we have
   \begin{align*}
   \mathrm{rk}{D_1}^\perp &= \mathrm{rk}(\T(H/K)\times\bar{\q}) - \mathrm{rk}(D_1) \\
                        &= 2\dim(h/k)+2\dim\g - (\dim\d + \dim \c - 2\dim\k) \\
                        &= \dim\d - \dim \c.
   \end{align*}
  Let $B=\varphi(H\times\k)^\perp\subseteq \T H\times\bar{\q}$, and $\Phi: B\to B/B^\perp=\T(H/K)\times\bar{\q}$ the quotient map.  We have
            \[ {D_1}^\perp = \Phi(D)^\perp = \Phi(B^\perp + D^\perp) = \Phi(D^\perp).\]
  With the description
    \[ D^\perp = \left\{ \big(0, -\<\theta^L_h,\delta\>,f^*(\Ad_h\delta)\big)\,|\,\delta\in\ann_{\d^*}(\c)\right\}, \]
  from Equation \eqref{Dperpdf}, we have
    \begin{equation}\label{D1perp}
       {D_1}^\perp = \{ ([h,0,-\Ad_{h^{-1}}\mathrm{pr}_{h^*}(\delta)];f^*(\delta))\,|\,\delta\in \ann_{\d^*}(\Ad_h\c)\} .
    \end{equation}

  \item Finally, since the anchor map on $\T(H/K)\times\bar{\q}$ is the projection to the tangent bundle factor, it is clear that $\a({D_1}^\perp)=0$.
  \end{enumerate}
 \end{proof}
\end{prop}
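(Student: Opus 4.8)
This is a bookkeeping computation for the $K$-reduction, to be read off from the explicit descriptions of $D$ and $D^\perp$ in Proposition \ref{DredKe} (Equations \eqref{Ddef} and \eqref{Dperpdf}), the isotropic generators $\varphi(H\times\k)$ of Equation \eqref{Kgeninj}, and the reduction-in-stages principle of Lemma \ref{redinstage}. Write $B:=\varphi(H\times\k)^\perp$, with reduction map $\Phi_B\colon B\to B/B^\perp=\T(H/K)\times\bar{\q}$, so that by construction $D_1=\Phi_B(D\cap B)$; all components are to be tracked through the identifications \eqref{tangid}.

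\textbf{Rank and coisotropicity.} First I would record the two containments that feed Lemma \ref{redinstage}. On one hand $B^\perp=\varphi(H\times\k)\subseteq D$: for $\kappa\in\k\subseteq\h$ one has $\varphi(\kappa)=(\kappa^L,0,0)$, and $\iota_{\kappa^L}\theta^L+\Ad_{h^{-1}}f(0)=\kappa\in\c$, which is the defining condition \eqref{Ddef} of $D$. On the other hand $D^\perp\subseteq B$, visible directly from \eqref{Dperpdf}: the $\mathrm{T}^*H$-component of an element of $D^\perp$ is $-\langle\theta^L_h,\delta\rangle=-\mathrm{pr}_{\h^*}(\delta)$ with $\delta\in\ann_{\d^*}(\c)$, and since $\k\subseteq\c$ this annihilates $\k$, which is exactly the defining condition of $B$. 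Moreover $\varphi(H\times\k)\cap D^\perp=0$, since the generators have nonvanishing $\mathrm{T}H$-component (the $K$-action on $H$ is principal) while $D^\perp$ has vanishing $\mathrm{T}H$-component; taking perpendiculars gives $B+D=\T H\times\bar{\q}$, so $D\cap B$ has constant rank $\rk D+\rk B-\rk(\T H\times\bar{\q})=\rk D-\dim\k$, and hence
\[ \rk D_1=\rk(D\cap B)-\dim\k=\rk D-2\dim\k=\dim\c+\dim\d-2\dim\k. \]
Using $\dim\d=\dim\g+\dim\h$ (transverse subalgebras) and $\k\subseteq\c\cap\h$, this equals $\dim(\h/\k)+\dim\g+\dim(\c/\k)$, as claimed; in particular $D_1$ is a subbundle. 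Coisotropicity is then the conclusion of Lemma \ref{redinstage} applied to the coisotropic subspaces $B$ and $D$: since $B^\perp\subseteq D$, the intersection $D\cap B$ is coisotropic in $\T H\times\bar{\q}$, so its image $D_1=\Phi_B(D\cap B)$ is coisotropic in $B/B^\perp$.

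\textbf{The perpendicular.} For the formula I would use the quotient-metric identity from the proof of Lemma \ref{redinstage}, namely $\Phi_B(D)^\perp=\Phi_B(B^\perp+D^\perp)=\Phi_B(D^\perp)$, the last step because $B^\perp=\ker\Phi_B$ and $D^\perp\subseteq B$. Thus $D_1^\perp=\Phi_B(D^\perp)$, and it remains to push the explicit $D^\perp$ of \eqref{Dperpdf} through $\Phi_B$. An element $\big(0,\,-\langle\theta^L_h,\delta\rangle,\,f^*(\Ad_h\delta)\big)$ with $\delta\in\ann_{\d^*}(\c)$ descends to the class $\big[h,\,0,\,-\mathrm{pr}_{\h^*}(\delta)\,;\,f^*(\Ad_h\delta)\big]$ in $H\times_K(\h/\k\oplus\ann(\k)\oplus\bar{\q})$ (the $\mathrm{T}^*(H/K)$-component lies in $\ann(\k)$ since $\k\subseteq\c$). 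Setting $\gamma:=\Ad_h\delta$ (so $\delta=\Ad_{h^{-1}}\gamma$ and $\ann_{\d^*}(\c)$ becomes $\ann_{\d^*}(\Ad_h\c)$), using that $\mathrm{pr}_{\h^*}$ is $\Ad$-equivariant because it is dual to the $\Ad$-equivariant inclusion $\h\hookrightarrow\d$, rewrites this class as $\big[h,\,0,\,-\Ad_{h^{-1}}\mathrm{pr}_{\h^*}(\gamma)\,;\,f^*(\gamma)\big]$ with $\gamma\in\ann_{\d^*}(\Ad_h\c)$, which is exactly the asserted description \eqref{D1perp}. I would close this part by checking it is an honest subbundle of the right rank: the assignment $\gamma\mapsto\big([h,0,-\Ad_{h^{-1}}\mathrm{pr}_{\h^*}(\gamma)];f^*(\gamma)\big)$ is injective, since if both components vanish then $\mathrm{pr}_{\h^*}(\gamma)=0$, i.e. $\gamma\in\ann_{\d^*}(\h)$, on which $f^*$ is injective by Proposition \ref{DredKe}, forcing $\gamma=0$; and $\dim\ann_{\d^*}(\Ad_h\c)=\dim\d-\dim\c=\rk(\T(H/K)\times\bar{\q})-\rk D_1$.

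\textbf{Anchor, and the expected obstacle.} That $\a(D_1^\perp)=0$ is immediate: the anchor of $\T(H/K)\times\bar{\q}$ is projection onto the $\mathrm{T}(H/K)$-factor, and every element of $D_1^\perp$ has vanishing $\mathrm{T}(H/K)$-component. The only delicate step is the middle paragraph: the identifications \eqref{tangid} are not naive quotients but carry $\Ad$-twists in the fibres, so one must be consistent about which coset representative is used and where the $\Ad_h$ is absorbed — this is precisely what converts the $h$-independent index set $\ann_{\d^*}(\c)$ of $D^\perp$ into the $h$-dependent $\ann_{\d^*}(\Ad_h\c)$ of $D_1^\perp$. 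Everything else reduces to the two containments $B^\perp\subseteq D$, $D^\perp\subseteq B$ together with a dimension count.
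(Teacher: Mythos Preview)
Your proposal is correct and follows essentially the same approach as the paper: both arguments establish the containments $B^\perp=\varphi(H\times\k)\subseteq D$ and $D^\perp\subseteq B$, invoke the reduction-in-stages identity $D_1^\perp=\Phi_B(D)^\perp=\Phi_B(D^\perp)$ from Lemma~\ref{redinstage}, compute the rank by inclusion--exclusion, and then push the explicit description \eqref{Dperpdf} of $D^\perp$ through the quotient. You supply somewhat more detail than the paper (explicitly verifying the two containments, the injectivity check for the parametrization of $D_1^\perp$, and the $\Ad_h$-reparametrization $\gamma=\Ad_h\delta$), but the skeleton is identical.
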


\begin{prop}  Given $D\subset \T H\times\bar{\q}$ in Equation \eqref{Ddef}, with $D_1:= (D//K)$, then:
    \[ D_1 / D_1^\perp = (D/D^\perp)//K. \]
\begin{proof}
For brevity, define $B:= \varphi(H\times\k)^\perp$.  Let $\Phi:\,B\to B/B^\perp$ and $\mu:\,D\to D/D^\perp$ be the natural projections.  We have $B^\perp\subset D$, and $D^\perp\cap B^\perp=0$, and so by Lemma \ref{redinstage} we have
    \begin{align*} (D//K)/(D//K)^\perp &= \big(\Phi(D)/\Phi(D)^\perp\big) / K \\
                                       &= \big((D\cap B) / (D\cap B)^\perp \big)/ K\\
                                       &= \big( \mu(B) / \mu(B)^\perp\big) / K\\
                                       &= (D/D^\perp)//K. \qedhere
    \end{align*}
\end{proof}
\end{prop}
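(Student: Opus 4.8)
The plan is to recognize this identity as an instance of ``reduction in stages'' (Lemma \ref{redinstage}), carried out fibrewise over $H$ and then combined with the quotient by $K$. First I would fix notation: set $B := \varphi(H\times\k)^\perp \subset \T H\times\bar{\q}$, so that $B^\perp = \varphi(H\times\k)$. Since the generators $\varphi(H\times\k)$ of the $K$-action on $\T H\times\bar{\q}$ are isotropic (Equation \eqref{Kgeninj}), the sub-bundle $B$ is coisotropic, and $B/B^\perp = (\T H\times\bar{\q})//K = \T(H/K)\times\bar{\q}$. The sub-bundle $D$ is coisotropic and involutive and satisfies $D^\perp\subset D$ by Proposition \ref{DredKe}, and by construction $\varphi(H\times\k)\subset D$, i.e.\ $B^\perp\subseteq D$. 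Write $\Phi:\,B\to B/B^\perp$ and $\mu:\,D\to D/D^\perp$ for the two quotient maps.

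The hypothesis $B^\perp\subseteq D$ is exactly what Lemma \ref{redinstage} requires: $B\cap D$ is then coisotropic, and
\[
\Phi(D)/\Phi(D)^\perp \;=\; (B\cap D)/(B\cap D)^\perp \;=\; \mu(B)/\mu(B)^\perp,
\]
the last equality being the Remark following Lemma \ref{redinstage}. It then remains to unwind the identifications on the two ends. On the left, $\Phi(D)$ is precisely the sub-bundle of $\T(H/K)\times\bar{\q}$ underlying $D_1 = D//K$ before quotienting by $K$, and $\Phi(D)^\perp$ descends to $D_1^\perp$. On the right, $\mu(B) = \mu(D\cap\varphi(H\times\k)^\perp)$ is the perpendicular of the descended generators $\mu(\varphi(H\times\k))$ inside $D/D^\perp$, while $\mu(B)^\perp = \mu(\varphi(H\times\k))$ is that image itself (here using $\varphi(H\times\k)\subseteq D$). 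Passing both sides to the $K$-quotient, the argument runs
\begin{align*}
D_1/D_1^\perp &= \big(\Phi(D)/\Phi(D)^\perp\big)/K \\
              &= \big((B\cap D)/(B\cap D)^\perp\big)/K \\
              &= \big(\mu(B)/\mu(B)^\perp\big)/K \\
              &= (D/D^\perp)//K,
\end{align*}
the final step being the definition of the isotropic-generator reduction of the Courant algebroid $D/D^\perp$ by the descended $K$-action.

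The main obstacle I expect is the bookkeeping around the $K$-quotient rather than the linear algebra: one has to check that $D$ is genuinely $K$-invariant (so that $D^\perp$, and hence $D/D^\perp$, inherit the $K$-action and the descended generators $\mu(\varphi(H\times\k))$), and that $D^\perp\cap B^\perp = 0$, which guarantees these descended generators keep constant rank so that $(D/D^\perp)//K$ is well defined and has the expected rank. After that, matching $\Phi(D)/\Phi(D)^\perp$ carrying the residual $K$-action with $D_1/D_1^\perp$, and $\mu(B)/\mu(B)^\perp$ carrying the residual $K$-action with the definition of $(D/D^\perp)//K$, is the only place where care is needed; the purely fibrewise content is entirely supplied by Lemma \ref{redinstage} and its Remark.
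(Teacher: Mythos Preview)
Your proposal is correct and follows essentially the same approach as the paper's proof: you introduce the same $B=\varphi(H\times\k)^\perp$, the same projections $\Phi$ and $\mu$, verify $B^\perp\subset D$ and $D^\perp\cap B^\perp=0$, and then invoke Lemma \ref{redinstage} (and its Remark) to obtain the identical four-line chain of equalities. Your discussion of the bookkeeping around $K$-invariance and the constancy of rank is more explicit than the paper's, but the argument is the same.
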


\begin{cor} The coisotropic reduction ${D_1}/{D_1}^\perp$ contains $L:=\tilde{L}//K$ as a Dirac structure, characterised as follows:
    \begin{equation}\label{Linrd} L = \Big\{\Big([h,\,-(\Ad_{h^{-1}}\xi)_{\h/\k}],\,\xi\Big) \,\big|\,\, \xi\in \g\cap\Ad_h(\c+\h)\,\,\Big\}. \end{equation}
\begin{proof}
Since the Dirac structure $\tilde{L}\subset D/D^\perp$ contains $\varphi(H\times\k)$, its reduction $\tilde{L}//K$ is a Dirac structure in $(D/D^\perp)//K$ by \cite{iscourred}.  Since $\tilde{L}\cong H\times\c$ as Lie algebroids, this gives us that
    \[ L:= \tilde{L}//K \cong (H\times\c)//K = H\times_K(\c/\k) \]
which embeds into $\T(H/K)\times\bar{\q}$ via Equation \eqref{Linrd}, since
    \[ \c/\k = \c/(\c\cap\h) \cong (\c+\h)/\h.\]
\end{proof}
\end{cor}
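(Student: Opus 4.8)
The plan is to deduce the two assertions — that $L=\tilde{L}//K$ is a Dirac structure in $D_1/D_1^\perp$, and that it has the explicit form \eqref{Linrd} — in succession, the first by invoking the reduction theory already in place and the second by an element chase through the two quotients.

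\textbf{That $L$ is a Dirac structure.} By Proposition \ref{DredKe}, $\tilde{L}\subset\tilde{\P}=D/D^\perp$ is a Dirac structure, canonically isomorphic as a Lie algebroid to the action algebroid $H\times\c$; under this isomorphism the isotropic generators $\varphi(H\times\k)$ of the $K$-action on $\T H\times\bar\q$ correspond to $H\times\k\subseteq H\times\c$, which is legitimate because $\c\cap\h=\k$ forces $\k\subseteq\c$. By \eqref{Dperpdf} the subbundle $D^\perp$ has trivial $\mathrm{T}H$- and $\g$-components, so $\varphi(H\times\k)\cap D^\perp=0$ (hence $\varphi(H\times\k)$ injects into $\tilde{\P}$ and lands in $\tilde{L}$) and likewise $\tilde{L}\cap D^\perp=0$. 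Thus $\tilde{L}$ is a $K$-invariant Dirac structure in $\tilde{\P}$ with $\tilde{L}\cap\varphi(H\times\k)=\varphi(H\times\k)$ of constant rank $\dim\k$, and the Theorem on $K$-reduction with isotropic generators — equivalently, the reduction of Dirac structures in \cite{iscourred} — yields that $\tilde{L}//K$ is a Dirac structure in $\tilde{\P}//K$. The preceding proposition identifies $\tilde{\P}//K=(D/D^\perp)//K=D_1/D_1^\perp$, so $L$ is a Dirac structure there; along the way one records the abstract model $L\cong(H\times\c)//K=(H\times(\c/\k))/K=H\times_K(\c/\k)$, so $\rk L=\dim\c-\dim\k$.

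\textbf{The explicit description.} I would chase a generic element through both quotients. By \eqref{DcapTHg}, in left trivialisation $\tilde{L}$ is presented by the triples $(v,0,\zeta)\in\T H\times\bar\q$ with $\zeta\in\g$ and $\Ad_{h^{-1}}\zeta+\iota_v\theta^L\in\c$; reducing by $D^\perp$ changes nothing since $\tilde{L}\cap D^\perp=0$, while reducing by $K$ replaces $v$ by its class in $\mathrm{T}(H/K)=H\times_K(\h/\k)$ and divides out $\varphi(H\times\k)$, i.e. retains only $\iota_v\theta^L\bmod\k$. Writing $\iota_v\theta^L=c-\Ad_{h^{-1}}\zeta$ with $c\in\c$, both summands lie in $\c+\h$, so $\Ad_{h^{-1}}\zeta\in\c+\h$, i.e. $\zeta\in\g\cap\Ad_h(\c+\h)$; and since $\iota_v\theta^L+\Ad_{h^{-1}}\zeta$ vanishes in $(\c+\h)/\c\cong\h/\k$, the tangent slot of the reduced element equals $(\iota_v\theta^L)_{\h/\k}=-(\Ad_{h^{-1}}\zeta)_{\h/\k}$. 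Taking $\xi=\zeta$ produces exactly \eqref{Linrd}, and conversely every $\xi\in\g\cap\Ad_h(\c+\h)$ occurs, because $\xi\in\c+\h$ permits a choice of $v$ with $\iota_v\theta^L\equiv-\Ad_{h^{-1}}\xi$ modulo $\c$. A dimension count $\dim(\g\cap\Ad_h(\c+\h))=\dim(\c+\h)-\dim\h=\dim\c-\dim\k$, using $\d=\g\oplus\h$ and $\Ad_h\h=\h$, then shows the displayed set is all of $L$ rather than a proper subspace; independence of the representative $h$ follows from the $K$-invariance of $\c$, which is also encoded in the isomorphism $\c/\k=\c/(\c\cap\h)\cong(\c+\h)/\h$ placing $L$ inside $\T(H/K)\times\bar\q$.

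The first step is largely bookkeeping together with a citation of \cite{iscourred}, so the real work lies in the second step. The point requiring the most care is that the two reductions — first by $D^\perp$, then by $K$ — commute as the preceding proposition asserts, that $\tilde{L}$ passes through the $D^\perp$-quotient without change, and above all that the rewriting $\iota_v\theta^L\equiv-\Ad_{h^{-1}}\zeta$ is carried out in the correct quotient $(\c+\h)/\c$ and not in $\d/\c$ or in $\h$ itself. Beyond this I anticipate no genuine obstacle.
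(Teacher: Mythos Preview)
Your proposal is correct and follows essentially the same approach as the paper's proof: both invoke the Dirac reduction result from \cite{iscourred} for the first assertion, and both exploit the identification $\c/\k=\c/(\c\cap\h)\cong(\c+\h)/\h$ for the explicit description. You supply considerably more detail than the paper does --- the element chase through the two quotients, the verification that the tangent slot really computes to $-(\Ad_{h^{-1}}\xi)_{\h/\k}$ in the correct quotient $(\c+\h)/\c$, and the dimension count confirming equality --- whereas the paper simply asserts the embedding via the displayed isomorphism and leaves these computations implicit.
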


\begin{prop}\label{deq}  The Manin pair $(\P,L)=(D_1/{D_1}^\perp, \tilde{L}//K)$ over $H/K$ is a Dirac module for $(H,\A,E)$.
\begin{proof}
By Corollary \ref{DDpDirmod}, $(H,D/D^\perp,\tilde{L})$ is a Dirac module for $(H,\A,E)$.  The graph of the groupoid action of $\A$ on $D_1/{D_1}^\perp$ is obtained by the reduction of the groupoid action of $\A$ on $D/D^\perp$, hence defining a $CA$-action
    \[ \A\times \P \da \P.\]
Alternatively, since $D_1\subset \T(H/K)\times\bar{\q}$, the action of $\A$ on $\P$ is obtained by reduction of the graph of the groupoid action of $\T H\times\bar{\q}\times\q$ on $\T(H/K)\times\bar{\q}$: $C$ acts on $D$ by Proposition \ref{DredKe}, and so the reduction yields an action of $C$ on $D_1$ by the formula
\begin{align*}
  ((h_1,f(\zeta')&-\Ad_{h_1^{-1}}f(\zeta),\Ad_{h_2}\alpha);\zeta,\zeta')\circ([{h_2},-\Ad_{{h_2}^{-1}}f(\zeta')\md \c,\alpha],\zeta')\\
   &=([h_1h_2, \Ad_{{h_2}^{-1}}\big(f(\zeta')-\Ad_{{h_1}^{-1}}f(\zeta)\big)-\Ad_{{h_2}^{-1}}f(\zeta')\md \c,\alpha];\zeta)\\
   &=([h_1h_2, -\Ad_{(h_1h_2)^{-1}}f(\zeta)\md \c,\alpha];\zeta)
  \end{align*}
using the identifications in Equation \eqref{tangid}.  Since $\zeta'$ satisfies $f(\zeta')\in \Ad_{h_2}(\c+\h)$, and $f(\zeta')-\Ad_{h_1^{-1}}f(\zeta)\in \h$, it follows that $f(\zeta)\in \Ad_{h_1h_2}(\c+\h)$, and hence
    \[\big([h_1h_2, -\Ad_{(h_1h_2)^{-1}}f(\zeta)\md \c,\alpha];\zeta\big)\in D_1.\]
In this way, parallel to Proposition \ref{DredKe}, we get an action of $\A=C/C^\perp$ on $D_1/{D_1}^\perp$, and an action of $E$ on $L$.  The result then follows from Lemma \ref{CAisDirac}.
\end{proof}
\end{prop}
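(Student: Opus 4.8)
The goal is to check the two defining properties of a Dirac module: that $\P = D_1/{D_1}^\perp$ is a $CA$-module for $\A$ (with $(\P,L)$ a Manin pair), and that the action morphism $\mathrm{Act}:(\A,E)\times(\P,L)\da(\P,L)$ is a Dirac morphism. The strategy is to reduce almost everything to material already in place: Corollary \ref{DDpDirmod} gives that $(H,D/D^\perp,\tilde L)$ is a Dirac module for $(H,\A,E)$, and Lemma \ref{CAisDirac} says that once we know $\P$ is a $CA$-module, $(\P,L)$ is a Manin pair, and $E$ acts on $L$, the Dirac-morphism property of $\mathrm{Act}$ comes for free. So the real content is to transport the $CA$-module structure and the Dirac structure through the $K$-reduction.

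First I would recall that $(\P,L)$ is a Manin pair: by the preceding propositions $D_1$ is a coisotropic, involutive sub-bundle of $\T(H/K)\times\bar\q$ with $\a({D_1}^\perp)=0$, so Proposition \ref{CAcoisred} makes $D_1/{D_1}^\perp$ a Courant algebroid over $H/K$; and since the Dirac structure $\tilde L\subset D/D^\perp$ contains the isotropic generators $\varphi(H\times\k)$, its reduction $L=\tilde L//K$ is a Dirac structure in $(D/D^\perp)//K = D_1/{D_1}^\perp$ by the reduction theorem for Dirac structures with isotropic generators (cited as \cite{iscourred}), and is canonically $H\times_K(\c/\k)$, embedded as in Equation \eqref{Linrd}.

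Next I would produce the $CA$-action of $\A$ on $\P$. Two equivalent routes: (i) observe $\P = (D/D^\perp)//K$ and simply reduce the $CA$-action $\A\act D/D^\perp$ of Corollary \ref{DDpDirmod} by the $K$-action (whose generators are isotropic via \eqref{Kgeninj}); or (ii) work one level up and reduce the $CA$-action of $\T H\times\bar\q\times\q$ on $\T(H/K)\times\bar\q$ along the coisotropic $C$ acting on $D_1$. For route (ii) the only thing to check is that the explicit composition formula written in the excerpt lands back in $D_1$: given composable $(v,\alpha;\zeta,\zeta')\in C_{h_1}$ and an element of $D_1$ over $h_2$ one computes the product to be $([h_1h_2,-\Ad_{(h_1h_2)^{-1}}f(\zeta)\,\md\,\c,\alpha];\zeta)$, and since $\zeta'$ satisfies $f(\zeta')\in\Ad_{h_2}(\c+\h)$ while $f(\zeta')-\Ad_{h_1^{-1}}f(\zeta)\in\h$ forces $f(\zeta)\in\Ad_{h_1h_2}(\c+\h)$, the result indeed lies in $D_1$; the same computation restricted to $\g$-components (as in the last part of Proposition \ref{DredKe}) shows $E$ acts on $L$. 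Passing to the quotients $\A=C/C^\perp$, $\P=D_1/{D_1}^\perp$ is then routine using Lemma \ref{VBquotactt} together with $\u({D_1}^\perp)\subseteq(C^\perp)^{(0)}$.

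Finally, with $\P$ a $CA$-module for $\A$, $(\P,L)$ a Manin pair over $H/K$, and $E$ acting on $L$, Lemma \ref{CAisDirac} immediately gives that $\mathrm{Act}:(\A,E)\times(\P,L)\da(\P,L)$ is a Dirac morphism, so $(H/K,\P,L)$ is a Dirac module. I expect the main obstacle to be bookkeeping rather than conceptual: carefully justifying that the two reductions commute, i.e. that $(D/D^\perp)//K = D_1/{D_1}^\perp$ as Courant algebroids \emph{with their $\A$-actions} (this is where the reduction-in-stages Lemma \ref{redinstage} and the identifications $B=\varphi(H\times\k)^\perp$, $B^\perp\subset D$, $D^\perp\cap B^\perp=0$ must be handled precisely), and verifying the membership condition $f(\zeta)\in\Ad_{h_1h_2}(\c+\h)$ is preserved under composition so that the reduced action map is genuinely defined on all of $D_1$.
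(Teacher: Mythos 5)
Your proposal is correct and follows essentially the same route as the paper: it establishes the $CA$-action of $\A$ on $\P=D_1/{D_1}^\perp$ by reduction (offering both the reduction of the $\A$-action on $D/D^\perp$ by $K$ and the direct reduction of the $C$-action on $D_1$ with the same membership check $f(\zeta)\in\Ad_{h_1h_2}(\c+\h)$), notes that $E$ acts on $L$, and concludes via Lemma \ref{CAisDirac}. The extra care you flag about the two reductions commuting is already supplied by the preceding proposition identifying $(D/D^\perp)//K$ with $D_1/{D_1}^\perp$, so nothing further is needed.
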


\begin{lem}\label{D1hom} Consider the conditions in Proposition \ref{deq}.  Then $(H/K,D_1/{D_1}^\perp,H\times_K(\c/\k))$ is a Dirac homogeneous space for the Dirac Lie group $(H,\A,E)$.
\begin{proof}
Let $\p:=\P_{eK}$, and $\l:=\pg(\c)\cong\c/\k=L_{eK}$.  Since $\u_e:\,\pg(\c)\hookrightarrow\g$ with ${\u_e}\circ{\u_e^*}=\pg\beta^\#$, this means $\q/\g$ acts on $\p/\l$ transitively (Proposition \ref{lininj}), and so $\P/L$ is acted upon transitively by $\A/E$ (Proposition \ref{transiff}).
Hence, $(H/K, \P,L)$ is a Dirac homogeneous space for $(H,\A,E)$.
\end{proof}
\end{lem}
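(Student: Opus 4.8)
### Proof Plan for Lemma~\ref{D1hom}

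The plan is to verify the two ingredients that the definition of a Dirac homogeneous space requires: that $(H/K,\P,L)$ is genuinely a Dirac module for $(H,\A,E)$, and that the induced group action of $\A/E$ on $\P/L$ is transitive. The first ingredient has already been established in Proposition~\ref{deq}, so the real content of this lemma is the transitivity statement, and the strategy is to reduce it to the purely linear criterion of Proposition~\ref{lininj} via Proposition~\ref{transiff}.

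First I would identify the relevant fibres at the base point $eK$: set $\p := \P_{eK} = (D_1/{D_1}^\perp)_{eK}$ and $\l := L_{eK}$. Using the explicit description of $L$ in Equation~\eqref{Linrd}, one reads off $\l \cong \c/\k \cong \pg(\c)$, the last identification because $\c\cap\h = \k$ so that $\pg$ restricted to $\c$ has kernel exactly $\k$. By Proposition~\ref{transiff} (together with the Remark following it), the action $\A/E \act \P/L$ is transitive if and only if $\q/\g \act \p/\l$ is transitive. By Proposition~\ref{lininj}, this linear action is transitive precisely when the restricted moment map $\u_e|_{\l}\colon \l \hookrightarrow \g$ is injective. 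Since $\l = \pg(\c) \subseteq \g$ and $\u_e$ is (on this fibre) the identity-like map coming from the source/target reduction — more precisely, $\u_e$ restricted to $\pg(\c)$ is the inclusion into $\g$ — injectivity is immediate. The key input making everything consistent is Proposition~\ref{fpbeta}, which gives $f_{\tp}\circ f_{\tp}^* = \beta^\#$ and hence, after projecting, $\u_e \circ \u_e^* = \pg\circ\beta^\#|_{\g^*} = \lambda^\#$ by Lemma~\ref{lamequiv}; this is exactly the compatibility condition required to invoke Proposition~\ref{lininj}.

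Concretely, the steps in order are: (1) cite Proposition~\ref{deq} for the Dirac module structure; (2) compute $\l = L_{eK}\cong \pg(\c)$ from Equation~\eqref{Linrd} and note $\u_e|_\l$ is the inclusion $\pg(\c)\hookrightarrow\g$, hence injective; (3) invoke Proposition~\ref{fpbeta} and Lemma~\ref{lamequiv} to record $\u_e\circ\u_e^* = \lambda^\#$, so that $(\p,\l,\u_e)$ is a genuine metrized $\q$-module in the sense needed; (4) apply Proposition~\ref{lininj} to conclude $\q/\g\act\p/\l$ is transitive; (5) lift this to $\A/E\act\P/L$ via Proposition~\ref{transiff}; (6) conclude that $(H/K,\P,L)$ satisfies the definition of a Dirac homogeneous space.

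I expect the only mild subtlety — not really an obstacle — to be step (2): pinning down that the reduced moment map $\u_e$ on the fibre $\p = (D_1/D_1^\perp)_{eK}$ really does restrict to the inclusion $\pg(\c)\hookrightarrow\g$ on $\l$, as opposed to some twisted version. This follows by tracing through the reduction in stages: the moment map on $D/D^\perp$ descends from $\u((v,\alpha),\xi) = (\Ad_h\alpha,\t(\xi))$ on $\T H\times\bar\q$, and at $h=e$ on the copy $H\times\c$ this is just $\zeta\mapsto\t(\zeta) = \pg(\zeta)$ on $\c$; passing to the further $K$-reduction does not change the $eK$-fibre computation since $\varphi(H\times\k)\subset\tilde L$ is precisely the kernel being quotiented. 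Everything else is a direct citation of results already proved in the excerpt, so the lemma is essentially a bookkeeping assembly of Propositions~\ref{transiff}, \ref{lininj}, \ref{fpbeta} and \ref{deq}.
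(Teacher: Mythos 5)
Your proposal is correct and follows essentially the same route as the paper's proof: identify $\l = L_{eK}\cong \c/\k \cong \pg(\c)$, observe that $\u_e$ restricts to the inclusion $\pg(\c)\hookrightarrow\g$ with $\u_e\circ\u_e^* = \pg\beta^\# = \lambda^\#$, and then chain Proposition~\ref{lininj} with Proposition~\ref{transiff} to get transitivity of $\A/E\act\P/L$. The extra care you take in step (2), tracing the reduced moment map through the two-stage reduction, is a useful elaboration of a point the paper leaves implicit, but it is not a different argument.
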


\begin{lem}\label{keeh} If $K=\{e\}$, and thus $\c\cap\h=0$, the Dirac module
    \[(H,\tilde{\P},\tilde{L}) = (H,D/D^\perp,H\times\c)\]
is an homogeneous space for $(H,\A,E)$.
\begin{proof}
By the construction of $(\tilde{\P},\tilde{L})$ in Proposition \ref{DredKe}, the moment map on $H\times\c$ at the identity is given by $\u_e=\pg$, the projection to $\g$ along $\h$.  If $\c\cap\h=0$, then we have
    \[ \c \cong \c/\{0\} = \c/\c\cap\h \cong (\c+\h)/\h \cong \pg\c. \]
and hence, $\u_e$ is an injection of $\c$ into $\g$, and satisfies $\u_e\circ\u_e^* = \pg\beta^\#$.  Thus, by Proposition \ref{lininj} $\tilde{\p}/\c$ is an homogeneous space for $\q/\g$, and so $\A/E$ acts transitively on $\tilde{\P}/\tilde{L}$ (Proposition \ref{transiff}).
\end{proof}
\end{lem}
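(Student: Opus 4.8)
This is a plan for proving Lemma~\ref{keeh}. The statement is essentially the $K=\{e\}$ special case of the homogeneity assertion, and it should follow from the machinery already assembled: the construction of $(\tilde{\P},\tilde{L})=(D/D^\perp, H\times\c)$ in Proposition~\ref{DredKe}, the transitivity criterion of Proposition~\ref{lininj} for metrized linear groupoid modules, and the ``lifting'' of transitivity from the fibre to the bundle level in Proposition~\ref{transiff} (or its Remark).

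The plan is to proceed as follows. First I would recall from Proposition~\ref{DredKe} and its proof that the moment map of $\tilde{L}\cong H\times\c$ at the identity fibre is $\u_e=\pg$, the projection $\d\to\g$ along $\h$, restricted to $\c$. (This is how $D$ was defined in Equation~\eqref{Ddef} via $f|_\g=\mathrm{id}$, and how the reduction identifies $\tilde L$ with $H\times\c$ embedded using $\pg\Ad_h$.) Second, I would observe that when $K=\{e\}$ the hypothesis $\c\cap\h=\k$ becomes $\c\cap\h=0$, and hence the chain of isomorphisms
\[
\c \cong \c/(\c\cap\h) \cong (\c+\h)/\h \cong \pg(\c)
\]
shows $\u_e=\pg|_\c$ is injective as a map $\c\hookrightarrow\g$. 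Third, I would invoke Proposition~\ref{fpbeta} — whose conclusion $f_{\tp}\circ f_{\tp}^*=\beta^\#$ combined with the identity $\lambda=\pg(\beta)=\pg'(\beta)$ from Lemma~\ref{lamequiv} — to record that $\u_e\circ\u_e^*=\pg\circ\beta^\#|_{\g^*}=\lambda^\#$, the compatibility condition of Theorem~\ref{linmods}. Thus $(\tp,\c,\u_e)$ is genuinely a metrized module for the identity-fibre metrized linear groupoid $\q\gp\g$ with $\u_e|_\c$ injective.

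With these facts in hand, the conclusion is almost immediate: by Proposition~\ref{lininj}, injectivity of $\u_e|_\c:\c\hookrightarrow\g$ is exactly what makes the induced group action $\q/\g\act\tp/\c$ transitive, and Proposition~\ref{transiff} (applied to the $VB$-group $\A/E$ over $H$, the bundle $\tilde\P/\tilde L$ over $H$, with identity fibres $\q/\g$ and $\tp/\c$) then upgrades this to transitivity of $\A/E$ on $\tilde\P/\tilde L$. Together with Corollary~\ref{DDpDirmod}, which already established that $(H,\tilde\P,\tilde L)$ is a Dirac module, this gives that it is a Dirac homogeneous space for $(H,\A,E)$, completing the proof.

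I do not expect any serious obstacle here — all the real work (coisotropy, involutivity, the $CA$-module structure, the moment-map identities) was done in Propositions~\ref{DredKe} and~\ref{fpbeta}. The only point requiring mild care is making sure the identification $\u_e=\pg|_\c$ is read off correctly from the construction of $D$ and from the embedding $H\times\c\cong D\cap(\mathrm{T}H\times\g)$ in Equation~\eqref{DcapTHg}, and that the application of Proposition~\ref{transiff} is legitimate, i.e.\ that $\A/E$ really is a $VB$-group over $H$ with identity fibre $\q/\g$ (which follows from the discussion after Theorem~\ref{almDirLieG}, where $\A/E\cong E^*$ is shown to be a groupoid over $(\mathrm{core}(E))^*=0$). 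So the ``hard part'' is merely bookkeeping the identifications, not any new estimate or construction.
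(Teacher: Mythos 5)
Your proposal is correct and follows essentially the same route as the paper's proof: read off $\u_e=\pg|_\c$ from the construction of $D$, use $\c\cap\h=0$ to get injectivity via the chain $\c\cong(\c+\h)/\h\cong\pg(\c)$, verify $\u_e\circ\u_e^*=\pg\beta^\#=\lambda^\#$, and conclude by Proposition \ref{lininj} together with Proposition \ref{transiff}. The only difference is that you make explicit the appeals to Proposition \ref{fpbeta}, Lemma \ref{lamequiv}, and Corollary \ref{DDpDirmod} that the paper leaves implicit.
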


\begin{prop}\label{tcs} Let $(H,\A,E)$ be a Dirac Lie group, with Dirac Manin triple $(\d,\g,\h)_\beta$, and let $(H,\P',L')$ be a $CA$-module for $(H,\A,E)$ such that $f_{\tp}:=\a_e\oplus\u_e$ is injective on $\l':=L'_e$.  If $\c:=f_{\tp}(\l')$, and $D$ defined in Equation \eqref{Ddef}, then $(\P',L')$ and $(D/D^\perp, H\times\c)$ are isomorphic as action Courant/Lie algebroids.
\begin{proof}
Since $L'$ is an $E$-module, then $L'=H\times\l'$ as an action Lie algebroid whose action is determined by its value at the identity via the groupoid action of $E$.  The map $f_{\tp}:\,\l'\to \d$ is an injective Lie algebra morphism (Proposition \ref{uplusa}), and hence $L' \cong H\times\c$ as action Lie algebroids via the map $f_{\tp}$.\\
The $E$ action trivialises the Courant algebroid $\P'$ as $H\times \p'$ by Proposition \ref{EandP}.  Since $f_{\tp}:\l'\hookrightarrow \d$ with $f_{\tp}\circ{f_{\tp}}^*=\beta^\#$, the pair $(\p',\l')$ must be a linear homogeneous space for the metrized linear groupoid $\d\times\d^*_\beta\gp\d$, meaning
    \[\p' = \s^{-1}(f_{\tp}(\l'))/\s^{-1}(f_{\tp}(\l'))^\perp.\]
We obtain a moment map $\tilde{\u}:({D/D^\perp},\c)\to \d$ via $\tilde{\u}=\a_e\oplus\u_e$; we know that on $\c$, $\a_e = \ph$ and $\u_e = \pg$, meaning this new moment map on $\c$ is just the inclusion into $\d$.  By the same token with $(\p',\l')$, we obtain a groupoid action of $\d\times\d^*_\beta\gp \d$ on $\tilde{\u}:{D/D^\perp}_e\to \d$.  Since $\tilde{\u}\circ\tilde{\u}^* = \beta^\#$, and $\tilde{\u}$ is injective on $\c$, this makes $({(D/D^\perp)}_e,\c)$ a homogeneous space, and so
    \[ {(D/D^\perp)}_e = \s^{-1}(\c)/\s^{-1}(\c)^\perp\ = \p' \]
with $\tilde{\u}(\overline{(x,\alpha)}) = x + \beta^\#(\alpha)$ for $\overline{(x,\alpha)}$ the image of $(x,\alpha)\in \s^{-1}(f_{\tp}(\l'))$ in the quotient, and with the anchor being recovered as the projection to $\h$.  Hence, $D/D^\perp$ and $\P'$ are equivalent as bundles, and share the same anchor map at the identity fibres; since the anchor map is determined completely by the value at the identity for these action Courant modules, this means that $\P' = D/D^\perp$ as Courant algebroids.
\end{proof}
\end{prop}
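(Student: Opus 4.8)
The plan is to push all of the structure down to the identity fibres over $e\in H$ and to identify both $\P'$ and $D/D^\perp$ there with the normal form for linear homogeneous spaces of the metrized linear groupoid $\d\times\d^*_\beta\gp\d$ attached to $(\d,\g,\h)_\beta$, via Theorem \ref{qlinhom}. The underlying principle is that an action Courant (respectively Lie) algebroid over $H$ is completely determined by its identity fibre together with the moment map into $\d$ (compare Propositions \ref{anchorid}, \ref{trivLiealgstr} and the anchor formula of Proposition 5.1 of \cite{lmdir}), so once the identity-fibre data of the two modules are matched the whole structure follows.

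First I would treat $L'$. Since the base is $H$ itself ($K=\{e\}$), the $E$-action trivialises $L'$: by Proposition \ref{EandP}, $L'\cong H\times\l'$ as a bundle, and as in Proposition \ref{trivLiealgstr} $L'$ is an action Lie algebroid determined by its identity fibre $\l'$. By Proposition \ref{uplusa} the map $f_{\tp}=\a_e\oplus\u_e:\l'\to\d$ is a Lie algebra morphism; it is injective by hypothesis, so it identifies $\l'$ with $\c=f_{\tp}(\l')\subset\d$, and since both $L'$ and $H\times\c$ are the action Lie algebroid for the $\d$-action of Equation \eqref{diracanchor}, this upgrades to an isomorphism $L'\cong H\times\c$ of action Lie algebroids. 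For the Courant side, the $E$-action likewise trivialises $\P'\cong H\times\p'$ with $\p'=\P'_e$ a metrized vector space (Proposition \ref{EandP}, Theorem \ref{linmods}), and $\l'\subset\p'$ is Lagrangian since $L'$ is a Dirac structure. Proposition \ref{fpbeta} gives $f_{\tp}\circ f_{\tp}^*=\beta^\#$, so $\p'$ is a metrized module for $\d\times\d^*_\beta\gp\d$ with moment map $f_{\tp}$; as $f_{\tp}|_{\l'}$ is injective, Proposition \ref{lininj} makes the group action on $\p'/\l'$ transitive, hence $(\p',\l')$ is a linear homogeneous space, and Theorem \ref{qlinhom} forces the normal form $\p'\cong\s^{-1}(\c)/\s^{-1}(\c)^\perp$ with $\l'\cong\c$, where $\s$ is the source map of $\d\times\d^*_\beta\gp\d$ and the moment map is $\overline{(x,\alpha)}\mapsto x+\beta^\#(\alpha)$.

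Next I would run the identical argument on $D/D^\perp$. By Proposition \ref{DredKe} its identity moment map $\tilde\u=\a_e\oplus\u_e$ restricts on $\c$ to the inclusion $\c\hookrightarrow\d$ (since on $\c$ one has $\a_e=\ph$ and $\u_e=\pg$), and the computation of Proposition \ref{fpbeta} again gives $\tilde\u\circ\tilde\u^*=\beta^\#$; since $\tilde\u|_\c$ is injective, $\big((D/D^\perp)_e,\c\big)$ is a linear homogeneous space for $\d\times\d^*_\beta\gp\d$, hence by Theorem \ref{qlinhom} equal to $\s^{-1}(\c)/\s^{-1}(\c)^\perp$ with the same moment map. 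Thus $\p'$ and $(D/D^\perp)_e$ are canonically isomorphic as metrized vector spaces equipped with a moment map into $\d$, intertwining the Lagrangian subspaces $\c$. Finally, the anchor of either action Courant algebroid in left trivialisation is $\rho(\zeta)_h=(h,\Ad_{h^{-1}}\ph\Ad_h f_{\tp}(\zeta))$ (Proposition \ref{anchorid} and the anchor formula of Proposition 5.1 of \cite{lmdir}), so the identity-fibre identification carries one anchor onto the other; being action Courant algebroids with matching identity data, $\P'$ and $D/D^\perp$ then agree as Courant algebroids, intertwining $L'$ with $H\times\c$, which is the claim.

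I expect the only real obstacle to be bookkeeping rather than any conceptual difficulty: one must make the two copies of $\s^{-1}(\c)/\s^{-1}(\c)^\perp$ coincide literally, not merely up to abstract isomorphism, which means carefully tracking the identification of $\q$ with $\g\times\g^*_\lambda$, the dictionary between $\d\times\d^*_\beta$-module data and ``metrized vector space plus moment map'', and the relation $f(\gamma)=\beta$ (so that $\lambda=\pg(\gamma)$ is also the projection of $\beta$, Lemma \ref{lamequiv}), so that the transfer of the Courant bracket is automatic once the underlying metrized bundles and moment maps have been matched. A secondary point to watch is that the $\bullet$-action is not by Courant automorphisms, so the globalisation step genuinely relies on the action-algebroid description of $\P'$ and $D/D^\perp$ and not on any equivariance of the brackets.
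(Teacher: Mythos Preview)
Your proposal is correct and follows essentially the same route as the paper: trivialise both $(\P',L')$ and $(D/D^\perp,H\times\c)$ via the $E$-action, use $f_{\tp}\circ f_{\tp}^*=\beta^\#$ to recognise each identity fibre as the linear homogeneous space $\s^{-1}(\c)/\s^{-1}(\c)^\perp$ for $\d\times\d^*_\beta\gp\d$ via Theorem~\ref{qlinhom}, and then conclude that the matched identity data (moment map and anchor) propagate globally since both are action Courant algebroids. The only cosmetic discrepancy is that Proposition~\ref{fpbeta} is stated for pullbacks of Dirac homogeneous spaces rather than arbitrary $CA$-modules over $H$, but its proof uses nothing beyond the module axioms at the identity, so your invocation is legitimate; the paper's own proof simply asserts $f_{\tp}\circ f_{\tp}^*=\beta^\#$ at that point without citation.
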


\section{The Classification}
We now have all of the pieces to classify the Dirac homogeneous spaces.
\begin{thm}\label{dirachom} Let $(H,\A,E)$ be the Dirac Lie group associated to the $H$-equivariant Dirac Manin triple $(\d,\g,\h)_\beta$, and let $H/K$ be an $H$-homogeneous space.  The Dirac homogeneous spaces $(H/K,\P,L)$ for $(H,\A,E)$ are classified by $K$-invariant $\beta$-coisotropic subalgebras $\c\subseteq\d$ such that $\c\cap\h=\k=\mathrm{Lie}(K)$.
\begin{proof}
\begin{enumerate}
\item When $K=\{e\}$:  start with such a Dirac homogeneous space $(\P,L)$ over $H$; as in Proposition \ref{EandP}, the action of $E$ trivialises $(\P,L)=(H\times \p,H\times\l)$, with
        \[\p=\s^{-1}(\l)/\s^{-1}(\l)^\perp.\]
    From Proposition \ref{uplusa} the map $f_{\p}:=\a_e\oplus\u_e$ is an injective Lie algebra morphism of $\l$ into $\d$, which satisfies $f_{\p}\circ f_{\p}^*=\beta^\#$.  This defines a moment map for an action of the groupoid $\d\times\d^*_\beta\gp\d$ on $(\p,\l)$, and by Proposition \ref{lininj}, $(\d\times\d^*_\beta)/\d$ acts transitively on $\p/\l$, which implies $\c:=f_{\p}(\l)$ must be a $\beta$-coisotropic subalgebra of $\d$.  Since $\u_e$ is an injection of $\l$ into $\g$, this means $f_{\p}(\l)\cap\h = 0$.  Taking this $\c$, and constructing the action Lie algebroid $L'=H\times\c$, and $\P'=D/D^\perp$ (via Equation \eqref{Ddef}), Proposition \ref{tcs} gives $(\P',L')\cong(\P,L)$, the homogeneous space we started with.\\
    Every such $\c$ arises in this way by Lemma \ref{keeh}, coming from the homogeneous space $(H,D/D^\perp,H\times\c)$.
\item When $K$ is nontrivial:  Start with a Dirac homogeneous space
        \[(H/K,\P,L)=(H/K,H\times_K\p,H\times_K\l),\]
    and consider the algebroid pullbacks $(\pi^!\P,\pi^!L)$ over $H$, for $\pi:\,H\to H/K$. By Example \ref{pullbmod}, $(H,\pi^!\P,\pi^!L)$ is a Dirac module for $(H,\A,E)$, and so by Proposition \ref{EandP} trivialises as
        \[  (H,\pi^!\P,\pi^!L) = (H,H\times\tilde{\p},H\times\tilde{\l})\]
    where $\tilde{\l}/\k\cong\l$.  The map $f_{\tp}$ injects $\tilde{\l}$ as a $K$-invariant $\beta$-coisotropic subalgebra of $\d$ by Proposition \ref{fpbeta}, with $f_{\tp}(\tilde{\l})\cap\h=\k$.  Define $\c:=f_{\tp}(\tilde{\l})$.  With $D$ from Equation \eqref{Ddef} we have $(D/D^\perp,H\times\c) = (\tilde{\P},\tilde{L})$, by Proposition \ref{tcs}.  We know that the reduction by $K$ gives
       \[ (D/D^\perp,H\times\c)//K = ( (D/D^\perp)//K, (H\times\c)//K ) = (D_1/D_1^\perp, H\times_K(\c/\k)) \]
    by Proposition \ref{deq}, and gives an homogeneous space by Lemma \ref{D1hom}.  Reducing $(\tilde{\P},\tilde{L})$ yields:
        \[ (\tilde{\P},\tilde{L})//K = ( (\pi^!\P)//K, (\pi^!L)//K) = (\P,L) \]
    via Proposition \ref{CourKred}.  Hence, $(D_1/D_1^\perp,H\times_L(\c/\k)) = (\P,L)$, the homogeneous space we began with.\\
    Every such $\c$ arises from the homogeneous space $(H/K,D_1/{D_1}^\perp,H\times_K(\c/\k))$ by Lemma \ref{D1hom}.
\end{enumerate}
\end{proof}
\end{thm}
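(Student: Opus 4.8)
The plan is to reduce the classification to the metrized-linear-groupoid and almost-Dirac results of the previous chapters, using the pullback and reduction operations on Courant algebroids as the bridge, and to split the argument into the case $K=\{e\}$ (where the homogeneous space already lives over $H$) and the case of nontrivial $K$ (handled by pulling back along $\pi\colon H\to H/K$, working over $H$, and reducing back down by the isotropic $K$-action). The two halves of each case are: \emph{(i)} produce the classifying datum $\c$ from a given Dirac homogeneous space, and \emph{(ii)} reconstruct a Dirac homogeneous space from a given $\c$ and check the two directions are mutually inverse.

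For $K=\{e\}$ I would start from a Dirac homogeneous space $(\P,L)$ over $H$ and trivialise it via the $E$-action (Proposition \ref{EandP}), writing $(\P,L)=(H\times\p,H\times\l)$ with $\p=\s^{-1}(\l)/\s^{-1}(\l)^\perp$; since $(\P,L)$ is a Dirac \emph{module}, $\l$ inherits a Lie algebra structure and the map $f_\p:=\a_e\oplus\u_e\colon\l\to\d$ is an injective Lie algebra morphism (Proposition \ref{uplusa}) with $f_\p\circ f_\p^*=\beta^\#$ (Proposition \ref{fpbeta}). Viewing $(\p,\l)$ as a linear homogeneous space for the metrized linear groupoid $\d\times\d^*_\beta\gp\d$ and invoking Proposition \ref{lininj} (transitivity of $\A/E\act\P/L$ is exactly injectivity of $\u_e$, via Proposition \ref{transiff}), one gets that $\c:=f_\p(\l)$ is a $\beta$-coisotropic subalgebra of $\d$ and that $\c\cap\h=0$. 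Conversely, given such a $\c$ I would feed it into the explicit construction $(\tilde\P,\tilde L)=(D/D^\perp,H\times\c)$ of Proposition \ref{DredKe} with $D$ as in Equation \eqref{Ddef}, note that it is a Dirac module by Corollary \ref{DDpDirmod} and a homogeneous space by Lemma \ref{keeh}, and appeal to Proposition \ref{tcs} to see that this construction inverts the first half on the nose.

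For nontrivial $K$ I would pass from $(H/K,\P,L)=(H/K,H\times_K\p,H\times_K\l)$ to the algebroid pullbacks $(\pi^!\P,\pi^!L)$ over $H$, which by Example \ref{pullbmod} are again a Dirac module for $(H,\A,E)$ and hence trivialise as $(H\times\tilde\p,H\times\tilde\l)$ with $\k\hookrightarrow\tilde\l$ a subalgebra and $\tilde\l/\k\cong\l$. Proposition \ref{fpbeta} again gives $f_{\tilde\p}\circ f_{\tilde\p}^*=\beta^\#$, so $\c:=f_{\tilde\p}(\tilde\l)$ is $K$-invariant and $\beta$-coisotropic, and because the anchor of $\pi^!L$ is projection to $\mathrm{T}H$ with kernel $\k$ one gets $\c\cap\h=\k$. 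From $\c$ I would run the two-stage construction: form $(D/D^\perp,H\times\c)$ as above, then reduce by the isotropic $K$-action to $(D_1/D_1^\perp,H\times_K(\c/\k))$; by reduction in stages (Lemma \ref{redinstage}) and Proposition \ref{deq} this equals $(D/D^\perp)//K$ with its reduced Dirac structure, and Lemma \ref{D1hom} shows it is a Dirac homogeneous space. Matching the two directions uses Proposition \ref{CourKred} and Corollary \ref{otherLiered} — namely $(\pi^!\P)//K\cong\P$ and $(\pi^!L)//K\cong L$ — together with Proposition \ref{tcs} identifying $(\tilde\P,\tilde L)$ with $(D/D^\perp,H\times\c)$, so that $(D_1/D_1^\perp,H\times_K(\c/\k))=(\P,L)$.

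I expect the main obstacle to be the bookkeeping in the nontrivial-$K$ step: one must verify that the order of operations is immaterial — that reducing the explicitly constructed $(D/D^\perp,H\times\c)$ by $K$ yields the \emph{same} object, both as a Courant algebroid and as a Dirac submodule (with the correct reduced $\A$-action and moment map), as the reduction of the pullback $(\pi^!\P,\pi^!L)$ — and that the Dirac structure $L$ genuinely survives every reduction. All of this rests on the compatibility lemmas (\ref{redinstage}, \ref{CourKred}, \ref{tcs}, \ref{fpbeta}), so once those are available the argument is a careful assembly rather than a new idea; the delicate point is confirming that the transitivity condition, i.e.\ the $\beta$-coisotropy of $\c$, passes correctly through the pullback-then-reduce procedure, which is precisely where Propositions \ref{transiff} and \ref{lininj} do the work.
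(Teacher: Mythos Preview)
Your proposal is correct and follows essentially the same approach as the paper: split into $K=\{e\}$ and nontrivial $K$, trivialise via the $E$-action, extract $\c$ via $f_{\tilde\p}=\a_e\oplus\u_e$, reconstruct through $D/D^\perp$ (and its $K$-reduction $D_1/D_1^\perp$), and close the loop using Propositions~\ref{tcs} and~\ref{CourKred}. Your citation of Proposition~\ref{fpbeta} for the identity $f_\p\circ f_\p^*=\beta^\#$ in the $K=\{e\}$ case is in fact slightly more precise than the paper, which folds this into the reference to Proposition~\ref{uplusa}.
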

In the statement of Theorem \ref{dirachom}, we have implicitly chosen a basepoint of the $H$-homogeneous space $M$, identifying $M\cong H/K$.  We thus get a map from $M$ to the variety of coisotropic subalgebras
    \begin{equation}\label{coisomap}
       M \longrightarrow \mathcal{C}(\d), \qquad m \mapsto \c_m
    \end{equation}
with $\c_m$ being the classifying subalgebra from Theorem \ref{dirachom} under the identification $M\cong H/K_m$ with $K_m = \mathrm{stab}(m)$; this map is the analogue to the Drinfeld map $M\to\L(\d)$ in the Poisson case \cite{Drinfeldp}.

\begin{thm}  Let $(M,\P,L)$ be a Dirac homogeneous space for $(H,\A,E)$.  For any point $m\in M$ with corresponding classifying subalgebra $\c_m\subset\d$ from Equation \eqref{coisomap}, and any $g\in H$,
    \[\Ad_g\c_m = \c_{g\cdot m}.\]
Hence, the isomorphism class of classifying subalgebras for $(M,\P,L)$ is the orbit $(H\cdot\c_m)$.
\begin{proof}
Fix $m\in M$ and $g\in H$, and let $\c:= \c_m$, $\c' := \Ad_g\c$, $K:= \mathrm{stab}(m)$, and $K'=\mathrm{stab}(g\cdot m)$.  Since $K_{g\cdot m} = \Ad_g K_m$, it follows that $\c'\cap\h = \k' = \Ad_g\k$, and so $\c'$ defines a Dirac homogeneous space on $H/K' \cong M$.  Define the action Lie algebroids $H\times\c$ and $H\times\c'$ with the action for each given by Equation \eqref{diracanchor} in left-trivialisation; i.e.: for $\xi\in \c$
    \[ \rho(\xi)_h = (h, \Ad_{h^{-1}}\ph\Ad_h \xi) \]
and similarly for $\xi'\in\c'$.  Define $\mu: H\times\c \to H\times\c'$ by $\mu(h,\xi) = (hg^{-1},\Ad_g\xi)$; since $\mu_*(h,x) = (hg^{-1},\Ad_gx)$ in left trivialisation, we have
    \[ \a'\circ \mu = \mu_* \circ \a \]
giving an isomorphism of action Lie algebroids $H\times\c\cong H\times\c'$, as $\Ad_g$ is a Lie automorphism of $\d$.  The groups $K$ and $K'$ act on $H\times\c$ and $H\times\c'$ respectively; since the generators for this action are the inclusions of $H\times\k$ and $H\times\Ad_g\k$, the map $\mu$ clearly intertwines the generators, giving us the Lie algebroid isomorphism
    \[ (H\times\c)//K \cong E \cong (H\times\c')//K'.\]
Similarly, defining $\mu:\T H\times\q\to \T H\times\q$ via $\mu((h,v,\alpha);\xi) = (hg^{-1},\Ad_gv,\Ad_g\alpha);\xi)$ in left-trivialisation, and recalling
    \[D = \{(v,\alpha,\xi)\in \T H\times\bar{\q}\,|\,\Ad_{h^{-1}}f(\xi)+\iota_v\theta^L\in\c\}  \]
(with $D'$ defined similarly for $\c'$), we have $\mu(D) = D'$.  As the inner product is $\Ad$-invariant, and the generators for the $K$ action on $D/D^\perp$ are the inclusion of $H\times\k$, we again conclude that
    \[ (D/D^\perp)//K \cong \P \cong (D'/D'^\perp)//K',\]
and hence $\c_{g\cdot m} = \Ad_g\c_m$.
\end{proof}
\end{thm}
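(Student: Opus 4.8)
The plan is to reduce the statement to the uniqueness half of Theorem~\ref{dirachom} by transporting the explicit construction of the homogeneous space attached to a coisotropic subalgebra under the automorphism $\Ad_g$ of $\d$. First I would record the elementary fact that stabilizers transform by conjugation: writing $K_m=\mathrm{stab}(m)$, one has $K_{g\cdot m}=gK_mg^{-1}$ and hence $\k_{g\cdot m}=\Ad_g\k_m$ at the Lie algebra level. Since $\Ad_g$ is a Lie algebra automorphism of $\d$ preserving $\h$ and the invariant tensor $\beta$ (it integrates an $H$-action with these properties, by the definition of an $H$-equivariant Dirac Manin triple), the subspace $\c':=\Ad_g\c_m$ is again $\beta$-coisotropic and a subalgebra, with $\c'\cap\h=\Ad_g(\c_m\cap\h)=\Ad_g\k_m=\k_{g\cdot m}$; here I would invoke Lemma~\ref{fstarann}(2) with $\Ad_g\colon\d\to\d$. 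Thus $\c'$ is a legitimate classifying datum for Dirac homogeneous spaces on $H/K_{g\cdot m}$, which we identify with $M$ via the basepoint $g\cdot m$; the diffeomorphism $H/K_m\to H/K_{g\cdot m}$ matching the two identifications of $M$ is $hK_m\mapsto hg^{-1}K_{g\cdot m}$, since $hg^{-1}\cdot(g\cdot m)=h\cdot m$.

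Next I would build the comparison map on the building blocks of the construction in the proof of Theorem~\ref{dirachom}. On the Lie algebroid side, define $\mu\colon H\times\c_m\to H\times\c'$ by $\mu(h,\xi)=(hg^{-1},\Ad_g\xi)$, covering $h\mapsto hg^{-1}$. Using left trivialization $\T H=H\times\h$, the pushforward under right translation $R_{g^{-1}}$ sends $(h,X)\mapsto(hg^{-1},\Ad_gX)$, and a direct check shows that $\mu$ carries the action anchor $\rho(\xi)_h=(h,\Ad_{h^{-1}}\ph\Ad_h\xi)$ of Equation~\eqref{diracanchor} for $\c_m$ to the corresponding anchor for $\c'$ evaluated at $hg^{-1}$ — the two outer adjoint factors rearrange so that $\ph$ remains sandwiched between $\Ad_h$ and $\Ad_{h^{-1}}$ — so $\mu$ is an isomorphism of action Lie algebroids. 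On the Courant side, define analogously $\mu\colon\T H\times\q\to\T H\times\q$ by $\mu((h,v,\alpha);\xi)=((hg^{-1},\Ad_gv,\Ad_g\alpha);\xi)$ in left trivialization. Since the pairing on $\q$ is $\Ad$-invariant (via $f$ and $\gamma$), and by the very definition $D=\{(v,\alpha,\xi)\in\T H\times\bar{\q}\,|\,\Ad_{h^{-1}}f(\xi)+\iota_v\theta^L\in\c_m\}$ of Equation~\eqref{Ddef}, this $\mu$ carries $D$ (built from $\c_m$) onto the corresponding $D'$ (built from $\c'$), hence $D^\perp$ onto $D'^\perp$, inducing a Courant algebroid isomorphism $D/D^\perp\to D'/D'^\perp$ compatible with the $\A$-action and restricting to the isomorphism $H\times\c_m\to H\times\c'$ of the Dirac submodules.

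Finally I would verify that $\mu$ intertwines the subgroup actions. The generators for the $K_m$-action on $D/D^\perp$ (resp.\ on $H\times\c_m$) are the inclusions of $H\times\k_m$ of Equation~\eqref{Kgeninj}, and $\mu$ sends these to the inclusions of $H\times\k_{g\cdot m}=H\times\Ad_g\k_m$, i.e.\ to the generators for the $K_{g\cdot m}$-action. Hence $\mu$ descends to isomorphisms of the reductions $(D/D^\perp)//K_m\to(D'/D'^\perp)//K_{g\cdot m}$ and $(H\times\c_m)//K_m\to(H\times\c')//K_{g\cdot m}$, covering $\mathrm{id}_M$ and compatible with the $(\A,E)$-action; that is, an isomorphism of Dirac homogeneous spaces over $M$. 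Since the left-hand side is, by Theorem~\ref{dirachom}, the Dirac homogeneous space classified by $\c_m$ — namely $(M,\P,L)$ — and the right-hand side is the one classified by $\c'$, the uniqueness in Theorem~\ref{dirachom} forces $\c_{g\cdot m}=\c'=\Ad_g\c_m$. Running over all $g\in H$ and using transitivity of the $H$-action on $M$, the set of classifying subalgebras $\{\c_{m'}\,|\,m'\in M\}$ equals $\{\Ad_g\c_m\,|\,g\in H\}=H\cdot\c_m$, which is the orbit statement.

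I expect the main obstacle to be the bookkeeping in the middle step: checking simultaneously that the right-translation-plus-$\Ad_g$ map respects the anchor formula~\eqref{diracanchor}, the defining incidence condition~\eqref{Ddef} for $D$, the orthogonal complements, and the reductions, and — crucially — that the resulting isomorphism genuinely covers $\mathrm{id}_M$ (so that the two basepoint identifications $M\cong H/K_m$ and $M\cong H/K_{g\cdot m}$ are matched by $h\mapsto hg^{-1}$ and no spurious automorphism of $M$ is introduced) and genuinely preserves the $(\A,E)$-module structure, so that Theorem~\ref{dirachom} applies. None of this is deep, but it must be done carefully.
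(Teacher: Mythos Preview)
Your proposal is correct and follows essentially the same route as the paper: define $\mu(h,\xi)=(hg^{-1},\Ad_g\xi)$ on the action Lie algebroid and its Courant analogue on $\T H\times\q$, check it intertwines anchors, the defining condition \eqref{Ddef} for $D$, the metric, and the $K$-generators, then pass to the reductions and invoke the uniqueness in Theorem~\ref{dirachom}. If anything, your write-up is more explicit than the paper's about why $\c'$ is again $\beta$-coisotropic, about the base diffeomorphism $hK_m\mapsto hg^{-1}K_{g\cdot m}$ covering $\mathrm{id}_M$, and about the final appeal to uniqueness.
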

\subsection{Examples}
In this section we will consider examples of Theorem \ref{dirachom} for specific Dirac Lie groups.
\begin{example} \textbf{Poisson Lie Groups} (adapted from Section 4, \cite{lwx2}). (Let $(H,\Pi)$ be a Poisson Lie group with associated $H$-equivariant Dirac Manin triple $(\d,\g,\h)_\beta = (\h\bowtie\h^*,\h^*,\h)_\beta$, where $\beta$ is given by the pairing.  Let $(H/K,\Pi_{H/K})$ be a Poisson homogeneous space with corresponding Manin pair $(\T(H/K),\mathrm{Gr}_{\Pi_{H/K}})$. The Poisson action
    \[(H,\Pi)\times (H/K,\Pi_{H/K}) \longrightarrow (H/K,\Pi_{H/K}) \]
 lifts to a Dirac morphism
    \[ (H\times\d,H\times\h^*)\times (\T(H/K),\mathrm{Gr}_{\Pi_{H/K}})\da (\T(H/K),\mathrm{Gr}_{\Pi_{H/K}}) \]
 Considering pullbacks $(\pi^!\T(H/K),\pi^!\mathrm{Gr}_{\Pi_{H/K}})=(\T H,L')$, which by Example \ref{pullbmod} trivialises to
    \[(\T H,L')=(H\times\d,H\times\tilde{\l}).\]
 Hence, the $\c$ we recover is $\tilde{\l}$, which is necessarily Lagrangian in $\d$ as $L'$ is Lagrangian in $\T H$.  This coincides with Drinfeld's theorem.
\end{example}
\begin{example}\label{standardex} \textbf{The Standard Dirac Lie Group} (Example 5.2.1, \cite{lmdir}). For any Lie group $H$, we have the $H$-equivariant Dirac Manin triple $(\h\ltimes\h^*,\h^*,\h)_\beta$, with $\beta$ given by the pairing; in essence, this is the Poisson case for $\Pi_H=0$.  Let $K$ be a closed Lie subgroup of $H$, with Lie algebra $\k$.
    \begin{enumerate}
        \item The Lie subalgebra $\c:=\k\ltimes\ann(\k)$ is Lagrangian in $\d$ and satisfies $\c\cap\h=\k$, and so generates the homogeneous space
            \[ (\P,L) \cong \big( H\times_K (\s^{-1}(\ann(\k))/\s^{-1}(\ann(\k))^\perp), H\times_K(\ann(\k))\big). \]
        \item Since $\h^*\subset\d$ is an abelian Lie algebra, the coadjoint action of $\h^*$ on $\h$ is trivial, and hence $\c:= \k\ltimes\h^*$ is a subalgebra of $\d$.  It is coisotropic because it contains the Lagrangian subalgebra $\k\ltimes\ann(\k)$, and it satisfies $\c\cap\h=\k$.  This gives rise to the homogeneous space
            \[ (\P,L) \cong (H\times_K \d, H\times_K\h^*) = (\A,E)/K. \]
        \item If $\mathfrak{j}\subseteq\k$ is an ideal (in $\k$), then $\c:= \k\ltimes\ann(\mathfrak{j})$ is a subalgebra of $\d$.  It contains the Lagrangian subalgebra $\k\ltimes\ann(\k)$, and satisfies $\c\cap\h=\k$, generating the homogeneous space
                \[ (\P,L) \cong \big(H\times_K (\s^{-1}(\ann(\mathfrak{j}))/\s^{-1}(\ann(\mathfrak{j}))^\perp),H\times_K(\ann(\mathfrak{j}))\big). \]
    \end{enumerate}
\end{example}
\begin{example} \textbf{The Cartan-Dirac structure} (Example 5.2.2, \cite{lmdir}).  Let $G$ be a Lie group with an invariant metric on $\g$. The Dirac Manin triple   \[(\overline{\g}\oplus\g,\g_\triangle,0\oplus\g)_\beta\]
with $\beta$ given by the metric on $\overline{\g}\oplus\g$ gives rise to a Dirac Lie group structure on $H=\{e\}\oplus G$.
    \begin{enumerate}
        \item Let $\g$ be a semi-simple Lie algebra with the Killing form, and let $\k\subset\g$ be an ideal.  Then the subalgebra
            \[ \c:= \g_\triangle + (\k\oplus\k) \subset \overline{\g}\oplus\g\]
        is coisotropic as it contains the Lagrangian subalgebra $\g_\triangle$.  Then we have
            \[ \c\cap \h = \c\cap (0\oplus\g) = 0\oplus \k,\]
        and exponentiating $0\oplus\k$ gives us a closed, connected Lie subgroup $K\leqslant H$.  Hence, we get a Dirac homogeneous space over $H/K$:
            \[(H/K,\P,L) = (H/K, H\times_K\p, H\times_K(\c/(0\oplus\k)).\]
        \item Let $\g$ be a complex, semi-simple Lie algebra with the Killing form.  Decompose $\g$ as
                \[ \g = \mathfrak{t}\oplus\bigoplus_\alpha \g_\alpha = \mathfrak{t}\oplus\mathfrak{n}_-\oplus\mathfrak{n}_+ \]
             for $\g_\alpha$ the root spaces.  Take $\c\subseteq \overline{\g}\oplus\g$ to be
                \[ \c:=(\mathfrak{n}_+\oplus 0) + (0\oplus \mathfrak{n}_-) + (\mathfrak{t}_\triangle),\]
             then $\c$ is a subalgebra, since
             \[ [ (\mathfrak{n}_+\oplus 0), \mathfrak{t}_\triangle] \subseteq (\mathfrak{n}_+\oplus 0)\]
             and similarly
                \[[(0\oplus \mathfrak{n}_-),\mathfrak{t}_\triangle]\subseteq (0\oplus \mathfrak{n}_-).\]
             It is also Lagrangian, since
                \[ \<\mathfrak{n}_+,\mathfrak{t}\> = 0 = \< \mathfrak{n}_-,\mathfrak{\t}\>, \]
             and $\dim\c = \dim\g$.  Then $\c\cap\h = \c\cap(0\oplus\g) = \mathfrak{n}_-$, exponentiating to $N_-=:K$.  Hence, $H/K = TN_+$, giving us the Dirac homogeneous space
                \[ (H/K,\P,L) = (TN_+, H\times_{N_-} \p,  H\times_{N_-}\l),\]
             for $\l = (\mathfrak{n}_+\oplus 0) + \mathfrak{t}_\triangle$, and $\p= \s^{-1}(\l)/\s^{-1}(\l)^\perp$.
        \item In fact, Evens and Lu -- together with work by Karolinsky in \cite{Karol} --  provided a classification of all Lagrangian subalgebras of $\g\oplus\bar{\g}$ in \cite{evenslu2}.  Given any such Lagrangian subalgebra $\l\subset \g\oplus\bar{\g}$, define
                \[\k:= \l\cap(0\oplus\g) \]
            and exponentiate it to the connected, closed Lie subgroup $K\subset H$.  This $\l$ then gives a Dirac homogeneous space on $H/K$.
        \item Let $\g$ be a real semi-simple Lie algebra with complexification $\g^\C$.  The Iwasawa decomposition gives us
            \[ \g = \mathfrak{u}\oplus\mathfrak{a}\oplus\mathfrak{n} \]
           where $\mathfrak{u}$ is a compact real form of $\g$, $\mathfrak{a} = i\mathfrak{t}$, and $\mathfrak{n}$ is nilpotent; define $\mathfrak{b}:=\mathfrak{a}\oplus\mathfrak{n}$.  Lu and Weinstein showed in \cite{luwein1} that the triple $(\g,\mathfrak{u},\mathfrak{b})$ forms a Manin triple, with the imaginary part of the Killing form as the metric.  Hence, $\mathfrak{b}$ is Lagrangian in $\g$, meaning $\mathfrak{t}\oplus\mathfrak{b}$ must be coisotropic in $\g$.  Defining
            \[ \c: = (\mathfrak{t}\oplus\mathfrak{b})\oplus(\mathfrak{t}\oplus\mathfrak{b}) \subset \overline{\g}\oplus\g \]
           we obtain a Dirac homogeneous space on $H/K$ with $K=\mathrm{exp}(0\oplus(\mathfrak{t}\oplus\mathfrak{b}))$.
    \end{enumerate}
\end{example}

\section{Coisotropic Dirac Lie Subgroups, Homogeneous Spaces}
Lu gave a method in \cite{luh} for producing Poisson homogeneous spaces from certain Poisson Lie subgroups.  Let $(H,\Pi_H)$ be a connected Poisson Lie group, and $(K,\Pi_K)$ a closed, connected Poisson Lie-subgroup with Lie algebra $\k$ such that $\ann(\k)\subseteq\h^*$ is a subalgebra\footnote{Such subgroups are referred to as ``coisotropic", perhaps a little confusingly.}. Then there exists a unique Poisson structure on $H/K$ making $\pi:\,H\to H/K$ a Poisson map, and the left action of $H$ on $H/K$ a Poisson action.  Indeed, this is the Poisson homogeneous space for $(H,\Pi_H)$ corresponding to the Lagrangian subalgebra $\l=\k\oplus\ann(\k)\subseteq\d$.\\
This construction generalises to the Dirac case as well.
\begin{dfn} Let $(H,\A,E)$ be a Dirac Lie group with associated Dirac Manin triple $(\d,\g,\h)_\beta$.  A \emph{Dirac Lie subgroup} of $(H,\A,E)$ is a Dirac Lie group $(K,\A',E')$ with associated Dirac Manin triple $(\d',\g',\k)_{\beta'}$, and a Dirac morphism (Chapter 4 in \cite{lmdir})
    \[ R:\, (\d',\g',\k)_{\beta'} \da (\d,\g,\h)_\beta \]
such that the associated group morphism $K\hookrightarrow H$ is an injective immersion.  $(K,\A',E')$ is called a \emph{closed Dirac Lie subgroup} if $K$ is closed in $H$. If $\k\subset\d'$ is $\beta'$-coisotropic, then we call $(K,\A',E')$ a \emph{coisotropic Dirac Lie subgroup}.
\end{dfn}

\begin{prop} Let $(K,\A',E')$ be a coisotropic, closed Dirac Lie subgroup of $(H,\A,E)$.  Then $(K,\A',E')$ generates a Dirac homogeneous space for $(H,\A,E)$ over $H/K$ associated to $\c=\k\oplus\ker\psi$, for $\psi:\,\g\to\g'$ coming from the Dirac morphism.
\begin{proof}
By Lemma 4.2 in \cite{lmdir}, the Dirac morphism $R:\,(\d',\g',\k)_{\beta'} \da (\d,\g,\h)_\beta$ is the direct sum of the graphs of $\phi:\,\k\hookrightarrow\h$, $\psi:\,\g\to\g'$:
    \begin{equation}\label{Rgraphs} R = \{ (\zeta + \phi(\kappa),\psi(\zeta)+\kappa)\,|\,\kappa\in\k,\,\zeta\in\g \}. \end{equation}
Since $R$ is a coisotropic relation of algebras, and $\k\subset\d'$ is a $\beta'$-coisotropic, it follows that the forwards image $\k\circ R$ is a $\beta$-coistropic subalgebra of $\d$.  By Equation \eqref{Rgraphs}, $\kappa \sim_R (\phi(\kappa)+\zeta)$ if and only if $\psi(\zeta) = 0$.  Since $\phi(\k)\cong\k$ as algebras, we have $\k\circ R = \k\oplus\ker\psi\subset\d=:\c$.  By Theorem \ref{dirachom}, this gives the $(H,\A,E)$-homogeneous space
    \[ (H/K,\P,L) = (H/K, H\times_K \p , H\times\k(\c/\k) ) \]
with $\p = \s^{-1}(\c/\k)/\s^{-1}(\c/\k)^\perp$.
\end{proof}
\end{prop}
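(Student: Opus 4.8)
The plan is to unpack the two pieces of input data—the Dirac morphism $R:\,(\d',\g',\k)_{\beta'}\da(\d,\g,\h)_\beta$ and the coisotropicity of $\k$ in $\d'$—and feed them into the classification of Theorem \ref{dirachom}. First I would invoke Lemma 4.2 of \cite{lmdir} to write $R$ explicitly as the direct sum of the graphs of the maps $\phi:\,\k\hookrightarrow\h$ (the inclusion on Lie algebras, coming from $K\hookrightarrow H$) and $\psi:\,\g\to\g'$, so that
\[ R = \{(\zeta+\phi(\kappa),\,\psi(\zeta)+\kappa)\,|\,\kappa\in\k,\,\zeta\in\g\}\subset \d\times\bar{\d'}. \]
This is the structural backbone: once $R$ has this shape, everything reduces to linear-algebra bookkeeping with $\phi$ and $\psi$.

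Next I would compute the forward image $\c:=\k\circ R\subseteq\d$. From the explicit form of $R$, an element $\kappa\in\k$ is $R$-related to $\phi(\kappa)+\zeta$ precisely when $\psi(\zeta)=0$; since $\phi$ is injective and identifies $\phi(\k)\cong\k$, this gives $\c=\phi(\k)\oplus\ker\psi$, which under the identification $\phi(\k)=\k\subset\h\subset\d$ is $\k\oplus\ker\psi$. I would then check the three properties required by Theorem \ref{dirachom}: (i) $\c$ is a subalgebra—this follows because $R$ is a coisotropic subalgebra of the product Lie algebra $\d\times\bar{\d'}$ and forward images of subalgebras under such relations are subalgebras (standard for Courant/Lie relations, cf. the backward/forward image machinery used throughout the thesis); (ii) $\c$ is $\beta$-coisotropic—this is where the hypothesis that $\k$ is $\beta'$-coisotropic in $\d'$ enters, together with the fact that a coisotropic relation carries $\beta'$-coisotropic subspaces to $\beta$-coisotropic subspaces (this is the relational analogue of Lemma \ref{fstarann}, applied to the bilinear forms on the two sides of $R$); (iii) $\c\cap\h=\k$—here $\c\cap\h=(\k\oplus\ker\psi)\cap\h$, and since $\ker\psi\subseteq\g$ which is transverse to $\h$, the intersection collapses to $\k$. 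Finally, $K$-invariance of $\c$ is automatic: $\c$ is built from $K$-equivariant data ($\phi,\psi$ intertwine the $K$-actions because $R$ is a morphism of $K$-equivariant structures), so $\Ad_K$ preserves $\c$.

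Having produced a $K$-invariant $\beta$-coisotropic subalgebra $\c\subset\d$ with $\c\cap\h=\k$, I would then simply invoke Theorem \ref{dirachom} to conclude that $\c$ classifies a Dirac homogeneous space $(H/K,\P,L)$ for $(H,\A,E)$, and read off its normal form from the construction in Section 6.3: $(\P,L)\cong(H/K,\,H\times_K\p,\,H\times_K(\c/\k))$ with $\p=\s^{-1}(\c/\k)/\s^{-1}(\c/\k)^\perp$, where $\s$ is the source map of the metrized linear groupoid $\q\gp\g$ at the identity fibre and $\c/\k\cong\pg(\c)\subset\g$ is identified with its image under $\pg$. I expect the main obstacle to be item (ii)—verifying cleanly that coisotropicity of $\k$ in $(\d',\beta')$ transfers across the relation $R$ to coisotropicity of $\c$ in $(\d,\beta)$. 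The cleanest route is probably to factor $R$ through its graph presentation and reduce to the two-map situation: $\beta$ and $\beta'$ are related through $\phi$ and $\psi$ by the compatibility conditions defining a Dirac morphism of Dirac Manin triples (as in Chapter 4 of \cite{lmdir}), and then one can apply Lemma \ref{fstarann} essentially verbatim with $f$ replaced by the appropriate composite of $\phi^*$ and $\psi$. The remaining steps are routine transversality and equivariance checks.
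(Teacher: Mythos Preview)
Your proposal is correct and follows essentially the same route as the paper's proof: invoke Lemma~4.2 of \cite{lmdir} to write $R$ as the direct sum of the graphs of $\phi$ and $\psi$, compute the forward image $\c=\k\circ R=\k\oplus\ker\psi$, observe that coisotropic relations carry $\beta'$-coisotropic subalgebras to $\beta$-coisotropic subalgebras, and then apply Theorem~\ref{dirachom}. Your version is in fact more thorough---you explicitly verify $\c\cap\h=\k$ and $K$-invariance, which the paper's proof leaves implicit even though both are hypotheses of Theorem~\ref{dirachom}.
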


\begin{rmrk} If $(H,\Pi_H)$ is a Poisson Lie group with corresponding Dirac Lie group $(H,\A,E)$, and $(K,\Pi_K)$ is a coisotropic, closed Poisson Lie subgroup with corresponding Dirac Lie subgroup $(K,\A',E')$, then we have
    \[ (\d,\g,\h) \cong (\h\bowtie\h^*,\h^*,\h),\quad (\d',\g',\k) \cong (\k\bowtie\k^*,\k^*,\k), \]
as well as $\psi = \phi^*$ by Remark 4.3 in \cite{lmdir}.  Since $\phi:\,\k\to\h$ is injective, $\psi:\,\h^*\to\k^*$ is surjective, and $\ker\psi = \ann(\k)$.  This gives $\c=\k\bowtie\ann(\k)$, as in \cite{luh}.
\end{rmrk}

\begin{example} Let $H$ be a Lie group, and $(H,\A,E)$ the standard Dirac Lie group structure, associated to $(\h\ltimes\h^*,\h^*,\h)_\beta$.  Let $K$ be any closed Lie subgroup of $H$, with $(K,\A',E')$ the standard Dirac Lie group structure associated to it, with triple $(\k\ltimes\k^*,\k^*,\k)_\beta$.  The inclusion map $K\hookrightarrow H$ gives rise to a Dirac morphism
    \[ (K,\A',E') \da (H,\A,E) \]
(by Proposition 2.10 in \cite{lmdir}).  As in the Poisson case, the resulting $\phi:\,\k\hookrightarrow\h$ is the inclusion, making $\psi$ surjective, yielding the $K$-invariant Lagrangian subalgebra
    \[ \c := \k\ltimes \ann(\k).\]

\end{example}

\section{General Dirac Actions}

In the general case of Dirac modules $(M,\P,L)$ for $(H,\A,E)$, where $M$ is simply an $H$-manifold, we no longer necessarily have trivialisations of $\P,L$.  The groupoid action $\A\act \P$ does still induce a group action defined by Equation \eqref{HactonPL}:
    \begin{align*}
        H\times\P &\to \P \\
        (h,y) &\mapsto (h,\u(y))\circ y.
    \end{align*}
We refer to this action as $\bullet$.  Once this group action (i.e.: the action of $E$ on $\P$) is defined, we can defined the full action of $\A$ on $\P$ by
    \[ (h,(\u(x),\alpha)) \circ x_m = h\bullet x_m + \u^*_{h.m}(\alpha);\]

This group action does not, in general, preserve the Courant structure of $\P$; it does, however, preserve the bracket structure of the $H$-invariant sections of $\P$.

\begin{prop}\label{genDirHinv}  Let $(M,\P,L)$ be a Dirac module for the Dirac Lie group $(H,\A,E)$.  For the $H$ action on $\P$ defined by Equation \eqref{HactonPL}, the $H$-invariant sections $\Gamma(\P)^H$ are closed under the Courant bracket.
\begin{proof}  Let Act denote the $CA$-morphism given by the groupoid action of $\A$ on $\P$, and let $\sigma_i\in \Gamma(\P)^H$; then by $H$-invariance, $(\eta_i,\sigma_i)\sim_{\mathrm{Act}}\sigma_i$ for $\eta_i\in \Gamma(E)$ composable with $\sigma_i$ along the graph of the $H$ action on $M$.  Hence, we have
    \[ \Cour{(\eta_1,\sigma_1),(\eta_2,\sigma_2)} \sim_{\mathrm{Act}} \Cour{\sigma_1,\sigma_2}. \]
For the first term, $\Cour{(\eta_1,\sigma_1),(\eta_2,\sigma_2)} = (\eta', \Cour{\sigma_1,\sigma_2})$, where
    \[ \eta' = [\eta_1,\eta_2] + \L_{\a(\sigma_1)}\eta_2 - \L_{\a(\sigma_2)}\eta_1. \]
Hence, $(\eta', \Cour{\sigma_1,\sigma_2})\sim_{\mathrm{Act}} \Cour{\sigma_1,\sigma_2}$, meaning $\Cour{\sigma_1,\sigma_2}\in \Gamma(\P)^H$.
\end{proof}
\end{prop}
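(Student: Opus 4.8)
The plan is to translate $H$-invariance of a section of $\P$ into a relatedness statement for the Courant morphism underlying the groupoid action, and then to use that Courant morphisms send related pairs of sections to related pairs of brackets. Since $(M,\P,L)$ is a Dirac module, the action map $\mathrm{Act}\colon\A\times\P\da\P$ is in particular a Courant morphism, and only this weaker fact (not the uniqueness clause of a Dirac morphism) will be needed.

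First I would make the characterization of invariance precise. Given $\sigma\in\Gamma(\P)$, along the graph of the action map $H\times M\to M$ one has the canonical $E$-valued assignment $\eta_\sigma(h,m)=(h,\u(\sigma(m)))$, which at each point is composable with $\sigma(m)$ because $\s(\eta_\sigma(h,m))=\u(\sigma(m))$. By the definition \eqref{HactonPL} of the induced $H$-action on $\P$, the identity $\eta_\sigma(h,m)\circ\sigma(m)=\sigma(h\cdot m)$ holds for all $(h,m)$ exactly when $\sigma$ is $H$-invariant. Reading this through $\mathrm{Act}$, it says precisely that $\sigma\in\Gamma(\P)^H$ if and only if the pair $(\eta_\sigma,\sigma)$, regarded as a section of $\A\times\P$ restricted to the graph of the action map, satisfies $(\eta_\sigma,\sigma)\sim_{\mathrm{Act}}\sigma$. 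This is the same device used to produce the trivialisations in Proposition \ref{EandP} and the Lie algebra structure in Proposition \ref{trivLiealgstr}.

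With this in hand, take $\sigma_1,\sigma_2\in\Gamma(\P)^H$ and set $\eta_i:=\eta_{\sigma_i}$. Applying the bracket-compatibility of the Courant morphism $\mathrm{Act}$ to the relations $(\eta_i,\sigma_i)\sim_{\mathrm{Act}}\sigma_i$ gives
\[
\Cour{(\eta_1,\sigma_1),(\eta_2,\sigma_2)}\sim_{\mathrm{Act}}\Cour{\sigma_1,\sigma_2},
\]
the left bracket being computed in the product Courant algebroid $\A\times\P$ over $H\times M$ and restricted to the graph. Since each $\sigma_i$ is pulled back from $M$, while each $\eta_i$ is $E$-valued and depends on $m$ only through $\u(\sigma_i)$, this product bracket splits as $(\eta',\Cour{\sigma_1,\sigma_2})$ with
\[
\eta'=[\eta_1,\eta_2]+\L_{\a(\sigma_1)}\eta_2-\L_{\a(\sigma_2)}\eta_1,
\]
where $[\eta_1,\eta_2]$ is the restriction to $E$ of the Courant bracket on $\A$. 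Here $[\eta_1,\eta_2]$ stays in $E$ because $E$ is a Dirac structure, and the two Lie-derivative terms are derivatives of $E$-valued functions along vector fields on $M$, hence remain $E$-valued; so $\eta'\in\Gamma(E)$. Thus $(\eta',\Cour{\sigma_1,\sigma_2})\sim_{\mathrm{Act}}\Cour{\sigma_1,\sigma_2}$ with $\eta'$ a section of $E$, and by the characterization of the first step this exhibits $\Cour{\sigma_1,\sigma_2}$ as $H$-invariant.

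The main obstacle I anticipate lies in the first step rather than in the computation: one must be careful about the precise sense in which $\eta_\sigma$ is a ``section'' -- it is $E$-valued and composable with $\sigma$ along the graph of the action map, but genuinely varies with the point of $M$ -- and one must verify the \emph{equivalence} (not merely the implication) between $H$-invariance of $\sigma$ and the relation $(\eta_\sigma,\sigma)\sim_{\mathrm{Act}}\sigma$, which rests on unwinding \eqref{HactonPL} together with the fact that there is a unique element of $E_h$ composable with a given element of $\P_m$. Once that dictionary is in place, the product-bracket computation is routine, and the only substantive point is that $\eta'$ does not leave $E$, which follows immediately from $E$ being a sub-Dirac structure together with the triviality $E\cong H\times\g$.
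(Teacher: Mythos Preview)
Your proposal is correct and follows essentially the same approach as the paper's proof: both characterize $H$-invariance of $\sigma$ by the relation $(\eta,\sigma)\sim_{\mathrm{Act}}\sigma$ with $\eta\in\Gamma(E)$, apply bracket-compatibility of the Courant morphism $\mathrm{Act}$, and compute the product bracket as $(\eta',\Cour{\sigma_1,\sigma_2})$ with the same formula for $\eta'$. You are in fact more explicit than the paper on two points it leaves implicit---why $\eta'$ remains in $E$, and why the relation $(\eta',\Cour{\sigma_1,\sigma_2})\sim_{\mathrm{Act}}\Cour{\sigma_1,\sigma_2}$ with some $E$-valued $\eta'$ forces $H$-invariance (via uniqueness of the composable element of $E_h$)---and your remark that only the Courant-morphism property of $\mathrm{Act}$ is used is accurate.
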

In certain cases, the $H$ action from Equation \eqref{HactonPL} allows us to generate new Manin Pairs.

\begin{prop}  Let $(M,\P,L)$ be a Dirac module for the Dirac Lie group $(H,\A,E)$ such that $M$ is a principal $H$-bundle.  Then
    \[ (\P/H,L/H) \to M/H \]
is a Manin pair.
\begin{proof}  The metric, being multiplicative, is $H$-invariant, and thus descends to the quotient $\P/H$.  By Proposition \ref{genDirHinv}, the $H$-invariant sections of $\P$ are closed under the Courant bracket, and hence the bracket descends to $\P/H$.  The anchor map $\a:\,\P\to \mathrm{T}M$ satisfies
    \[ \a_\P(\xi\circ x) = \a_\A(\xi)\cdot\a_\P(x) \]
for composable $\xi\in\A_h,\,x\in \P_m$.  Let $(h,\zeta)=\a_\A(h,\u(x))$ in left-trivialisation of $\mathrm{T}H$; we have
    \begin{align*} \a_\P(h\bullet x) &= (h,\zeta)\cdot\a_\P(x_m)\\
                                &= (e,\Ad_h\zeta)\cdot(h,0)\cdot \a_\P(x_m)\\
                                &= h\cdot \a_\P(x_m) + (\Ad_h\zeta)_M|_{h.m}
    \end{align*}
where in the first term, the dot denotes the tangent lift of the action of $H$ on $M$.  Since the second term is tangent to the $H$-orbit directions, we conclude that the anchor descends to the quotient.
\end{proof}
\end{prop}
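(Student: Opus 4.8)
The statement to prove is that for a Dirac module $(M,\P,L)$ over a principal $H$-bundle $M$, the quotient $(\P/H,L/H)\to M/H$ is a Manin pair. The plan is to verify the three defining pieces of a Manin pair: that $\P/H$ carries a well-defined Courant algebroid structure over $M/H$, and that $L/H$ descends to a Dirac structure inside it.

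First I would check that $\P/H$ is a smooth vector bundle over $M/H$: since $M$ is a principal $H$-bundle the $H$-action on $M$ is free and proper, and the lifted $H$-action on $\P$ (via Equation \eqref{HactonPL}) covers it and is fibrewise linear, so the quotient is again a vector bundle, with sections $\Gamma(\P/H)\cong\Gamma(\P)^H$. Next I would transport each structure map: the fibre metric is $H$-invariant because it is multiplicative and $\g$ is $H$-stable under $\bullet$ (exactly the computation already done in Proposition \ref{conjact1}), so it descends and is still non-degenerate fibrewise; the Courant bracket descends because Proposition \ref{genDirHinv} shows $\Gamma(\P)^H$ is closed under $\Cour{\cdot,\cdot}$, and one checks the Courant axioms of Definition \ref{courdef} pass to the quotient since they are identities on sections. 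The anchor requires the argument sketched at the end of the excerpt: $\a_\P(h\bullet x) = h\cdot\a_\P(x) + (\Ad_h\zeta)_M|_{h\cdot m}$, and the correction term $(\Ad_h\zeta)_M$ is tangent to the $H$-orbits, hence dies in $\mathrm{T}M/H\twoheadrightarrow\mathrm{T}(M/H)$; so $\a_\P$ descends to $\a_{\P/H}:\P/H\to\mathrm{T}(M/H)$, and compatibility of $\a$ with bracket and metric is inherited. This establishes $\P/H$ as a Courant algebroid over $M/H$.

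Then I would treat $L/H$. Since $E$ acts on $L$ (as observed right after the definition of Dirac module, using that $L$ is Lagrangian), the induced $H$-action on $\P$ restricts to an $H$-action on $L$, so $L/H\subset\P/H$ is a well-defined subbundle of rank equal to $\tfrac12\operatorname{rk}(\P/H)$. It is isotropic because the descended metric restricted to $\Gamma(L)^H$ vanishes (again from $H$-invariance of the metric and isotropy of $L$), hence Lagrangian by the dimension count. Involutivity of $\Gamma(L/H)\cong\Gamma(L)^H$ under the descended bracket follows from the same invariant-section argument: if $\sigma_1,\sigma_2\in\Gamma(L)^H$ then $\Cour{\sigma_1,\sigma_2}\in\Gamma(L)$ since $L$ is a Dirac structure, and it is $H$-invariant by the proof of Proposition \ref{genDirHinv}, so $\Cour{\sigma_1,\sigma_2}\in\Gamma(L)^H$. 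Thus $L/H$ is a Dirac structure in $\P/H$ and $(\P/H,L/H)$ is a Manin pair.

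The main obstacle, and the step deserving the most care, is the descent of the anchor map — specifically making rigorous that the correction term $(\Ad_h\zeta)_M|_{h\cdot m}$ genuinely lies in the vertical bundle of $M\to M/H$ for all $h$, so that $\a_\P$ is constant along $H$-orbits modulo verticals; this is where one uses that $\zeta = \a_\A(h,\u(x))$ in left trivialization is (via Equation \eqref{anchorvac} and Proposition \ref{THgHd}) an element of $\h$ acting by a fundamental vector field. Once that is in place, everything else is a routine check that $H$-invariant sections inherit the Courant axioms, metric non-degeneracy (which needs the action to preserve the metric, already known), and the Lagrangian/involutivity conditions for $L/H$.
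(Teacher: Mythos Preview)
Your proposal is correct and follows essentially the same approach as the paper: the metric descends by multiplicativity, the bracket descends via Proposition \ref{genDirHinv}, and the anchor descends because the correction term $(\Ad_h\zeta)_M|_{h\cdot m}$ is a fundamental vector field and hence vertical. You are in fact more thorough than the paper's own proof, which treats only the Courant algebroid structure on $\P/H$ explicitly and leaves the verification that $L/H$ is a Dirac structure implicit.
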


\addcontentsline{toc}{chapter}{Bibliography}



\end{document}